\numberwithin{equation}{section}
\definecolor{lightgray}{gray}{0.9}
\definecolor{xgray}{gray}{0.8}
\colorlet{conjcolor}{blue!5!white}
\renewenvironment{figure}[1][]{
  \begin{originalfigure}[#1]
    \begin{mdframed}[linecolor=black!0,backgroundcolor=black!1]
}{
    \end{mdframed}
  \end{originalfigure}
}
\theoremstyle{plain}
\newtheorem{defn}[equation]{Definition}
\newtheorem{thm}[equation]{Theorem}
\newtheorem{prop}[equation]{Proposition}
\newtheorem{fact}[equation]{Fact}
\def\PhysicsFact{Physics Assumption}
\newtheorem{fact?}[equation]{Fact?}
\newtheorem{cor}[equation]{Corollary}
\newtheorem{lem}[equation]{Lemma}
\newtheorem{conj}[equation]{Conjecture}
\newtheorem{proposal}[equation]{Proposal}
\theoremstyle{remark}
\newtheorem{rem}[equation]{Remark}
\newtheorem{ex}[equation]{Example}
\newtheorem{const}[equation]{Construction}
\newtheorem{physremark}[equation]{Physics Remark}
\let\oldendrem\endrem
\def\endrem{\hfill {$\lrcorner$} \oldendrem}
\let\oldendconst\endconst
\def\endconst{\hfill {$\lrcorner$} \oldendconst}
\let\oldendphysremark\endphysremark
\def\endphysremark{\hfill {$\lrcorner$} \oldendphysremark}
\def\Aut{\mathop{\mathrm{Aut}}}
\def\bC{\mathbb{C}}
\def\bR{\mathbb{R}}
\def\bQ{\mathbb{Q}}
\def\bZ{\mathbb{Z}}
\def\cS{\mathcal{S}}
\def\Hom{\mathrm{Hom}}
\def\Ext{\mathrm{Ext}}
\def\tr{\mathop{\mathrm{tr}}}
\def\ko{\mathrm{ko}}
\def\H{\mathrm{H}}
\def\K{\mathrm{KU}}
\def\KU{\mathrm{KU}}
\def\KO{\mathrm{KO}}
\def\MF{\mathrm{MF}}
\def\TMF{\mathrm{TMF}}
\def\Tmf{\mathrm{Tmf}}
\def\tmf{\mathrm{tmf}}
\def\mf{\mathrm{mf}}
\def\pt{\mathrm{pt}}
\def\Cl{\mathrm{Cl}}
\def\BO{BO}
\def\Z{\mathbb{Z}}
\def\MString{\mathrm{MString}}
\def\MSpin{\mathrm{MSpin}}
\def\Q{\mathbb{Q}}
\def\R{\mathbb{R}}
\def\ch{\mathrm{ch}}
\def\Spin{Spin}
\def\SU{SU}
\def\Nequals#1{$\mathcal{N}{=}#1$}
\def\spin{\text{spin}}
\def\stri{\text{string}}
\def\diff{\text{diff}}
\let\oldsigma\sigma
\let\sigmasymbol\sigma
\def\Wit{\oldsigma}
\def\WitVec{\mathrm{Wit}}
\def\sigma{\Phi}
\def\Witt{\mathop{\mathrm{Witt}}\nolimits}
\def\SQFT{\mathrm{SQFT}}
\def\Ker{\mathop{\mathrm{Ker}}}
\def\index{\mathop{\mathrm{index}}\nolimits}
\def\hocolim{\mathop{\mathrm{hocolim}}}
\def\del{\partial}
\def\cw{\mathrm{cw}}
\def\hBord{h\mathrm{Bord}}
\def\clo{\mathrm{clo}}
\def\top{\text{top}}
\def\an{\text{an}}
\def\geom{\text{geom}}
\def\id{\mathrm{id}}
\def\ABS{\mathrm{ABS}}
\def\tor{\text{torsion}}
\def\JF{\mathrm{JF}}
\def\ind{\mathrm{ind}}
\def\wt{\mathrm{wt}}
\def\res{\mathrm{res}}
\def\SO{SO}
\def\Semispin{SemiSpin}
\def\RO{\mathrm{RO}}
\def\moduli{\mathcal{M}}
\def\cL{\mathcal{L}}
\def\Pic{\mathrm{Pic}}
\def\cE{\mathcal{E}}
\def\tr{\mathrm{tr}}
\def\V{\mathbb{V}}
\def\W{\mathbb{W}}
\def\cM{\mathcal{M}}
\def\cH{\mathcal{H}}
\def\cS{\mathcal{S}}
\def\cI{\mathcal{I}}
\newcommand{\mayuko}[1]{\textcolor{red}{(Mayuko:#1)}}
\def\lpar{(\!(}
\def\rpar{)\!)}
\let\oldtext\text
\def\text#1{\oldtext{\upshape\mdseries #1}}
\DeclareSymbolFont{yhlargesymbols}{OMX}{yhex}{m}{n} \DeclareMathAccent{\reallywidehat}{\mathord}{yhlargesymbols}{"62}
\DeclareFontFamily{U}{mathx}{}
\DeclareFontShape{U}{mathx}{m}{n}{<-> mathx10}{}
\DeclareSymbolFont{mathx}{U}{mathx}{m}{n}
\DeclareMathAccent{\widehat}{0}{mathx}{"70}
\DeclareMathAccent{\widecheck}{0}{mathx}{"71}
\let\hat\widehat
\let\tilde\widetilde
\let\check\widecheck
\let\breve\tilde
\let\oldwidehat\widehat
\def\widehat#1{\oldwidehat{#1}{}}
\def\paragraph#1{

\medskip\noindent \textit{#1} --- }
\let\epsilonelem\epsilon
\def\epsilon{\ahoaho}
\def\varepsilon{\ahoaho}
\def\varsigma{\ahoaho}
\begin{document}

\title[Anderson duality of TMF and its differential manifestations]{Anderson duality of 
topological modular forms\\[.2em]
and its differential-geometric manifestations}
\author{Yuji Tachikawa}
\author{Mayuko Yamashita}
\date{v1: May, 2023; v2: May, 2024, v3: June, 2025}
\address{Kavli Institute for the Physics and Mathematics of the Universe \textsc{(wpi)},
  The University of Tokyo,
  5-1-5 Kashiwanoha, Kashiwa, Chiba, 277-8583,
  Japan
}
\email{yuji.tachikawa@ipmu.jp}
\address{Perimeter Institute for Theoretical Physics, 31 Caroline Street North,
Waterloo, Ontario, Canada, N2L 2Y5
	}
	\email{myamashita@perimeterinstitute.ca}
\def\funding{The research of YT is  supported 
in part  by WPI Initiative, MEXT, Japan through IPMU, the University of Tokyo,
and in part by Grant-in-Aid for JSPS KAKENHI Grant Number 17H04837 and 24K06883.
The work of MY is supported by Grant-in-Aid for JSPS KAKENHI Grant Number 20K14307 and JST CREST program JPMJCR18T6.}
\thanks{\emph{Acknowledgments}: 
The authors thank N. Kawazumi for his insightful question after a seminar given by MY on \cite{TachikawaYamashita}, concerning the interpretation of the secondary invariant associated to the vanishing of the primary invariant studied in \cite{TachikawaYamashita}.
Trying to answer his question was a major source of inspiration for our work.
The authors also thank K. Yonekura for the collaboration at the early stage of this work,
and Y. Moriwaki for helpful discussions on vertex operator (super)algebras.
The appendix~\ref{app:sanath-proof}, giving an alternative proof of one of our theorems, was kindly provided by Sanath Devalapurkar.
\funding
}

\begin{abstract}
\if0
We construct a morphism of spectra from $\KO((q))/\TMF$ to $\Sigma^{-20}I_\Z \TMF$,
which we show to be an equivalence and to implement the Anderson self-duality of $\TMF$.
This morphism is then used to define another morphism from $\TMF$ to $\Sigma^{-20}I_\Z(\MSpin/\MString)$,
which induces a differential geometric pairing
and captures not only the invariant of Bunke and Naumann
but also a finer 
invariant which detects subtle Anderson dual pairs of elements of $\pi_\bullet\TMF$.
Our analysis leads to conjectures concerning certain self-dual vertex operator superalgebras and some specific torsion classes in $\pi_\bullet\TMF$.
\fi
We construct and study a morphism of spectra implementing the Anderson duality of 
topological modular forms ($\TMF$).
Its differential version will then be introduced,  allowing us to pair
elements of $\pi_d\TMF$ with spin manifolds 
whose boundaries are equipped with string structure.
A few negative-degree elements of $\pi_d\TMF$ will then be constructed
using the theory of $\RO(G)$-graded $\TMF$,
and will be identified  using the differential pairing.
We also discuss a conjecture relating vertex operator algebras
and negative-degree elements of $\pi_d\TMF$,
underlying much of the discussions of this paper.
The paper ends with a separate appendix for physicists, 
in which the contents  of the paper are summarized and translated into their language.

\end{abstract}
\maketitle

\tableofcontents


\section{Introduction}

\subsection{Prelude to the introduction}
Topological modular forms (TMF) are an object in algebraic topology
which combines the ring of weakly-holomorphic integral modular forms 
and  stable homotopy groups of spheres in an intricate fashion \cite{Hopkins2002,TMFBook,BrunerRognes}.
Topological modular forms are, in a certain precise sense, 
the most universal of all the elliptic cohomology theories,
They are known to arise as the natural next step after K-theory and KO-theory from the perspective of the chromatic homotopy theory,
and share various properties with them.
For example, just as K-theory is 2-periodic and KO-theory is 8-periodic,
topological modular forms are $24^2=576$-periodic.
Another shared feature is that spin manifolds have KO-theory orientation,
while string manifolds have TMF orientation \cite{AHR10},
where spin structures and string structures are tangential structures of
manifolds obtained by killing the homotopy groups of orthogonal groups successively. 
\if0
Moreover,  there is a morphism of spectra \begin{equation}
\TMF\to \KO((q))\label{quux}
\end{equation} relating the two. 
\mayuko{The $\KO((q))$ is the Tate K-theory, which is not just a $\KO$-theory with formal power series but correctly regarded as elliptic cohomology for the Tate curves. (For example, the spin orientation of $\KO((q))$ is not just $ABS((q))$. )So I believe it is misleading to bring up $\KO((q))$ at this stage. }
\fi
In this sense, compared to K and KO theory, we can consider $\TMF$ as a better approximation of the sphere spectum, which is the central object in the stable homotopy theory. 
Furthermore, the formal power series with coefficients in $\KO$, $\KO((q))$, is known to arise
as the elliptic cohomology associated to the moduli of the Tate curve, and as such  it receives a morphism from $\TMF$:
\begin{equation}
\Phi\colon\TMF\to \KO((q)).\label{quux}
\end{equation}

One significant difference still exists, however.
Whereas K theory and KO theory have  geometric constructions via vector bundles,
we do not yet have a corresponding `geometric' construction of topological modular forms,
which have so far been  studied purely within homotopy theory.
There is a program proposed by Stolz and Teichner \cite{StolzTeichner1,StolzTeichner2}
to construct topological modular forms via
two-dimensional supersymmetric quantum field theories (SQFTs).
But its implementation is still in infancy, due in many respects to the fact that
our mathematical understanding of SQFTs is very meager. 

With all that said, it is already possible to take inspirations from 
this program and from the developments on the side of physics,
and transfer them into the realm of pure mathematics,
so that we can increase our understanding of topological modular forms.
In this paper we are going to do exactly that.

More concretely, using notations which will be introduced more fully later,
we do the following.
We first define  a morphism of spectra 
\begin{equation}
\KO((q))/\TMF \to \Sigma^{-20} I_\bZ \TMF,\label{qux}
\end{equation}
where $I_\bZ E$ 
is the Anderson dual of a spectrum $E$.
We will show that this morphism is an equivalence, 
and that it is to be regarded as the $\TMF$ version of the 
known Anderson self-duality of $\Tmf$ originally found in  \cite{Sto1,Sto2}.
This will allow us to reproduce the results of \cite{BunkeNaumann} in a straightforward manner.
Furthermore, our formulation \eqref{qux} 
of the Anderson duality of topological modular forms
will lead to a differential pairing \begin{equation}
\widehat\TMF^{-d} \times \reallywidehat{\MSpin/\MString}_{-d-21} \to \bR/\bZ,
\label{diffpairgenintro}
\end{equation}
which is to be thought of as a TMF analogue 
of the eta invariant of Dirac operators on spin manifolds.

Our considerations will then allow us to give an explicit, concrete 
construction of some special torsion elements 
in the kernel of the homomorphism \begin{equation}
\pi_d\TMF\to \pi_d\KO((q)).
\end{equation} induced from \eqref{quux}.
These are the most subtle type of torsions in $\pi_* \TMF$,
and are known to capture in positive degrees  a number of nontrivial elements of 
the stable homotopy groups of spheres \cite{Hopkins2002}.
The situation in the negative degrees was much less clear, however.
In this paper, among others,
we construct an element in $\pi_{-28}\TMF$,
using in an essential way  
the theory of $\RO(G)$-graded topological modular forms developed in 
 \cite{GepnerMeier,lin2024topologicalellipticgenerai,BauerMeierTJF}.
We then use the differential pairing \eqref{diffpairgenintro}
 to show that this element is the generator of the kernel of the homomorphism $\Phi$ on $d=-28$,
 which is known to be $\bZ/2\bZ$ from the spectral sequence computation.

\begin{physremark}
The morphism \eqref{qux}, the differential pairing \eqref{diffpairgenintro},
and the negative-degree elements we construct in $\pi_{d}\TMF$ are all inspired
from considerations in physics, as we will mention along the way.
But it is to be emphasized that all the discussions in this paper
are mathematically rigorous. 
Any background knowledge in physics 
is neither required nor expected, although
a partial translation of the contents of this paper for  physicists 
is provided in Appendix~\ref{sec:introphys}.
\end{physremark}

\if0
The main theme of our paper is the Anderson duality of topological modular forms and its differential geometric manifestations involving string manifolds.
What we will describe can be considered as a generalization of the simpler case of the Anderson duality of K-theory and its differential geometric manifestations as realized by the eta invariants on spin manifolds.
Therefore we would like to begin by  recalling these concepts in this simpler setting of K-theory   before delving in to the much more interesting but complicated case of  topological modular forms.

\begin{physremark}
As mentioned above,
 many of the contents of this paper were originally motivated by physics.
As such, in the paper, we sometimes have remarks from physics perspectives.
That said, skipping all these remarks should not cause any problem in the understanding of the rest of the paper, which is written in the standard and rigorous mathematical style.
A partial translation of the contents of this paper for  physicists 
is provided in Appendix~\ref{sec:introphys}.
\end{physremark}
\fi

Now that a general introduction was given,
a more detailed preview of the rest of the paper will be presented below.
We will start in Sec.~\ref{subsec:duality-KO} by discussing 
the Anderson self-duality of KO-theory and its differential geometric manifestations 
as realized by the eta invariants on spin manifolds.
Then in Sec.~\ref{subsec:duality-TMF},
we will outline our construction of the crucial morphism \eqref{qux}
which implements the Anderson duality of topological modular forms,
and also discuss its differential version.
In Sec.~\ref{subsec:intro-appl}, 
we then discuss various applications of this Anderson duality of topological modular forms.
Among others, we explain the relation to the invariant of Bunke and Naumann,
and also the construction and the determination of special negative-degree elements of $\pi_d\TMF$.
The next section, Sec.~\ref{subsec:intro-ver}, will be to place this paper
in the larger context of the Stolz-Teichner program, by formulating a precise conjecture
relating vertex operator algebras and negative-degree elements of $\pi_d\TMF$.
The main part of our paper then finally begins
after we present the overall organization of the paper in Sec.~\ref{subsec:intro-org}
and the notations and the conventions in Sec.~\ref{subsec:nota-conv}.
Let us now move on to the detailed introduction of the paper.

\subsection{Anderson duality and the induced pairings of $\KO$}
\label{subsec:duality-KO}
The Anderson duality $I_\bZ$  is a natural duality operation in stable homotopy theory.
Various important spectra are known to be self-dual:
$\H\bZ \simeq I_\bZ \H\bZ$,
$\K \simeq I_\bZ \K$,
and $\KO\simeq \Sigma^{4} I_\bZ \KO$.
When we take the homotopy groups, the Anderson duality provides non-torsion and torsion pairings, both of which are perfect.
We take the case of \begin{equation}
\zeta_\KO: \KO\xrightarrow{\sim} \Sigma^4 I_\bZ \KO \label{zetaKO}
\end{equation}
as an example. 

{ 
We start with a basic lemma whose easy proof will be provided later:
\begin{lem}[=Lemma~\ref{lemma:basic}]
\label{basiclemmaintro}
For a ring spectrum $R$ and an $R$-module spectrum $M$,
an $R$-module homomorphism $\zeta: M\to \Sigma^{-s} I_\bZ R$
from $M$ to the Anderson dual of $R$ 
is specified by an element $\zeta \in \pi_s I_\Z M$, 
which we denote by the same symbol by a slight abuse of notation.
\end{lem}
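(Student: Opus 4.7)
The plan is to derive the bijection formally from the representability properties of Anderson duality. Recall that the Anderson dual of a spectrum $X$ may be realized as the function spectrum $I_\Z X = F(X, I_\Z \mathbb{S})$, where $\mathbb{S}$ denotes the sphere spectrum and $I_\Z \mathbb{S}$ is the Anderson dual of the sphere. In particular, for a ring spectrum $R$, the spectrum $I_\Z R = F(R, I_\Z \mathbb{S})$ inherits a canonical $R$-module structure from the $R$-action on the source variable.

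First, I would invoke the standard adjunction: for any $R$-module spectrum $M$ and any spectrum $X$, there is a natural equivalence
\[
\mathrm{Map}_R\bigl(M, F(R, X)\bigr) \simeq \mathrm{Map}\bigl(M, X\bigr),
\]
which is just the $(-\wedge_R R, F(R,-))$ adjunction combined with the identification $M \wedge_R R \simeq M$. Applied with $X = \Sigma^{-s} I_\Z \mathbb{S}$, and using $\Sigma^{-s} F(R, I_\Z \mathbb{S}) \simeq F(R, \Sigma^{-s} I_\Z \mathbb{S})$, this yields
\[
\mathrm{Map}_R\bigl(M, \Sigma^{-s} I_\Z R\bigr) \simeq \mathrm{Map}\bigl(M, \Sigma^{-s} I_\Z \mathbb{S}\bigr).
\]

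Next, I would take connected components and rewrite the right-hand side using the suspension/desuspension adjunction together with the defining property of $I_\Z M$:
\[
\pi_0 \mathrm{Map}(M, \Sigma^{-s} I_\Z \mathbb{S}) = [\Sigma^s M, I_\Z \mathbb{S}] = \pi_s F(M, I_\Z \mathbb{S}) = \pi_s I_\Z M.
\]
Composing the two identifications gives the desired bijection between $R$-module homotopy classes $\zeta : M \to \Sigma^{-s} I_\Z R$ and elements of $\pi_s I_\Z M$, justifying the notational abuse in the statement.

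There is no real obstacle here; the lemma is essentially a direct unpacking of the $F(R,-)$ adjunction. The only point that deserves a line of care is ensuring the $R$-module structure on $\Sigma^{-s} I_\Z R$ used in the left-hand side is the one induced from $F(R, I_\Z \mathbb{S})$ via left multiplication on the source copy of $R$, and that this is the module structure implicit in the Anderson self-duality statements quoted around \eqref{zetaKO}.
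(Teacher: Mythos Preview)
Your proof is correct and follows essentially the same approach as the paper. The paper's argument is terser: it observes that by the definition of the Anderson dual a map $M \to \Sigma^{-s} I_\Z R$ corresponds to a map $M \wedge R \to \Sigma^{-s} I_\Z$, and that the $R$-module condition forces this to factor through the multiplication $M \wedge R \to M$ --- which is precisely the content of the $(-\wedge_R R,\,F(R,-))$ adjunction you invoke.
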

In our case, then, $\zeta_\KO$ is specified by an element 
$\zeta_\KO\in \pi_{-4} I_\Z \KO$.
To understand $\pi_{-4} I_\Z \KO$,
we use the exact sequence
\begin{equation}
\label{seqseq}
0\to \Ext(\pi_{-d+3}\KO,\Z)\to \pi_{d-4} I_\Z\KO \to \Hom(\pi_{-d+4} \KO,\bZ) \to 0.
\end{equation}
We also need nonzero homotopy groups of $\KO$,
which are given by 
$\pi_0\KO\simeq \bZ$, $\pi_1\KO\simeq \bZ/2$, $\pi_2\KO\simeq \bZ/2$,
$\pi_4\KO\simeq 2\bZ$.
Here,  we included a factor of $2$ in the last isomorphism 
to preserve the graded ring structure of $\pi_\bullet \KO$.
From \eqref{seqseq} for the case $d=0$ and $\pi_3\KO\simeq 0$, 
we see that   \begin{equation}
\zeta_\KO\in \pi_{-4}I_\bZ\KO \simeq \Hom(\pi_4\KO,\bZ) \simeq \Hom(2\bZ,\bZ)
\end{equation}  which is given by  \begin{equation}
\zeta_\KO = \frac12\cdot - \in \Hom(2\bZ,\bZ).
\end{equation}

The Anderson self-duality  \eqref{zetaKO} then gives rise to 
the non-torsion pairing \begin{align}
\langle -,-\rangle_{\zeta_\KO}&\colon \pi_d \KO \times \pi_{-d+4} \KO \to \bZ
\end{align}
giving the perfect pairing between $\pi_0\KO\simeq \bZ$ and $\pi_4\KO\simeq 2\bZ$, and
\begin{align}
( -,- )_{\zeta_\KO} &\colon (\pi_d \KO)_\tor \times (\pi_{-d+3} \KO)_\tor \to \bQ/\bZ
\end{align}
giving  the perfect pairing between $\pi_1\KO\simeq \bZ/2$ and $\pi_2 \KO\simeq \bZ/2$.}

We can also combine the morphism \eqref{zetaKO}  with the Anderson dual of  the Atiyah-Bott-Shapiro orientation $\ABS: \MSpin\to \KO$ to have the morphism \begin{equation}
\zeta_\spin= I_\bZ\ABS\circ\zeta_\KO \colon \KO \to \Sigma^{4}I_\bZ\MSpin,
\end{equation}
which in turn induces the pairings \begin{align}
\langle -,-\rangle_{\zeta_\spin}&\colon \pi_d \KO \times \pi_{-d+4} \MSpin \to \bZ,
\label{first}\\
( -,- )_{\zeta_\spin} &\colon (\pi_d \KO)_\tor \times (\pi_{-d+3} \MSpin)_\tor \to \bQ/\bZ.
\label{second}
\end{align}
Furthermore, $\KO$ and $\MSpin$  have  differential refinements $\widehat\KO$ and $\widehat\MSpin$, respectively, so that we can subsume the two pairings \eqref{first}, \eqref{second}
into a single  differential pairing \begin{equation}
(-,-)_{\widehat\zeta_\spin}\colon \widehat\KO^{-d} \times \widehat\MSpin_{-d+3} \to \bR/\bZ,
\end{equation}
which is  essentially given by the eta invariant of Atiyah, Patodi and Singer.

\subsection{Anderson duality and the induced pairings of $\TMF$}
\label{subsec:duality-TMF}
What we aim to achieve in this paper is to generalize this story  to topological modular forms.
Topological modular forms come in three versions, $\TMF$, $\Tmf$ and $\tmf$,
as explained in the standard textbooks such as \cite{TMFBook,BrunerRognes}.
Rationally, $(\pi_{2\bullet}\TMF)_\bQ\simeq(\MF_\bullet )_\bQ$
and $(\pi_{2\bullet}\tmf)_\bQ\simeq(\mf_\bullet )_\bQ$
are the rings of weakly-holomorphic modular forms and of modular forms, respectively,
but integrally, homotopy groups of topological modular forms also contain interesting torsional information.

The behavior of topological modular forms under the Anderson duality was first found for $\Tmf$ in \cite{Sto1,Sto2} in the form \begin{equation}
\Tmf \simeq \Sigma^{-21} I_\bZ\Tmf,
\label{eq_Tmf_duality_intro}
\end{equation} and was later reformulated in terms of $\tmf$ in \cite{BrunerRognes}.
Our first main result in this paper is its reformulation in terms of $\TMF$.

To state it, we need to use the cofiber sequence  \begin{equation}
\TMF \xrightarrow{\Phi} \KO((q)) \xrightarrow{C \Phi} \KO((q))/\TMF
\label{cofib}
\end{equation}  where $\Phi: \TMF\to \KO((q))$ is 
the morphism corresponding to the restriction to the Tate moduli in the definition of $\TMF$
as the global section of a sheaf of spectra over the moduli stack of elliptic curves,
as constructed in \cite{AHR10,HillLawson}.
A fundamental  result of this paper is
 \begin{thm}[=Theorem~\ref{main}]\label{main-intro}
 We have  an equivalence of $\TMF$-module spectra \begin{equation}
 \label{selfduality2}
 \alpha_{\KO((q))/\TMF} : \KO((q))/\TMF \xrightarrow{\sim} \Sigma^{-20} I_\bZ \TMF.
 \end{equation}
 \end{thm}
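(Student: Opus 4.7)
The plan is to deduce Theorem~\ref{main-intro} from the Anderson self-duality of $\Tmf$ recorded in \eqref{eq_Tmf_duality_intro}, $\Tmf \simeq \Sigma^{-21} I_\bZ \Tmf$. The intuition is that $\TMF$ and $\Tmf$ differ only by boundary data at the cusp (captured by the map $\KO[[q]] \to \KO((q))$), so the known duality for $\Tmf$ should transport to a duality statement involving $\TMF$ via this boundary data, with the one-unit discrepancy between the shifts ($-20$ versus $-21$) arising from a single connecting morphism.

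Concretely, I would first invoke the Mayer--Vietoris pullback square
\[
\begin{array}{ccc}
\Tmf & \longrightarrow & \TMF\\
\downarrow & & \downarrow\Phi\\
\KO[[q]] & \longrightarrow & \KO((q))
\end{array}
\]
built into the construction of $\Tmf$ as global sections over the compactified moduli stack of elliptic curves, with the lower row encoding the formal neighbourhood of the cusp. Since pullback squares of spectra are automatically pushouts, the horizontal cofibers match, giving a canonical equivalence $\KO((q))/\TMF \simeq \KO[[q]]/\Tmf$. Applying $I_\bZ$ to the cofiber sequence $\Tmf \to \TMF \to \KO((q))/\KO[[q]]$, using $I_\bZ\Tmf \simeq \Sigma^{21}\Tmf$, and then rotating and desuspending by $20$, yields a cofiber sequence
\[
\Tmf \longrightarrow \Sigma^{-20} I_\bZ\bigl(\KO((q))/\KO[[q]]\bigr) \longrightarrow \Sigma^{-20} I_\bZ \TMF.
\]

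The key technical input I would then need is an auxiliary equivalence $\KO[[q]] \simeq \Sigma^{-20} I_\bZ(\KO((q))/\KO[[q]])$ that is compatible with the natural maps from $\Tmf$. Granting it, the five-lemma applied to the map between this cofiber sequence and $\Tmf \to \KO[[q]] \to \KO((q))/\TMF$ produces the desired equivalence $\alpha_{\KO((q))/\TMF}$, and Lemma~\ref{basiclemmaintro} ensures that it can be arranged to be a $\TMF$-module morphism as claimed.

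The main obstacle I foresee is precisely this auxiliary equivalence, which is a statement purely about $\KO$. Morally it should come from a residue-type pairing $\KO[[q]] \otimes (\KO((q))/\KO[[q]]) \to \KO$ extracting the coefficient of $q^{-1}$, composed with $\KO \simeq \Sigma^{4} I_\bZ \KO$; but the required shift of $-20$ in place of the naive $-4$ must record the internal degree of the Tate parameter $q$ in the convention making $\Phi$ a map of graded ring spectra. Pinning this down at the spectrum level, together with compatibility with the $\Tmf$-actions, is the part I would expect to demand the most care. An alternative route I would explore is to apply the Gorenstein/Serre-duality formalism (\`a la Greenlees or Ausoni--Rognes) directly to the two-chart affine cover implied by the pullback square above, thereby packaging the shift bookkeeping into the local duality data at the cusp instead of in a separate $\KO$-theoretic lemma.
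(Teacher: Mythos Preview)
Your approach is correct and essentially coincides with the paper's second proof (Appendix~\ref{app:sanath-proof}, due to Devalapurkar): the pullback square is Lemma~\ref{lem_Tmf_TMF}, the auxiliary $\KO$-equivalence is Lemma~\ref{lem_duality_KO[[q]]}, and the comparison of fiber sequences is Lemma~\ref{penultimate_step}. Your one point of uncertainty is resolved there by the explicit element $\phi \mapsto \tfrac{1}{2}\Delta(q)\,\phi\big|_{q^0}$ on $\pi_{-20}$, so the shift of $-20$ arises from the degree-$24$ class $\Delta$ combined with the degree-$4$ self-duality of $\KO$, not from any internal grading on $q$.
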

 
We now want to describe $\alpha_{\KO((q))/\TMF}$ more explicitly.
Using Lemma~\ref{basiclemmaintro},
this is done by describing $\alpha_{\KO((q))/\TMF}$ as an element of $\pi_{20}I_\Z \KO((q))/\TMF$.

In order to do so, let us first consider a $\KO((q))$-module morphism \begin{equation}
\alpha_{\KO((q))}\colon \KO((q)) \to  \Sigma^{-20} I_\bZ \KO((q))
\end{equation}
specified by an element 
\begin{equation}
    \alpha_{\KO((q))} \in \pi_{20} I_\Z \KO((q))\simeq \Hom(\pi_{-20}\KO((q)), \Z) \simeq {\Hom(2\Z((q)), \Z)},
\end{equation}
which we choose to be\begin{align}
\label{eq:choice-alpha}
   { \alpha_{\KO((q))} = \frac{1}{2}\Delta(q) \cdot - \big|_{q^0} \in \mathrm{Hom}(2\Z((q)), \Z).}
\end{align}
Here $\Delta(q)$ is the modular discriminant
and $f(q)\big|_{q^0}$ denotes the constant term of a $q$-series $f(q)$.

\begin{physremark}
There are many ways to motivate why we choose this particular morphism $\alpha_{\KO((q))}$ in \eqref{eq:choice-alpha}.
One is to say that this works in the end.
Another motivation, which was the original reason why the authors considered it, was that this morphism appeared in the characterization of the anomaly of heterotic string theories in the authors' previous study \cite{TachikawaYamashita}.
\label{foot:phys}
\end{physremark}

Then \begin{equation}
I_\bZ\Phi ( \alpha_{\KO((q))} ) \in \pi_{20} I_\bZ \TMF \label{eq:vanishing}
\end{equation} turns out to vanish, which allows us to find a lift \begin{equation}
\alpha_{\KO((q))/\TMF} \in \pi_{20} I_\bZ \KO((q))/\TMF
\end{equation} under the long exact sequence of homotopy groups
associated to the Anderson dual of the cofiber sequence \eqref{cofib}, \begin{equation}
\pi_{20} I_\bZ \KO((q))/\TMF
\xrightarrow{I_\bZ C\sigma}
\pi_{20} I_\bZ \KO((q))
\xrightarrow{I_\bZ \sigma}
\pi_{20} I_\bZ \TMF.
\end{equation}
This lift $\alpha_{\KO((q))/\TMF} $, which turns out to be unique,  gives the $\TMF$-module morphism in Theorem~\ref{main-intro}.
It is not immediate that this morphism is actually an equivalence;
the detailed construction of this morphism will be in Sec.~\ref{subsec:secondary} and the fact that it is an equivalence will be proved in two distinct ways in Appendices~\ref{app:original-proof} and \ref{app:sanath-proof}.\footnote{%
The proof given in Appendix~\ref{app:original-proof} is based on the Anderson duality of $\tmf$ and is rather computational.
Another proof given in Appendix~\ref{app:sanath-proof} is more conceptual and is based on the $\Tmf$ self-duality \eqref{eq_Tmf_duality_intro},
and was kindly provided by Sanath Devalapurkar.
}

\begin{physremark}
As discussed in the authors' previous work \cite{TachikawaYamashita},
the anomaly of heterotic string theories is characterized by  $I_\bZ\Phi\circ\alpha_{\KO((q))}$,
 whose vanishing \eqref{eq:vanishing} guarantees 
that   heterotic string theories are always anomaly-free.
The authors then considered the secondary morphism $\alpha_{\KO((q))/\TMF}$,
which can be naturally defined from this vanishing, 
in order to describe the Green-Schwarz couplings of general heterotic string theories.
It was a major surprise even to the authors
 that the secondary morphism $\alpha_{\KO((q))/\TMF}$ thus defined
  gave the Anderson duality \eqref{selfduality2} of $\TMF$.
\end{physremark}

As already mentioned for the case of the Anderson dual pairing of $\KO$, 
the existence of the Anderson duality \eqref{selfduality2} induces two pairings \begin{align}
\langle-,-\rangle_{\alpha_{\KO((q))/\TMF}} &\colon
\pi_d \TMF \times \pi_{-d-20} \KO((q))/\TMF \to \bZ,
\label{nontorsion-pairing}\\
(-,-)_{\alpha_{\KO((q))/\TMF}} &\colon
(\pi_d \TMF)_\tor \times (\pi_{-d-21} \KO((q))/\TMF)_\tor  \to \bQ/\bZ
\label{torsion-pairing}
\end{align}
which are perfect under suitable topologies on the groups involved, after dividing by the torsion subgroups in the case of \eqref{nontorsion-pairing}.

Thanks to the detailed data contained in \cite{BrunerRognes},
we know the structure of  the Abelian groups $\pi_\bullet \TMF$, $\pi_\bullet \KO((q))/\TMF$ 
and the pairings between them very explicitly.
These pairings associated to the Anderson duality  \eqref{selfduality2}
are explicitly tabulated in Appendix~\ref{app:duality} for the convenience of the readers.

The discussions so far are almost purely within homotopy theory, 
except possibly our motivation behind the choice of our crucial element $\alpha_{\KO((q))}$ in \eqref{eq:choice-alpha},
as explained in Physics Remark~\ref{foot:phys}.
We now connect these  considerations in homotopy theory to differential geometry of string manifolds.
As we will see, this allows us to compute the pairings of some concrete elements of $\pi_\bullet \TMF$ of negative degree
and some explicit torsion classes of string manifolds,
manifesting the Anderson duality of $\TMF$ in a differential geometric setting.

\begin{physremark}
These results in pure mathematics were heavily motivated by the study of branes in heterotic string theories in \cite{KaidiOhmoriTachikawaYonekura,Kaidi:2024cbx},
and in turn have direct applications in uncovering the previously unknown features of heterotic string theories.
\end{physremark}

For this purpose, we 
utilize the sigma orientations $\Wit_\text{string}: \MString \to \TMF$
and $\Wit_\text{spin}: \MSpin \to \KO((q))$, which were constructed in \cite{AHR10}
so that the following diagram commutes:
\begin{equation}
\vcenter{\xymatrix{
\MString \ar[d]^-{\Wit_\text{string}} \ar[r]^-{\iota} &
\MSpin \ar[d]^-{\Wit_\text{spin}} \\
\TMF \ar[r]^-{\Phi}& 
\KO((q))
}},
\end{equation}
where $\MString$ and $\MSpin$ are the Thom spectra for the string and spin manifolds, respectively.
We extend the two rows in the following way:
\begin{equation}
\vcenter{\xymatrix{
\MString \ar[d]^-{\Wit_\text{string}} \ar[r]^-{\iota} &
\MSpin \ar[d]^-{\Wit_\text{spin}} \ar[r]^-{C\iota} &
\MSpin/\MString \ar[d]^-{\Wit_\text{spin/string}} \\
\TMF \ar[r]^-{\sigma}& 
\KO((q))  \ar[r]^-{C\sigma} & 
\KO((q))/\TMF
}}.
\end{equation}
Here,  $\MSpin/\MString$ is defined to be the cofiber of the forgetful morphism $\iota$,
and is the Thom spectrum of the relative spin/string manifolds, whose homotopy groups 
give bordism groups of spin manifolds with string-structured boundaries.
The morphism $\Wit_\text{spin/string}$ was then chosen so that the diagram above commutes.
By composing it with \eqref{selfduality2}, we have the morphism \begin{equation}
\alpha_\text{spin/string} \colon \TMF\to \Sigma^{-20}I_\bZ \MSpin/\MString,
\end{equation}
which in turn gives the following more geometric versions of
the two pairings \eqref{nontorsion-pairing} and \eqref{torsion-pairing}:
\begin{align}
\langle-,-\rangle_{\alpha_\text{spin/string}} &\colon
\pi_d \TMF \times \pi_{-d-20} \MSpin/\MString \to \bZ,
\label{nontorsion-pairing-geom}\\
(-,-)_{\alpha_\text{spin/string}} &\colon
(\pi_d \TMF)_\tor \times (\pi_{-d-21} \MSpin/\MString)_\tor \to \bQ/\bZ.
\label{torsion-pairing-geom}
\end{align}
As  was also the case for $\KO$, 
these two pairings are  subsumed in a single differential pairing \begin{equation}
(-,-)_{\widehat\alpha_\text{spin/string}}  \colon\widehat\TMF^{-d} \times \reallywidehat{\MSpin/\MString}_{-d-21} \to \bR/\bZ,
\label{differential-pairing}
\end{equation}
whose properties we can and will explore.

\begin{physremark}
The physics interest of this pairing lies in the fact that it provides the Green-Schwarz couplings for a given string worldsheet theories specified by the element in $\widehat \TMF^{-d}$.
Some more details of the physics significance will be explained in Appendix~\ref{sec:introphys}.
\end{physremark}

As our construction is somewhat involved, we have an Appendix~\ref{app:toy}
where the same techniques are applied to a much simpler setting,
with the correspondence given below:
\[
\begin{array}{|c||c|c|c|c|}
\hline
\text{main text}  & \TMF & \KO((q))  & \MString & \MSpin\\
\hline
\text{App.~\ref{app:toy}}  & \KO & \K & \MSpin & \MSpin^c  \\
\hline
\end{array}.
\]

\subsection{Applications}
\label{subsec:intro-appl}
We will study the consequences of our  pairings induced from the Anderson duality of $\TMF$,
from various points of view.

\subsubsection{Determination of the invariants of Bunke and Naumann}
The first application is an alternative computation of the invariant $b^\tmf$ of Bunke and Naumann \cite{BunkeNaumann}.
This invariant was defined in \cite{BunkeNaumann} for elements $x$ in the kernel of $\sigma \colon \pi_{4k-1}\TMF\to \pi_{4k-1}\KO((q))$ by lifting them to $\pi_{4k}\KO((q))/\TMF$
and taking their rationalization. 
The rationalization of the pairing $\langle-,-\rangle_{\alpha_{\KO((q))/\TMF}}$ pairs them with the rationalization of $\pi_{-4k-20}\TMF$,
which will allow us to completely determine this invariant.
We will also see that the torsion pairing $(-,-)_{\alpha_{\KO((q))/\TMF}}$ provides torsion invariants of $\TMF$ finer than that given by Bunke and Naumann.

Bunke and Naumann also had a differential geometric version of their invariant, which they denoted by $b^\geom$, 
defined on the kernel of $\pi_{4k-1}\MString\to\pi_{4k-1}\MSpin$.
We will see that this invariant can be naturally thought  of as part of the differential pairing \eqref{differential-pairing}.

\subsubsection{Other examples of the differential pairing}

Another example is the following. 
The kernel of $\sigma: \pi_{-31}\TMF \to \pi_{-31}\KO((q))$ is known to be $\bZ/2$,
whose  generator we denote by $x$.
The torsion pairing \eqref{torsion-pairing} pairs it with torsion elements of $\pi_{10}\MSpin/\MString$.
Denoting by $\nu$ a generator of $\pi_3\MString=\bZ/24$,
the element
$\nu^3\in \pi_9 \MString$ clearly vanishes when sent by $\iota$ to $\pi_9\MSpin$,
and therefore can be lifted to a class $y$ in $\pi_{10}\MSpin/\MString$.
We will see that the pairing $(x,y)_{\alpha_\text{spin/string}}$ 
 is nontrivial if and only if $y$ is a lift of $\nu^3$.

\subsubsection{Identification of a $\TMF$ class coming from a power operation}
One more application of our general result is that it sometimes allows us 
to identify elements of $\pi_\bullet \TMF$
by performing differential geometric computations of the Anderson-dual pairing.
More concretely, 
we combine the authors' previous works \cite{TachikawaYamashita,Yamashita:2021cao,YamashitaAndersondualPart2} with the results of this paper to have differential geometric understandings of the pairing $\alpha_{\spin/\stri}$. 
This enables us to understand torsion elements coming from a power operation on $\TMF$, which had been understood only by spectral sequence computations in the literature, by differential geometric methods. 

The concrete case we study in this paper is the following.
Let $e_8 \in \pi_{-16}\TMF$ be the unique element 
whose image by $\sigma$ is  $c_4/\Delta$.\footnote{%
One rationale for this notation $e_8$ is that $c_4$ is the lattice theta function for the lattice $E_8$;
further motivations will be given amply below.}
The $H_\infty$-ring structure of $\TMF$ allows us to define its square $(e_8)^2$
as an element of $\TMF^{32}(B\Sigma_2)$, 
where $\Sigma_2$ is the symmetric group on two points
representing the exchange of two factors of $e_8$.
Let $f:S^1\to B\Sigma_2$ be the classifying map of a nontrivial $\Sigma_2$ bundle over $S^1$,
and we let \begin{equation}
e_8^{(2)} :=  f^* ((e_8)^2)/[S^1] \in \TMF^{31}(\pt)=\pi_{-31}\TMF,
\end{equation} where $[S^1] \in \TMF_1(S^1)$ is the fundamental class with the bounding string structure.
This procedure is a power operation called a cup-$1$ product, as we will see in Remark \ref{rem_cup1}, and denoted by $\cup_1 \colon \pi_{2\ell}\TMF \to \pi_{4\ell+1}\TMF$.

This element $e_8^{(2)}$ is of some physics interest,
and it is fairly straightforward to check that it vanishes when sent to $\pi_{-31}\KO((q))$ via $\sigma$.
As we already mentioned, the kernel of $\sigma:\pi_{-31}\TMF\to\pi_{-31}\KO((q))$ is $\bZ/2$, whose generator we denoted by $x$ above.
Then we can ask the question: is $e_8^{(2)}$ zero or $x$?
We will answer this question in two completely distinct methods.

The first method is the following. As $e_8^{(2)}$ is obtained by a power operation in $\TMF$,
we can determine it to be $x$, using the detailed data provided in \cite{BrunerRognes} 
concerning power operations as seen in the Adams spectral sequence for $\pi_\bullet\TMF$.
This we will do in Sec.~\ref{app:power}.

The second method is presented 
in Sec.~\ref{sec:computation},
using the differential version of the pairing \eqref{torsion-pairing-geom} above.
There, we compute   $(e^{(2)}_8,y)_{\alpha_\text{spin/string}}$  for a class $y\in \pi_{10}\MSpin/\MString$
lifting $\nu^3$ and confirm it to be nonzero,
thus showing indeed that $e^{(8)}_2$ is the generator $x$.

As described above, the first method is purely homotopy-theoretical,
while the second method uses the differential geometric pairing.
Two computations proceed in two  totally different ways,
and the eventual consistency of the results is highly nontrivial.
To the authors this was one of the most exciting and satisfying aspects of this work.

\begin{physremark}
In addition, these computations determine previously undetermined couplings of heterotic string theory of a very subtle type,
known as discrete Green-Schwarz couplings. For more details, see Appendix~\ref{sec:introphys}.
\end{physremark}

We note that our differential computation is performed using the following proposition involving the genuinely equivariant refinement of $\TMF$ developed by Gepner-Meier \cite{GepnerMeier}:
\begin{prop}
\label{prop:above}
The element $e_8$ has a natural lift to an $\RO(SU(2))$-graded genuinely equivariant $\TMF$-cohomology element, \[
\check e_8 \in \TMF_{SU(2)}^{16 + \overline{V}_{SU(2)}}(\pt)
\]
where $\overline V_{SU(2)} = V_{SU(2)} - 4\underline \R \in \RO(SU(2))$ is the virtual representation of $SU(2)$ associated to the fundamental representation $V_{SU(2)}$ of $SU(2)$. 
\end{prop}
We will show Proposition~\ref{prop:above} in Appendix~\ref{app:equivariant}. 
$\RO(G)$-grading of $\TMF$ is conjectured to be a part of genuinely $G$-equivariant twist of $\TMF$ parametrized by $[BG, BO\langle 0, 1, 2, 3, 4 \rangle]$, which is yet to be constructed. 
The proposition above is a lesser version of the following conjecture:
\begin{conj}
\label{conj:above}
The element $\check e_8$ further lifts to an $E_8$-equivariant twisted element \[
\breve e_8\in \TMF^{16+\breve\tau}_{E_8}(\pt),
\]
where $\breve\tau \colon BE_8 \to K(\Z, 4)$ is a map
which pulls back to $\tau$ above via the standard inclusion $SU(2)\subset E_8$ using a simple root.
\end{conj}
This conjecture has a good physics motivation, as we will explain below.

\subsection{Vertex operator algebras and $\TMF$}
\label{subsec:intro-ver}
Now we would like to explain the physics behind Proposition~\ref{prop:above} and Conjecture~\ref{conj:above} above,
and how it leads to our conjectures concerning vertex operator algebras and $\TMF$ classes.
The Stolz-Teichner proposal \cite{StolzTeichner1,StolzTeichner2} posits that the deformation classes of two-dimensional \Nequals{(0,1)} supersymmetric quantum field theories (SQFTs) determine $\TMF$ classes.
Among such SQFTs, two  simpler classes of theories are (i)
purely left-moving modular-invariant spin conformal field theories (CFTs) and (ii)
purely right-moving modular-invariant superconformal field theories (SCFTs).
The $\TMF$ classes coming from (ii) were the subject of a paper \cite{Gaiotto:2018ypj} and are automatically in $\pi_{d>0}\TMF$.
Our paper concerns the classes coming from (i), which are in contrast in $\pi_{d<0}\TMF$.

Mathematically, purely left-moving modular-invariant spin CFTs are believed to be described by \emph{self-dual} (or equivalently \emph{holomorphic}) vertex operator superalgebras (VOSAs), whose central charge $c$ is necessarily of the form $n/2$.
A slightly generalized form of the Stolz-Teichner proposal then has the following form,
which will be given more fully as Conjecture~\ref{conj:affinevoa}:
\begin{conj}
A holomorphic VOSA $\V$ of central charge $n/2$  containing an affine algebra $\hat{\mathfrak{g}}_k$
should give a class  $[\V]\in \TMF^{n+ k\tau}_G(\pt)$,
where $G$ is the simply-connected compact Lie group of type $\mathfrak{g}$
and {$\tau \colon BG \to K(\Z, 4)$ is
the map in \cite[Example 5.1.5]{FSSdifferentialtwistedstring}
which represents a generator of $\H^4(BG; \Z) \simeq \Z$. }
Furthermore,  $\sigma([\V])\in \KO^n_G((q))(\pt)$ can be computed using the theory of VOSA.%
\footnote{%
In the formulation of this conjecture, we use genuine $G$-equivariant $\TMF$ with twists,
whose theory has not been adequately developed yet for general $G$.
In the rest of the paper, we only use the $\RO(G)$-graded equivariant $\TMF$
about which we already have sound mathematical foundations 
for which no issue remains, 
see e.g.~\cite{GepnerMeier,BauerMeierTJF,lin2024topologicalellipticgenerai}. 
\label{footnote_genuine}}
\end{conj}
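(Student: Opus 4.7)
The plan is to proceed via the Stolz-Teichner correspondence, upgraded to handle equivariance and twists. First I would associate to the holomorphic VOSA $\V$ of central charge $n/2$ a two-dimensional \Nequals{(0,1)} SQFT by pairing $\V$, regarded as a purely left-moving CFT, with a trivial right-moving fermionic sector. The condition that $\V$ be self-dual/holomorphic is equivalent to modular invariance of the torus partition function, which is precisely the physical hypothesis needed for the Stolz-Teichner map to land in $\TMF^n(\pt)$ rather than in a non-modular-invariant variant.

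Second, the embedded affine subalgebra $\hat{\mathfrak{g}}_k \subset \V$ equips the resulting SQFT with a global $G$-symmetry carried by a level-$k$ current algebra, whose 't~Hooft anomaly is classified by $k \in \H^4(BG;\Z)\simeq \Z$ with generator realized by $\tau$. Under the (conjectural) equivariant Stolz-Teichner correspondence, such anomalies should translate into twists of $\TMF$ via the composition $K(\Z,4)\to BO\langle 0,\ldots,4\rangle\to B\GL_1(\TMF)$ recalled from \cite{ABG}. Accepting this dictionary, the class naturally lives in $\TMF^{n+k\tau}_G(\pt)$, with the twist coefficient $k$ inherited directly from the current algebra level.

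For the second assertion, I would identify $\sigma$ with the Tate-curve restriction, which at the level of field theories corresponds to reducing on a supersymmetric circle and extracting the equivariant elliptic genus. For the SQFT built from $\V$, this genus is simply the flavored partition function of $\V$, i.e.\ the graded $\hat{\mathfrak{g}}_k$-character decomposed into $G$-integrable representations. Combined with the twist-$k$-adjusted identification of $\KO^n_G((q))(\pt)$ with an appropriate completion of graded representations of the central extension determined by $k$, this yields a concrete and computable formula for $\sigma([\V])$ purely in VOSA-theoretic terms.

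The principal obstacle is exactly the one flagged in the footnote: a rigorous genuinely $G$-equivariant model of $\TMF$ with twists by $\H^4(BG;\Z)$ has not yet been constructed, and neither has the corresponding Stolz-Teichner map from $G$-equivariant (0,1)-SQFTs with anomaly. At present both sides are only accessible through their Borel-equivariant shadows, where much of the fine structure is washed out. A complete proof therefore demands first erecting this foundational machinery; once it is in place, the physics-to-mathematics translation sketched above should produce the class $[\V]$ and the computation of $\sigma([\V])$ with little additional work. Independent consistency checks in small cases, such as $G=\SU(2)$ at $k=1$ where both sides can be computed by hand, would provide useful supporting evidence along the way.
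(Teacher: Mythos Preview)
The statement you are addressing is a \emph{conjecture}, and the paper does not offer a proof of it. What the paper provides, in Appendix~\ref{app:VOA}, is a chain of physics-motivated proposals (the Stolz-Teichner proposal, its $G$-equivariant extension with anomaly twist, and the expectation that self-dual unitary VOSAs arise from purely left-moving spin CFTs) whose combination yields the conjecture as a natural consequence. Your proposal traces essentially the same heuristic path: build an \Nequals{(0,1)} SQFT from $\V$, read off the $G$-symmetry and its level-$k$ anomaly from the affine subalgebra, invoke the conjectural equivariant Stolz-Teichner dictionary to land in $\TMF^{n+k\tau}_G(\pt)$, and interpret $\sigma$ via the elliptic genus/character. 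You also correctly identify the two genuine obstructions: the absence of a rigorous construction of genuinely $G$-equivariant twisted $\TMF$, and the conjectural status of the Stolz-Teichner map itself.

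So your write-up is not a proof, but it was never going to be one, and you are explicit about this. As a sketch of the motivation it is accurate and aligns with the paper's own reasoning. One small sharpening: the paper's fuller version of the conjecture (Conjecture~\ref{conj:affinevoa}) gives an explicit formula $\sigma([\V]) = \eta(q)^{-n}[\W]$ in terms of the canonically $\bZ/2$-twisted module $\W$ of $\V$, which is more precise than ``flavored partition function'' and would strengthen your second assertion.
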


One of the simplest nontrivial VOSA is the one given by the affine algebra $(\widehat {\mathfrak{e}_8})_1$ of $c=8$,
which should give a class  $\breve e_8\in \TMF^{16+\tau}_{E_8}(\pt)$,
which is the element  in the conjecture above.
Not only that, there is a physics argument saying that the element $e_8^{(2)}$ above,
reformulated physically in the language of SQFTs,
can be continuously deformed to a VOSA based on the affine algebra $(\widehat {\mathfrak{e}_8})_2$ of $c=31/2$.
The general conjecture stated above then implies that the element $e_8^{(2)}\in \TMF^{31}(\pt)$
should have a genuine $E_8$-equivariant lift $\check e_8^{(2)}\in \TMF^{31+ 2\tau}_{E_8}(\pt)$
together with its Borel equivariant version $\hat e_8^{(2)}\in \TMF^{31+ 2\tau}(BE_8)$.
Combined, we now have a conjectural description of the generator of $\Ker(\sigma: \pi_{-31}\TMF\to\pi_{-31}\KO((q))) = \bZ/2$
in terms of a VOSA.

\subsection{Organization of the paper}
\label{subsec:intro-org}
The rest of the paper is organized as follows:
\begin{itemize}
\item
In Sec.~\ref{sec:secondary}, we define two secondary morphisms $\alpha_{\KO((q))/\TMF}$ and $\alpha_\text{spin/string}$.
We also explain that the former gives the Anderson duality of $\TMF$.
We give two versions of its proof, in Appendices~\ref{app:original-proof} and \ref{app:sanath-proof}, respectively.
\item
In Sec.~\ref{sec:pairings}, we study  pairings induced by our secondary morphisms
and provide a number of computations of the pairings, in both torsion and non-torsion cases.
Along the way, we compute the invariant of Bunke and Naumann,
and relate it to the differential pairings.
We also see that the Anderson dual pairing of $\TMF$ has a differential geometric manifestation.
\item
In Sec.~\ref{sec:diff}, we define two $\TMF$ classes $x_{-d} \in \pi_{-d}\TMF$  for $d=31$ and $d=28$,
where $x_{-31}$ is the element we denoted by $e_8^{(2)}$ in this introduction.
They are both in $A_{-d}:=\Ker(\sigma: \pi_{-d}\TMF\to\pi_{-d}\KO((q)))$.
We prove three statements  independently of each other:
i) that $x_{-31}$ generates $A_{-31}\simeq \Z/2$, by performing a computation of the 
power operation in the Adams spectral sequence;
ii) that $x_{-28}$ generates $A_{-28}\simeq \Z/2$, 
by computing the Anderson duality pairing differential-geometrically;
and iii) that we have $x_{-28}=x_{-31}\nu$, using an explicit twisted string bordism.
The statements ii) and iii)  combined will provide an alternative, differential-geometric proof of the statement i) which is obtained purely homotopy-theoretically.
\end{itemize}
We also have a number of Appendices:
\begin{itemize}
\item
In Appendix~\ref{app:VOA}, we formulate conjectures concerning VOSAs and $\TMF$ classes.
Among others, we describe the VOSAs which should be behind $x_{-31}$ and $x_{-28}$.
\item
In Appendix~\ref{app:proofs}, we provide two different proofs of the Anderson duality of $\TMF$, Theorem~\ref{main-intro}.
The first proof uses the Anderson duality of $\tmf$ as formulated by \cite{BrunerRognes}.
The second proof uses the Anderson self-duality of $\Tmf$ and was kindly provided by Sanath Devalapurkar.
The first proof is   computational but is more explicit,
while the second proof is conceptual but is more abstract.
\item
In Appendix~\ref{app:TMF}, we summarize the structure of $\pi_\bullet\TMF$ as Abelian groups.
In particular we describe the important torsion groups $A_\bullet$,
which is the kernel of $\sigma:\pi_\bullet\TMF\to \pi_\bullet \KO((q))$.
We also describe the effects of the Anderson duality of $\TMF$ on its homotopy groups concretely.
\item
In Appendix~\ref{app:equivariant}, we present a minimal amount of the theory $\RO(G)$ graded topological modular forms,
and prove Proposition~\ref{prop:above} and other results concerning equivariant lifts of certain elements of $\pi_\bullet\TMF$.
They are used in Sec.~\ref{sec:diff}.
\item
In Appendix~\ref{app:diff_bordism}, we provide and develop necessary backgrounds on differential bordism groups and their Anderson duals.
\item
In Appendix~\ref{app:toy},  we discuss a simpler analogue of the study given in the main part of the paper, 
where the roles played by $\MString$, $\MSpin$, $\TMF$, and $\KO((q))$
are replaced by $\MSpin$, $\MSpin^c$, $\KO$, $\K$, respectively.
This allows us to obtain the Anderson self-duality of $\KO$ in terms of the Anderson self-duality of $\K$.
We also study various induced pairings in this setting. 
\item
Finally in Appendix~\ref{sec:introphys}, we give a brief description
of the bulk of the contents of this paper for physicists.
\end{itemize}

We note that we sometimes refer to the authors' previous work \cite{TachikawaYamashita}. 
When we explicitly refer to the equation numbers and the theorem numbers there, we always refer to the published version, or equivalently the version 2 on the arXiv,  which contains significant improvements in the exposition compared to the original version 1.
That said, familiarities in the details of \cite{TachikawaYamashita} are not assumed in this paper,
and the crucial points of \cite{TachikawaYamashita} are repeated in this article,
so that the readers can understand this paper without reading \cite{TachikawaYamashita}.

\subsection{Notations and conventions}
\label{subsec:nota-conv}
\begin{itemize}
    \item We use notations $\Z/n := \Z/n\Z$ \emph{except} in the only section written for physicists, Appendix~\ref{sec:introphys},
    where $\bZ_n:=\bZ/n\Z$ is used instead. 
     We denote the $p$-adic completion of $\Z$ by$\Z^{\wedge}_p$.
    \item For a spectrum $E$, we use the notations $E^{-d}$ and $E_{d}$ for $\pi_d E$, depending on the context. 
    The $p$-adic completion of $E$ is denoted by $E^{\wedge}_p$. 
    \item For a spectrum $E$, its Anderson dual spectrum is denoted by $I_\Z E$. 
    \item Given a morphism  $\alpha:E\to \Sigma^{-s}I_\bZ F$ for two spectra $E$ and $F$,
we denote the associated $\bZ$-valued and $\bQ/\bZ$-valued pairings by  \begin{equation}
\langle -,-\rangle_\alpha : \pi_d E\times  \pi_{-d-s} F \to \bZ
\label{Zpairing}
\end{equation}  and \begin{equation}
(-,-)_\alpha : (\pi_d E)_\text{torsion}\times  (\pi_{-d-s-1} F)_\text{torsion} \to \bQ/\bZ.
\label{Q/Zpairing}
\end{equation}
There are also occasions where we use the notations
 $\alpha\langle-,-\rangle$ and $\alpha(-,-)$ 
instead of $\langle-,-\rangle_\alpha$ and $(-,-)_\alpha$
for notational clarity.

Assuming that both $E$-\emph{cohomology} and $F$-\emph{homology} have differential versions
and that the differential refinement $\hat\alpha:\hat E\to \Sigma^{-s} \widehat{I_\bZ F}$ of $\alpha$ is given,
we denote the differential pairing by  \begin{equation}\label{diffpairing}
 (-,-)_{\hat\alpha}: \hat E^{-d} \times \hat F_{-d-s-1} \to \bR/\bZ
\end{equation} which subsumes both \eqref{Zpairing} and \eqref{Q/Zpairing} in the sense detailed in Sec.~\ref{subsec_diff_pairing}.    
We also denote this differential pairing by $\hat\alpha(-,-)$.
\item For a topological space $X$, we denote the associated pointed space by $X_+ := X \sqcup \pt$. 
     \item We use the formalism of twisted (co)homology developed in \cite{ABG}. 
     For a topological space $X$ and an $E_\infty$ ring spectrum $R$, a map $\tau \colon X \to BGL_1(R)$ specifies a twist. 
     We denote by $R \wedge_{\tau} X_+$ the associated $R$-module Thom spectrum, whose homotopy groups are the twisted $R$-homology groups, $R_{n+\tau}(X) := \pi_n(R\wedge_{\tau} X_+)$. 
     Moreover, given an $R$-module spectrum $E$, the associated Thom spectrum is also denoted by $E \wedge_{\tau}X_+ := E \wedge_R R \wedge_{\tau} X_+$, which represents the twisted $E$-homology groups, $E_{n+\tau}(X) := \pi_{n}(E \wedge_{\tau} X_+)$. 
     Analogously, we denote the twisted cohomology groups by $R^{n+\tau}(X)$ and $E^{n+\tau}(X)$.
\item We use the notation \begin{equation}
\mf=\bZ[c_4,c_6,\Delta]/(c_4^3-c_6^2-1728\Delta),\qquad
\MF=\mf[\Delta^{-1}]
\end{equation}
for the ring of integral modular forms and the ring of weakly-holomorphic integral modular forms, respectively.
It seems more common in the literature on $\TMF$ to use $\MF$ to denote what we denote by $\mf$,
but we believe our usage is more consistent, since we naturally have homomorphisms $\pi_{2\bullet}\tmf\to \mf_{\bullet}$ 
and $\pi_{2\bullet}\TMF\to\MF_{\bullet}$ in our notation.

We typically identity elements of $\mf$ and $\MF$ with their $q$-expansions {so that we have an inclusion $\MF_k \hookrightarrow \Z((q))$ for each even $k$.}
We define $-\big|_{q^0}$ to be the constant term of a $q$-series.
{
    \item We denote the complex Bott element by $\beta \in \pi_2 \K$. When we identify $\pi_{2n} \K \simeq \Z$, we always take the sign so that $\beta^n$ maps to $1$. 
 We always identify 
    \begin{align}
        \pi_{8k+4}\KO &\simeq 2\Z \subset \Z, \label{eq_KO_convention}\\
        \pi_{8k}\KO &\simeq \Z,
    \end{align}
    via $\pi_{4n}\KO \xhookrightarrow{c}\pi_{4n}\K \simeq \Z$, where $c$ denotes the complexification,
    \emph{except} in Sec.~\ref{subsubsec_BM_tmf}.
    The reason for this exceptional convention in Sec.~\ref{subsubsec_BM_tmf} is explained in Remark \ref{rem_exceptional_convention}. 
    }
    
     \item We denote by $X\in \pi_{576}\tmf$ the periodicity element.
The notation for the Bott element $B\in \pi_8\tmf$ follows that of \cite{BrunerRognes}.
As explained there, there are two elements $B$ and $\tilde B=B+\epsilonelem$ in $\pi_8\tmf$
whose modular form image is $c_4$, distinguished by their Adams filtration: 
$B$ is in filtration 4, while $B+\epsilonelem$ is in filtration 3.
    \item We use the following names for the morphisms between spectra, whose descriptions are given in the main text:
    \begin{equation}\label{diag_notation}
\vcenter{\xymatrix{
\MString \ar[d]^-{\Wit_\stri} \ar[r]^-{\iota} &
\MSpin \ar[d]^-{\Wit_\spin} \ar[r]^-{C\iota} &
\MSpin/\MString \ar[d]^-{\Wit_{\stri/\spin}} \\
\TMF \ar[r]^-{\sigma}& 
\KO((q))  \ar[r]^-{C\sigma} & 
\KO((q))/\TMF \\
\TMF \ar@{=}[u] \ar[r]^-{\phi} &
\TMF[B^{-1}] \ar[u]_-{p} \ar[r]^-{C\phi} &
\TMF/B^\infty \ar[u]_-{\varrho} 
}}.
\end{equation}

    \item We often consider the Abelian group \begin{equation}
    A_\bullet := \ker(\sigma: \pi_\bullet \TMF \to \pi_\bullet \KO((q))).
\end{equation}   This group equals \begin{equation}
    \Gamma_B \pi_\bullet \TMF = \ker(\phi: \pi_\bullet \TMF \to \pi_\bullet \TMF[B^{-1}])),
\end{equation}
the subgroup of $B$-power torsion elements,
as we will recall in Appendix~\ref{app:TMF}.
\item We often refer to the generator $\nu$ of  $\pi_3\mathbb{S}\simeq\pi_3\MString\simeq \pi_3\TMF\simeq \Z/24$,
given by $S^3$ equipped with the Lie group framing.
\item For a manifold $X$ and a graded $\R$-vector space $V^\bullet$, we denote by $\Omega^d(X; V^\bullet)$ the $\R$-vector space of differential forms on $X$ with coefficient in $V^\bullet$ with {\it total} degree $d$. 
The subspace of closed forms are denoted by $\Omega_\clo^*(X; V^\bullet)$. We write $\Omega^*(X):= \Omega^*(X; \R) $. 
    \item We heavily use the language of generalized differential cohomology in the latter half of the paper, starting from Sec.~\ref{subsec_diff_pairing} and onwards.
    We use the axiomatic framework given in \cite{BunkeSchickUniqueness}
    and adopt the conventional notations: a differential cohomology refining $E$ is typically denoted by $\widehat{E}^*$, and has structure maps 
    \begin{align}
    \vcenter{\xymatrix{
    \Omega^{*-1}(X ; E_\R^\bullet) / \mathrm{im}(d) \ar[r]^-{a} &\widehat{E}^*(X) \ar[d]^-{R} \ar[r]^-{I}& E^*(X) \\
    &\Omega_\clo^*(X; E_\R^\bullet)&
    }}.
    \end{align}
    See also Appendix~\ref{app:diff_bordism}.
\end{itemize}
\section{Secondary morphisms and the Anderson duality}
\label{sec:secondary}

 \subsection{The secondary morphism $\alpha_{\KO((q))/\TMF}$}
 \label{subsec:secondary}

 We start with a basic lemma, Lemma~\ref{lemma:basic}, which is simple but so fundamental that it will be implicitly used repeatedly below.
 We need to start with a couple of definitions to state the lemma.

\begin{defn}
 \label{defn:lor}
     Given two spectra $E$ and $F$ and a morphism $\alpha \colon E \to \Sigma^{-s}I_\Z F$, we denote by $\alpha^\lor$ the following composition:
     \begin{align*}
     \alpha^\lor \colon F \to I_\Z (I_\Z F) \xrightarrow{I_\Z \alpha} \Sigma^{-s}I_\Z E, 
     \end{align*}
     where the first arrow is induced by the generator $\id_{I\Z}$ of $[S^0, F(I\Z, I\Z)] \simeq [I\Z, I\Z] \simeq \Z$. 
 \end{defn}

 \begin{rem}
 In this paper we often deal with spectra such as $\TMF$ and $\KO((q))$ whose homotopy groups at fixed degrees are not finitely generated.
 In such cases we have a canonical morphism $E\to I_\bZ I_\bZ E$ but not an equivalence $E \xrightarrow{\sim} I_\bZ I_\bZ E$.
 This subtlety requires us to distinguish $I_\bZ \alpha$ and $\alpha^\vee$.
 \end{rem}

 \begin{defn}
 \label{defn:mor}
 Given a ring spectrum $R$, an $R$-module spectrum $M$ and a morphism $\alpha \colon M \to \Sigma^{-d}I_\Z$,
 the composition \begin{equation}
 M\wedge R \xrightarrow{\text{mult}} M \xrightarrow{\alpha} \Sigma^{-d}I\bZ 
 \label{fact}
 \end{equation} determines two $R$-module morphisms \begin{align}
 \text{mor}(\alpha)&\colon M\to \Sigma^{-d}I_\bZ R, &
 \text{mor}(\alpha)^\vee&\colon R\to \Sigma^{-d}I_\bZ M.
 \end{align}
 \end{defn}

 \begin{lem}
 \label{lemma:basic}
 Any $R$-module morphism 
 \begin{align*}
      M &\to  \Sigma^{-d} I_\bZ  R 
 \end{align*} 
 is of the form $\text{mor}(\alpha)$ for a map $\alpha \colon M \to \Sigma^{-d}I_\Z$,
 and any $R$-module morphism 
 \begin{align*}
      R &\to  \Sigma^{-d} I_\bZ  M 
 \end{align*} 
 is of the form $\text{mor}(\alpha)^\vee$ for a map $\alpha\colon M \to \Sigma^{-d}I_\Z$.
 \end{lem}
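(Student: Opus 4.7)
The plan is to recognize both statements as the two tensor--hom adjunctions applied to the factorization~\eqref{fact} that defines $\text{mor}(\alpha)$ and $\text{mor}(\alpha)^\vee$.

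For the first part, I would use that $I_\bZ R \simeq F(R, I_\bZ)$ as $R$-module spectra, where the $R$-action is induced from the multiplication on the source $R$. This exhibits $I_\bZ R$ as the cofree $R$-module on $I_\bZ$, so the cofree--forgetful adjunction gives, for any $R$-module $M$,
\begin{equation*}
\Hom_R(M, \Sigma^{-d} I_\bZ R) \;=\; \Hom_R\bigl(M, F(R, \Sigma^{-d}I_\bZ)\bigr) \;\cong\; [M, \Sigma^{-d} I_\bZ].
\end{equation*}
Under this bijection, a spectrum map $\alpha$ corresponds to the $R$-module map obtained as the tensor--hom adjoint of the composition $M \wedge R \xrightarrow{\text{act}} M \xrightarrow{\alpha} \Sigma^{-d}I_\bZ$, which is by construction $\text{mor}(\alpha)$. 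Conversely, an $R$-module map $\beta \colon M \to F(R, \Sigma^{-d}I_\bZ)$ yields a spectrum map by composing with the evaluation-at-$1$ counit $F(R, \Sigma^{-d} I_\bZ)\to \Sigma^{-d} I_\bZ$, and one verifies that $\text{mor}$ of this composition recovers $\beta$.

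For the second part, I would analogously identify $I_\bZ M \simeq F(M, I_\bZ)$ with its natural $R$-module structure from the $R$-action on $M$. Combining the fact that $R$ is the free $R$-module on $S^0$ with the tensor--hom adjunction in spectra gives
\begin{equation*}
\Hom_R(R, \Sigma^{-d} I_\bZ M) \;\cong\; \pi_d\, F(M, I_\bZ) \;\cong\; [M, \Sigma^{-d} I_\bZ],
\end{equation*}
and tracing through the bijection shows that a spectrum map $\alpha$ corresponds to $\text{mor}(\alpha)^\vee$, which by construction is the other adjoint of the same factorization~\eqref{fact}.

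The main subtlety is foundational rather than computational: one must know that the Anderson dual of an $R$-module is genuinely computed by $F(-, I_\bZ)$ with its natural $R$-action, and that the adjunctions above behave as expected at the $\infty$-categorical level. Once these identifications are in place in the stable $\infty$-category of $R$-modules, the lemma becomes an immediate unwinding of the adjunction formalism, and both correspondences are tautological restatements of the data encoded in \eqref{fact}.
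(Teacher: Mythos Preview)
Your proposal is correct and takes essentially the same approach as the paper's proof, just spelled out in more detail. The paper's two-sentence argument invokes exactly the same adjunctions you identify: $I_\bZ R \simeq F(R, I_\bZ)$ together with the tensor--hom/cofree--forgetful adjunction, and the observation that the $R$-module condition forces the adjoint map $M \wedge R \to \Sigma^{-d} I_\bZ$ to factor through the multiplication as in~\eqref{fact}.
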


 \begin{proof}
 From the definition of the Anderson dual, 
 any such morphism is determined by a morphism $M\wedge R \to \Sigma^{-d}I\bZ$.
 Due to our assumption that it is an $R$-module morphism,  it can be factored as in \eqref{fact}.
 \end{proof}

\begin{rem}
 \label{rem:mor}
 In the following, we simply write $\text{mor}(\alpha)$ by $\alpha$,
 and $\text{mor}(\alpha)^\vee$ by $\alpha^\vee$.
 \end{rem}

To proceed, we need three facts about modular forms and topological modular forms:
\begin{fact}
\label{fact:mf}
For all $f(q)\in (\MF_2)_\bQ$, we have $f(q)\big|_{q^0}=0$,
where the notation $\cdots \big|_{q^0}$ means the extraction of the constant term from a series in $q$.
\end{fact}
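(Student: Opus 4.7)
The plan is to realize elements of $(\MF_2)_\bQ$ as meromorphic differentials on the modular curve $X(1)\simeq\mathbb{P}^1_\bC$ and invoke the residue theorem.

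For $f\in(\MF_2)_\bQ$, the weight-$2$ transformation law makes the 1-form $\omega := f(\tau)\,d\tau$ invariant under $\mathrm{SL}_2(\bZ)$, since $d\tau$ transforms by $(c\tau+d)^{-2}$ while $f$ transforms by $(c\tau+d)^{2}$. Thus $\omega$ descends to a meromorphic 1-form on $\mathbb{H}/\mathrm{SL}_2(\bZ)$, and a standard local coordinate check (passing through $w=z^n$ at a fixed point of stabilizer order $n\in\{2,3\}$) shows that it extends holomorphically across the elliptic points $\tau=i,\rho$.

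Near the cusp I would use the coordinate $q = e^{2\pi i\tau}$, so that $d\tau = dq/(2\pi i\, q)$. From the $q$-expansion $f(q) = \sum_{n\geq -N} a_n q^n$ I then read
\[
\omega = \frac{1}{2\pi i}\sum_{n\geq -N} a_n\, q^{n-1}\,dq, \qquad \mathrm{Res}_{q=0}\,\omega = \frac{a_0}{2\pi i} = \frac{f(q)|_{q^0}}{2\pi i}.
\]
Since $f$ is weakly holomorphic, this is the only pole of $\omega$ on the compact Riemann surface $X(1)\simeq\mathbb{P}^1_\bC$, and the residue theorem forces the sum of residues to vanish, yielding $f(q)|_{q^0}=0$.

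The only mildly technical step is the holomorphic extension across the elliptic points, but this is routine for even integer weights. A purely algebraic alternative would use the presentation $(\MF_\bullet)_\bQ = \bQ[c_4,c_6,\Delta^{\pm 1}]/(c_4^3-c_6^2-1728\Delta)$, enumerate the weight-$2$ monomials $c_4^a c_6^b \Delta^{-c}$ (for instance $c_4^2 c_6/\Delta$), and verify the vanishing of the constant term directly from the known $q$-expansions of $c_4$, $c_6$, and $\Delta$; however, the residue-theoretic argument is the more conceptual one and makes clear why the statement is specific to weight $2$.
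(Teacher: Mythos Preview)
Your residue-theorem argument is correct and is the classical proof of this fact. The paper itself does not include a proof of this statement; it simply labels it a classic result in the theory of modular forms and refers the reader to \cite[Lemma~3.14]{TachikawaYamashita}. The argument you give---that $f(\tau)\,d\tau$ descends to a meromorphic $1$-form on the compact curve $X(1)\simeq\mathbb{P}^1_\bC$ whose only pole is at the cusp with residue $a_0/(2\pi i)$---is exactly the standard one.

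One small comment on your proposed ``purely algebraic alternative'': note that $(\MF_2)_\bQ$ is infinite-dimensional over $\bQ$, with basis $\{c_4^{3k-1}c_6\Delta^{-k}\}_{k\geq 1}$, so a direct monomial check would not terminate without some further inductive or generating-function argument. The residue proof is not just more conceptual but genuinely more efficient here.
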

\begin{fact}
\label{fact:tmf21}
$\pi_{-21}\TMF=0$.
\end{fact}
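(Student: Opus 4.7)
The statement $\pi_{-21}\TMF=0$ is a specific entry of the explicit computation of $\pi_\bullet\TMF$ carried out in detail in \cite{BrunerRognes}. My first move would be to reduce to a $p$-local statement: since $(\pi_{2\bullet}\TMF)_\bQ \simeq (\MF_\bullet)_\bQ$ is concentrated in even degrees and $\TMF$ has no odd-degree rational homotopy, $\pi_{-21}\TMF$ is purely torsion; and since $\pi_\bullet\TMF$ is torsion-free away from the primes $2$ and $3$, it suffices to prove the vanishing after localization at each of these two primes.

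For each $p\in\{2,3\}$ I would then consult the $E_\infty$ page of the Adams (or descent) spectral sequence for $\tmf_{(p)}$ as tabulated in \cite{BrunerRognes}, invert the Bott element $B$ (equivalently, the periodicity element $X=\Delta^{24}$) to pass to $\TMF_{(p)}$, and read off that no classes survive in stem $-21$ after this localization.

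A more conceptual alternative, bypassing a chart-by-chart inspection, would use the Anderson self-duality $\Tmf \simeq \Sigma^{-21}I_\bZ\Tmf$ of \eqref{eq_Tmf_duality_intro} to produce the short exact sequence
\begin{equation*}
0 \to \Ext(\pi_{-1}\Tmf, \bZ) \to \pi_{-21}\Tmf \to \Hom(\pi_0\Tmf, \bZ) \to 0,
\end{equation*}
and then to verify that every class appearing in $\pi_{-21}\Tmf$ is annihilated by some power of $\Delta$, hence dies in the localization $\TMF = \Tmf[\Delta^{-1}]$. The main obstacle in this alternative is carefully tracking the multiplicative action of $\Delta$ through the Anderson duality isomorphism — in particular showing that the generator of the $\Hom(\pi_0\Tmf,\bZ)=\bZ$ summand becomes $\Delta$-power torsion — which is cumbersome enough that in practice the cleanest route is simply to invoke the explicit tables in \cite{BrunerRognes}.
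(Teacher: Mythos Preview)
Your approach is essentially the same as the paper's: the paper does not give an independent proof of Fact~\ref{fact:tmf21} but simply states that it ``follow[s] from the properties of the homotopy groups of $\TMF$ summarized in Appendix~\ref{app:TMF}'', which in turn collates the tables of \cite{BrunerRognes} (specifically the $A_d$ and $U_d$ decomposition, with $A_{-21}=0$ at $p=2,3$ read off from Tables~\ref{tab:p=2} and \ref{tab:p=3}, and $U_{-21}=0$ since $-21$ is odd and $\not\equiv 1,2 \pmod 8$). One small slip: to pass from $\tmf$ to $\TMF$ you invert the periodicity element $X\in\pi_{576}\tmf$, not the Bott element $B\in\pi_8\tmf$; these are not equivalent localizations.
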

\begin{fact}
\label{fact:tmf22}
The morphism $\pi_{-22}\TMF\to \pi_{-22}\KO((q))$ is injective.
\end{fact}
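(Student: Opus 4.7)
The plan is to reduce Fact~\ref{fact:tmf22} to a direct inspection of the known computation of $\pi_\bullet \TMF$, whose structure at negative degrees is tabulated in \cite{BrunerRognes}. As recalled in the notations section, the kernel
\[
A_{-22} := \ker\bigl(\sigma \colon \pi_{-22}\TMF \to \pi_{-22}\KO((q))\bigr)
\]
coincides with the $B$-power-torsion subgroup of $\pi_{-22}\TMF$. Thus the assertion is equivalent to showing $A_{-22}=0$.

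First I would observe that $\pi_{-22}\TMF$ is purely torsion: rationally it would map isomorphically to $(\MF_{-11})_\bQ$, but any monomial $c_4^a c_6^b \Delta^c$ has even weight $4a+6b+12c$, so no weakly-holomorphic integral modular form of weight $-11$ exists and $(\MF_{-11})_\bQ=0$. Hence it suffices to verify $A_{-22}=0$ at the primes $p=2$ and $p=3$ separately.

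Next I would read off $\pi_{-22}\TMF$ at these primes from the descent (equivalently, Adams--Novikov) spectral sequence charts of \cite{BrunerRognes}. For each generator one must verify that it is \emph{not} $B$-power torsion; equivalently, that under $\sigma$, whose spectral-sequence incarnation is the $q$-expansion at the Tate cusp, its image in $\pi_{-22}\KO((q)) = (\bZ/2)((q))$ is nonzero. This is a finite check, since the group at this degree is small and all hidden $B$-multiplications are recorded in the chart.

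The main obstacle is the routine but nontrivial bookkeeping: one must be careful with hidden additive extensions and with hidden $B$-extensions in the Bruner--Rognes chart at degree $-22$, to ensure that no class escapes the verification by sitting invisibly in the kernel of $\sigma$. No conceptual input beyond the tables of \cite{BrunerRognes} is needed; if a more conceptual route were desired, one could alternatively combine the self-duality $\Tmf \simeq \Sigma^{-21} I_\bZ \Tmf$ of \cite{Sto1,Sto2} with the long exact sequence relating $\Tmf$ and $\TMF$ to constrain $\pi_{-22}\TMF$, but in practice the tabular approach is the most efficient.
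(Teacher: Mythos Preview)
Your proposal is correct and matches the paper's own justification: the paper simply states that Fact~\ref{fact:tmf22} follows from the structure of $\pi_\bullet\TMF$ summarized in Appendix~\ref{app:TMF}, where the tables (extracted from \cite{BrunerRognes}) record $A_{-22}=0$ at both $p=2$ and $p=3$. Your argument is a slightly more explicit unpacking of exactly this, including the observation that the rational part vanishes since $\MF_{-11}=0$.
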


Fact~\ref{fact:mf} is a classic result in the theory of modular forms. For a proof, see e.g.~\cite[Lemma 3.14]{TachikawaYamashita}.
Fact~\ref{fact:tmf21} and Fact~\ref{fact:tmf22} follow from the properties of the homotopy groups of $\TMF$ summarized in Appendix~\ref{app:TMF}.

Let us now consider the $\KO((q))$-module morphism $\alpha_{\KO((q))}: \KO((q))\to \Sigma^{-20}I_\bZ\KO((q))$.
Due to the Lemma \ref{lemma:basic}, its homotopy class is specified by an element in $\pi_{20}I_\bZ \KO((q))$:
\begin{defn}
\label{eq_def_alpha_spin}
We define the element $\alpha_{\KO((q))}\in\pi_{20}I_\Z\KO((q))$ via
\[
    \alpha_{\KO((q))} = {\frac12\Delta(q) \cdot - \big|_{q^0} \in \mathrm{Hom}(2\Z((q)), \Z)}
\]
under the isomorphism 
\[
\pi_{20} I_\Z \KO((q)) \simeq \Hom(\pi_{-20}\KO((q)), \Z) \simeq {\Hom(2\Z((q)), \Z)}.
\]
\end{defn}
This choice of $\alpha_{\KO((q))}$ was motivated by a string theoretic consideration, see \cite[Sec.~2 and Lemma~3.8]{TachikawaYamashita} for details.
The main mathematical result of the authors' previous work \cite{TachikawaYamashita} was the following:
\begin{thm}[=Theorem 3.17 of \cite{TachikawaYamashita}]
\label{oldmain}
The element $I_\bZ \sigma (\alpha_{\KO((q))}) \in \pi_{20}I_\bZ \TMF$ is zero.
\end{thm}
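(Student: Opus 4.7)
The plan is to reduce the statement to the vanishing of a single homomorphism $\pi_{-20}\TMF\to\Z$ via the Anderson-duality universal coefficient sequence, and then to invoke Fact~\ref{fact:mf}.

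First I would apply the Anderson-duality short exact sequence \eqref{seqseq} in degree $n=20$ to $\TMF$:
\[
0\to\Ext(\pi_{-21}\TMF,\Z)\to\pi_{20}I_\Z\TMF\to\Hom(\pi_{-20}\TMF,\Z)\to 0.
\]
By Fact~\ref{fact:tmf21} the Ext term vanishes, so the right-hand map is an isomorphism. Hence $I_\Z\sigma(\alpha_{\KO((q))})=0$ if and only if its image in $\Hom(\pi_{-20}\TMF,\Z)$ is zero, and by naturality of the sequence this image is the composite
\[
\pi_{-20}\TMF\xrightarrow{\;\sigma_*\;}\pi_{-20}\KO((q))=2\Z((q))\xrightarrow{\;\alpha_{\KO((q))}\;}\Z,\qquad y\longmapsto\tfrac12\Delta(q)\,\sigma(y)\big|_{q^0}.
\]

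Next I would verify that this composite vanishes rationally, which is sufficient since $\Z\hookrightarrow\Q$ is injective. Rationally, the $q$-expansion map $\pi_{2k}\TMF\otimes\Q\to\Q((q))$ --- obtained by post-composing $\sigma$ with complexification $\KO\to\K$ and using the convention $\pi_{8k+4}\KO=2\Z\subset\Z$ --- factors through the rational weakly holomorphic modular forms $(\MF_k)_\Q$. Therefore, for $y\in\pi_{-20}\TMF$, the element $\sigma(y)\in 2\Z((q))$ represents a class in $(\MF_{-10})_\Q$, and $\tfrac12\Delta(q)\sigma(y)\in(\MF_2)_\Q$ is a rational weakly holomorphic modular form of weight $2$. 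By Fact~\ref{fact:mf} its constant term is zero, which is exactly the statement of the theorem.

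The only subtle step is the interplay of the factor of $\tfrac12$ with the convention $\pi_{8k+4}\KO=2\Z\subset\Z$: the coefficient $\tfrac12\Delta(q)$ appearing in Definition~\ref{eq_def_alpha_spin} is precisely what makes $\tfrac12\Delta(q)\sigma(y)$ an honest weight-$2$ weakly holomorphic modular form on $SL_2(\Z)$ rather than a half-integral one, so that Fact~\ref{fact:mf} applies directly. Beyond this bookkeeping there is no substantive obstacle; the theorem reduces to the classical residue-theoretic vanishing of the constant term of any rational weight-$2$ weakly holomorphic modular form on $SL_2(\Z)$. Note that Fact~\ref{fact:tmf22} plays no role here; it will presumably enter subsequently to establish uniqueness of the lift $\alpha_{\KO((q))/\TMF}\in\pi_{20}I_\Z(\KO((q))/\TMF)$.
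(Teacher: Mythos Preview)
Your proposal is correct and follows essentially the same route as the paper: use Fact~\ref{fact:tmf21} to identify $\pi_{20}I_\Z\TMF\simeq\Hom(\pi_{-20}\TMF,\Z)$, rationalize so that $\sigma$ lands in $(\MF_{-10})_\Q$, and then apply Fact~\ref{fact:mf} to $\Delta\cdot(\MF_{-10})_\Q\subset(\MF_2)_\Q$. Your remark that Fact~\ref{fact:tmf22} is irrelevant here and enters only for the uniqueness of the lift is also exactly right; the discussion of the factor $\tfrac12$ is harmless bookkeeping, since Fact~\ref{fact:mf} is stated over $\Q$ anyway.
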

\begin{proof}
By Fact~\ref{fact:tmf21}, we have $\pi_{-21}\TMF=0$. Therefore we have the isomorphism \begin{equation}
 \pi_{20}I_\bZ \TMF=\Hom(\pi_{-20}\TMF,\Z).
\end{equation}
Therefore, we only have to show the vanishing of $I_\bZ \sigma (\alpha_{\KO((q))} ) $ 
after rationalization, which takes values in $\Hom((\MF_{-10})_\Q,\Q)$.
From Definition~\ref{eq_def_alpha_spin}, we only have to show that \begin{equation}
\Delta(q) f(q) \big|_{q^0} =0
\end{equation} for any $f(q)\in (\MF_{-10})_\Q$.
This follows by recalling that $\Delta(q)\in \MF_{12}$ and Fact~\ref{fact:mf}.
\end{proof}

Define the $\TMF$-module spectrum $\KO((q))/\TMF$ as the cofiber
\begin{align}\label{eq_def_F}
    \TMF \stackrel{\sigma}{\to} \KO((q)) \stackrel{C\sigma}{\to} \KO((q))/\TMF. 
\end{align}
We then have the following proposition:
\begin{prop}
There is a unique elelemt $\alpha_{\KO((q))/\TMF} \in \pi_{20} I_\Z  \KO((q))/\TMF$ which satisfies
\begin{align}
    I_\Z C\Phi (\alpha_{\KO((q))/\TMF} ) = \alpha_{\KO((q))} .  
\end{align}

\end{prop}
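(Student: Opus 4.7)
The plan is to apply the Anderson dual functor $I_\bZ$ to the cofiber sequence \eqref{eq_def_F}, producing a fiber sequence
$$I_\bZ \KO((q))/\TMF \to I_\bZ \KO((q)) \xrightarrow{I_\bZ \sigma} I_\bZ \TMF,$$
and to read off the conclusion from the portion of its long exact sequence of homotopy groups in degrees $20$ and $21$:
$$\pi_{21} I_\bZ \KO((q)) \to \pi_{21} I_\bZ \TMF \xrightarrow{\partial} \pi_{20} I_\bZ \KO((q))/\TMF \xrightarrow{I_\bZ C\sigma} \pi_{20} I_\bZ \KO((q)) \xrightarrow{I_\bZ \sigma} \pi_{20} I_\bZ \TMF.$$

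Existence of the lift is immediate: Theorem~\ref{oldmain} says precisely that $I_\bZ\sigma(\alpha_{\KO((q))}) = 0$, so exactness provides a preimage $\alpha_{\KO((q))/\TMF}$.

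For uniqueness, I need to show that $I_\bZ C\sigma$ is injective on $\pi_{20}$, or equivalently, by exactness, that the map $\pi_{21} I_\bZ \KO((q)) \to \pi_{21} I_\bZ \TMF$ is surjective. To this end I would unravel both $\pi_{21} I_\bZ$-groups using the Anderson-duality universal coefficient short exact sequence
$$0 \to \Ext(\pi_{-22}E,\bZ) \to \pi_{21} I_\bZ E \to \Hom(\pi_{-21}E,\bZ) \to 0$$
for $E=\TMF$ and $E=\KO((q))$. Both $\Hom$-terms vanish: for $\TMF$ by Fact~\ref{fact:tmf21}, and for $\KO((q))$ because $\pi_{-21}\KO=0$ since $-21\equiv 3\pmod 8$. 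Thus both groups reduce to their $\Ext$-summands, and the map between them is precisely $\Ext(\sigma_*,\bZ)$ applied to $\sigma_*\colon\pi_{-22}\TMF \to \pi_{-22}\KO((q))$. This latter homomorphism is injective by Fact~\ref{fact:tmf22}, and the standard six-term $\Hom/\Ext$ exact sequence associated to $0\to\pi_{-22}\TMF\to\pi_{-22}\KO((q))\to\mathrm{coker}(\sigma_*)\to 0$ yields the required surjectivity of $\Ext(\sigma_*,\bZ)$.

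The subtlety worth flagging, and what I expect to be the main conceptual obstacle, is that $\pi_{21}I_\bZ\TMF$ is generally nonzero — it is $\Ext(\pi_{-22}\TMF,\bZ)$, and $\pi_{-22}\TMF$ carries torsion — so uniqueness cannot be deduced from a na\"ive vanishing of that group. Instead, it depends essentially on the fact that Fact~\ref{fact:tmf22} upgrades, after applying the contravariant functor $\Ext(-,\bZ)$, to a surjectivity statement that kills the connecting homomorphism $\partial$.
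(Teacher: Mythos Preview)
Your proof is correct and follows essentially the same approach as the paper's: both use the long exact sequence in homotopy of the Anderson dual of the cofiber sequence, invoke Theorem~\ref{oldmain} for existence, and establish uniqueness by showing that $\pi_{21}I_\bZ\KO((q)) \to \pi_{21}I_\bZ\TMF$ is surjective via the identifications with $\Ext$-groups (using Facts~\ref{fact:tmf21} and~\ref{fact:tmf22}). Your write-up is somewhat more explicit about why the $\Hom$-term vanishes for $\KO((q))$ and about the six-term sequence underlying the surjectivity of $\Ext(\sigma_*,\bZ)$, but the argument is the same.
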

\begin{proof}
The long exact sequence for the homotopy fibration Anderson dual to \eqref{eq_def_F} gives
\begin{align}
    \pi_{21}I_\bZ \KO((q)) \xrightarrow{I_\Z \Phi} \pi_{21}I_\bZ \TMF \to \pi_{20} I_\Z  \KO((q))/\TMF \xrightarrow{I_\Z C\Phi} \pi_{20}I_\bZ \KO((q)) \xrightarrow{I_\Z \Phi}  \pi_{20}I_\bZ \TMF. 
\end{align}
From Theorem~\ref{oldmain} we know that $ \alpha_{\KO((q))} \in \pi_{20} I_\Z \KO((q))$ is in the kernel of the last map. Thus there is a lift. 
To show the uniqueness, it is enough to show that the first map is surjective. 

For this we notice that
$\pi_{21}I_\bZ \TMF\simeq \Ext(\pi_{-22}\TMF , \Z) $ 
because $\pi_{-21}\TMF = 0$ by Fact~\ref{fact:tmf21}.
Similarly, we have 
$\pi_{21}I_\bZ \KO((q)) \simeq \Ext(\pi_{-22}\KO((q)), \Z)$.
Now the map $\sigma:\pi_{-22}\TMF \to \pi_{-22}\KO((q)) $ is injective by Fact~\ref{fact:tmf22}.
The morphism  \begin{equation}
\mathrm{Ext}(\pi_{-22}\KO((q)) , \Z) \to \mathrm{Ext}(\pi_{-22}\TMF , \Z)
\end{equation}
is then a surjection, and we get the desired result. 
\end{proof}

\begin{prop}
\label{prop:KO/TMF}
We have \begin{equation}
\pi_{-21}\KO((q))/\TMF=0
\end{equation}
and therefore 
\begin{equation}
\pi_{20} I_\Z  \KO((q))/\TMF
\simeq\Hom(\pi_{-20}\KO((q))/\TMF, \Z).
\label{eq:lem_alpha_F}
\end{equation}
\end{prop}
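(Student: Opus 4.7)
The strategy is to read off $\pi_{-21}\KO((q))/\TMF$ from the long exact sequence of homotopy groups associated to the defining cofiber sequence \eqref{eq_def_F}, and then deduce the $\Hom$-description of $\pi_{20} I_\Z \KO((q))/\TMF$ from the usual universal coefficient short exact sequence for Anderson duals.

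First, the relevant portion of the long exact sequence reads
\begin{equation*}
\pi_{-21}\TMF \xrightarrow{\sigma} \pi_{-21}\KO((q)) \to \pi_{-21}\KO((q))/\TMF \to \pi_{-22}\TMF \xrightarrow{\sigma} \pi_{-22}\KO((q)).
\end{equation*}
The leftmost group vanishes by Fact~\ref{fact:tmf21}. The second group $\pi_{-21}\KO((q))$ is identified with $(\pi_{-21}\KO)((q))$, and Bott periodicity gives $\pi_{-21}\KO \simeq \pi_3\KO = 0$, so this group also vanishes. The rightmost arrow is injective by Fact~\ref{fact:tmf22}. Hence $\pi_{-21}\KO((q))/\TMF$ is sandwiched between $0$ and the kernel of an injection, and must vanish.

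Once the vanishing $\pi_{-21}\KO((q))/\TMF = 0$ is in hand, the second assertion follows from the short exact sequence
\begin{equation*}
0 \to \Ext(\pi_{-21}\KO((q))/\TMF, \Z) \to \pi_{20} I_\Z \KO((q))/\TMF \to \Hom(\pi_{-20}\KO((q))/\TMF, \Z) \to 0,
\end{equation*}
whose $\Ext$-term is now zero.

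No step looks like a real obstacle; the only mild subtlety is to confirm the identification $\pi_n \KO((q)) = (\pi_n \KO)((q))$, which is standard given the construction of $\KO((q))$ as the Tate $K$-theory spectrum. All of the genuinely nontrivial input has already been isolated as Facts~\ref{fact:tmf21} and \ref{fact:tmf22}.
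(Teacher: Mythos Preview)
Your proof is correct and follows essentially the same route as the paper: both use the long exact sequence of the cofiber \eqref{eq_def_F}, invoke $\pi_{-21}\KO((q))=0$ and the injectivity from Fact~\ref{fact:tmf22}, and then read off \eqref{eq:lem_alpha_F} from the Anderson-dual short exact sequence. Your inclusion of $\pi_{-21}\TMF$ at the left end is harmless but redundant, since $\pi_{-21}\KO((q))=0$ already suffices.
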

\begin{proof}
Consider the  exact sequence \begin{equation}
\pi_{-21}\KO((q)) \to \pi_{-21}\KO((q))/\TMF \to \pi_{-22}\TMF \to \pi_{-22} \KO((q)).
\end{equation}
Then the statement follows because the first term is zero and the last arrow is injective by Fact~\ref{fact:tmf22}.
\end{proof}

\begin{prop}
\label{prop:KO/TMF20}
We have \begin{equation}
\pi_{-20}\KO((q))/\TMF \simeq {2\bZ((q))/2\MF_{-10}}.
\end{equation}
\end{prop}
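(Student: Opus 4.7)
The plan is to extract $\pi_{-20}\KO((q))/\TMF$ as a cokernel via the long exact sequence associated to the cofiber sequence \eqref{eq_def_F}. The relevant piece reads
\[
\pi_{-20}\TMF \xrightarrow{\sigma} \pi_{-20}\KO((q)) \to \pi_{-20}\KO((q))/\TMF \to \pi_{-21}\TMF \to \pi_{-21}\KO((q)),
\]
and Fact~\ref{fact:tmf21} (that $\pi_{-21}\TMF = 0$) collapses this to $\pi_{-20}\KO((q))/\TMF \simeq \operatorname{coker}(\sigma)$. This is conceptually the same move that was used in Proposition~\ref{prop:KO/TMF}, and it is clean because no $\Ext$ contribution appears.

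Next I identify the target and the map. Bott periodicity together with the convention \eqref{eq_KO_convention} (embedding $\pi_{4n}\KO \hookrightarrow \pi_{4n}\K \simeq \Z$ via complexification) identifies $\pi_{-20}\KO$ with $2\Z \subset \Z$, since $-20 = 4\cdot(-5)$ with $-5$ odd; hence $\pi_{-20}\KO((q)) \simeq 2\Z((q))$. The composition of $\sigma$ with complexification lands in the subring $\MF_{-10} \subset \Z((q))$, recording the weight $-10$ weakly-holomorphic modular form associated to a $\TMF$ class. Because the inclusion $\pi_{-20}\KO((q)) \hookrightarrow \pi_{-20}\K((q))$ is exactly $2\Z((q)) \hookrightarrow \Z((q))$, an element of $\pi_{-20}\KO((q))$ whose underlying modular form is $f$ is recorded as $2f$ in $2\Z((q))$. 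Therefore
\[
\mathrm{image}(\sigma) = 2\cdot\mathrm{image}\bigl(\pi_{-20}\TMF \to \MF_{-10}\bigr).
\]

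The main obstacle is the integral surjectivity $\pi_{-20}\TMF \twoheadrightarrow \MF_{-10}$: rationally it is immediate from the isomorphism $(\pi_{2\bullet}\TMF)_\Q \simeq \MF_\Q$, but integrally one must rule out a proper sublattice. To close this gap I would appeal to the explicit tabulation of $\pi_\bullet\TMF$ recorded in Appendix~\ref{app:TMF} (or equivalently to the spectral sequence computations of \cite{BrunerRognes}), from which surjectivity in this particular degree can be directly read off. Combining this with the cokernel identification and the convention-induced factor of $2$ yields $\mathrm{image}(\sigma) = 2\MF_{-10}$ and hence the stated isomorphism $\pi_{-20}\KO((q))/\TMF \simeq 2\Z((q))/2\MF_{-10}$.
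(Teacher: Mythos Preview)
Your approach is the paper's: run the long exact sequence of the cofiber \eqref{eq_def_F}, use Fact~\ref{fact:tmf21} to kill $\pi_{-21}\TMF$, and identify $\mathrm{image}(\sigma)$ in degree $-20$. The paper does exactly this, citing Fact~\ref{fact:imageZ} for the last step.

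Where your argument wobbles is the bookkeeping of the factor of $2$. The map $\pi_{-20}\TMF \to \MF_{-10}$ is \emph{by definition} the composite $c\circ\sigma$ landing in $\pi_{-20}\K((q))\simeq\Z((q))$, and under the convention $\pi_{-20}\KO((q))\simeq 2\Z((q))\subset\Z((q))$ the image of $\sigma$ in $2\Z((q))$ is literally the same subset as the image of $c\circ\sigma$ in $\Z((q))$; there is no additional doubling. So your displayed equation $\mathrm{image}(\sigma) = 2\cdot\mathrm{image}\bigl(\pi_{-20}\TMF \to \MF_{-10}\bigr)$ is off, and correspondingly the surjectivity $\pi_{-20}\TMF \twoheadrightarrow \MF_{-10}$ you invoke is false: by Fact~\ref{fact:imageZ} the image is $2\MF_{-10}$, since in weight $-10$ every basis monomial $c_4^i c_6^j \Delta^k$ necessarily has $j=1$ (the congruence $4i+12k\equiv 0\pmod 4$ forces it) and hence $a_{i,j,k}=2$. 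The Remark following Fact~\ref{fact:imageZ} states explicitly that this $2$ \emph{is} the complexification factor, not something to be layered on top of it. Your two slips cancel, so the conclusion is right, but the clean route is to quote Fact~\ref{fact:imageZ} directly, as the paper does.
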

\begin{proof}
We study the exact sequence \begin{equation}
\pi_{-20}\TMF\to \pi_{-20}\KO((q)) \to \pi_{-20}\KO((q))/\TMF \to \pi_{-21}\TMF.
\end{equation}
From Fact~\ref{fact:imageZ}, the image of the first arrow is $2\MF_{-10} \subset \pi_{-20}\KO((q)) \simeq 2\bZ((q))$.
From Fact~\ref{fact:tmf21}, the last term is zero.
The statement then follows.
\end{proof}

\begin{prop}
\label{rem:rat}
The element \begin{equation}
\alpha_{\KO((q))/\TMF} \in \pi_{20} I_\bZ (\KO((q))/\TMF) \simeq \Hom(\pi_{-20}\KO((q))/\TMF,\bZ)
\end{equation}
is given by \begin{equation}
\label{alpha-KO/TMF}
\begin{aligned}
{2\bZ((q))/2\MF_{-10}}  & \to \bZ, \\
[f(q)]  &\mapsto {\frac{1}{2}\Delta(q) f(q) \big|_{q^0}}.
\end{aligned}
\end{equation}
\end{prop}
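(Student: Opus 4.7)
\smallskip
\noindent\textbf{Proof plan.} The plan is to read off $\alpha_{\KO((q))/\TMF}$ from its defining property $I_\bZ C\sigma(\alpha_{\KO((q))/\TMF}) = \alpha_{\KO((q))}$ by unwinding the universal coefficient sequence \eqref{seqseq} for all three spectra $\TMF$, $\KO((q))$, $\KO((q))/\TMF$ in degree $20$.

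First I would invoke Proposition~\ref{prop:KO/TMF} to rewrite
\[
\pi_{20} I_\bZ \KO((q))/\TMF \simeq \Hom(\pi_{-20}\KO((q))/\TMF,\bZ),
\]
and Proposition~\ref{prop:KO/TMF20} to identify the target group of the Hom with $2\bZ\lpar q \rpar/2\MF_{-10}$. The analogous identification for $\KO((q))$ reads $\pi_{20} I_\bZ \KO((q)) \simeq \Hom(2\bZ\lpar q\rpar,\bZ)$, and under these identifications the map $I_\bZ C\sigma$ is simply precomposition with the quotient map $C\sigma_*:2\bZ\lpar q\rpar \twoheadrightarrow 2\bZ\lpar q\rpar /2\MF_{-10}$.

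Next, since $\pi_{-21}\TMF=0$ (Fact~\ref{fact:tmf21}), the long exact sequence associated with \eqref{eq_def_F} shows that $C\sigma_*$ in degree $-20$ is indeed surjective, so the precomposition map $\Hom(2\bZ\lpar q\rpar/2\MF_{-10},\bZ) \to \Hom(2\bZ\lpar q\rpar,\bZ)$ is injective. Thus $\alpha_{\KO((q))/\TMF}$ is the unique homomorphism whose pullback along $C\sigma_*$ equals $\alpha_{\KO((q))} = \tfrac{1}{2}\Delta(q)\cdot -|_{q^0}$. Since $C\sigma_*$ is a quotient map, this forces the claimed formula $[f(q)] \mapsto \tfrac{1}{2}\Delta(q) f(q)|_{q^0}$ pointwise.

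The only thing that could obstruct this is well-definedness on the quotient, i.e.\ vanishing of $\tfrac{1}{2}\Delta(q)f(q)|_{q^0}$ for $f(q)\in 2\MF_{-10}$, but this is precisely the content of Theorem~\ref{oldmain} (and, at the level of formulas, of Fact~\ref{fact:mf} applied to $\Delta(q)f(q)\in (\MF_2)_\bQ$). So there is no genuine obstacle; the proof is a diagram chase in the Hom identifications, with the only nontrivial input being the already-established vanishing $I_\bZ\sigma(\alpha_{\KO((q))})=0$.
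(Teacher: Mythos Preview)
Your proposal is correct and follows essentially the same route as the paper's proof, just with the diagram chase spelled out in full detail. The paper's own argument is a terse two-sentence appeal to the definition of $\alpha_{\KO((q))}$ and Proposition~\ref{prop:KO/TMF20}, together with Fact~\ref{fact:mf} for well-definedness on the quotient; your unpacking of $I_\bZ C\sigma$ as precomposition with the surjection $2\bZ\lpar q\rpar \twoheadrightarrow 2\bZ\lpar q\rpar/2\MF_{-10}$ is exactly what that appeal amounts to.
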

\begin{proof}
This follows from the form of $\alpha_{\KO((q))}$ given in Definition~\ref{eq_def_alpha_spin}
and Proposition~\ref{prop:KO/TMF20}.
That this homomorphism is well-defined follows from the fact that elements in $\MF_{-10}$ multiplied by $\Delta$ is in $\MF_2$, whose constant term in the $q$-expansion is always zero by Fact~\ref{fact:mf}.
\end{proof}

\subsection{Anderson duality of topological modular forms and $\alpha_{\KO((q))/\TMF}$}
\label{sec:xxx}


We just defined the element $\alpha_{\KO((q))/\TMF}$.
This  induces $\TMF$-module homomorphisms
\begin{align}\label{eq_alpha_F_TMF}
    \alpha_{\KO((q))/\TMF} &\colon \KO((q))/\TMF \to \Sigma^{-20} I_\Z \TMF, \\
    \alpha^{\vee}_{\KO((q))/\TMF} &\colon \TMF \to \Sigma^{-20} I_\Z \KO((q))/\TMF. 
\end{align}
A fundamental result in this paper is that the morphism  $\alpha_{\KO((q))/\TMF}$  \eqref{eq_alpha_F_TMF}
 is actually an equivalence:
\begin{thm}\label{main}
 The secondary morphism   is a $\TMF$-module equivalence:
    \begin{align}\label{eq_main_thm}
        \alpha_{\KO((q))/\TMF} \colon \KO((q))/\TMF \stackrel{\sim}{\to} \Sigma^{-20}I_\Z \TMF.
    \end{align}
  \end{thm}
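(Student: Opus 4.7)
The plan is to derive the equivalence from the Anderson self-duality of $\Tmf$ due to Stojanoska \eqref{eq_Tmf_duality_intro}, namely $\Tmf \simeq \Sigma^{-21} I_\bZ \Tmf$, together with a Mayer--Vietoris square relating $\Tmf$, $\TMF$, and the Tate curve restriction maps. Since $\alpha_{\KO((q))/\TMF}$ is already constructed as the unique lift of $\alpha_{\KO((q))}$, it would suffice to produce an abstract $\TMF$-module equivalence $\KO((q))/\TMF \xrightarrow{\sim} \Sigma^{-20} I_\bZ \TMF$ and then to identify it with $\alpha_{\KO((q))/\TMF}$ up to a unit.

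First I would invoke the homotopy pullback square
\[
\vcenter{\xymatrix{
\Tmf \ar[r] \ar[d] & \KO[[q]] \ar[d] \\
\TMF \ar[r]^-{\sigma} & \KO((q))
}}
\]
arising from the presentation of $\overline{\mathcal{M}_{\mathrm{ell}}}$ as the gluing of $\mathcal{M}_{\mathrm{ell}}$ with the formal neighbourhood of the cusp parametrised by the Tate curve. Being (co)Cartesian, this square yields an equivalence of $\TMF$-modules $\KO((q))/\TMF \simeq \KO[[q]]/\Tmf$. Next I would apply $I_\bZ$ to the cofiber sequence $\Tmf \to \KO[[q]] \to \KO[[q]]/\Tmf$, substitute $I_\bZ \Tmf \simeq \Sigma^{21}\Tmf$, and use that $I_\bZ\KO[[q]] \simeq \Sigma^{-4}\KO((q))$ (which follows from $\KO \simeq \Sigma^4 I_\bZ \KO$ together with the $\bZ$-module duality that swaps formal power series and formal Laurent series of $\bZ$). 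Collecting suspensions should yield an equivalence $\Sigma^{-20}I_\bZ \TMF \simeq \KO((q))/\TMF$ of $\TMF$-modules.

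The main obstacle I expect is not the production of this abstract equivalence but rather its identification with the explicit map $\alpha_{\KO((q))/\TMF}$. By Lemma~\ref{lemma:basic} and the uniqueness in Proposition~\ref{prop:KO/TMF}, it suffices to check that precomposing the structural equivalence with $C\sigma\colon \KO((q)) \to \KO((q))/\TMF$ recovers, as an element of $\pi_{20} I_\bZ \KO((q))$, the prescribed class $\alpha_{\KO((q))} = \tfrac12\Delta(q)\cdot{-}\big|_{q^0}$ up to a unit in $\pi_0\TMF\simeq\bZ$. This reduces to identifying the canonical class implementing Stojanoska's $\Tmf$ self-duality with the discriminant $\Delta\in\pi_{24}\Tmf$ and carefully tracking the Bott-periodicity factor of $2\in\pi_4\KO$ through the pullback square; the degree shift $-20 = -21 + 1$ should emerge from the suspension picked up by the connecting map in the dualised cofiber sequence.
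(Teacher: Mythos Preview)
Your overall strategy matches the paper's second proof (Appendix~\ref{app:sanath-proof}): deduce the result from Stojanoska's $\Tmf$-duality via the Tate pullback square. However, one step fails. The claimed equivalence $I_\bZ\KO[[q]] \simeq \Sigma^{-4}\KO((q))$ is false: on $\pi_0$ it would require $\Hom(\bZ[[q]],\bZ)\simeq\bZ((q))$, but by Specker's theorem $\Hom\bigl(\prod_\omega\bZ,\bZ\bigr)\simeq\bigoplus_\omega\bZ$, which is much smaller. There is no $\bZ$-linear duality exchanging power series for Laurent series, and the Anderson dual of the infinite product $\KO[[q]]$ is not tractable in the way you need.

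The fix is to dualise the \emph{other} fiber sequence produced by the pullback. Besides $\KO[[q]]/\Tmf\simeq\KO((q))/\TMF$, the square also gives $\TMF/\Tmf\simeq\KO((q))/\KO[[q]]$, and the latter is a countable \emph{coproduct} of copies of $\KO$, so $I_\bZ$ turns it into a product: $I_\bZ\bigl(\KO((q))/\KO[[q]]\bigr)\simeq\Sigma^{20}\KO[[q]]$, realised by the residue pairing $\phi\mapsto\tfrac12\Delta\phi|_{q^0}$ (Lemma~\ref{lem_duality_KO[[q]]}). One then builds a map of cofiber sequences with top row $\Tmf\to\KO[[q]]\to\KO((q))/\TMF$, bottom row $\Sigma^{-21}I_\bZ\Tmf\to\Sigma^{-20}I_\bZ\bigl(\KO((q))/\KO[[q]]\bigr)\to\Sigma^{-20}I_\bZ\TMF$, and verticals $\alpha_\Tmf$, $\alpha_{\KO((q))/\KO[[q]]}$, $\alpha_{\KO((q))/\TMF}$; commutativity is checked directly from the explicit formulas (Lemma~\ref{penultimate_step}), and two-out-of-three finishes. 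This simultaneously pins down the specific map $\alpha_{\KO((q))/\TMF}$, rather than requiring your separate up-to-unit identification.
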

We give two separate proofs of this theorem,
one given by the authors of this paper originally in the first version of this paper,
and another more conceptual version provided by Sanath Devalapurkar.
As they are both somewhat technical, they are given in Appendix~\ref{app:original-proof}
and in Appendix~\ref{app:sanath-proof}, respectively.

In this main part, we give a very brief outline of the proofs. 
Originally, the Anderson duality of topological modular forms was formulated and proved by Stojanoska \cite{Sto1,Sto2} as the Anderson self-duality of $\Tmf$ as follows:
\begin{fact}[\cite{Sto1,Sto2}]
    We have a $\Tmf$-module equivalence
    \begin{align}
        \alpha_{\Tmf} \colon \Tmf \simeq \Sigma^{-21} I_\bZ \Tmf. 
    \end{align}
\end{fact}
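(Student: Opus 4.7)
The plan is to establish Stojanoska's Anderson self-duality of $\Tmf$ by lifting Serre--Grothendieck duality on the compactified moduli stack of elliptic curves to a spectrum-level equivalence of $\Tmf$-modules.

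First I would use the sheaf-theoretic construction $\Tmf \simeq \Gamma(\overline{\mathcal{M}}_{\mathrm{ell}}, \mathcal{O}^{\mathrm{top}})$, where $\mathcal{O}^{\mathrm{top}}$ is the Goerss--Hopkins--Miller sheaf of $E_\infty$-ring spectra on the compactified moduli stack of generalized elliptic curves over $\mathrm{Spec}\,\bZ$. The associated descent spectral sequence
\begin{equation*}
E_2^{s,t} = H^s(\overline{\mathcal{M}}_{\mathrm{ell}}; \omega^{\otimes t/2}) \Rightarrow \pi_{t-s}\Tmf
\end{equation*}
identifies the Hodge line bundle $\omega$ with a shift of $+2$ in topological degree.

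Second I would invoke Serre--Grothendieck duality on $\overline{\mathcal{M}}_{\mathrm{ell}}$, which is a proper, one-dimensional, tame Deligne--Mumford stack. The key classical computation identifies the dualizing complex as $\omega^{\otimes -10}[1]$; this uses Hartshorne's residues-and-duality formalism adapted to tame DM stacks, together with a careful boundary analysis at the cuspidal divisor and at the stacky points $j = 0, 1728$. Coherent Serre duality then yields perfect pairings
\begin{equation*}
H^i(\overline{\mathcal{M}}_{\mathrm{ell}}; \omega^{\otimes k}) \times H^{1-i}(\overline{\mathcal{M}}_{\mathrm{ell}}; \omega^{\otimes(-10-k)}) \to \bZ
\end{equation*}
at the level of the $E_2$-page of the descent filtration.

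Third I would lift this coherent duality to an equivalence of spectra. The guiding formula is
\begin{equation*}
I_\bZ \Tmf \simeq \Gamma\bigl(\overline{\mathcal{M}}_{\mathrm{ell}}; \mathcal{O}^{\mathrm{top}} \otimes \omega^{\otimes -10}\bigr)[1],
\end{equation*}
where the twist by $\omega^{\otimes -10}$ is built from the topological realization of the Hodge line bundle internally to $\mathcal{O}^{\mathrm{top}}$-modules on the stack. Combining the degree $-20$ from $\omega^{\otimes -10}$ (each $\omega$ contributes $+2$, inverted ten times) with the Serre cohomological shift $[1]$ (a $-1$ degree shift on spectra via the descent filtration) gives a total shift of $-21$, yielding $\Tmf \simeq \Sigma^{-21} I_\bZ \Tmf$ as desired. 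The $\Tmf$-linearity is automatic since the entire construction stays inside $\mathcal{O}^{\mathrm{top}}$-modules.

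The hard part is converting coherent Serre duality into an honest spectrum-level Anderson duality, not merely a matching of spectral sequence pages. Stojanoska's original approach writes $I_\bZ \Tmf$ as a homotopy limit of Anderson duals $I_\bZ \mathcal{O}^{\mathrm{top}}(U)$ over an affine atlas of $\overline{\mathcal{M}}_{\mathrm{ell}}$ and uses Gorenstein properties of the local elliptic spectra $\mathcal{O}^{\mathrm{top}}(U)$ to glue them into the predicted sheaf: each local spectrum is Gorenstein of a shift matching the local dualizing module, and these shifts assemble coherently. A more conceptual alternative would invoke a six-functor formalism on spectral Deligne--Mumford stacks to deduce Anderson self-duality directly from Serre duality on the underlying classical stack. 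Either way, the most delicate input is the dualizing complex computation at the cusp, where the nodal degeneration of elliptic curves interacts with the topological periodicity of the local elliptic spectra to pin the shift down to exactly $-21$.
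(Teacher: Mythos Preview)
The paper does not prove this statement; it records it as a \emph{Fact} imported from Stojanoska's work \cite{Sto1,Sto2} and uses it as a black box (both in Section~\ref{sec:xxx} and in Appendix~\ref{app:sanath-proof}). There is therefore no in-paper proof to compare your proposal against.

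That said, your outline is broadly in the spirit of Stojanoska's actual argument: lift coherent Serre duality on $\overline{\mathcal{M}}_{\mathrm{ell}}$ to a spectrum-level statement via the descent spectral sequence, with the shift of $-21$ arising from the dualizing sheaf $\omega^{-10}[1]$. One caveat: your sketch understates where the real work lies. Stojanoska's papers do not simply ``glue Gorenstein shifts'' over an affine atlas; the argument in \cite{Sto1} proceeds via Galois descent along the level-$2$ (and later level-$3$) covers $\Tmf(2)\to\Tmf$, proving Anderson self-duality first for $\Tmf(2)$ (where the moduli problem is a weighted projective line and the computation is direct) and then descending via homotopy fixed points. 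The extension to the prime $2$ in \cite{Sto2} requires further care. Your ``six-functor formalism'' alternative is closer to later treatments but was not the original route. As a high-level summary your proposal is acceptable, but as a proof it would need the descent machinery made explicit.
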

This was later reformulated in terms of $\tmf$ by \cite{BrunerRognes}. 
Following their notations, let $B \in \pi_{8}\tmf \subset \pi_8 \TMF$ be the Bott element,
whose modular form image is $c_4$.\footnote{%
There are two elements $B$ and $B+\epsilonelem$ whose image is $c_4$, where $\epsilonelem$ is of order 2.
They are distinguished by their Adams filtrations. 
}
Let $X \in \pi_{576}\tmf$ be the periodicity element, whose modular form images is $\Delta^{24}$.
Now,  given a ring spectrum $R$ and an element $x\in \pi_\bullet R$, 
we define the spectrum $R/(x^\infty)$ by the cofiber sequence \begin{equation}
R\to R[x^{-1} ] \to R/(x^\infty),
\end{equation}  and $R/(x^\infty,y^\infty):=(R/(x^\infty))/(y^\infty)$.
The Anderson duality in terms of $\tmf$ is then:
\begin{fact}[\cite{BrunerRognes}]
\begin{align}
\label{eq:aaaaaa}
\alpha_{\tmf'}\colon \tmf' \simeq \Sigma^{-20}I_{\Z} \tmf,\qquad  \text{where}\quad \tmf' := \tmf / (B^\infty, X^\infty).
\end{align}
\end{fact}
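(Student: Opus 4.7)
The plan is to deduce the connective self-duality $\tmf'\simeq \Sigma^{-20}I_\Z\tmf$ from Stojanoska's $\Tmf$-level equivalence $\Tmf\simeq \Sigma^{-21}I_\Z\Tmf$ by transferring through cofiber sequences that compare $\tmf$ both to $\Tmf$ (via the connective-cover map) and to its periodic localizations at $B$ and $X$. The quotient by $B^\infty$ and $X^\infty$ is precisely the piece of $\tmf$ that inherits the duality, and the shift from $-21$ to $-20$ will emerge as the net effect of the suspensions introduced by the cofiber constructions.

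First I would set up the structural comparisons. Recall that $\tmf$ is the connective cover of $\Tmf$, so the fiber sequence $\tmf\to \Tmf\to \tau_{<0}\Tmf$ combined with Stojanoska's duality identifies the negative-degree content of $\Tmf$ with a shift of $I_\Z\tmf$, via the Anderson-duality formula $\pi_{-n}\Tmf\simeq \Hom(\pi_{n-21}\tmf,\Z)\oplus \Ext(\pi_{n-22}\tmf,\Z)$ for $n$ sufficiently large. On the other side, inverting the $576$-periodicity class $X=\Delta^{24}$ yields $\tmf[X^{-1}]\simeq \TMF$, and $\tmf'$ arises as the iterated cofiber $\tmf/B^\infty := \mathrm{cofib}(\tmf\to \tmf[B^{-1}])$ followed by $\tmf' := \mathrm{cofib}(\tmf/B^\infty\to (\tmf/B^\infty)[X^{-1}])$, presenting $\tmf'$ as a double $B,X$-torsion quotient.

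Next I would apply $I_\Z$ to these sequences. The key technical input is a completion--localization duality under Anderson dual: the cofiber sequence $R\to R[x^{-1}]\to R/x^\infty$ becomes a fiber sequence $I_\Z(R/x^\infty)\to I_\Z(R[x^{-1}])\to I_\Z R$, with the middle term computed by a Milnor-type limit tower in $I_\Z R$. Iterating for $x=B$ and then $x=X$ on $R=\tmf$, and invoking Stojanoska's identification of $I_\Z\tmf$ in terms of $\Tmf$ from the previous step, produces a natural candidate morphism $\tmf'\to \Sigma^{-20}I_\Z\tmf$. The shift $-20=-21+1$ decomposes as $-21$ from Stojanoska plus $+1$ from the extra suspension introduced by composing two cofiber sequences.

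Finally I would verify that this candidate is an equivalence by reading off the induced map on homotopy groups, using the explicit Adams-spectral-sequence description of $\pi_\bullet\tmf$ and the resulting description of $\pi_\bullet\tmf'$ from \cite{BrunerRognes}, matching the $B$- and $X$-tower structure on the left with the Anderson-dual structure on the right. The main obstacle is controlling the interaction of Anderson duality with the two successive localizations: $I_\Z$ does not commute with smashing over $\tmf$ in general, and one must verify that the completion-versus-localization duality assembles coherently through both inversions to produce exactly the shift from $\Sigma^{-21}$ to $\Sigma^{-20}$. This careful bookkeeping, which is essentially the Bruner--Rognes reformulation, is where the degree shift gets pinned down.
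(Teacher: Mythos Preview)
The paper does not prove this statement; it is recorded as a Fact cited from \cite{BrunerRognes} (their Propositions~10.12 and~13.21, for $p=2$ and $p=3$ respectively), both in Section~2.2 and again in Appendix~C. There is therefore no paper proof to compare your proposal against.

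On your proposal itself: the strategy of deducing the $\tmf'$ duality from Stojanoska's $\Tmf$ duality is reasonable in spirit, and something along these lines can be made to work. But your write-up does not carry it out. You introduce both the connective-cover sequence $\tmf\to\Tmf\to\tau_{<0}\Tmf$ and the $B,X$-localization sequences defining $\tmf'$, yet the crucial link between these two pictures---how $\tmf/(B^\infty,X^\infty)$ is actually identified with a truncation or shift of $\Tmf$---is never established, and that identification is precisely where the content lies. Your degree-shift accounting, ``$+1$ from the extra suspension introduced by composing two cofiber sequences,'' is also not justified as stated: two cofiber constructions do not naturally yield a single suspension, and you yourself flag this bookkeeping as the ``main obstacle'' without resolving it.

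For the record, the actual Bruner--Rognes argument does not pass through Stojanoska. They work one prime at a time, establish that $\tmf$ satisfies a Gorenstein/Mahowald--Rezk duality, and use local-cohomology techniques together with explicit Adams spectral sequence computations; the shift $-20$ emerges from the Gorenstein shift of $\tmf$ rather than by adjusting Stojanoska's $-21$.
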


The approach in Appendix~\ref{app:original-proof} is as follows.
We consider $\TMF/B^\infty$ defined by  the cofiber sequence \begin{equation}
\xymatrix{
\TMF  \ar[r]^-{\phi} &
\TMF[B^{-1}]  \ar[r]^-{C\phi} &
\TMF/B^\infty \\
}.
\end{equation} 
It is then fairly straightforward to construct from the morphism $\alpha_{\tmf'}$ in \eqref{eq:aaaaaa} a morphism \begin{equation}
     \alpha_{\TMF/B^\infty} :\TMF/B^\infty \to \Sigma^{-20} I_\bZ \TMF
\end{equation} which turns out to be almost an equivalence in the sense that \begin{equation}
\pi_\bullet \TMF/B^\infty \simeq N'_\bullet\otimes \bZ[X^{-1},X],\qquad
\pi_\bullet \Sigma^{-20}I_\bZ\TMF \simeq N'_\bullet\otimes \bZ[X^{-1},X]].
\end{equation} for a certain $\bZ[B]$-module $N'_\bullet$.

We now use the following commutative diagram  \begin{align}
\vcenter{\xymatrix{
\TMF \ar@{=}[d] \ar[r]^-{\phi} &
\TMF[B^{-1}] \ar[d]^-{p} \ar[r]^-{C\phi} &
\TMF/B^\infty \ar[d]^-{\varrho} \\
\TMF \ar[r]^-{\sigma}& 
\KO((q))  \ar[r]^-{C\sigma} & 
\KO((q))/\TMF
}},
\label{TMFd}
\end{align} 
where $p$ is induced by the invertibility of $\sigma(B)=c_4$ in $\pi_\bullet \KO((q))$,
and $\varrho$ is filled to make the diagram commute.
This diagram allows us to show that \begin{equation}
\pi_\bullet\KO((q))/\TMF \simeq N'_\bullet\otimes \bZ[X^{-1},X]].
\end{equation}
Combined with the fact that we have a morphism $\alpha_{\KO((q))/\TMF}\colon \KO((q))/\TMF\to \Sigma^{-20}I_\bZ \TMF$, 
we can conclude that $\alpha_{\KO((q))/\TMF}$ is indeed an equivalence.

Another proof, provided in Appendix~\ref{app:sanath-proof}, is more conceptual and is based on the $\Tmf$ version of the Anderson duality statement.
We will start by establishing the homotopy pullback square
    \begin{align}
        \vcenter{\xymatrix{
\Tmf \ar[r] \ar[d]^-{\sigma'} & \TMF \ar[d]^{\sigma} \\
\KO[[q]] \ar[r] & \KO((q))
        }}.
    \end{align}
This can then be used to show that the following diagram commutes:
    \begin{align}
        \vcenter{\xymatrix{
         \Tmf \ar[r] \ar[dd]_-{\simeq }^-{\alpha_{\Tmf}} 
         & \KO[[q]] \ar[d]_-{\simeq}^-{\alpha_{\KO((q))/\KO[[q]]}} \ar[r] 
         & \KO[[q]]/\Tmf \ar[d]^-{\simeq} \\
         & \Sigma^{-20}I_\Z \left(\KO((q))/\KO[[q]]\right)  \ar[d]_-{\simeq} 
         & \KO((q))/\TMF \ar[d]^-{\alpha_{\KO((q))/\TMF}}  \\
         \Sigma^{-21}I_\Z \Tmf \ar[r] & \Sigma^{-20}I_\Z \left(\TMF/\tmf\right) \ar[r] &\Sigma^{-20}I_\Z \TMF 
        }},
    \end{align}
where $\alpha_\Tmf$ on the left is the $\Tmf$ Anderson self-duality
and $\alpha_{\KO((q))/\KO[[q]]}$ in the middle is constructed from the $\KO$ Anderson self-duality.
This will allow us to conclude that $\alpha_{\KO((q))/\TMF}$ on the right is also an equivalence.

\subsection{The secondary morphism $\alpha_\text{spin/string}$}

We now consider more geometric versions of the morphism $\alpha_{\KO((q))/\TMF}$ introduced in Sec.~\ref{subsec:secondary}.
This will allow us to explore the consequence of the Anderson duality of $\TMF$ in a differential geometric setting later in this paper.

We start by recalling that there are   sigma orientations $\Wit_\text{string}$ and
$\Wit_\text{spin}$,  constructed in \cite{AHR10},
fitting in the following commutative diagram
\begin{equation}
\vcenter{\xymatrix{
\MString \ar[d]^-{\Wit_\text{string}} \ar[r]^-{\iota} &
\MSpin \ar[d]^-{\Wit_\text{spin}} \ar[r]^-{C\iota} &
\MSpin/\MString \ar[d]^-{\Wit_\text{spin/string}} \\
\TMF \ar[r]^-{\sigma}& 
\KO((q))  \ar[r]^-{C\sigma} & 
\KO((q))/\TMF
}}\label{sigma-cd}
\end{equation}
where the morphism $\Wit_\text{spin/string}$ is induced from the commutativity of the left square. 
Here we remind the reader that $\MSpin/\MString$ is the Thom spectrum for the relative spin/string bordism,
so that a compact spin manifold $N$ of dimension $d$ whose boundary $M=\partial N$ is equipped with a compatible string structure determines a bordism class $[N,M]\in \pi_d\MSpin/\MString$.

We now define our geometric morphisms:
\begin{defn}[{$\alpha_{\spin}$ and $\alpha_{\spin/\stri}$}]\label{def_alpha_rel}
    We define two morphisms 
    $\alpha_{\spin}$ and 
    $\alpha_{\spin/\stri }$ via  
    the following compositions:
    \begin{align}
        \alpha_{\spin } &\colon \TMF \xrightarrow{ \alpha^{\vee}_{\KO((q))}} \Sigma^{-20}I_\Z \KO((q)) \xrightarrow{I_\Z \Wit_\spin} \Sigma^{-20}I_\Z \MSpin,\\ 
        \alpha_{\spin/\stri } &\colon \TMF \xrightarrow{ \alpha^{\vee}_{\KO((q))/\TMF}} \Sigma^{-20}I_\Z \KO((q))/\TMF \\
        & \qquad\qquad\xrightarrow{I_\Z \Wit_{\spin/\stri}} \Sigma^{-20}I_\Z \MSpin/\MString. \nonumber
    \end{align}
\end{defn}

\begin{physremark}
    Actually, it is possible to define $\alpha_{\spin/\stri}$ directly from the morphism $\alpha_{\spin}: \TMF \to \Sigma^{-20}I_\Z \MSpin$, without using $\KO((q))/\TMF$, but instead using the uniqueness result given in the following proposition. 
   This  proposition is not logically necessary for the rest of the paper, but we include it because this alternative way of defining $\alpha_{\spin/\stri}$ is  physically more natural than Definition \ref{def_alpha_rel}. 
\end{physremark}

    \begin{prop}
\label{lem:a_spin/string}
The morphism $\alpha_{\spin/\stri}$ in Definition \ref{def_alpha_rel} is uniquely, up to homotopy, characterized as an $\MString$-module map which fits into the homotopy commutative diagram of $\MString$-modules,
    \begin{align}\label{diag_alpha_rel_uniqueness}
        \vcenter{\hbox{\xymatrix{
        \TMF \ar[d]_-{\alpha_\text{spin/string}} \ar[r]^-{ \sigma}& \KO((q)) \ar[d]_-{\alpha_\spin}\\
    \Sigma^{-20}I_\Z \MSpin/\MString \ar[r]_-{I_\bZ C\iota}& \Sigma^{-20} I_\Z \MSpin
        }}}
    \end{align}
\end{prop}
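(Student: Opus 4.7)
The plan is to establish two claims: that the morphism $\alpha_{\spin/\stri}$ of Definition~\ref{def_alpha_rel} makes the square in \eqref{diag_alpha_rel_uniqueness} homotopy commute, and that any $\MString$-module map satisfying this commutativity is homotopic to it.

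For commutativity, I first unfold the definitions: $\alpha_{\spin/\stri} = I_\Z \Wit_{\spin/\stri} \circ \alpha^\vee_{\KO((q))/\TMF}$ and $\alpha_{\spin} = I_\Z \Wit_\spin \circ \alpha^\vee_{\KO((q))}$. Anderson-dualizing the right-hand square of \eqref{sigma-cd}, namely the identity $\Wit_{\spin/\stri} \circ C\iota = C\sigma \circ \Wit_\spin$, yields $I_\Z C\iota \circ I_\Z \Wit_{\spin/\stri} = I_\Z \Wit_\spin \circ I_\Z C\sigma$. Substituting this into the two columns, the commutativity of the proposed square reduces to the single identity
\[ I_\Z C\sigma \circ \alpha^\vee_{\KO((q))/\TMF} = \alpha^\vee_{\KO((q))} \circ \sigma \]
of $\TMF$-module maps $\TMF \to \Sigma^{-20} I_\Z \KO((q))$. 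By Lemma~\ref{lemma:basic}, both sides are classified by their underlying class in $\pi_{20} I_\Z \KO((q))$, and in both cases this class is $\alpha_{\KO((q))}$: the left-hand side because $\alpha_{\KO((q))/\TMF}$ was constructed precisely as the lift satisfying $I_\Z C\sigma(\alpha_{\KO((q))/\TMF}) = \alpha_{\KO((q))}$, and the right-hand side tautologically.

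For uniqueness, I would analyze the space of $\MString$-module lifts of $\alpha_\spin \circ \sigma$ along $I_\Z C\iota$. The Anderson dual of the cofiber sequence $\MString \xrightarrow{\iota} \MSpin \xrightarrow{C\iota} \MSpin/\MString$ is the fiber sequence $\Sigma^{-1}I_\Z \MString \to I_\Z(\MSpin/\MString) \to I_\Z \MSpin$, so the set of such lifts is a torsor under $[\TMF, \Sigma^{-21}I_\Z \MString]_{\MString}$. By Lemma~\ref{lemma:basic}, this group equals $\pi_{21} I_\Z \TMF$, which by the Anderson-duality short exact sequence and Fact~\ref{fact:tmf21} reduces to $\mathrm{Ext}(\pi_{-22}\TMF, \Z)$.

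The main obstacle is therefore to verify that the image of this Ext group in the torsor vanishes. I would approach this either by a direct computation of $\pi_{-22}\TMF$ using the detailed tabulation in Appendix~\ref{app:TMF}, or, more structurally, by invoking Theorem~\ref{main} to reidentify $\pi_{21}I_\Z \TMF \simeq \pi_1 \KO((q))/\TMF$ and then tracing through the $\MString$-module compatibility of the connecting map $\Sigma^{-21}I_\Z \MString \to \Sigma^{-20}I_\Z(\MSpin/\MString)$. The fact that $\alpha_{\spin/\stri}$ was constructed via the canonical factorization through $\KO((q))/\TMF$ should then pin down the particular lift among any a priori ambiguity.
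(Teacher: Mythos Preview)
Your commutativity argument is fine and matches the paper's. The gap is in the uniqueness argument.

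You correctly identify that the ambiguity in lifts is governed by $[\TMF,\Sigma^{-21}I_\Z\MString]_{\MString}\simeq\pi_{21}I_\Z\TMF\simeq\Ext(\pi_{-22}\TMF,\Z)$. But this group is \emph{not} zero: since $-22\equiv 2\pmod 8$, the image of $\sigma$ in $\pi_{-22}\KO((q))\simeq\Z/2((q))$ lies in $\eta^2\MF_{-12}$ and is an infinite $\Z/2$-vector space, while $A_{-22}=0$ so $\pi_{-22}\TMF$ itself is this infinite $\Z/2$-vector space. Neither a direct computation of $\pi_{-22}\TMF$ nor the identification $\pi_{21}I_\Z\TMF\simeq\pi_1\KO((q))/\TMF$ via Theorem~\ref{main} will make this vanish, and ``the construction picks out a particular lift'' is not an argument for uniqueness.

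What you are missing is that the set of lifts is not a torsor under $[\TMF,\Sigma^{-21}I_\Z\MString]_{\MString}$ itself, but under its quotient by the image of the preceding map
\[
[\TMF,\Sigma^{-21}I_\Z\MSpin]_{\MString}\longrightarrow[\TMF,\Sigma^{-21}I_\Z\MString]_{\MString}
\]
in the long exact sequence. The paper shows this map is surjective by factoring it through
\[
[\KO((q)),\Sigma^{-21}I_\Z\MSpin]_{\MSpin}\simeq\pi_{21}I_\Z\KO((q))\simeq\Ext(\pi_{-22}\KO((q)),\Z),
\]
and observing that $\Ext(\pi_{-22}\KO((q)),\Z)\to\Ext(\pi_{-22}\TMF,\Z)$ is surjective precisely because $\sigma\colon\pi_{-22}\TMF\to\pi_{-22}\KO((q))$ is injective (Fact~\ref{fact:tmf22}). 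This is the key input you need.
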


\begin{proof}
The commutativity of the diagram \eqref{diag_alpha_rel_uniqueness} follows directly from the definition of $\alpha_{\spin/\stri}$. 
To show the uniqueness, we use the exact sequence, 
    \begin{equation}
    \label{eq_proof_secondary_lift}
    \begin{aligned}
   & [\TMF, \Sigma^{-21}I_\Z \MSpin]_{\MString} \to [\TMF, \Sigma^{-21}I_\Z \MString]_{\MString} \\
        & \to [\TMF, \Sigma^{-20}I_\Z \MSpin/\MString]_{\MString} 
    \end{aligned}
\end{equation}
where $[M,N]_R$ for two $R$-modules $M,N$ stands for the Abelian group
of $R$-morphisms from $M$ to $N$.
It is enough to show that the first map of \eqref{eq_proof_secondary_lift} is surjective. 
We prove a stronger claim that the composition \begin{equation}
\begin{gathered}
[\KO((q)), \Sigma^{-21}I_\Z \MSpin]_{\MSpin} 
 \to [\KO((q)), \Sigma^{-21}I_\Z \MSpin]_{\MString} \\
 \to [\TMF, \Sigma^{-21}I_\Z \MSpin]_{\MString} 
 \to [\TMF, \Sigma^{-21}I_\Z \MString]_{\MString} 
\end{gathered}
\end{equation} is already a surjection.
For this, we note
\begin{equation}
    [\TMF, \Sigma^{-21}I_\Z \MString]_{\MString} 
    \simeq\pi_{21}I_\Z \TMF \simeq \mathrm{Ext}(\pi_{-22}\TMF , \Z) 
\end{equation}
where the last isomorphism follows by $\pi_{-21}\TMF = 0$, which is Fact~\ref{fact:tmf21}.
Similarly, we have \begin{equation}
    [\KO((q)), \Sigma^{-21}I_\Z \MSpin]_{\MSpin} 
    \simeq\pi_{21}I_\Z \KO((q)) \simeq \mathrm{Ext}(\pi_{-22}\KO((q)) , \Z) .
\end{equation}
Then, the surjectivity we are after follows from the injectivity of $\pi_{-22}\TMF\to \pi_{-22}\KO((q))$,
which is Fact~\ref{fact:tmf22}.
\end{proof}

Let us pause here to note that the realification\footnote{%
Rationalization is enough, but we choose to use realification, which is more in line with differential refinements we use later.} 
of the sigma orientations have explicit expressions in terms of characteristic classes.
First, we can express $\Wit_\text{spin}$ applied to spin manifolds $N$ of dimension $d=4k$ as \begin{equation}
\Wit_\text{spin} ([N])
 = \int_N  \hat A(TN) \ch(\WitVec(TN)))  \label{WittenGen}
\end{equation}
where 
$\hat A$ is the A-roof genus, $\ch$ is the Chern character, and we define \begin{equation}
\WitVec(V) = \frac{\eta(q)^{d}}{q^{d/24}}  \bigotimes_{\ell\ge 1}
\bigoplus_{k\ge 0} q^{\ell k} \mathrm{Sym}^k V.
\label{WitVec}
\end{equation}
for a real bundle $V$ of dimension $d$, where $\eta(q)$ is the Dedekind eta function.
This formula also determines $ \sigma\circ \Wit_\text{string}$ in view of the commutative diagram \eqref{sigma-cd}.

The expression \eqref{WittenGen} goes back to \cite{Witten:1986bf},
where it was found that it defines an integral modular form when $\frac12p_1(N)\in \H^4(N;\bZ)$ vanishes rationally;
this observation was one source of the developments leading to the construction of topological modular forms.
Correspondingly, we have the factorization \begin{equation}
\widehat{A}(T)\ch (\mathop{\WitVec}(T) ) = \frac12p_1(T) \Witt(T) \label{factorized-characteristic-class}
\end{equation} where $T$ is the universal tangent bundle over $\MSpin$
and $\Witt$ is a certain characteristic class.

From this information we can find the realified expression for $\Wit_\text{spin/string}$:
\begin{prop}
\label{prop_varpi_integration}
For $[N, M] \in \pi_{4k}(\MSpin/\MString)$, we have
    \begin{multline}
    (\Wit_{\spin/\stri})_\R ([N, M] ) =\\
     \int_N \cw_{g_N}(\hat A(TN) \ch(\WitVec(TN))) - \int_M H \wedge \cw_{g_M}(\Witt(TM)) \mod (\MF_{2k})_\R.
\end{multline}
where $\cw_{g}$ is the Chern-Weil construction using the metric $g$
and $H$ is the 3-form satisfying $dH=p_1(T)/2$ which is part of the string structure on $M$.
\end{prop}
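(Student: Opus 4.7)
The plan is to reduce the statement to the closed-manifold formula \eqref{WittenGen} by lifting $[N,M]$ along the rationally surjective map $(\pi_{4k}\MSpin)_\R \twoheadrightarrow (\pi_{4k}(\MSpin/\MString))_\R$, and then to convert the correction term using the factorization \eqref{factorized-characteristic-class} together with Stokes' theorem. Since the target $(\pi_{4k}(\KO((q))/\TMF))_\R \simeq \R((q))/(\MF_{2k})_\R$ only records $q$-series modulo modular forms, all computations can be carried out rationally and the ambiguities that appear are precisely what is quotiented out in the end.

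First, since rational $\MString$-bordism is a polynomial $\R$-algebra concentrated in degrees divisible by $4$, we have $(\pi_{4k-1}\MString)_\R = 0$, so the boundary map of the cofiber sequence in \eqref{sigma-cd} vanishes rationally and $(\pi_{4k}\MSpin)_\R \twoheadrightarrow (\pi_{4k}(\MSpin/\MString))_\R$ is surjective. After clearing a denominator if needed, I choose a compact string manifold $M'$ with $\partial M' = M$ compatibly with the given string structure on $M$ and form the closed spin manifold $\tilde N := N \cup_M \overline{M'}$. I claim $(C\iota)_*[\tilde N] = [N,M]$ rationally in $\pi_{4k}(\MSpin/\MString)$: the ``difference'' is represented by the pair $(\overline{M'}, M)$, which, because $\overline{M'}$ itself carries a string structure extending that of its boundary, factors through $\pi_{4k}(\MString/\MString)=0$ under the forgetful map to $\pi_{4k}(\MSpin/\MString)$.

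Next, the commutative diagram \eqref{sigma-cd} gives
\[(\Wit_{\spin/\stri})_\R([N,M]) = (C\sigma)_\R\bigl((\Wit_\spin)_\R([\tilde N])\bigr).\]
Choosing metrics that are products near the common boundary, \eqref{WittenGen} applied to the closed spin $\tilde N$ splits as
\[(\Wit_\spin)_\R([\tilde N]) = \int_N \cw_{g_N}(\hat A(TN)\ch(\WitVec(TN))) - \int_{M'} \cw_{g_{M'}}(\hat A(TM')\ch(\WitVec(TM'))).\]
On the string manifold $M'$, I fix an extension $H_{M'}$ of $H$ with $dH_{M'} = \cw_{g_{M'}}(p_1(TM')/2)$, chosen to be pulled back from $M$ in the collar. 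The factorization \eqref{factorized-characteristic-class} combined with closedness of the Chern--Weil form $\cw_{g_{M'}}(\Witt(TM'))$ yields
\[\cw_{g_{M'}}(\hat A\,\ch\,\WitVec)(TM') = d\bigl(H_{M'} \wedge \cw_{g_{M'}}(\Witt(TM'))\bigr),\]
so Stokes' theorem gives $\int_{M'} \cw_{g_{M'}}(\hat A\ch\WitVec)(TM') = \int_M H \wedge \cw_{g_M}(\Witt(TM))$. Substituting produces the claimed formula.

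Well-definedness modulo $(\MF_{2k})_\R$ is automatic: any two choices $M', M''$ of string null-bordism of $M$ differ by the closed string manifold $M' \cup_M \overline{M''}$, whose Witten genus is an integral modular form of weight $2k$ and hence lies in $(\MF_{2k})_\R$ by Ando--Hopkins--Rezk; ambiguities in the extension $H_{M'}$ likewise produce closed string contributions absorbed into the quotient. The main technical point I expect to require care is the identification $[N,M] = (C\iota)_*[\tilde N]$ at the rational bordism level; rigorously this can be handled either by constructing an explicit relative spin cobordism with corners using the cylinder $\overline{M'} \times [0,1]$ attached appropriately, or more cleanly by appealing to the vanishing of $\pi_{4k}(\MString/\MString)$ together with naturality of the cofiber sequences of Thom spectra.
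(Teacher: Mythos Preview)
Your argument is correct. The approach differs from the paper's, which is considerably more direct: the paper observes that the factorization \eqref{factorized-characteristic-class} together with the string structure on $M$ (providing $H$ with $dH = \cw(p_1/2)|_M$) exhibits $\hat A(TN)\ch(\WitVec(TN))$ as a \emph{relative} de~Rham class in $\H^{4k}(N,M;\R)$, with explicit cochain representative the pair $(\hat A\,\ch\,\WitVec,\ H\wedge\Witt)$. Since $(\Wit_{\spin/\stri})_\R$ is by construction the relative characteristic-number map associated to this class, the formula is simply the evaluation of a relative closed form on the relative fundamental cycle, namely $\int_N\omega - \int_M\eta$. No auxiliary string filling $M'$ is needed.

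Your route recovers the same answer by geometrically realizing the relative class: you cap off $M$ by a string $M'$, apply the closed formula on $\tilde N$, and let Stokes on $M'$ convert the correction term into the boundary integral. This is more elementary in that it only uses \eqref{WittenGen} and Stokes, but it costs you the extra choices (the filling $M'$, the extension $H_{M'}$) and the rational surjectivity argument, together with the bordism identification $(C\iota)_*[\tilde N]=[N,M]$ that you flag at the end. The paper's relative-cohomology viewpoint sidesteps all of this, and moreover makes transparent why the answer lands in $\R((q))/(\MF_{2k})_\R$: the ambiguity in lifting to a relative class is exactly the image of $\H^{4k}(\MString;\R)$, which after integration is $(\MF_{2k})_\R$. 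Both proofs ultimately rest on the same ingredient, the factorization \eqref{factorized-characteristic-class}; yours packages it through an auxiliary bordism, the paper's through relative de~Rham theory on $(N,M)$ itself.
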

\begin{proof}
This is contained in the proof of \cite[Theorem 4.2]{BunkeNaumann}. 
To give an outline,
the factorization \eqref{factorized-characteristic-class} and the fact that the string structure on $M$ provides a trivialization of $p_1/2$ on $M$ 
means that $\hat A(TN)\ch(\WitVec(TN))$ defines a relative cohomology class in $\H^{4k}(N,M;\mathbb{R})$,
and then $(\Wit_\text{spin/string})_\bR([N,M])$ is its pairing with respect to the fundamental class.
The equation above is simply its expression in terms of differential forms.
\end{proof}

\noindent These facts will be used to evaluate the pairing induced by $\alpha_\text{spin/string}$ in the following sections.

\section{Pairings induced by secondary morphisms}
\label{sec:pairings}

Let us move on to the study of the pairing induced by our secondary morphisms on the homotopy groups of $\TMF$, $\KO((q))/\TMF$ and $\MSpin/\MString$.

\subsection{Induced non-torsion pairing and the invariant of Bunke and Naumann}

We start by considering the non-torsion part of the pairing 
and study its relation to the invariant of Bunke and Naumann.
The non-torsion pairing is defined as follows:
\begin{defn}
\label{def_int_pairing}
Given a morphism of spectra $\alpha:E\to \Sigma^{-s} I_\bZ F$
and $x\in \pi_d E$,
$\alpha(x)\in \pi_{d+s}I_\bZ F$ determines an element
$\alpha_x \in \Hom(\pi_{-d-s}F,\Z)$.
Using this, we define the pairing
\[
\langle -,-\rangle_\alpha : \pi_d E\times  \pi_{-d-s} F \to \bZ
\]  by
\[
\langle x,y\rangle_\alpha := \alpha_x(y).
\]
\end{defn}

The realification $\alpha_\R \colon E_\R \to \Sigma^{-s}(I_\Z F)_\R \simeq \Sigma^{-s}I_\R F_\R$ induces the pairing
\begin{align}\label{eq_R_pairing}
    \alpha_\R \colon \pi_d E_\R \times \pi_{-d-s}F_\R \to \R.  
\end{align}
The pairing of Definition~\ref{def_int_pairing} factors through the pairing \eqref{eq_R_pairing} in the following sense: for any $x \in \pi_d E$ and $y \in \pi_{-d-s}F$ with realifications $x_\R \in \pi_d E_\R$ and $y_\R\in \pi_{-d-s}F_\R$, we have the following equiality in $\R$, 
\begin{align}
    \langle x, y \rangle_\alpha = \alpha_\R(x_\R, y_\R).
\end{align}
We will often describe such a situation by a diagram, which is in this case
\begin{align}
    \vcenter{\xymatrixcolsep{0pc}
    \xymatrix{
    \pi_d E \ar[d]^-{\otimes \R}&\times& \pi_{-d-s}F\ar[rrrrrr]^-{ \langle - , -\rangle_\alpha}  \ar[d]^-{\otimes \R}&&&&&& \Z \ar@{^{(}-_>}[d] \\
    \pi_d E_\R &\times& \pi_{-d-s}F_\R\ar[rrrrrr]^-{ \alpha_\R}  &&&&&& \R 
    }}.
\end{align}
In such a case we say that the two pairings are {\it compatible}. 

Typically, in differential geometric situations, the realified pairing \eqref{eq_R_pairing} has a formula written in terms of characteristic forms, which makes it easy to compute explicitly. 
From that point of view, the important point of the pairing of Definition~\ref{def_int_pairing} is in its {\it integrality}; 
the fact that $\alpha_\R$ comes from  $\alpha$ guarantees the integrality of the pairings between elements coming from $\pi_d E$ and $\pi_{-d-s}F$. 
This often leads us to nontrivial divisibility results, as we will see in various examples below, see e.g.~Remark \ref{rem_integrality}.

\subsubsection{An example}\label{sec_example_free}
Let us begin by warming up ourselves by considering the pairings concerning $\pi_{-24}\TMF$.
Recall that $\sigma:\pi_{-24}\TMF\to \pi_{-24}\KO((q))$ is injective,
and the image of $\sigma$ is generated over $\bZ$ by $24/\Delta$ and $J^k/\Delta$ for $k\ge 1$, where $J=c_4^3/\Delta$ is the modular $J$-function.
For more details, see Appendix~\ref{app:TMF} and the references therein.

\paragraph{The pairing of $\pi_{-24}\TMF$ with $\pi_4\KO((q))/\TMF$}
We first consider \begin{equation}
\alpha_{\KO((q))/\TMF}^\vee: \pi_{-24}\TMF\to \pi_4\KO((q))/\TMF,
\end{equation}
 which  induces a $\bZ$-valued pairing \begin{equation}
\langle -,- \rangle_{\alpha_{\KO((q))/\TMF}} : \pi_{-24}\TMF \times \pi_4\KO((q))/\TMF \to \bZ.
\end{equation}
Here  we have a short exact sequence \begin{equation}
0\to  \pi_4\KO((q))/\sigma(\pi_4\TMF) \to \pi_4\KO((q))/\TMF\to A_3 \to 0
\label{seq-above}
\end{equation}
where $A_3=\Ker(\sigma:\pi_3\TMF\to \pi_3\KO((q)))=\bZ/24$.
We note that \begin{equation}
\pi_4(\KO((q)))/\sigma(\pi_4\TMF) \simeq {2\bZ((q))/2\MF_2}
\end{equation}
 contains elements whose modular form images are {$[(2c_6/c_4)J^{-\ell}]$}.
The induced pairing can be computed using the rationalized form of $\alpha_{\KO((q))/\TMF}$ given in Remark \ref{rem:rat}. We find 
\begin{equation}
\langle J^k/\Delta , {[(2c_6/c_4)J^{-\ell}]} \rangle_{(\alpha_{\KO((q))/\TMF})_\R}=
\begin{cases}
0 & (k<\ell), \\
1 & (k=\ell),\\
\in \bZ & (k>\ell).
\end{cases} \label{kl}
\end{equation} 
Note that the modular form images of $\pi_{-24}\TMF$ contain $n/\Delta$ for $n\in \bZ$ only when $24|n$.
This means that the pairing when restricted to $\pi_{-24}\TMF\times \pi_4\KO((q))/\sigma(\pi_4\TMF)$ is not perfect by a factor of $24$,
which nicely matches with the fact that $A_3=\bZ/24$ in the sequence \eqref{seq-above} above.

\paragraph{The pairing of $\pi_{-24}\TMF$ with $\pi_4\MSpin/\MString$}
Let us next consider the composition 
\begin{align}
    \pi_{-24}\TMF \xrightarrow{\alpha_{\spin/\stri}} \pi_{-4} I_\bZ \MSpin/\MString \to \Hom(\pi_4  \MSpin/\MString,\bZ)
\end{align} which defines the pairing \begin{equation}
\langle-,-\rangle_{\alpha_{\spin/\stri}}: \pi_{-24}\TMF \times \pi_4 \MSpin/\MString\to \bZ.
\label{geompair}
\end{equation}
We compute this pairing  after realification 
using the following formula, which directly follows from the definition of $\alpha_{\spin/\stri}$,  Proposition \ref{rem:rat},
and Proposition~\ref{prop_varpi_integration}:
\begin{prop}\label{claim_int_pairing_formula}
    The pairing after the realification, 
\begin{align}
    (\alpha_{\spin/\stri})_\R \colon \pi_{d} \TMF_\R \times \pi_{-d-20} \MSpin/\MString_\R \to \R,
\end{align}
is nontrivial only if $d \equiv 0 \pmod 4$, and in that case is given by
\begin{align}
    (\alpha_{\spin/\stri})_\R\left(x, y\right) = \frac12 \cdot \Delta\cdot \Phi(x) \cdot (\Wit_{\spin/\stri})_\R(y)\Bigr|_{q^0} 
\end{align}
for $x \in \pi_d \TMF_\R$ and $y \in \pi_{-d-20}\MSpin/\MString_\R$. 
Here we are regarding $\Phi(x) \in \pi_{-d}\KO((q))_\bR \simeq \R((q))$, and 
$(\Wit_{\spin/\stri})_\R(y) \in \R((q))/(\MF_{(-d-20)/2})_{\R}$ was described in Proposition~\ref{prop_varpi_integration} above.
\end{prop}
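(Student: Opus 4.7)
The plan is to unwind the definition of $\alpha_{\spin/\stri}$ and reduce the computation to the explicit formula for $\alpha_{\KO((q))/\TMF}$ given in Proposition~\ref{rem:rat}, combined with the realified expression for $\Wit_{\spin/\stri}$ from Proposition~\ref{prop_varpi_integration}.

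First I would translate $\alpha_{\spin/\stri} = I_\Z \Wit_{\spin/\stri} \circ \alpha^{\vee}_{\KO((q))/\TMF}$ into a formula for the induced $\Z$-valued pairing. By the functoriality of $I_\Z$, this gives
\[
\langle x, y \rangle_{\alpha_{\spin/\stri}} = \alpha^{\vee}_{\KO((q))/\TMF}(x)\bigl(\Wit_{\spin/\stri}(y)\bigr).
\]
Next, invoking Definition~\ref{defn:mor} and Remark~\ref{rem:mor}, which say that $\alpha_{\KO((q))/\TMF}$ and $\alpha^{\vee}_{\KO((q))/\TMF}$ arise from the same bilinear pairing via the $\TMF$-module structure, I would rewrite this as
\[
\langle x, y \rangle_{\alpha_{\spin/\stri}} = \alpha_{\KO((q))/\TMF}\bigl(x \cdot \Wit_{\spin/\stri}(y)\bigr),
\]
with $x \cdot \Wit_{\spin/\stri}(y) \in \pi_{-20} \KO((q))/\TMF$ coming from the $\TMF$-module action on $\KO((q))/\TMF$.

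The key observation is that this $\TMF$-action factors through $\Phi\colon\TMF\to\KO((q))$, so $x$ acts via $\Phi(x)\in\pi_d\KO((q))$. After realification, $\Phi(x)$ is identified with its $q$-expansion in $\R((q))$, and the product $\Phi(x)\cdot (\Wit_{\spin/\stri})_\R(y)$ represents an element of $\R((q))/(\MF_{-10})_\R$, which is exactly the realification of $\pi_{-20}\KO((q))/\TMF$ described in Proposition~\ref{prop:KO/TMF20}. Plugging this representative into the formula of Proposition~\ref{rem:rat} yields the claimed identity. The degree constraint is immediate from the fact that $\pi_\bullet \KO((q))_\R$ is concentrated in degrees divisible by $4$, forcing $\Phi(x)=0$ whenever $d\not\equiv 0\pmod 4$.

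There is no real obstacle here; the proof is essentially a diagram chase. The one point requiring mild vigilance is that the formula descends to a well-defined pairing, since $(\Wit_{\spin/\stri})_\R(y)$ is only well-defined modulo $(\MF_{(-d-20)/2})_\R$. This follows from Fact~\ref{fact:mf}: multiplying the ambiguity by $\Phi(x)\in(\MF_{d/2})_\R$ lands in $(\MF_{-10})_\R$, and further multiplication by $\Delta\in \MF_{12}$ lands in $(\MF_2)_\R$, whose constant term vanishes.
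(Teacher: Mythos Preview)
Your proposal is correct and follows essentially the same approach as the paper, which simply states that the formula ``directly follows from the definition of $\alpha_{\spin/\stri}$, Proposition~\ref{rem:rat}, and Proposition~\ref{prop_varpi_integration}.'' You have carefully unwound precisely those ingredients, including the well-definedness check via Fact~\ref{fact:mf}, which the paper leaves implicit.
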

\noindent More detailed discussions of this proposition and related matters will be given in Sec.~\ref{subsubsec_diff_pairing_BunkeNaumann}.

Now, the geometry behind the short exact sequence \begin{equation}
0\to \pi_4 \MSpin \to \pi_4(\MSpin/\MString) \to \pi_3\MString \to 0
\end{equation} which is \begin{equation}
0\to \bZ \to \bZ \to \bZ/24\to 0
\end{equation} was studied in \cite[Appendix D]{TachikawaYamashita}.
There, it was recalled that the generator of $\pi_4 \MSpin = \bZ$ is the K3 surface and
$\pi_4 (\MSpin/\MString) = \bZ$ is generated by $[B^4,\nu]$, 
where we put the string structure on $S^3=\partial B^4$ so that
it comes from the generator $\nu\in \pi_3\MString=\pi_3\mathbb{S}=\bZ/24$.

The essential information in the computation of the pairing \eqref{geompair} is that \begin{equation}
\int_{[B^4,\nu]} \frac{p_1}{2}=1,
\end{equation} from which we conclude by applying the formula in Proposition~\ref{prop_varpi_integration} and Proposition \ref{claim_int_pairing_formula} that 
\begin{equation}
\langle J^k/\Delta, [B^4,\nu]\rangle_{(\alpha_{\spin/\stri})_\R}
= J^k \cdot \frac{1}{24}E_2 \Bigm|_{q^0},
\label{eq:d=4pairing}
\end{equation}
where \begin{equation}
E_2(q)=1-24\sum_{n\ge 1} \sigmasymbol_1(n) q^n
\end{equation} is the quasi-modular Eisenstein series of degree 2.
Let us  check the integrality of the pairing \eqref{geompair} directly,
using its realification \eqref{eq:d=4pairing}:
\begin{itemize}
\item For $k=0$, this is guaranteed by the fact that $c/\Delta \in \sigma(\pi_{-24}\TMF)$ if and only if $c$ is a multiple of 24.
\item For $k\ge 1$, the only possibly non-integer terms in \eqref{eq:d=4pairing} is 
$\frac{1}{24}J^k\big|_{q^0}$.
The integrality of this number can be shown in the following manner.
We use \cite[Theorem 12.1]{Borcherds},
which says that $\Theta_\Lambda(q)/\Delta^k|_{q^0}$ for 
the lattice theta function $\Theta_\Lambda(q)$ constructed from an even self-dual lattice $\Lambda$ of rank $8k$ is a multiple of 24.
Now we apply this to $\Lambda=\Lambda_{E8}^{\oplus k}$,
for which $\Theta_\Lambda(q)=J(q)^k$.
\end{itemize}

\begin{rem}\label{rem_integrality}
    Conversely, the result above $k=0$ can be regarded as obstructing the existence of $c/\Delta \in \Phi(\pi_{-24}\TMF)$ with $c \not\equiv 0\pmod {24}$ by the integrality of the pairing. 
    This observation is crucially used in the paper \cite{johnsonfreyd2024576fold}  of the second author and Theo Johnson-Freyd to give a differential geometric, and accordingly physical, explanation of the $576$-periodicity of $\TMF$. 
\end{rem}

\subsubsection{The relation to the invariant $b$ of Bunke and Naumann}\label{subsubsec_BM_tmf}

The discussion of the pairing induced by $\pi_{-24}\TMF$ and our secondary morphisms
can be put in a broader context by considering the invariant $b$ introduced by Bunke and Naumann in \cite{BunkeNaumann}.\footnote{%
In fact, four invariants, $b^\text{an}$, $b^\text{geom}$, $b^\text{top}$ and $b^\text{tmf}$ were introduced there. 
Their index theorem shows that they all agree when the respective domains are appropriately restricted, so we only discuss $b:=b^\text{tmf}$ in this section.
We will come back to $b^\text{an}$, $b^\text{geom}$ and $b^\text{top}$ when we discuss the differential pairing in Subsubsection \ref{subsubsec_diff_pairing_BunkeNaumann}.}

\begin{rem}
\label{rem_exceptional_convention}
    Throughout this Sec.~\ref{subsubsec_BM_tmf},
we use the isomorphism $\pi_{8k+4}\KO \simeq \bZ$
instead of our choice $\pi_{8k+4}\KO\simeq 2\bZ$ in the rest of this paper,
to reduce the clutter and also to follow the notation of \cite{BunkeNaumann}.
\end{rem}

Let us define $A_d$ to be the kernel of the map $\sigma:\pi_d \TMF\to \pi_d \KO((q))$.
We collect basic information on $A_d$ in Appendix~\ref{app:TMF}.
We now consider the long exact sequence \begin{equation}
\cdots \to \pi_{d}\KO((q))\stackrel{C\sigma}{\to} \pi_{d}\KO((q))/\TMF \to \pi_{d-1} \TMF \to \pi_{d-1} \KO((q)) \to \cdots.
\end{equation}
An element $x\in A_{d-1}$ can be lifted to $\overline{x}\in \pi_{d}\KO((q))/\TMF$,
which is determined up to  the addition of $(C\sigma)(\pi_{d}\KO((q)))$.
We now assume $d=4\ell$.
Then  \begin{equation}
\overline{x}_\bQ  \in (\pi_{d}\KO((q))/\TMF)_\bQ \simeq \bQ((q))/(\MF_{2\ell} )_\bQ
\end{equation}
is determined up to the addition of $(C\sigma)(\pi_d\KO((q)))\simeq \bZ((q))$.
Therefore the following is well-defined:
\begin{defn}
We define the Bunke-Naumann invariant of $x\in A_{4\ell-1}$ by the formula $$
b(x) := [ \overline{x}_\bQ ]\in  \frac{\bQ((q))}{(\MF_{2\ell})_\bQ + \bZ((q))}.
$$
\end{defn}

We are now going to compute the Bunke-Naumann invariants completely.
The invariant  was originally determined in a different manner in \cite{BunkeNaumann}.
Recall that $\MF_{2\ell}$ has a $\bZ$-basis consisting of \begin{equation}
S_{2\ell}^{\ge 0}:=\{ c_4^i c_6^j \Delta^k \mid 
4i+6j+12k=2\ell,\  
i\ge 0,\ 
j\in \{0,1\},\ 
k\in \bZ \}.
\end{equation} Let us analogously define \begin{equation}
S_{2\ell}^{<0} :=\{ c_4^i c_6^j \Delta^k \mid 
4i+6j+12k=2\ell,\  
i< 0,\ 
j\in \{0,1\},\ 
k\in \bZ \}.
\end{equation}
We further let \begin{equation}
S_{2\ell}:=S_{2\ell}^{<0} \cup S_{2\ell}^{\ge 0}.
\end{equation}

\begin{lem}
$[S_{2\ell}^{<0}]$ forms a $\bZ$-basis of $\bZ((q))/\MF_{2\ell}$.
\end{lem}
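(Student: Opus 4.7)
The plan is to exploit the $q$-expansion structure of the monomials in $S_{2\ell}$. The key observation is that each such monomial, viewed in $\bZ((q))$ via its $q$-expansion, has a distinguished leading $q$-order with coefficient $1$, and as the index varies these leading orders exhaust every integer exactly once, with $S_{2\ell}^{\ge 0}$ occupying a half-line $c \le c_{\max}$ and $S_{2\ell}^{<0}$ occupying its complement $c > c_{\max}$.

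First I would collect the preliminaries. Since $c_4, c_6 \in 1 + q\bZ[[q]]$ are units in $\bZ[[q]]$ and $\Delta \in q + q^2\bZ[[q]]$, the monomial $c_4^a c_6^b \Delta^c$ has $q$-expansion $q^c(1 + O(q))$ for any $(a,b,c) \in \bZ \times \{0,1\} \times \bZ$. The weight constraint $4a + 6b + 12c = 2\ell$ with $b \in \{0,1\}$ then forces $b \equiv \ell \pmod 2$ uniquely and determines $a = (\ell - 3b)/2 - 3c$, so $S_{2\ell}$ is in bijection with $\bZ$ via $c$; the sign condition on $a$ partitions it into $S_{2\ell}^{\ge 0}$ (for $c \le c_{\max} := \lfloor(\ell - 3b)/6\rfloor$) and $S_{2\ell}^{<0}$ (for $c > c_{\max}$). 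The standard monomial decomposition from $\MF = \bZ[c_4, c_6, \Delta^{\pm 1}]/(c_4^3 - c_6^2 - 1728\Delta)$, in which the relation reduces $c_6^j$ to $j \in \{0,1\}$, then identifies $S_{2\ell}^{\ge 0}$ as a $\bZ$-basis of $\MF_{2\ell}$.

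I would then establish the assertion by a ``kill the lowest term'' argument. Given $f = \sum_{n \ge -N} a_n q^n \in \bZ((q))$, for $n = -N, -N+1, \ldots$ I would iteratively subtract the current coefficient of $q^n$ times the unique element of $S_{2\ell}$ with leading order $q^n$. The finitely many steps with $n \le c_{\max}$ contribute an honest element of $\MF_{2\ell}$, while the infinitely many steps with $n > c_{\max}$ give a $q$-adically convergent sum $\sum_{c > c_{\max}} \lambda_c f_c$, where $f_c \in S_{2\ell}^{<0}$ denotes the element of leading order $q^c$. Passing to the limit yields $[f] = \sum_{c > c_{\max}} \lambda_c [f_c]$ in $\bZ((q))/\MF_{2\ell}$. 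For uniqueness, if $\sum_{c > c_{\max}} \mu_c f_c \in \MF_{2\ell}$ with $\mu_{c_0} \ne 0$ at the minimal such index $c_0$, then the left side has leading $q$-order exactly $c_0 > c_{\max}$ with coefficient $\mu_{c_0}$, while every nonzero element of $\MF_{2\ell}$ has leading $q$-order $\le c_{\max}$, giving a contradiction.

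The main obstacle, I expect, will be the interpretation of ``$\bZ$-basis'' here: the representations of classes in $\bZ((q))/\MF_{2\ell}$ as $\bZ$-combinations of $[S_{2\ell}^{<0}]$ are in general genuinely infinite, convergent only in the $q$-adic topology. Once this is acknowledged, both existence and uniqueness hinge on the same triangularity of leading $q$-orders together with the fact that all leading coefficients equal $1$, so no cancellation can occur among the $f_c$; this is what makes the argument go through cleanly.
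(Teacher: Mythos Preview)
Your proof is correct and follows essentially the same approach as the paper: both rest on the observation that the elements of $S_{2\ell}$ are parametrized by the exponent $k$ of $\Delta$ (your $c$), each has $q$-expansion $q^k(1+O(q))$, and this triangularity with unit leading coefficients makes $S_{2\ell}$ a topological $\bZ$-basis of $\bZ((q))$; quotienting by the subspace $\MF_{2\ell}$ with basis $S_{2\ell}^{\ge 0}$ then yields the result. The paper states this in two sentences, while you spell out the ``kill the lowest term'' recursion and the uniqueness argument explicitly, and you are right to flag that ``$\bZ$-basis'' here must be read in the $q$-adic (topological) sense.
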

\begin{proof}
Given $\ell$, the element  $c_4^i c_6^j \Delta^k \in S_{2\ell}$ 
is uniquely fixed by $k\in \bZ$, exists for all $k\in \bZ$,
and has the $q$-expansion $1q^k+\cdots$.
Therefore $S_{2\ell}$ forms a $\bZ$-basis of $\bZ((q))$.
As $S_{2\ell}^{\ge 0}$ forms a $\bZ$-basis of $\MF_{2\ell}$,
the lemma follows.
\end{proof}

We now introduce a pairing \begin{equation}
\langle -,-\rangle : \MF_{-2\ell-10} \times \bZ((q))/\MF_{2\ell} \to \bZ
\end{equation} given by \begin{equation}\label{eq_pairing_MF}
\langle f(q), [g(q)] \rangle := \Delta(q) f(q) g(q) \Bigm|_{q^0}.
\end{equation}
This is well-defined thanks to Fact~\ref{fact:mf}. 

\begin{prop}
\label{prop:3.22}
The rationalization of the pairing $\langle-,-\rangle$ above equals the rationalization of the pairing \begin{equation}
\langle -,-\rangle_{\alpha_{\KO((q))/\TMF}}:  \pi_{-4\ell-20} \TMF\times \pi_{4\ell}\KO((q))/\TMF \to \bZ.
\end{equation}
\end{prop}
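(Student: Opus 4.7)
The plan is to reduce the computation to Proposition~\ref{rem:rat}, which describes the underlying homomorphism $\pi_{-20}\KO((q))/\TMF \to \Z$ associated to $\alpha_{\KO((q))/\TMF}$, by exploiting its $\TMF$-module structure. Specifically, by Lemma~\ref{lemma:basic} and Definition~\ref{defn:mor}, the pairing unwinds as
\[
\langle y, x\rangle_{\alpha_{\KO((q))/\TMF}} = \alpha_{\KO((q))/\TMF}(y \cdot x)
\]
for $y \in \pi_{-4\ell-20}\TMF$ and $x \in \pi_{4\ell}\KO((q))/\TMF$, where $y \cdot x \in \pi_{-20}\KO((q))/\TMF$ is computed via the $\TMF$-module structure on the cofiber~\eqref{eq_def_F}, and the right-hand side denotes the underlying homomorphism applied at $\pi_{-20}$.

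Next, I pass to rationalizations. Using the rational identifications $(\pi_{-4\ell-20}\TMF)_\Q \simeq (\MF_{-2\ell-10})_\Q$ via $\sigma$, and $(\pi_{4\ell}\KO((q))/\TMF)_\Q \simeq \Q((q))/(\MF_{2\ell})_\Q$, $(\pi_{-20}\KO((q))/\TMF)_\Q \simeq \Q((q))/(\MF_{-10})_\Q$ via the long exact sequence associated to~\eqref{eq_def_F}, and using the fact that $\sigma$ rationally implements the $q$-expansion map $\MF_\bullet \hookrightarrow \Q((q))$, the $\TMF$-module product descends rationally to pointwise multiplication of $q$-series: for $y \leftrightarrow f(q)$ and $x \leftrightarrow [g(q)]$, we have $y \cdot x \leftrightarrow [f(q)g(q)]$, which is well-defined in $\Q((q))/(\MF_{-10})_\Q$ since $f \cdot \MF_{2\ell} \subseteq \MF_{-10}$.

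Finally, I apply Proposition~\ref{rem:rat}. Combined with the convention change of Remark~\ref{rem_exceptional_convention} that absorbs the factor of $\tfrac12$ (by rescaling $\pi_{8k+4}\KO \simeq 2\Z$ to $\pi_{8k+4}\KO \simeq \Z$), the underlying homomorphism at $\pi_{-20}$ becomes $[h(q)] \mapsto \Delta(q)h(q)|_{q^0}$. Substituting $h = fg$ yields
\[
\langle y, x\rangle_{(\alpha_{\KO((q))/\TMF})_\Q} = \Delta(q)f(q)g(q)\bigr|_{q^0} = \langle f, [g]\rangle,
\]
which is the desired identification; well-definedness of the right-hand side uses Fact~\ref{fact:mf}, as already noted around~\eqref{eq_pairing_MF}. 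The main obstacle is bookkeeping: tracking the factor-of-$2$ conventions between Proposition~\ref{rem:rat} and Remark~\ref{rem_exceptional_convention}, and verifying that the $\TMF$-module action rationally becomes pointwise multiplication of $q$-series. Both reduce to routine compatibility of $\sigma$ with ring structures and with rationalization.
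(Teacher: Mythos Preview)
Your proof is correct and takes essentially the same approach as the paper: both reduce to the explicit formula for $\alpha_{\KO((q))/\TMF}$ in Proposition~\ref{rem:rat} and both account for the missing factor of $\tfrac12$ via the convention change of Remark~\ref{rem_exceptional_convention}. You simply spell out in more detail the intermediate step of unwinding the $\TMF$-module pairing as $\alpha_{\KO((q))/\TMF}(y\cdot x)$ and the rational identification of the module action with multiplication of $q$-series, which the paper leaves implicit.
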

\begin{proof}
This follows easily from the explicit form of the rationalized version of $\alpha_{\KO((q))/\TMF}$, see  \eqref{alpha-KO/TMF}.
The absence of the factor $1/2$ from \eqref{eq_pairing_MF}
compared to \eqref{alpha-KO/TMF}
is due to our choice $\pi_{8k+4}\KO\simeq \bZ$ in this subsubsection
and $\pi_{8k+4}\KO\simeq 2\bZ$ in the rest of the paper (Remark \ref{rem_exceptional_convention}).
\end{proof}

We now consider an involution \begin{equation}
S_{-2\ell-10} \ni c_4^i c_6^j \Delta^k \mapsto 
c_4^{-1-i} c_6^{1-j} \Delta^{-1-k} \in S_{2\ell}
\end{equation} which induces a bijection between $S_{-2\ell-10}^{\ge 0}$ and $S_{2\ell}^{<0}$.
A short computation reveals \begin{equation}
\langle c_4^i c_6^j \Delta^k, [c_4^{-1-i'} c_6^{1-j'} \Delta^{-1-k'} ]\rangle
= \begin{cases}
0 & (k> k'), \\
1 & (k=k'),\\
\in \bZ & (k< k').
\end{cases}
\end{equation}
Therefore, $S_{-2\ell-10}^{\ge 0}$ and $[S_{2\ell}^{<0}]$ form a dual basis for $\MF_{-2\ell-10}$ and $\bZ((q))/\MF_{2\ell}$ under our pairing.

Our pairing induces \begin{equation}
\langle -,-\rangle: (\MF_{-2\ell-10})_\bQ \times \frac{\bQ((q))}{(\MF_{2\ell})_\bQ + \bZ((q))} \to \bQ/\bZ.
\end{equation}

The following lemma is clear from our discussion: 
\begin{lem}
The homomorphisms \begin{align*}
\bZ((q))/\MF_{2\ell} \ni x &\mapsto  (f\mapsto \langle f , x\rangle )\in \prod_{f} \bZ
,\\
 \frac{\bQ((q))}{(\MF_{2\ell})_\bQ + \bZ((q))}\ni x &\mapsto (f\mapsto \langle f , x\rangle )\in \prod_{f} \bQ/\bZ
\end{align*}
 are  isomorphisms of Abelian groups, where $f$ on the right hand side runs over elements of $S^{\ge 0}_{-2\ell-10}$.
\end{lem}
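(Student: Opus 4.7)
The plan is to reduce the lemma to the inversion of a unitriangular $\bZ$-linear system built from the pairing matrix computed immediately above. I enumerate $S^{\ge 0}_{-2\ell-10}=\{a_m\}_{m\geq 0}$ in decreasing order of the $\Delta$-power $k^+_m$, writing $a_m=c_4^{i_m}c_6^{j_m}\Delta^{k^+_m}$, and set $b_n := c_4^{-1-i_n}c_6^{1-j_n}\Delta^{-1-k^+_n}$, the involution-image of $a_n$, so that $\{b_n\}_{n\geq 0}$ enumerates $S^{<0}_{2\ell}$. The pairing formula recalled just above then gives
\[
M_{m,n}:=\langle a_m,[b_n]\rangle=\begin{cases}1,&m=n,\\ 0,&m<n,\\ \in\bZ,&m>n,\end{cases}
\]
so $M$ is lower-triangular with unit diagonal.

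Next I identify both sides of both maps with countable products. Every element of $\bZ((q))$ has a unique expansion in the basis $S_{2\ell}$ of $\bZ((q))$, and quotienting by $\MF_{2\ell}=\bZ\{S^{\ge 0}_{2\ell}\}$ kills exactly the coefficients on $S^{\ge 0}_{2\ell}$ while leaving the coefficients on $\{b_n\}$ free; this gives $\bZ((q))/\MF_{2\ell}\simeq \prod_n \bZ$. Repeating the identification over $\bQ$ and further quotienting by $\bZ((q))$ produces $\bQ((q))/((\MF_{2\ell})_\bQ + \bZ((q)))\simeq \prod_n (\bQ/\bZ)$. The targets $\prod_f \bZ$ and $\prod_f \bQ/\bZ$ become $\prod_m \bZ$ and $\prod_m \bQ/\bZ$ after identifying $f \in S^{\ge 0}_{-2\ell-10}$ with $a_m$.

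Writing $x=\sum_n c_n [b_n]$ and $d_m:=\langle a_m, x\rangle$, the triangularity $M_{m,n}=0$ for $n>m$ reduces the a priori infinite expression $d_m=\sum_n c_n M_{m,n}$ to the finite sum $c_m + \sum_{n<m} c_n M_{m,n}$, thereby removing any convergence issue. The recursion
\[
c_m = d_m - \sum_{n<m} c_n M_{m,n}
\]
uniquely determines $(c_n)_n$ from $(d_m)_m$ in both cases, showing that both maps are bijections; integrality of the entries $M_{m,n}$ makes this recursion run verbatim over $\bQ/\bZ$. The only non-routine point is the initial book-keeping that makes $M$ lower-triangular, which follows at once from the pairing formula together with the order-reversing nature of the involution on the $\Delta$-power.
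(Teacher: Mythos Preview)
Your proof is correct and is essentially the same as the paper's, which simply states that the lemma is clear from the preceding discussion establishing that $S_{-2\ell-10}^{\ge 0}$ and $[S_{2\ell}^{<0}]$ form a dual basis under the unitriangular pairing matrix; you have merely written out explicitly the triangular inversion that this implies.
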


Let us then make the following definition:
\begin{defn}
\label{defn:individual}
We call 
$\langle f,b(x)\rangle$ for each $f\in S^{\ge 0}_{-2\ell-10}$ 
as the individual Bunke-Naumann invariant of $x\in A_{4\ell-1}$.
The entirety of $\langle f,b(x)\rangle$ determines $b(x)$ due to the proposition above.
\end{defn}
We see however:
\begin{prop}
\label{prop:individual}
The individual Bunke-Naumann invariant $\langle f,b(x)\rangle$ is zero unless $f=\Delta^k$.
\end{prop}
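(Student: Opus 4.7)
The plan is to exhibit, for every $f \in S^{\ge 0}_{-2\ell-10}$ with $f \ne \Delta^k$, an integral lift $\tilde f \in \pi_{-4\ell-20}\TMF$ under $\sigma$, and then to use this lift to witness the vanishing of $\langle f, b(x)\rangle$ modulo $\bZ$. First I would recall from Proposition~\ref{prop:3.22} that the rational pairing $\langle-,-\rangle$ of \eqref{eq_pairing_MF} used in Definition~\ref{defn:individual} coincides (after tensoring with $\bQ$) with the integer-valued Anderson pairing $\langle -,-\rangle_{\alpha_{\KO((q))/\TMF}}$.

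Granting such a lift $\tilde f$, I would then choose any integral lift $\overline x \in \pi_{4\ell}\KO((q))/\TMF$ of $x$, which exists because $x \in A_{4\ell-1}$ lies in the image of the boundary map from the cofiber sequence \eqref{eq_def_F}. By definition the value $\langle \tilde f, \overline x\rangle_{\alpha_{\KO((q))/\TMF}}$ is an integer; by Proposition~\ref{prop:3.22} this integer computes the rational number $\Delta(q)\, \sigma(\tilde f)\, \overline x_\bQ\big|_{q^0} = \Delta(q)\, f\, \overline x_\bQ\big|_{q^0}$, which is by definition a representative of $\langle f, b(x)\rangle$ in $\bQ$ before reduction modulo $\bZ$. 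Reducing, the individual Bunke--Naumann invariant $\langle f, b(x)\rangle$ is zero in $\bQ/\bZ$.

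It thus remains to verify that every basis element $f = c_4^i c_6^j \Delta^k$ of $S^{\ge 0}_{-2\ell-10}$ with $(i,j) \ne (0,0)$ lies in the image of $\sigma\colon \pi_{-4\ell-20}\TMF \to \MF_{-2\ell-10}$. This is the main obstacle of the proof. It is a purely structural statement about the homomorphism $\sigma$ on homotopy, and follows from Fact~\ref{fact:imageZ} collected in Appendix~\ref{app:TMF}: the image of $\sigma$ in any $\MF_k$ differs from $\MF_k$ only in the coefficient of the pure power $\Delta^{k/12}$ (when $12\mid k$), whereas every monomial $c_4^i c_6^j \Delta^k$ with at least one positive $c_4$ or $c_6$ factor admits an honest lift. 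Concretely, one combines the lifts $B, c_6 \in \pi_\bullet\tmf$ of $c_4, c_6$ with the periodicity element $X \in \pi_{576}\tmf$ and with elements such as $e_8 = c_4^3/\Delta \in \pi_{-16}\TMF$ (and more generally $c_4^{3k}/\Delta^{k+1}$, as seen in Sec.~\ref{sec_example_free}) to produce lifts of all the required monomials --- the only lattice vectors in the basis $S^{\ge 0}_{-2\ell-10}$ that escape this procedure are precisely the pure powers $\Delta^k$, which matches the statement of the proposition.
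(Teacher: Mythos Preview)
Your argument is correct and matches the paper's: lift $f$ to $\pi_{-4\ell-20}\TMF$ via Fact~\ref{fact:imageZ}, then use the integrality of the Anderson pairing through Proposition~\ref{prop:3.22} to conclude that $\langle f,\overline{x}\rangle\in\bZ$, whence $\langle f,b(x)\rangle=0$. One small slip: $\sigma(e_8)=c_4/\Delta$, not $c_4^3/\Delta$ (see Definition~\ref{defn:e8}); in any case the explicit construction of lifts in your final paragraph is unnecessary once Fact~\ref{fact:imageZ} is invoked.
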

\begin{proof}
Let $f=c_4^i c_6^j \Delta^k$ with $i> 0$.
Pick a lift $\overline x\in \pi_{4\ell} \KO((q))/\TMF$.
We now note that  $f$ is in the image of $\sigma:\pi_{-4\ell-20} \TMF\to \pi_{-4\ell-20} \KO((q))$,
see e.g.~Fact~\ref{fact:imageZ}.
We then have $\langle f,\overline x\rangle \in \bZ$ from Proposition~\ref{prop:3.22} above,
from which we conclude $\langle f,b(x)\rangle = \langle f,[\overline x]\rangle = 0$.
\end{proof}

We have an immediate corollary:
\begin{cor}
$b(x)$ is zero unless $x\in A_{24(-1-k)+3}$,
and is uniquely fixed by $\langle \Delta^k, b(x)\rangle$ if $x\in A_{24(-1-k)+3}$.
\end{cor}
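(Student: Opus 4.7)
The proof should be essentially a direct combinatorial consequence of Proposition~\ref{prop:individual} together with the lemma identifying $\bQ((q))/((\MF_{2\ell})_\bQ+\bZ((q)))$ with the product $\prod_{f\in S^{\ge 0}_{-2\ell-10}}\bQ/\bZ$ via the individual invariants. The plan is to unpack the degree constraint imposed by restricting to $f=\Delta^k$.

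First, I would recall that by the lemma preceding Definition~\ref{defn:individual}, the invariant $b(x)\in\bQ((q))/((\MF_{2\ell})_\bQ+\bZ((q)))$ is completely determined by the collection of individual invariants $\langle f,b(x)\rangle$ as $f$ ranges over $S^{\ge 0}_{-2\ell-10}$. Proposition~\ref{prop:individual} then tells us that $\langle f,b(x)\rangle = 0$ whenever $f \ne \Delta^k$, so $b(x)$ is determined by the single invariants of the form $\langle \Delta^k,b(x)\rangle$ with $\Delta^k \in S^{\ge 0}_{-2\ell-10}$.

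Next, I would spell out the degree matching: a pure power $\Delta^k$ lies in $S^{\ge 0}_{-2\ell-10}$ (i.e.\ has $c_4$- and $c_6$-exponents zero) if and only if $12k = -2\ell-10$, equivalently $\ell = -6k-5$. Substituting into $4\ell-1$ gives the degree
\begin{equation*}
4\ell - 1 = 4(-6k-5) - 1 = -24k - 21 = 24(-1-k)+3,
\end{equation*}
so $x \in A_{4\ell-1}$ can contribute a nonzero individual invariant of the form $\langle \Delta^k,b(x)\rangle$ only when $x \in A_{24(-1-k)+3}$.

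Combining these two observations: if $x\in A_{4\ell-1}$ with $4\ell-1$ not of the form $24(-1-k)+3$, then \emph{no} power $\Delta^k$ belongs to $S^{\ge 0}_{-2\ell-10}$, so every individual invariant vanishes and hence $b(x)=0$ by the lemma. If instead $x\in A_{24(-1-k)+3}$, then there is a unique $k$ such that $\Delta^k \in S^{\ge 0}_{-2\ell-10}$, and by Proposition~\ref{prop:individual} all other individual invariants vanish; the lemma then shows $b(x)$ is uniquely determined by the single value $\langle \Delta^k,b(x)\rangle$. The only conceptual step is the degree arithmetic; there is no real obstacle, since the structural work has already been done in the preceding lemma and proposition.
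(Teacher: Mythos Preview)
Your proof is correct and follows exactly the intended approach: the paper presents this as an immediate corollary of Proposition~\ref{prop:individual} and the preceding lemma, and your argument simply unpacks the degree arithmetic that makes the implication explicit.
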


We now have a final proposition determining the Bunke-Naumann invariant completely:
\begin{prop}
For $x\in A_{24(-1-k)+3}$, the homomorphism $$
A_{24(-1-k)+3} \ni x \mapsto \langle \Delta^k,b(x)\rangle \in \{n/\gcd(24,k)\}_{n \in \Z}  \subset \Q/\bZ
$$ is an isomorphism of Abelian groups.
\end{prop}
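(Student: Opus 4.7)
The plan is to combine the explicit form of the rationalized Anderson pairing from Proposition~\ref{prop:3.22} with the structural information on $\pi_\bullet \TMF$ (in particular the structure of the groups $A_\bullet$) collected in Appendix~\ref{app:TMF}. By the corollary immediately preceding the proposition, the map $x\mapsto\langle\Delta^k,b(x)\rangle$ already records all of $b(x)$; so proving that it is an isomorphism onto the stated cyclic subgroup of $\Q/\Z$ reduces to computing its image and establishing injectivity.

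First, I unwind the definitions. For $x\in A_{4\ell-1}$ with $4\ell-1=24(-1-k)+3$, choose a lift $\overline{x}\in\pi_{4\ell}\KO((q))/\TMF$ and a positive integer $n$ with $n\overline{x}=(C\sigma)(y)$ for some $y\in\pi_{4\ell}\KO((q))\simeq\Z((q))$ (using the convention of Remark~\ref{rem_exceptional_convention}). Then $b(x)=[y/n]$ in $\Q((q))/((\MF_{2\ell})_\Q+\Z((q)))$, and Proposition~\ref{prop:3.22} yields
\[
\langle\Delta^k,b(x)\rangle \;=\; \frac{1}{n}\,\Delta^{k+1}\,y\,\big|_{q^0}\pmod{\Z}.
\]
In particular the image lies in $\tfrac{1}{n}\Z/\Z$, so the exponent of the image divides the exponent of $A_{4\ell-1}$.

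Second, I would identify the image as $\{m/\gcd(24,k)\}_{m\in\Z}$ by applying the formula above to explicit generators of $A_{4\ell-1}$ provided by Appendix~\ref{app:TMF}. Such generators can be realized geometrically via the spin/string sigma orientation, as in Section~\ref{sec_example_free}, reducing the computation of $\Delta^{k+1}y|_{q^0}/n$ to integer divisibilities of products of modular forms. The resulting divisibilities are precisely controlled by the Borcherds-type constant-term theorem invoked in Section~\ref{sec_example_free} (applied to lattice theta functions of even unimodular lattices), and this is where the denominator $\gcd(24,k)$ makes its appearance.

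Third, I would prove injectivity. By the preceding corollary combined with Proposition~\ref{prop:individual}, $b$ restricted to $A_{4\ell-1}$ is faithfully recorded by the single value $\langle\Delta^k,b(x)\rangle$, so it suffices to show $b|_{A_{4\ell-1}}$ is injective. This follows from the Anderson self-duality of $\TMF$ (Theorem~\ref{main}): the induced perfect torsion pairing $(\pi_{4\ell}\KO((q))/\TMF)_\tor\times A_{4\ell-1}\to\Q/\Z$, together with Proposition~\ref{prop:3.22}, shows that $\langle\Delta^k,b(x)\rangle=0$ forces $x$ to pair trivially with every torsion class on the dual side, hence $x=0$. The main obstacle is the matching of the divisor with $\gcd(24,k)$: it requires careful bookkeeping of the Borcherds integrality input and the explicit $\TMF$-computations of Appendix~\ref{app:TMF} across all relevant $k$.
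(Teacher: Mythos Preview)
There is a genuine gap in your injectivity argument. You invoke a ``perfect torsion pairing $(\pi_{4\ell}\KO((q))/\TMF)_\tor\times A_{4\ell-1}\to\Q/\Z$'', but no such pairing exists: these groups are not Anderson-dual to each other. The group $A_{4\ell-1}$ is the \emph{quotient} of $\pi_{4\ell}\KO((q))/\TMF$ under the connecting homomorphism, not a torsion dual; the actual Anderson torsion partner of $(\pi_{4\ell-1}\TMF)_\tor$ lives in degree $-4\ell-20=24k$, not $4\ell$. In fact, since $\pi_{24k-1}\TMF=0$ (Appendix~\ref{app:TMF}), Theorem~\ref{main} identifies $\pi_{4\ell}\KO((q))/\TMF$ with $\Hom(\pi_{24k}\TMF,\Z)$, so it is torsion-free and the group you propose to pair against is zero. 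Your implication ``$\langle\Delta^k,b(x)\rangle=0$ forces $x$ to pair trivially with every torsion class on the dual side'' therefore has no content.

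The paper's argument is also much shorter than your step two and uses neither explicit generators of $A_\bullet$ nor Borcherds-type divisibility. Everything comes from the integral Anderson pairing of Proposition~\ref{prop:3.22} together with Fact~\ref{fact:imageZ}: since $\gcd(24,k)\Delta^k$ lies in $\sigma(\pi_{24k}\TMF)$, the value $\langle\gcd(24,k)\Delta^k,\overline{x}\rangle$ is an integer for every lift $\overline{x}$, giving the bound $\langle\Delta^k,b(x)\rangle\in\tfrac{1}{\gcd(24,k)}\Z/\Z$; and since that multiple of $\Delta^k$ is primitive in $\pi_{24k}\TMF$ and the $\Z$-valued pairing is perfect (Theorem~\ref{main}), some $\overline{x_1}$ has $\langle\gcd(24,k)\Delta^k,\overline{x_1}\rangle=1$, so its image $x_1\in A_{4\ell-1}$ attains $1/\gcd(24,k)$. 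Injectivity then drops out of the torsion-freeness noted above: if $b(x)=0$ one can adjust $\overline{x}$ by an element of $(C\sigma)(\Z((q)))$ to make it torsion, hence zero, hence $x=0$.
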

\begin{proof}
Lift $x$ to an element $\overline{x}$  in $\pi_{24(-1-k)+4}\KO((q))/\TMF$.
The element $\gcd(24,k)\Delta^k$ is a generator of the image of $\sigma$, and therefore 
$\langle \gcd(24,k)\Delta^k, \overline x\rangle \in \bZ$.
Therefore $\langle \Delta^k,b(x)\rangle=n/\gcd(24,k)$ for some $n$.
The perfectness of the Anderson dual pairing between $\pi_{24k}\TMF$ and $\pi_{24(-1-k)+4}\KO((q))/\TMF$
implies that there should be an element $x_1\in A_{24(-1-k)+4}$ such that 
$\langle \gcd(24,k)\Delta^k, \overline {x_1}\rangle =1$,
meaning that  $\langle \Delta^k,b(x_1)\rangle=1/\gcd(24,k)$.
Therefore $x\mapsto  \langle \Delta^k,b(x)\rangle $ gives  an isomorphism.
\end{proof}

\begin{rem}
Recently a spectrum $\KO_\MF$, which captures the $\KO$-based invariants
and the Bunke-Naumann invariants of $\TMF$, 
was introduced and analyzed in great detail in \cite{berwickevans2023field}.
There, it was also shown that the space of  `SQFTs defined on cylinders and tori' has a natural morphism to  $\KO_\MF$,
making a significant step in the understanding of the proposal of Stolz and Teichner \cite{StolzTeichner1,StolzTeichner2}.
Our analysis in the rest of this article will amply show that physics of SQFT and heterotic string theory knows 
even subtler aspects of $\TMF$.
\end{rem}

\subsection{Induced torsion pairing}\label{subsec_torsion_pairing}

Let us start by introducing a notation.

\begin{defn}\label{def_torsion_pairing}
Given a morphism of spectra $\alpha: E\to \Sigma^{-s} I_\bZ F$
and a torsion element $x\in \pi_d E$,
consider $\alpha(x)\in \pi_{d+s} I_\bZ F$.
Since $\alpha(x)$ is also torsion, we have 
\[
    \alpha(x) \in \Hom(\pi_{-d-s-1} F, \Q/\Z) / \Hom(\pi_{-d-s-1} F, \Q).
\]
Restricted to $(\pi_{-d-s-1}F)_\text{torsion}$ we get a well-defined homomorphism $$
\alpha_x : (\pi_{-d-s-1} F)_\text{torsion}\to \bQ/\bZ
$$
We then define $$
\alpha(-,-) : (\pi_d E)_\text{torsion} \times (\pi_{-d-s-1} F)_\text{torsion} \to \Q/\bZ
$$ via $ \alpha(x,y) :=\alpha_x (y)$.
\end{defn}

We have now successfully determined the Bunke-Naumann invariant.
Note that it is a torsion invariant defined on a subgroup of $\pi_{d}\TMF$
for $d=4\ell-1$.
As such, it also has a description using the torsion part of the pairing induced by $\alpha_{\KO((q))/\TMF}^\vee: \pi_{d}\TMF\to \pi_{d+20}I_\bZ\KO((q))/\TMF$,
i.e.~as a part of the pairing\begin{equation}
\alpha_{\KO((q))/\TMF}(-,-) :
(\pi_{d}\TMF)_\text{torsion}\times
(\pi_{-d-21} \KO((q))/\TMF)_\text{torsion} \to \Q/\Z.
\label{TMFKOpair}
\end{equation} 
Note that we have a long exact sequence \begin{multline}
\cdots \to \pi_{-d-21} \TMF \xrightarrow{\sigma}
\pi_{-d-21} \KO((q))  \\
\to 
\pi_{-d-21} \KO((q))/\TMF \to  \\
\pi_{-d-22} \TMF \xrightarrow{\sigma}
\pi_{-d-22} \KO((q)) \to\cdots,
\end{multline}
and therefore there are three types of torsions in $\pi_{-d-21}\KO((q))/\TMF$:
\begin{itemize}
\item The first is when $-d-21\equiv  1$ or $2$ mod $8$,
so that $\pi_{-d-21}\KO((q))\simeq \bZ/2((q))$.
This is a torsion already detected in $\KO$.
\item The second is when $-d-21\equiv 0$ mod $24$,
so that $\pi_{-d-21}\KO((q))\simeq \bZ((q))$ 
but the cokernel of $\sigma$ can contain torsion elements. 
This gave rise to the  Bunke-Naumann invariant.
\item Finally, the third comes  from the lifts of kernels of $\sigma$ in $\pi_{-d-22} \TMF$,
which is the most subtle and is our next topic.
\end{itemize}

So, let us consider the pairing \eqref{TMFKOpair} with an additional assumption that $x\in A_{d}$, i.e.~$x$ is in the kernel of $\sigma:\pi_{d}\TMF\to \pi_{d}\KO((q))$.
In the proofs of the following propositions we will repeatedly refer to the following structure morphisms:
\begin{equation}
\vcenter{\xymatrix{
\TMF \ar[r]^-{\sigma} & \KO((q))  \ar[r]^-{C\sigma} & 
\KO((q))/\TMF \ar[r]^-{\partial} & \Sigma\TMF
}}.
\end{equation}
We use the same symbols for the homomorphisms on the homotopy groups induced by them.

\begin{lem}
\label{prop:torsionlift}
An element $x\in A_{d}$ can be lifted to a torsion element in $\pi_{d+1}\KO((q))/\TMF$
unless  $d\equiv 3$ mod $24$.
\end{lem}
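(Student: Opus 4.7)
I would analyze the short exact sequence
\[
0 \to G \to \pi_{d+1}\KO((q))/\TMF \to A_d \to 0, \qquad G := \pi_{d+1}\KO((q))/\sigma(\pi_{d+1}\TMF),
\]
extracted from the long exact sequence of homotopy groups associated to the cofiber sequence defining $\KO((q))/\TMF$, and split into cases according to the residue of $d$ modulo $4$. The first case $d \not\equiv 3 \pmod 4$ is essentially immediate: here $d+1 \not\equiv 0 \pmod 4$, so $\pi_{d+1}\KO((q))$ is either $0$ or $(\bZ/2)((q))$, hence a torsion abelian group, and therefore so is $G$. Combined with the torsionness of $A_d$ (which follows from the rational injectivity of $\sigma \colon \pi_\bullet \TMF \to \pi_\bullet \KO((q))$), the above short exact sequence forces $\pi_{d+1}\KO((q))/\TMF$ itself to be a torsion group, and every lift of $x$ is then automatically torsion.

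The substantive case is $d \equiv 3 \pmod 4$ with $d \not\equiv 3 \pmod{24}$, where the Bunke--Naumann invariant enters. Writing $d+1 = 4\ell$, the corollary to Proposition~\ref{prop:individual} asserts that $b(x) \in \bQ((q))/((\MF_{2\ell})_\bQ + \bZ((q)))$ vanishes for every $x \in A_d$ in such degrees. Unpacking the definition $b(x) = [\bar x_\bQ]$, the vanishing says that the rationalization $\bar x_\bQ \in \pi_{d+1}(\KO((q))/\TMF)_\bQ \simeq \bQ((q))/(\MF_{2\ell})_\bQ$ of any chosen lift $\bar x$ can be represented by an integral element coming from $\pi_{d+1}\KO((q))$. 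Picking such an integral representative $g \in \pi_{d+1}\KO((q))$ and replacing $\bar x$ by $\bar x - C\sigma(g)$ yields a new lift of $x$ (since $\partial \circ C\sigma = 0$) whose rationalization is zero, hence a torsion lift.

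The main place where one must exercise care is the mismatch in conventions between Sec.~\ref{subsubsec_BM_tmf}, which uses $\pi_{8k+4}\KO \simeq \bZ$, and the rest of the paper, which uses $\pi_{8k+4}\KO \simeq 2\bZ$. One has to verify that the $g$ extracted from the condition $b(x) = 0$ is genuinely an element of $\pi_{d+1}\KO((q))$ in the convention in force, not merely a rational class. This is immediate once one notes that the invariant $b$ is by construction the obstruction to $\bar x_\bQ$ lying in the image of the rationalization map from $\pi_{d+1}\KO((q))$, so no genuine obstacle arises and the proof reduces to unpacking definitions once the case split is made.
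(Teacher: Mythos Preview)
Your proposal is correct and follows essentially the same approach as the paper's proof: both split into the trivial case where $\pi_{d+1}\KO((q))/\TMF$ is already torsion (i.e.\ $d+1\not\equiv 0\pmod 4$), and in the remaining case use the vanishing of the Bunke--Naumann invariant $b(x)$ for $d\not\equiv 3\pmod{24}$ to find an element of $\pi_{d+1}\KO((q))$ whose image under $C\sigma$ corrects an arbitrary lift $\bar x$ to a torsion lift. Your write-up is somewhat more detailed in the first case (spelling out the short exact sequence and the structure of $G$), but the substance is identical.
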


\begin{proof}
The only possible problem is when 
$\pi_{d+1}\KO((q))/\TMF$ contains non-torsion elements, i.e.~when $d+1=4\ell$.
So let us take $x\in A_{4\ell-1}$ and assume $d=4\ell-1 \not\equiv 3$ mod $24$.
Pick an arbitrary lift $\overline x \in \pi_{4\ell}\KO((q))/\TMF$.
As $d\not\equiv 3$ mod $24$, its Bunke-Naumann invariant vanishes, $b(x)=0$.
This means that one can find an element $y\in \pi_{4\ell}\KO((q))$ such that the rationalization of $y$ and $\overline x$ agrees.
Then $\overline x-(C\sigma)(y)\in \pi_{4\ell}\KO((q))/\TMF$ is a torsion lift of $x$.
\end{proof}

\begin{lem}
\label{prop:welldef}
Assume $d\not\equiv 3,-1$ mod $24$, and let $y \in A_{-d-22}$.
We can then pick its lift $\overline y \in (\pi_{-d-21} \KO((q))/\TMF)_\text{torsion}$ using Lemma~\ref{prop:torsionlift}, since $-d-21\not\equiv 3$ mod $24$.
The value $\alpha_{\KO((q))/\TMF}( x,\overline y)$ for $x\in A_{d}$ and $y\in A_{-d-22}$ does not depend on the lift $\overline y$.
\end{lem}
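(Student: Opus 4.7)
The plan is to show that any two torsion lifts $\overline y,\overline y'$ of $y$ differ by an element that pairs trivially with $x$, using that $x\in A_d=\ker\sigma$.

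First I will invoke exactness in the cofiber sequence $\TMF\xrightarrow{\sigma}\KO((q))\xrightarrow{C\sigma}\KO((q))/\TMF\xrightarrow{\partial}\Sigma\TMF$: since $\partial(\overline y)=\partial(\overline y')=y$, the difference lies in $\ker(\partial)=\mathop{\mathrm{Im}}(C\sigma)$, so $\overline y-\overline y'=C\sigma(z)$ for some $z\in\pi_{-d-21}\KO((q))$. Because both $\overline y$ and $\overline y'$ are torsion, so is $C\sigma(z)$ (though $z$ itself need not be). It therefore suffices to verify
\begin{equation}
\alpha_{\KO((q))/\TMF}\bigl(x,\,C\sigma(z)\bigr)\;=\;0 \;\in\; \bQ/\bZ.
\end{equation}

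Next I will unpack the pairing using Lemma~\ref{lemma:basic} and Remark~\ref{rem:mor}. Writing $\tilde\alpha$ for $\alpha_{\KO((q))/\TMF}\in\pi_{20}I_\bZ(\KO((q))/\TMF)$ regarded as a map $\KO((q))/\TMF\to\Sigma^{-20}I\bZ$, the pairing value equals $\tilde\alpha(x\cdot w)$ under the $\TMF$-module action on $\KO((q))/\TMF$. Since $C\sigma$ is $\TMF$-linear (being part of a cofiber sequence of $\TMF$-modules) and $\TMF$ acts on $\KO((q))$ through $\sigma$, this gives
\begin{equation}
x\cdot C\sigma(z)\;=\;C\sigma\bigl(\sigma(x)\cdot z\bigr).
\end{equation}
The defining relation $I_\bZ C\sigma(\tilde\alpha)=\alpha_{\KO((q))}$ of the lift translates to $\tilde\alpha\circ C\sigma=\alpha_{\KO((q))}$ as morphisms $\KO((q))\to\Sigma^{-20}I\bZ$, so
\begin{equation}
\tilde\alpha\bigl(x\cdot C\sigma(z)\bigr)\;=\;\alpha_{\KO((q))}\bigl(\sigma(x)\cdot z\bigr).
\end{equation}

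Finally, $\sigma(x)=0$ since $x\in A_d=\ker\sigma$, so the right-hand side vanishes and the pairing is independent of the choice of torsion lift. The argument is essentially formal once the module-theoretic bookkeeping is in place, so I do not anticipate a genuine obstacle; the hypotheses $d\not\equiv 3,-1\pmod{24}$ enter only through Lemma~\ref{prop:torsionlift} to guarantee the existence of the torsion lift $\overline y$ in the first place, and play no role in the independence argument itself.
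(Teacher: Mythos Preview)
Your proof has a genuine gap at the step ``the pairing value equals $\tilde\alpha(x\cdot w)$.'' With $x\in\pi_d\TMF$ and $w\in\pi_{-d-21}(\KO((q))/\TMF)$, the product $x\cdot w$ lies in $\pi_{-21}(\KO((q))/\TMF)$, and applying $\tilde\alpha\colon\KO((q))/\TMF\to\Sigma^{-20}I\bZ$ lands in $\pi_{-1}I\bZ=0$. So $\tilde\alpha(x\cdot w)$ vanishes identically and cannot compute the $\bQ/\bZ$-valued torsion pairing. The torsion pairing of Definition~\ref{def_torsion_pairing} is a \emph{secondary} construction: one lifts the torsion element $\alpha^\vee(x)\in\pi_{d+20}I_\bZ(\KO((q))/\TMF)$ along the boundary map of the cofiber sequence $I_\bZ\to I_\bQ\to I_{\bQ/\bZ}$ to obtain an element of $\Hom(\pi_{-d-21}(\KO((q))/\TMF),\bQ/\bZ)$, well-defined on torsion, and then evaluates on $w$. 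Your multiplicative formula only computes the primary $\bZ$-valued pairing of Definition~\ref{def_int_pairing}, which lives one degree off and is not what is needed here.

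The paper's proof avoids this by first arguing that $z$ itself, not merely $C\sigma(z)$, is torsion: when $-d-21\not\equiv 0\pmod{24}$ the image of $\sigma\colon\pi_{-d-21}\TMF\to\pi_{-d-21}\KO((q))$ is saturated (primitive elements map to primitive elements), so a non-torsion $z$ cannot have torsion image under $C\sigma$. Once $z$ is torsion, the genuine naturality of the torsion pairing applies and gives $\alpha_{\KO((q))/\TMF}(x,C\sigma(z))=\alpha_{\KO((q))}(\sigma(x),z)=0$. Thus, contrary to your final remark, the hypothesis $d\not\equiv 3\pmod{24}$ (equivalently $-d-21\not\equiv 0\pmod{24}$) is essential to the independence argument; only $d\not\equiv -1\pmod{24}$ is about the existence of the torsion lift $\overline y$.
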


\begin{proof}
Let $\overline y$ and $\overline y'$ be two lifts.
Then $\overline y-\overline y'=(C\sigma)(z)$ for a $z\in \pi_{-d-21} \KO((q))$.
Here $z$ is itself a torsion element.
This is  because a non-torsion element in $\pi_{-d-21}\KO((q))$ can map to 
a torsion element only when
$\sigma : \pi_{-d-21}\TMF\to\pi_{-d-21}\KO((q))$ maps a primitive element to a non-primitive element,
which happens only when $-d-21\equiv 0$ mod $24$.
Now that we established that $z$ is torsion, we can use the naturality of the pairing to 
conclude that $\alpha_{\KO((q))/\TMF}( x,(C\sigma)(z))
=\alpha_\text{spin}(\sigma(x),z)=0$.
\end{proof}

This allows us to make the following definition:
\begin{defn}
We define $$
\lpar -,-\rpar : A_{d} \times A_{-d-22} \to \bQ/\bZ
$$ for $d\not\equiv 3,-1$ mod $24$ via $$
\lpar x,y\rpar = \alpha_{\KO((q))/\TMF}( x,\overline y),
$$ 
where $\overline y$ is a lift of $y\in A_{-d-22}$ in $(\pi_{-d-21} \KO((q))/\TMF)_\text{torsion}$ using Lemma~\ref{prop:torsionlift}. 
This is well-defined thanks to Lemma~\ref{prop:welldef}.
\end{defn}

\begin{prop}
\label{pont-dual}
The pairing $\lpar -,-\rpar $ is a perfect Pontryagin pairing between
$A_{d}$ and $A_{-d-22}$ for $d\not\equiv3,-1$ mod $24$.
\end{prop}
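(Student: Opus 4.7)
My plan is to derive the perfectness of $\lpar-,-\rpar$ from the perfect Pontryagin torsion pairing
\[
\alpha_{\KO((q))/\TMF}(-,-)\colon (\pi_d\TMF)_\tor \times (\pi_{-d-21}\KO((q))/\TMF)_\tor \to \Q/\Z
\]
supplied by the Anderson self-duality of Theorem~\ref{main}, by restricting to an appropriate subgroup/annihilator pair and then invoking the standard duality between a subgroup and the corresponding quotient in a perfect pairing.

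First, I would use the long exact sequence in homotopy for the cofiber $\TMF \xrightarrow{\sigma} \KO((q)) \xrightarrow{C\sigma} \KO((q))/\TMF$, together with Lemma~\ref{prop:torsionlift} (applicable because $-d-22 \not\equiv 3\pmod{24}$), to produce the short exact sequence
\[
0 \to K \to (\pi_{-d-21}\KO((q))/\TMF)_\tor \xrightarrow{\partial} A_{-d-22} \to 0,
\]
with $K := \mathrm{im}(C\sigma_*) \cap (\pi_{-d-21}\KO((q))/\TMF)_\tor$. Next, the defining identity $I_\Z C\sigma(\alpha_{\KO((q))/\TMF}) = \alpha_{\KO((q))}$ yields the naturality relation
\[
\alpha_{\KO((q))/\TMF}(x, C\sigma_*(z)) = \alpha_{\KO((q))}(\sigma_*(x), z),
\]
so the subgroup $A_d$ annihilates $K$, the Anderson pairing descends through $\partial$, and this descent coincides with $\lpar-,-\rpar$ by Lemma~\ref{prop:welldef} (which is where the second hypothesis $d\not\equiv 3\pmod{24}$ enters). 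In particular $A_d \subseteq K^\perp$ inside $(\pi_d\TMF)_\tor$.

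By subgroup/quotient duality, the descended pairing $A_d \times A_{-d-22}\to \Q/\Z$ is perfect iff $A_d$ coincides with $K^\perp$. For the remaining inclusion $K^\perp \subseteq A_d$, take $x \in K^\perp$; the naturality relation forces $\alpha_{\KO((q))}(\sigma_*(x), z) = 0$ for all $z \in \pi_{-d-21}\KO((q))$ whose image $C\sigma_*(z)$ is torsion. I would conclude $\sigma_*(x) = 0$ by observing that $(\pi_d\KO((q)))_\tor$ vanishes unless $d\equiv 1,2 \pmod 8$, in which case it equals $\Z/2((q))$; on these torsion groups the pairing induced by $\alpha_{\KO((q))} = \tfrac12 \Delta(q)\cdot(-)|_{q^0}$ is non-degenerate because $\Delta(q)$ is a unit in $\Z((q))$ (given nonzero $f \in \Z/2((q))$, a suitable monomial $z = q^{-n}$ picks out the lowest term of $\Delta(q) f(q)$ mod $2$).

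The main obstacle I anticipate is precisely this last non-degeneracy check, together with confirming that one may find the required witnesses $z$ whose image $C\sigma_*(z)$ is torsion rather than just $z$ itself torsion. Once this is handled — along with the finiteness of $A_d$ and $A_{-d-22}$ in each degree, which follows from the known structure of $\pi_\bullet\TMF$ — the conclusion drops out formally from subgroup/annihilator duality in the ambient perfect Anderson torsion pairing.
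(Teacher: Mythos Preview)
Your approach and the paper's share the same starting point---the Anderson torsion isomorphism $(\pi_{-d-21}\KO((q))/\TMF)_\tor\simeq\Hom((\pi_d\TMF)_\tor,\Q/\Z)$---but diverge at the final step, and that divergence hides a genuine gap.

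The paper's proof is shorter and sidesteps your annihilator machinery entirely: for nonzero $x\in A_d$ it picks any torsion $z$ with $\alpha_{\KO((q))/\TMF}(x,z)\neq0$, sets $y=\partial z$, observes $\lpar x,y\rpar=\alpha_{\KO((q))/\TMF}(x,z)\neq0$, and concludes that $A_d\hookrightarrow\Hom(A_{-d-22},\Q/\Z)$, hence $|A_d|\le|A_{-d-22}|$.  Then it swaps $d\leftrightarrow -d-22$ (the hypotheses are symmetric) to get the reverse inequality and finish.  No computation of $K^\perp$, no non-degeneracy of the $\KO((q))$ torsion pairing, no subgroup/quotient duality.

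Your plan, by contrast, aims to show $A_d=K^\perp$ and then invoke ``subgroup/annihilator duality in the ambient perfect Anderson torsion pairing.''  The problem is that the ambient groups $(\pi_d\TMF)_\tor$ and $(\pi_{-d-21}\KO((q))/\TMF)_\tor$ are typically \emph{infinite} (e.g.\ when $d\equiv1,2\pmod 8$ the image $U_d\subset\Z/2((q))$ is infinite-dimensional), and for infinite discrete groups the double-annihilator identity $(K^\perp)^\perp=K$ can fail.  Concretely, even granting $A_d=K^\perp$, your argument only yields the injection $K^\perp\hookrightarrow(H/K)^*$, i.e.\ $|A_d|\le|A_{-d-22}|$; the map in the other direction $H/K\to(K^\perp)^*$ has kernel $(K^\perp)^\perp/K$, which you have not shown to vanish.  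So the ``conclusion drops out formally'' step is exactly where the paper's symmetry trick is needed, and you have not supplied it.

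Separately, your reverse inclusion $K^\perp\subseteq A_d$ is extra work that the paper never needs, and your proposed verification of it (non-degeneracy of the $\alpha_{\KO((q))}$ torsion pairing on $\Z/2((q))\times\Z/2((q))$) is only sketched; making it rigorous requires identifying that torsion pairing explicitly, which you have not done.  If you add the symmetry swap $d\leftrightarrow -d-22$ at the end, you can drop this step entirely and your argument collapses to the paper's.
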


\begin{proof}

From our  Theorem~\ref{main}, we know that $\alpha_{\KO((q))/\TMF}$ gives the isomorphism \begin{equation}
(\pi_{-d-21} \KO((q))/\TMF)_\text{torsion} \simeq \Hom( (\pi_{d}\TMF)_\text{torsion} , \Q/\Z).
\end{equation}
Therefore, for any nonzero $x\in A_{d}$, 
there is an element $z\in \pi_{-d-21}\KO((q))/\TMF$
for which $\alpha_{\KO((q))/\TMF}(x,z)$ is nonzero.
Letting $y=\partial(z)\in A_{-d-22}$, we see that $\lpar x,y\rpar$ is nonzero.
Therefore, the map $A_{d}\to \Hom(A_{-d-22},\Q/\Z)$ induced by $\lpar-,-\rpar$ is injective.
In particular, $|A_{d}| \le |A_{-d-22}|$.
Exchanging the role of $d$ and $-d-22$, we obtain an inequality in the opposite direction,
showing that $|A_{d}|=|A_{-d-22}|$ and that the pairing $\lpar-,-\rpar$ is perfect.
\end{proof}

\begin{rem}
\label{rem:AApairing}
The explicit form of this pairing can be found by using the information in \cite{BrunerRognes}. 
We can work at each prime. 
There is no $p$-torsions in $A_d$ for $p\ge 5$, so what matters is $p=2$ and $p=3$.

At prime 2, it was shown in \cite[Theorem 10.29 and Table 1]{BrunerRognes} that the Anderson duality induces a perfect pairing 
\begin{equation}
\Theta N_{170-d} \times \Theta N_{d} \to \Q/\Z,
\end{equation}
where $\Theta N_d \subset \pi_d \tmf$
is the subset of all $B$-power-torsion elements whose degree  $d$ satisfies $0\le d < 192$ and $d\neq 24k+3$.
This gives a perfect pairing at prime 2 between
$A_{-192\ell-22-d}= M^{-\ell-1} \Theta N_{170-d}$
and $A_{192\ell+d}=M^{\ell} \Theta N$
where $M\in \pi_{192}\tmf$ is the periodicity element at prime 2.

At prime 3, it was shown in \cite[Theorem 13.25]{BrunerRognes} that
the Anderson duality induces a perfect pairing \begin{equation}
\Theta N_{50-\bullet} \times \Theta N_\bullet \to \Q/\Z
\end{equation} 
where $\Theta N_\bullet$ is the $B$-power-torsion subgroup of elements whose degree $d$ satisfies $0\le d < 72$.
This gives a perfect pairing at prime 3 between 
$A_{-72\ell-22-d}= H^{-\ell-1} \Theta N_{50-d}$
and $A_{72\ell+d}=H^{\ell} \Theta N$
where $H\in \pi_{72}\tmf$ is the periodicity element at prime 3.
\end{rem}

The pairing just determined can be used to perform some computations of $\alpha_\text{spin/string}$.
Recall that Definition~\ref{def_torsion_pairing} gives us the pairing 
\begin{equation}
\alpha_{\spin/\stri}( -,-): (\pi_{-d-22} \TMF)_\text{torsion} \times  (\pi_{d+1}\MSpin/\MString)_\text{torsion} \to \Q/\Z,
\end{equation}
where we switched the role of $d$ and $-d-22$ with respect to \eqref{TMFKOpair} for convenience.
We then have the following:
\begin{prop}
\label{prop:geometric-p}
Take $x\in A_{-d-22}$. 
Suppose a string manifold $Y$ of dimension d
is spin null bordant, i.e.~there is a spin manifold $Z$ with $\partial Z=Y$.
We then have a class $[Z,Y]\in \pi_{d+1}\MSpin/\MString$.
When $d\not\equiv 3,-1$ mod $24$,
we have $$
\alpha_\text{spin/string}(x , [Z,Y]) = \lpar x, y\rpar,
$$
where $y=\Wit_\stri([Y])\in A_d\subset \pi_d\TMF$.
\end{prop}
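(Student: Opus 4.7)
The plan is to unwind Definition~\ref{def_alpha_rel}, transport the pairing across the right square of \eqref{sigma-cd}, and apply Lemma~\ref{prop:torsionlift} to turn the geometrically produced lift into a torsion one. Explicitly, the factorization $\alpha_{\spin/\stri} = (I_\bZ \Wit_{\spin/\stri}) \circ \alpha^{\vee}_{\KO((q))/\TMF}$ immediately rewrites the left-hand side as
\[
\alpha_{\spin/\stri}(x,[Z,Y]) \;=\; \alpha_{\KO((q))/\TMF}\bigl(x,\,\widetilde y\bigr), \qquad \widetilde y := (\Wit_{\spin/\stri})_{*}([Z,Y]) \in \pi_{d+1}\KO((q))/\TMF.
\]
By naturality of the connecting homomorphism applied to the cofiber sequences of \eqref{sigma-cd}, $\widetilde y$ is a lift of $y = (\Wit_\stri)_{*}([Y]) \in A_d$ along $\partial\colon \pi_{d+1}\KO((q))/\TMF \to \pi_d\TMF$.

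Next, since $d \not\equiv 3 \pmod{24}$, the Bunke--Naumann invariant $b(y)$ vanishes, and the proof of Lemma~\ref{prop:torsionlift} applied to the specific lift $\widetilde y$ produces an \emph{integral} class $z \in \pi_{d+1}\KO((q))$ together with a torsion lift $\overline y$ satisfying
\[
\widetilde y \;=\; \overline y + (C\sigma)(z), \qquad \overline y \in \bigl(\pi_{d+1}\KO((q))/\TMF\bigr)_{\tor}.
\]
Bilinearity of the pairing and the definition of $\lpar -,- \rpar$ (whose well-definedness is Lemma~\ref{prop:welldef}) then give
\[
\alpha_{\KO((q))/\TMF}(x,\widetilde y) \;=\; \lpar x, y\rpar + \alpha_{\KO((q))/\TMF}\bigl(x,\,(C\sigma)(z)\bigr).
\]

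The second term is killed by naturality. The defining property $I_\bZ C\sigma (\alpha_{\KO((q))/\TMF}) = \alpha_{\KO((q))}$ of the secondary class, combined with the fact that the $\TMF$-module action on $\KO((q))/\TMF$ factors through $\sigma$ and that $C\sigma$ is a $\KO((q))$-module map, translates into the identity
\[
\alpha_{\KO((q))/\TMF}\bigl(x,\,(C\sigma)(z)\bigr) \;=\; \alpha_{\KO((q))}(\sigma(x), z) \;=\; 0,
\]
using $x \in A_{-d-22}$, hence $\sigma(x) = 0$. The main obstacle is really only the integrality part of the torsion-correction step: one must realize the correction by a genuine class of $\pi_{d+1}\KO((q))$ rather than merely a rational one, and this is precisely what Lemma~\ref{prop:torsionlift} supplies --- which is why $d \not\equiv 3 \pmod{24}$ appears in the hypothesis. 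The complementary exclusion $d \not\equiv -1 \pmod{24}$ enters only so that $\lpar x, y\rpar$ is defined on $A_{-d-22} \times A_d$ in the first place.
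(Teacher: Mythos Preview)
Your argument is correct but does more work than needed, and the paper's proof is a two-line version of the same idea. The paper simply records that $\overline y := \Wit_{\spin/\stri}([Z,Y]) \in \pi_{d+1}\KO((q))/\TMF$ is a lift of $y$ and immediately concludes
\[
\alpha_{\spin/\stri}(x,[Z,Y]) \;=\; \alpha_{\KO((q))/\TMF}(x,\overline y) \;=\; \lpar x,y\rpar,
\]
the last equality being precisely the definition of $\lpar -,- \rpar$ together with Lemma~\ref{prop:welldef}. The point you may have missed is that $[Z,Y]$ is implicitly torsion (the pairing $\alpha_{\spin/\stri}(-,-)$ of Definition~\ref{def_torsion_pairing} is only defined on torsion), so $\overline y = \widetilde y$ is already a torsion lift and no splitting is required.

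Your decomposition $\widetilde y = \overline y + (C\sigma)(z)$ and the subsequent vanishing argument therefore amount to reproving Lemma~\ref{prop:welldef} inside this proof. Note also that your naturality step $\alpha_{\KO((q))/\TMF}(x,(C\sigma)(z)) = \alpha_{\KO((q))}(\sigma(x),z)$ needs $z$ to be \emph{torsion}, not merely integral, for the torsion pairing on the right to make sense; integrality alone does not suffice. This torsion property does hold, but its justification is exactly the argument in the proof of Lemma~\ref{prop:welldef} and uses $d \not\equiv -1 \pmod{24}$ --- so that exclusion is doing more than just making $\lpar -,- \rpar$ defined.
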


\begin{proof}
The image $\overline y$ of $[Z,Y]$ in $\pi_{d+1}\KO((q))/\TMF$ 
lifts $y$. Therefore, $
\alpha_\text{spin/string}(x,[Z,Y]) = \alpha_{\KO((q))/\TMF}(x,\overline y)
= \lpar x, y\rpar
$.
\end{proof}

\begin{rem}

Concretely, at prime $2$, take $(x,y)=(\nu_6\kappa/M,\nu^3)$,
$(\eta\nu_6\kappa/M,\epsilonelem)$,
$(\nu\nu_6\kappa/M,\nu^2)$ for $d=9,8,6$, respectively.
Here we follow the notations of \cite{BrunerRognes};
 in particular,
 $\nu\in\pi_3\mathbb{S}$ 
 and $\epsilonelem\in \pi_8\mathbb{S}$ are
  the group manifolds of $SU(2)$ and  $SU(3)$ with Lie group framing, and 
 they generate $A_d$ and $A_{-d-22}$, respectively,
see \cite[Table 10.1]{BrunerRognes}.
 Therefore $\alpha_\text{spin/string}(x,-)$ detects
 the relative bordism classes lifting $y$.

At prime $3$, take $(x,y)=(\beta^4/H,\beta)$, where $\beta\in \pi_{10}\mathbb{S}$
is the $Sp(2)$ with Lie group framing.
Again they generate $A_{-32}$ and $A_{10}$, see \cite[Sec.~13.6]{BrunerRognes}.
Therefore $\alpha_\text{spin/string}(\beta^4/H,-)$ detects the relative bordism classes lifting $\beta$.
\end{rem}

\subsection{Induced differential pairing}\label{subsec_diff_pairing}

Actually, the pairings discussed above 
are parts of the {\it differential pairing} which we now introduce. 
For the background materials on differentia (co)homology theories, see Appendix~\ref{app:diff_bordism}.

\subsubsection{General theory}\label{subsubsection_general_diff_paiaing}
In the general setting of Definitions \ref{def_int_pairing} and \ref{def_torsion_pairing} where we have a morphism of spectra $\alpha: E\to \Sigma^{-s} I_\bZ F$ and an integer $d$, suppose that
\begin{enumerate}
    \item A differential extension $\widehat{F}_\bullet$ of the $F$-\emph{homology} in the sense of \cite{YamashitaDifferentialIE} is given. 
    \item A differential extension $\widehat{E}^\bullet$ of $E$-\emph{cohomology} and a differential refinement $\widehat{\alpha} \colon \widehat{E}^{-d} \to (\widehat{I_\Z F})^{-d-s}$ of $\alpha \colon E \to \Sigma^{-s}I_\Z F$ are given.
\end{enumerate}
Then, using the pairing \eqref{eq_IE_diff_pairing} we get the following {\it differential pairing}, 
\begin{align}\label{eq_generalized_differential_pairing}
    \widehat{\alpha}\colon \widehat{E}^{-d} \times \widehat{F}_{-d-s-1} \to \R/\Z. 
\end{align}

The differential pairing is related to topological pairings discussed earlier as follows. 
First, by Lemma \ref{lem_pairing_compatibility} and Lemma \ref{lem_pairing_compatibility_2}, we get the following compatibility with $\R$-valued pairing. 
\begin{lem}\label{lem_diff_generalized_pairing_compatibility}
    In the following diagram, 
    \begin{align}\label{diag_diff_gen_pairing_compatibility}
\vcenter{\xymatrixcolsep{0pc}
    \xymatrix{
    E_\R^{-d-1} \ar[d]^-{a}&\times& (F_{-d-s-1})_\R\ar[rrrrrr]^-{ \alpha_\R}  &&&&&& \R \ar[d]^-{\text{mod} \ \Z} \\
    \widehat{E}^{-d} \ar[d]^-{R}&\times& \widehat{F}_{-d-s-1}\ar[u]^-{R}\ar[rrrrrr]^-{ \widehat{\alpha}}  &&&&&& \R/\Z \\
    E_\R^{-d} &\times& (F_\R)_{-d-s}\ar[rrrrrr]^-{\alpha_\R} \ar[u]^-{a}&&&&&& \R \ar[u]_-{\text{mod} \ \Z}
    }},
\end{align}
three pairings are compatible, i.e., for any element $x \in E_\R^{-d-1}$ and $\hat{y} \in \widehat{F}_{-d-s-1}$ we have
\begin{align}
    \widehat{\alpha}(a(x), \widehat{y})=\alpha_\R(x, R(\widehat{y})) \pmod \Z, 
\end{align}
and for any element $\widehat{x} \in \widehat{E}^{-d}$ and $y \in (F_\R)_{-d-s}$ we have
\begin{align}
    \widehat{\alpha}(\widehat{x}, a(y)) = \alpha_\R (R(\widehat{x}), y) \pmod \Z. 
\end{align}
\end{lem}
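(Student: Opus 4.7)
The plan is to observe that the differential pairing \eqref{eq_generalized_differential_pairing} is by construction
\[
\widehat{\alpha}(\widehat{x}, \widehat{y}) = \bigl\langle \widehat{\alpha}(\widehat{x}),\, \widehat{y} \bigr\rangle_{\mathrm{can}},
\]
where $\langle -,- \rangle_{\mathrm{can}}$ denotes the canonical differential pairing
\[
\widehat{I_\Z F}^{-d-s} \times \widehat{F}_{-d-s-1} \to \R/\Z
\]
of \eqref{eq_IE_diff_pairing}. The two claimed identities will then be obtained by combining the axiomatic properties of the refinement $\widehat{\alpha}$ with the two compatibility lemmas already established on the Anderson-dual side, namely Lemma \ref{lem_pairing_compatibility} and Lemma \ref{lem_pairing_compatibility_2}. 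So the proof reduces to a short diagram chase once the cited appendix material is in place.

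For the first identity, I would first use the fact that a differential refinement of the morphism $\alpha$ commutes with the structure map $a$ in the sense that $\widehat{\alpha}(a(x)) = a(\alpha_\R(x))$ in $\widehat{I_\Z F}^{-d-s}$ for every $x \in E_\R^{-d-1}$. Substituting this and then invoking Lemma \ref{lem_pairing_compatibility} on the $I_\Z F$-side — which computes $\langle a(-), - \rangle_{\mathrm{can}}$ via the $\R$-valued pairing of $\alpha_\R$ with the curvature, reduced mod $\Z$ — yields $\widehat{\alpha}(a(x), \widehat{y}) = \alpha_\R(x, R(\widehat{y})) \pmod{\Z}$.

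For the second identity, I would similarly use the compatibility of $\widehat{\alpha}$ with the curvature map $R$, namely $R(\widehat{\alpha}(\widehat{x})) = \alpha_\R(R(\widehat{x}))$, and then apply Lemma \ref{lem_pairing_compatibility_2}, which is the dual statement handling arguments of the form $a(y)$ on the homology side. This yields $\widehat{\alpha}(\widehat{x}, a(y)) = \alpha_\R(R(\widehat{x}), y) \pmod{\Z}$, completing the verification of diagram \eqref{diag_diff_gen_pairing_compatibility}.

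Since the two cited lemmas carry the analytic content, the present lemma is essentially formal. The only point where care is needed — and I do not expect it to be difficult — is verifying that the notion of differential refinement of $\alpha$ adopted in \cite{YamashitaDifferentialIE} and recalled in Appendix \ref{app:diff_bordism} intertwines the structure maps $a$ and $R$ strictly with the induced map $\alpha_\R$ on differential forms, rather than only up to a specified homotopy. Once that is in place, no correction terms appear when postcomposing with the canonical pairing, and the two compatibilities drop out.
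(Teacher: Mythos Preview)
Your approach is correct and matches the paper's, which simply cites Lemma~\ref{lem_pairing_compatibility} and Lemma~\ref{lem_pairing_compatibility_2} as the entire justification; you have merely unpacked the diagram chase. One small correction: you have the two lemma labels swapped---the statement that computes $\langle a(-),\,-\rangle_{\widehat{I_\Z}}$ in terms of the curvature is Lemma~\ref{lem_pairing_compatibility_2}, while the statement handling $a(y)$ on the homology side is the bottom compatibility in Lemma~\ref{lem_pairing_compatibility}---but your verbal descriptions of what each lemma does are accurate, so the argument goes through once the citations are interchanged.
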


Next, we discuss the relation with the torsion pairing $\alpha \colon (\pi_d E)_\text{torsion} \times (\pi_{-d-s-1} F)_\text{torsion} \to \Q/\bZ$ in Definition \ref{def_torsion_pairing}.  
Note that any torsion element $x \in (\pi_d E)_{\text{torsion}}$ can be lifted to an element $\widehat{x} \in \widehat{E}^{-d}$ with trivial curvature, $R(\widehat{x}) = 0 \in {E}^{-d}_\R$. 
Conversely, if an element $\widehat{x} \in \widehat{E}^{-d}$ satisfies $R(\widehat{x})=0$, then the topological class is torsion, $I(\widehat{x}) \in (\pi_d E)_{\text{torsion}}$. 

\begin{lem}\label{lem_diff_pairing_torsion}
Let $\widehat{x} \in \widehat{E}^{-d} $ 
lift a torsion element $x := I(\widehat{x}) \in  (\pi_{d} E)_\text{torsion}$,
further satisfying $R(\widehat{x}) = 0 \in {E}^{-d}_\R$.
Let  $\widehat{y} \in \widehat{F}_{-d-s-1} $
lift a torsion element $y := I(\widehat{y}) \in  (\pi_{-d-s-1} F)_\text{torsion}$.
Then we have
\[
    \widehat{\alpha}(\widehat{x}, \widehat{y}) = \alpha(x, y).
\]
\end{lem}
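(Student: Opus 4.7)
The plan is to reduce the differential pairing to a computation among flat classes and then invoke the compatibility of the differential Anderson-dual refinement with the topological torsion pairing.

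First, I would observe that the hypothesis can be strengthened to zero curvature on both sides. The curvature $R(\widehat{y})$ lies in $(F_\R)_{-d-s-1}$, which for a point is just the realification $y_\R$ of the topological class. Since $y$ is torsion, $y_\R=0$, and hence $R(\widehat{y})=0$ as well. So we may compute $\widehat{\alpha}(\widehat{x},\widehat{y})$ under the simplifying assumption $R(\widehat{x})=R(\widehat{y})=0$.

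Second, I would use Lemma~\ref{lem_diff_generalized_pairing_compatibility} to show that in this flat-curvature regime the differential pairing only depends on the topological classes $x$ and $y$. Any other lift $\widehat{y}{}'$ of $y$ differs by $\widehat{y}{}'-\widehat{y}=a(\omega)$ for some $\omega\in (F_\R)_{-d-s}$, and
\[
\widehat{\alpha}(\widehat{x},a(\omega))=\alpha_\R(R(\widehat{x}),\omega)=0
\]
because $R(\widehat{x})=0$. The same argument on the other side shows independence of the flat lift $\widehat{x}$: a difference $a(\xi)\in \widehat{E}^{-d}$ pairs against $\widehat{y}$ via $\alpha_\R(\xi,R(\widehat{y}))=0$.

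Third, I would appeal to the axiomatic structure of the differential Anderson-dual refinement (Appendix~\ref{app:diff_bordism}, following \cite{YamashitaDifferentialIE}): under the refinement $\widehat{\alpha}$, the class $\widehat{\alpha}(\widehat{x})\in \widehat{(I_\Z F)}^{-d-s}$ satisfies $I(\widehat{\alpha}(\widehat{x}))=\alpha(x)$ and $R(\widehat{\alpha}(\widehat{x}))=0$, so it is a flat class whose underlying topological class is the torsion element $\alpha(x)\in\pi_{d+s}I_\Z F$. Such a flat class corresponds canonically to a homomorphism $\pi_{-d-s-1}F \to \R/\Z$, and when paired via $\widehat{\alpha}$ with any $\widehat{y}$ lifting a class $y\in \pi_{-d-s-1}F$ with $R(\widehat{y})=0$, the result is precisely the value of this homomorphism on $y$. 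On the torsion subgroup this homomorphism is by construction the one determined by $\alpha(x)$ regarded as a functional $(\pi_{-d-s-1}F)_{\text{torsion}}\to\Q/\Z$, i.e.\ the pairing $\alpha(x,-)$ of Definition~\ref{def_torsion_pairing}. Evaluating on $y$ yields $\alpha(x,y)$, completing the proof.

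The main obstacle is the third step: it rests on the structural fact that the flat part of $\widehat{(I_\Z F)}^{-d-s}$ at a torsion topological class is canonically identified with $\Hom((\pi_{-d-s-1}F)_{\text{torsion}},\Q/\Z)$ in a way compatible with both the differential pairing and the topological Anderson-dual torsion pairing. This requires careful tracking of the canonical isomorphisms built into the axiomatic framework of Appendix~\ref{app:diff_bordism}, but is otherwise mechanical once the curvature vanishing is established.
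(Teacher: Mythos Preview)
Your proposal is correct and follows essentially the same route as the paper: the key step is that $R(\widehat{x})=0$ forces $\widehat{\alpha}(\widehat{x})$ to lie in the flat part of $(\widehat{I_\Z F})^{-d-s}$, identified with $(I_{\R/\Z}F)^{-d-s-1}$ via $\iota_{\mathrm{flat}}$, after which Lemma~\ref{lem_pairing_compatibility} supplies exactly the compatibility with the topological $I_{\R/\Z}$-pairing that you flag as the ``main obstacle'' in step~3. The paper's version is slightly more direct in that your steps~1--2 are not needed (only $R(\widehat{x})=0$ is used, not $R(\widehat{y})=0$; the independence of the lift $\widetilde{\alpha(x)}$ is handled by noting that $y$ is torsion), so you could streamline by going straight to $\iota_{\mathrm{flat}}$ and Lemma~\ref{lem_pairing_compatibility}.
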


\begin{proof}
    We recall the definition of $\alpha$. A torsion element $x \in (\pi_d E)_\text{torsion}$ maps to a torsion element $\alpha(x) \in (\pi_{d+s} I_\Z F)_\text{torsion} \subset (I_{\R/\Z} F)^{-d-s-1}/H^{-d-s-1}(F; \R)$. 
    We chose a lift $\widetilde{\alpha(x)} \in (I_{\R/\Z} F)^{-d-s-1}$ of $\alpha(x)$ and defined
    \begin{align}\label{eq_proof_pairing_compatibility}
        \alpha(x, y) := \langle \widetilde{\alpha(x)} , y \rangle_{I_{\R/\Z}}, 
    \end{align}
    where the right hand side uses the pairing in the top row of the diagram
    \begin{align}\label{diag_proof_pairing_compatibility}
\vcenter{\xymatrixcolsep{0pc}
    \xymatrix{
   (I_{\R/\Z}F)^{n-1}\ar[d]^-{\iota_{\text{flat}}}  &\times& \pi_{n-1}F \ar[rrrrrr]^-{\langle \cdot, \cdot\rangle_{I_{\R/\Z}}} &&&&&& \R/\Z \ar@{=}[d]\\
    (\widehat{I_\Z F})^n &\times& \widehat{F}_{n-1} \ar[u]^-{I}\ar[rrrrrr]^-{\langle \cdot, \cdot \rangle_{\widehat{I_\Z}}}  &&&&&& \R/\Z , \\
    }}
\end{align}
where the reader should refer to \eqref{diag_exact_diff_IE} for $\iota_\text{flat}$.
The right hand side of \eqref{eq_proof_pairing_compatibility} does not depend on the lift exactly because $y$ is torsion. 
By Lemma \ref{lem_pairing_compatibility}, the two pairings in the diagram \eqref{diag_proof_pairing_compatibility} are compatible so that we have
\begin{align}
    \alpha(x, y) = \left\langle \widetilde{\alpha(x)} , y \right\rangle_{I_{\R/\Z}} = \left\langle \iota_{\text{flat}}\left(\widetilde{\alpha(x)} \right), \widehat{y} \right\rangle_{\widehat{I_\Z}}
    =\widehat{\alpha}(\widehat{x}, \widehat{y}). 
\end{align}
Here the last equation used the assumption $R(\widehat{x}) = 0$, which implies that the lift $\widehat{x}$ is taken to lie in the image of $\iota_{\text{flat}}$. 
This completes the proof.
\end{proof}

We also consider the following variant of the differential pairing. 
We still assume the condition (1) in the beginning of this subsubsection. 
Instead of assuming (2), we now assume the following. 
\begin{enumerate}
    \item[(3)] $H^{-d-s-1}(F; \R) = 0$. 
\end{enumerate}
This condition (3)  and the axiom of generalized differential cohomology imply that the forgetful map gives an isomorphism
\begin{align}\label{eq_diff_IF_forgetful}
    I \colon (\widehat{I_\Z F})^{-d-s} \simeq (I_\Z F)^{-d-s} = \pi_{d+s}(I_\bZ F). 
\end{align}
Thus the differential pairing in \eqref{eq_generalized_differential_pairing} reduces to the following pairing, which we still call as the differential pairing, 
\begin{align}\label{eq_diff_pairing}
    \widehat{\alpha}\colon \pi_d E \times \widehat{F}_{-d-s-1} \to \R/\Z. 
\end{align}

\subsubsection{The differential pairing associated to $\alpha_{\spin/\stri}$ and the invariants of Bunke and Naumann}\label{subsubsec_diff_pairing_BunkeNaumann}

In this subsubsection we apply the general theory of differential pairing above to the case $\alpha = \alpha_{\spin/\stri} \colon \TMF \to \Sigma^{-20}I_\Z \MSpin/\MString$, and explain its relation with the invariants $b^\text{an}$, $b^\text{geom}$ and $b^\text{top}$ of Bunke and Naumann \cite{BunkeNaumann}.

In this case we have $E=\TMF$, $F = \MSpin/\MString$ and $s=20$. 
For the condition (1) on the differential extension of $\MSpin/\MString$-homology, we use the {\it relative differential spin/string bordism groups} $\reallywidehat{\MSpin/\MString}_*$, which are explained in detail in Appendix \ref{app_subsec_diff_bordism}. 
Roughly speaking, a typical element in $\reallywidehat{\MSpin/\MString}_n$ is represented by a {\it relative differential spin/string-cycle} of dimension $n$, consisting of data $(N_n, M_{n-1}, g_N^\spin, g_M^{\stri})$, where $N$ is a compact $n$-dimensional manifold with boundary $\del N = M$, $g_N^{\spin}$ is a differential spin structure on $N$ and $g_M^\stri$ is a differential string structure on $M$ which lifts $g_N^{\spin}|_M$. 

The differential pairing \eqref{eq_generalized_differential_pairing} in this case
becomes\footnote{
For the definition of the differential extension of $\TMF$, we use the {\it multiplicative} extension with $S^1$-integration, which uniquely exists by the result of \cite{BunkeSchickUniqueness}. 
}
\begin{align}\label{eq_diff_pairing_MSpin/MString}
    \widehat{\alpha}_{\spin/\stri} \colon  \widehat{\TMF}^{-d} \times \reallywidehat{\MSpin/\MString}_{-d-21} \to \R/\Z. 
\end{align}

First we investigate the easy parts of pairing where we can lift the values to $\R$, using the compatibility in Lemma \ref{lem_diff_generalized_pairing_compatibility}. 
We focus on the case $d =4k-1$ for some integer $k$. 
In this case the upper half of the diagram \eqref{diag_diff_gen_pairing_compatibility} becomes nontrivial and takes the form
\begin{align}
\vcenter{\xymatrixcolsep{0pc}
    \xymatrix{
    \TMF_\R^{-4k} \simeq (\MF_{2k})_\R \ar[d]^-{a}&\times& \pi_{-4k-20}\MSpin/\MString\ar[rrrrrr]^-{ (\alpha_{\spin/\stri})_\R}  &&&&&& \R \ar[d]^-{\text{mod} \ \Z} \\
    \widehat{\TMF}^{-(4k-1)} &\times& \reallywidehat{\MSpin/\MString}_{-4k-20}\ar[u]^-{I}\ar[rrrrrr]^-{ \widehat{\alpha}_{\spin/\stri}}  &&&&&& \R/\Z, 
    }}
\end{align}
We have the following formula for the differential pairings whose first variable comes from $(\MF_{2k})_\R$. 
Recall the map $\Wit_{\spin/\stri} \colon \MSpin/\MString \to \KO((q))/\TMF$, whose realification takes the form
\begin{align}\label{eq_Wit_rel_realification}
   (\Wit_{\spin/\stri})_\R \colon \pi_{4\ell}\MSpin/\MString \to \pi_{4\ell}(\KO((q))/\TMF)_\R \simeq \frac{\R((q))}{(\MF_{2\ell})_{\R}}. 
\end{align}

\begin{prop}[{The \emph{easy} part\footnote{We have the exact sequence $\TMF_\R^{-4k} \xrightarrow{a} \widehat{\TMF}^{-(4k-1)} \xrightarrow{I} \TMF^{-(4k-1)}$.
This means that, for a fixed integer $k$, the formula in Proposition \ref{prop_diff_pairing_easy_-1mod4} gives the whole information of $\widehat{\alpha}_{\spin/\stri}$ if and only if $\pi_{4k-1}\TMF = 0$. } of the pairing for $d \equiv -1 \pmod 4$}]\label{prop_diff_pairing_easy_-1mod4}
Let $k$ be an integer. 
For each $f(q) \in (\MF_{2k})_\R \simeq \TMF_\R^{-4k}$ and a differential spin/string cycle $(N, M, g_N^\spin, g_M^\stri)$ of dimension $(-4k-20)$, we have
\begin{align}\label{eq_diff_pairing_easy_-1mod4}
    \widehat{\alpha}_{\spin/\stri}\left(a(f), [N, M, g_N^\spin, g_M^\stri]\right) 
    ={\frac12}\Delta(q)f(q)(\Wit_{\spin/\stri})_\R ([N, M] )\Bigr|_{q^0} \pmod \Z. 
\end{align}
\end{prop}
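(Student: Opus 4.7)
The plan is to derive the formula by invoking the compatibility between the differential pairing and the $\R$-valued pairing, which was already packaged into Lemma~\ref{lem_diff_generalized_pairing_compatibility}. Specifically, taking $x = f \in (\MF_{2k})_\R \simeq \TMF_\R^{-4k}$ and $\widehat{y} = [N,M,g_N^\spin,g_M^\stri] \in \reallywidehat{\MSpin/\MString}_{-4k-20}$, the upper triangle of the diagram \eqref{diag_diff_gen_pairing_compatibility} in Lemma~\ref{lem_diff_generalized_pairing_compatibility} yields
\[
\widehat{\alpha}_{\spin/\stri}\bigl(a(f),\,[N,M,g_N^\spin,g_M^\stri]\bigr)
\;\equiv\;
(\alpha_{\spin/\stri})_\R\bigl(f,\,I([N,M,g_N^\spin,g_M^\stri])\bigr)
\pmod{\Z},
\]
so the problem reduces to identifying $I([N,M,g_N^\spin,g_M^\stri]) \in \pi_{-4k-20}(\MSpin/\MString)$ and plugging into the explicit formula of Proposition~\ref{claim_int_pairing_formula}.

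Next, by the construction of the relative differential spin/string bordism groups $\reallywidehat{\MSpin/\MString}_\bullet$ reviewed in Appendix~\ref{app_subsec_diff_bordism}, the forgetful map $I$ sends the differential cycle $(N,M,g_N^\spin,g_M^\stri)$ to its underlying topological relative bordism class $[N,M] \in \pi_{-4k-20}(\MSpin/\MString)$. Substituting into the formula of Proposition~\ref{claim_int_pairing_formula} (with $d = 4k$ there in the sense of the relation $-d-20$ for the second variable), we obtain
\[
(\alpha_{\spin/\stri})_\R\bigl(f,[N,M]\bigr)
\;=\;
\tfrac12 \Delta(q)\, f(q)\, (\Wit_{\spin/\stri})_\R\bigl([N,M]\bigr)\Bigr|_{q^0}.
\]
Combining the two displays gives the claimed formula \eqref{eq_diff_pairing_easy_-1mod4}.

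There is essentially no hard step: once Lemma~\ref{lem_diff_generalized_pairing_compatibility} and Proposition~\ref{claim_int_pairing_formula} are in hand, the statement is a direct substitution. The only point worth verifying carefully is that the ambiguity $(\MF_{2\ell})_\R$ in the target of $(\Wit_{\spin/\stri})_\R$ in \eqref{eq_Wit_rel_realification} is killed by the pairing with $\tfrac12 \Delta(q) f(q)$ after extracting the $q^0$-coefficient; this is exactly Fact~\ref{fact:mf} applied to the product $\Delta(q) f(q) g(q) \in (\MF_2)_\R$ for any $g(q) \in (\MF_{-2k-10})_\R$, so the right-hand side of \eqref{eq_diff_pairing_easy_-1mod4} is well-defined and the reduction mod $\Z$ is consistent.
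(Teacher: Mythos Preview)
Your proof is correct and follows essentially the same approach as the paper's, which simply states that the result is a direct consequence of Lemma~\ref{lem_diff_generalized_pairing_compatibility} together with the definition of $\alpha_{\spin/\stri}$. You have just unpacked these two ingredients more explicitly (via Proposition~\ref{claim_int_pairing_formula} for the realified pairing) and added the well-definedness check using Fact~\ref{fact:mf}, which the paper leaves implicit.
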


\begin{proof}
    This is the direct consequence of the compatibility of the pairing in Lemma \ref{lem_diff_generalized_pairing_compatibility} for the diagram \eqref{diag_diff_gen_pairing_compatibility} and the definition of $\alpha_{\spin/\stri}$. 
\end{proof}

Now we explain the relation with the invariants $b^\text{an}$, $b^\text{geom}$ and $b^\text{top}$ of Bunke and Naumann. 
These invariants are defined on 
\begin{equation}
A_{4\ell-1}^\stri := \ker(\iota\colon\pi_{4\ell-1}\MString \to \pi_{4\ell-1}\MSpin)
\end{equation}
as
\begin{align}
    b^\an, b^\geom, b^\top \colon A^\stri_{4\ell-1} \to 
    {\frac{(\pi_{4\ell}\ko[[q]])_\bQ}{(\mf_{2\ell})_\bQ + \pi_{4\ell}\ko[[q]]} }
    \simeq
    \frac{\Q[[q]]}{(\mf_{2\ell})_\bQ + {\kappa_{4\ell}}\bZ[[q]]} 
\end{align} where {\begin{equation}
\kappa_{4\ell} = \begin{cases}
1 & \text{if $4\ell=0$ mod $8$, }\\
2 & \text{if $4\ell=4$ mod $8$. }
\end{cases}
\end{equation}}%
They are defined by three different ways: $b^\an$ is given in terms of eta invariants on string manifolds $M_d$ with $[M_d] \in A_{4\ell-1}^\stri$, $b^\geom$ is given by taking a spin-null bordism $N_{4\ell}$ of $M_{4\ell - 1}$ and integrating characteristic forms, and $b^\top$ is given by a homotopy-theoretic construction. 
Recall also the invariant  $b^\tmf$ which we studied in Sec.~\ref{subsubsec_BM_tmf},
which was defined instead on 
\begin{equation}
A_{4\ell-1}=\ker (\sigma \colon \pi_{4\ell-1} \TMF \to \pi_{4\ell-1}\KO((q))).
\end{equation}
One of the main results of \cite{BunkeNaumann} was the following equalities, 
\begin{align}
    b^\an = b^\geom = b^\top = b^\tmf \circ \Wit_\stri,
\end{align}
where we already discussed $b^\tmf$ in Sec.~\ref{subsubsec_BM_tmf}.\footnote{%
Note that in Sec.~\ref{subsubsec_BM_tmf} and in \cite{BunkeNaumann},
the isomorphism $\pi_{8k+4}\KO\simeq \bZ$ was used instead of $\pi_{8k+4}\KO\simeq 2\bZ$.
This accounts for the appearance of $\kappa_{4\ell}$ in this section.
}

Here we would like to make some comments on  $b := b^\an = b^\geom = b^\top$ 
defined on $A^\stri_{4\ell-1}$.
We begin with the following lemma.
\begin{lem}\label{lem_b_top}
For  $[N, M] \in \pi_{4\ell}(\MSpin/\MString)$, 
we have
\begin{equation}
(\Wit_{\spin/\stri})_\R([N, M]) \in \R[[q]]/(\mf_{2\ell})_\R \subset \R((q))/(\MF_{2\ell})_\R
\end{equation}
and we have
    \begin{align}\label{eq_b_varpi}
        b([M]) = (\Wit_{\spin/\stri})_\R ([N, M] ) \mod {\kappa_{4\ell}}\Z[[q]]. 
    \end{align}
\end{lem}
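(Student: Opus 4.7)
My plan is to exploit a lifting of $\Wit_{\spin/\stri}$ through the connective/holomorphic versions $\ko[[q]]$ and $\tmf$, and then to match the resulting formula with Bunke--Naumann's construction of $b^\geom$ via the identity $b = b^\tmf\circ\Wit_\stri$ on $A^\stri_{4\ell-1}$ recalled from \cite{BunkeNaumann}.

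For the first claim, I would appeal to the spectrum-level construction of the sigma orientations in \cite{AHR10}, which produce factorizations $\Wit_\spin\colon\MSpin\to\ko[[q]]\to\KO((q))$ and $\Wit_\stri\colon\MString\to\tmf\to\TMF$. Taking cofibers yields a factorization
\[
\Wit_{\spin/\stri}\colon \MSpin/\MString \to \ko[[q]]/\tmf \to \KO((q))/\TMF.
\]
After realification, on $\pi_{4\ell}$ the rightmost map identifies with the inclusion
\[
\R[[q]]/(\mf_{2\ell})_\R \hookrightarrow \R((q))/(\MF_{2\ell})_\R,
\]
which is injective because a weakly holomorphic modular form with no pole at the cusp is holomorphic, i.e.\ $(\mf_{2\ell})_\R = (\MF_{2\ell})_\R\cap\R[[q]]$. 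Hence $(\Wit_{\spin/\stri})_\R([N,M])$ lies in the image of $\R[[q]]/(\mf_{2\ell})_\R$, which is the first claim.

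For the second claim, I would unwind the definitions. The invariant $b^\tmf(\Wit_\stri([M]))$ for $[M]\in A^\stri_{4\ell-1}$ is computed by rationalizing any chosen lift of $\Wit_\stri([M])\in A_{4\ell-1}$ to $\pi_{4\ell}\KO((q))/\TMF$. The relative cycle $[N,M]$ provides exactly such a lift via $\Wit_{\spin/\stri}$, whose rationalization is precisely $(\Wit_{\spin/\stri})_\R([N,M])$ by Proposition~\ref{prop_varpi_integration}. Using the first claim, the resulting value already lies in the subspace $\R[[q]]/(\mf_{2\ell})_\R$, and may be read in the finer quotient $\R[[q]]/((\mf_{2\ell})_\R + \kappa_{4\ell}\Z[[q]])$ which is the target of $b$. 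The additional quotient by $\kappa_{4\ell}\Z[[q]]$ records the image of $\pi_{4\ell}\ko[[q]]$, with the factor $\kappa_{4\ell}$ arising from the mismatch between our convention $\pi_{8k+4}\KO\simeq 2\Z$ and the convention $\pi_{8k+4}\KO\simeq\Z$ used in both \cite{BunkeNaumann} and Sec.~\ref{subsubsec_BM_tmf}. Combining these ingredients gives \eqref{eq_b_varpi}.

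The main obstacle is the spectrum-level factorization used in the first step: that $\Wit_\spin$ and $\Wit_\stri$ land in $\ko[[q]]$ and $\tmf$ respectively cannot be detected on homotopy groups alone and genuinely requires the construction of the sigma orientations in \cite{AHR10}. Once that is granted, the remainder is essentially bookkeeping, with the only subtlety being to correctly track the factor $\kappa_{4\ell}$ across the two conventions for $\pi_{8k+4}\KO$.
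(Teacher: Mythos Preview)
Your proof is correct and follows essentially the same approach as the paper. For the first claim you both use the factorization $\MSpin/\MString \to \ko[[q]]/\tmf \to \KO((q))/\TMF$ coming from the connective sigma orientations of \cite{AHR10}; for the second claim the paper takes the slightly more direct route of observing that Bunke--Naumann's $b^\top$ is defined by the very same formula as $(\Wit_{\spin/\stri})_\R \bmod \kappa_{4\ell}\Z[[q]]$ in Proposition~\ref{prop_varpi_integration}, whereas you go via $b^\tmf\circ\Wit_\stri$ and the lift interpretation, but this amounts to the same argument.
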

\begin{proof}
The first statement follows by the factorization 
\begin{equation}
\MSpin/\MString \to \ko[[q]]/\tmf \to \KO((q))/\TMF
\end{equation}
of $\Wit_{\spin/\stri}$.
    The equation \eqref{eq_b_varpi} then follows by the fact that the definition of $b^\top$ in \cite[Section 4.1]{BunkeNaumann} is exactly the same as the definition of  our $(\Wit_{\spin/\stri})_\R$ mod ${\kappa_{4\ell}}\Z[[q]]$ given in Proposition~\ref{prop_varpi_integration}.
\end{proof}

The combination of Proposition \ref{prop_diff_pairing_easy_-1mod4} and Lemma \ref{lem_b_top} allows us to write some parts of the differential pairing in terms of Bunke-Naumann invariants, as follows. 
Note that we cannot replace $(\Wit_{\spin/\stri})_\R([N, M])$ with $b([M])$ in the formula \eqref{eq_diff_pairing_easy_-1mod4} for general $f \in (\MF_{2k})_\R$. 
However this replacement is well-defined when $f \in \kappa_{4k}\MF_{2k}$. 
On the other hand, the $\R/\Z$-valued pairing is trivial when $f$ is in the image of $\pi_{4k}\TMF \to \kappa_{4k}\MF_{2k}$ because we have the following exact sequence
\begin{align}
    \TMF^{-4k} \to \TMF_\R^{-4k} \xrightarrow{a} \widehat{\TMF}^{-(4k-1)}. 
\end{align}
Thus we get the following proposition,
which geometrizes the combination of Definition~\ref{defn:individual} and Proposition~\ref{prop:individual}:
\begin{prop}
The pairing $\widehat{\alpha}_{\spin/\stri}$ for $d = 4k-1$ restricts to a pairing
    \[
    \widehat{\alpha}_{\spin/\stri} \colon \mathrm{coker}\left(\pi_{4k}\TMF \to {\kappa_{4k}}\MF_{2k} \right) \times \pi_{-4k-20}\MSpin/\MString \to \R/\Z, 
\]
and it is given in terms of the Bunke-Naumann invariant as
\[
    \widehat{\alpha}_{\spin/\stri}([f], [N, M]) = {\frac{1}{2}}\Delta(q)f(q)b([M])\big|_{q^0}. 
\]
\end{prop}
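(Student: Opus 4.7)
\emph{Proof proposal.} The plan is to deduce the statement from Proposition~\ref{prop_diff_pairing_easy_-1mod4} and Lemma~\ref{lem_b_top}, together with the exact sequence
\[
\TMF^{-4k}\longrightarrow \TMF_\R^{-4k}\xrightarrow{a}\widehat{\TMF}^{-(4k-1)}
\]
that encodes the descent to the cokernel. For $f\in \kappa_{4k}\MF_{2k}\subset (\MF_{2k})_\R\simeq \TMF_\R^{-4k}$, Proposition~\ref{prop_diff_pairing_easy_-1mod4} already gives
\[
\widehat{\alpha}_{\spin/\stri}\bigl(a(f),[N,M,g_N^\spin,g_M^\stri]\bigr)=\frac{1}{2}\Delta(q)f(q)(\Wit_{\spin/\stri})_\R([N,M])\Big|_{q^0}\pmod\Z,
\]
and the displayed exact sequence shows that $a(f)=0$ whenever $f$ lies in the image of $\pi_{4k}\TMF\to \kappa_{4k}\MF_{2k}$. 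Hence the pairing descends to $\mathrm{coker}(\pi_{4k}\TMF\to \kappa_{4k}\MF_{2k})$ with the same formula.

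The remaining task is to replace $(\Wit_{\spin/\stri})_\R([N,M])$ by the Bunke--Naumann invariant $b([M])$. Setting $4\ell=-4k-20$, Lemma~\ref{lem_b_top} lets us choose representatives in $\R[[q]]$ whose difference is of the form $m+n$ with $m\in (\mf_{2\ell})_\R$ and $n\in \kappa_{4\ell}\Z[[q]]$. The $m$-contribution to $\frac{1}{2}\Delta(q)f(q)(m+n)\big|_{q^0}$ is zero: since $\Delta$, $f$, $m$ have weights $12,\,2k,\,2\ell$ with $12+2k+2\ell=2$, the product $\Delta f m$ lies in $(\MF_2)_\R$ and therefore has vanishing constant term by Fact~\ref{fact:mf}. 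The $n$-contribution lies in $\tfrac12\kappa_{4k}\kappa_{4\ell}\Z$, and a quick parity check gives $\kappa_{4k}\kappa_{4\ell}=2$: if $k$ is even, then $\ell=-k-5$ is odd, so $\kappa_{4k}=1$ and $\kappa_{4\ell}=2$; if $k$ is odd, the roles swap. Hence this contribution is an integer and vanishes in $\R/\Z$, so the two expressions coincide.

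There is no real obstacle in this argument; it is essentially an assembly of Proposition~\ref{prop_diff_pairing_easy_-1mod4} and Lemma~\ref{lem_b_top}, reduced via the arithmetic identity $\kappa_{4k}\kappa_{4\ell}=2$ and the weight-$2$ vanishing of constant terms in Fact~\ref{fact:mf}.
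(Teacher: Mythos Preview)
Your proposal is correct and follows essentially the same approach as the paper. The paper does not give a formal proof of this proposition; it precedes the statement with a short discussion explaining that the replacement of $(\Wit_{\spin/\stri})_\R([N,M])$ by $b([M])$ is well-defined precisely when $f\in\kappa_{4k}\MF_{2k}$, and that the exact sequence $\TMF^{-4k}\to\TMF_\R^{-4k}\xrightarrow{a}\widehat{\TMF}^{-(4k-1)}$ forces descent to the cokernel. Your write-up simply makes these two points explicit, and in particular supplies the parity check $\kappa_{4k}\kappa_{4\ell}=2$ (with $\ell=-k-5$) that the paper leaves implicit.
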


\begin{rem}
This differential-geometric expression of the Bunke-Naumann invariant, 
which uses \eqref{eq_b_varpi} and Proposition~\eqref{prop_varpi_integration}, and then restricted to $-4k-20=4$, i.e.~for three-dimensional string manifolds $M$,
was originally found independently by \cite{Redden_2011} alongside with \cite{BunkeNaumann}.
Its interpretation via a TQFT for geometric string structure was recently given in \cite{fiorenza2023integrals}.
\end{rem}


\subsubsection{The differential pairings  in terms of the differential pushforwards}
Now we move on to the more difficult parts of the differential pairings. 
We give a general formula in Proposition \ref{prop_diff_pairing_generalformula}, and explain a conjectural relation with eta invariants for spin manifolds with string boundaries. 

The special feature of our setting is that
the pairing induced by $\alpha_{\spin/\stri}$ 
comes from combining the composition
\begin{align}\label{eq_rel_genus_module}
    \TMF \wedge (\MSpin/\MString) \xrightarrow{\id \wedge \Wit_{\spin/\stri}}
    \TMF \wedge (\KO((q))/\TMF) \xrightarrow{\text{multi}}
    \KO((q))/\TMF, 
\end{align}
and the element $\alpha_{\KO((q))/\TMF} \in \pi_{-20}I_\Z \KO((q))/\TMF$. 
For such a transformation, we can apply the result of \cite{YamashitaAndersondualPart2} to understand the pairing in terms of {\it differential pushforwards}. 

At the topological level in general, given a map of spectra of the form $E \wedge M\mathcal{S} \to F$, we get the {\it topological pushforward map} for each closed $\mathcal{S}$-manifold $(N_n, g_N^{\mathcal{S}})$ of dimension $n$ as
\begin{align}\label{eq_top_push_boundaryless}
    p_*^{N} \colon E^\bullet(N) \to F^{\bullet-n}. 
\end{align}
This is a slight generalization, explained e.g. in \cite[Section 6.1]{Yamashita:2021cao}, of the more familiar case of pushforwards associated to multiplicative genera. 
Here we use the relative version of the pushforward. 
Given a map of spectra of the form $E \wedge M\mathcal{S}/M\mathcal{S}' \to F$, for each compact relative $\mathcal{S}/\mathcal{S}'$-manifold $(N_n, M_{n-1}, g_N^\mathcal{S}, g_M^{\mathcal{S}'})$, we get the topological pushforward map as
\begin{align}
    p_*^{(N, M)} \colon E^\bullet(N) \to F^{\bullet-n}. 
\end{align}
The definition is given by the straightforward generalization of the case without boundaries using the relative Pontryagin-Thom construction\footnote{
We outline the argument as follows. 
Given a compact manifold $N_n$ with boundary $M$, we choose an embedding to $\R^{k-1} \times \R_{\ge 0}$ preserving the boundary, for a $k \gg 0$. The relative $\mathcal{S}/\mathcal{S}'$-structure on $(N, M)$ equips the normal bundle $\nu$ with a $\mathcal{S}$-structure, as well as a lift to $\mathcal{S}'$-structure on the restriction $\nu|_M$ to the boundary.  
Thus we obtain the relative Thom class $\theta \in (M\mathcal{S}/M\mathcal{S}')^{k-n}(\mathrm{Thom}(\nu)/\mathrm{Thom}(\nu|_M))$ in a tautological way. 
Now we can follow the usual procedure: given an element in $E^\bullet(N)$, multiply it with $\theta$ using $E \wedge (M\mathcal{S}/M\mathcal{S}') \to F$ to obtain an element in $F^{\bullet+k-n}(\mathrm{Thom}(\nu)/\mathrm{Thom}(\nu|_M))$, push forward to the one-point compactification of $ \R^{k}$ and use the suspension to get an element in $F^{\bullet-n}$. 
}. 
We are interested in the case \eqref{eq_rel_genus_module}. Given a compact relative spin/string manifold $(N_n, M_{n-1})$ we get 
\begin{align}\label{eq_push_top}
    p^{(N, M)}_{*} \colon \TMF^\bullet(N) \to (\KO((q))/\TMF)^{\bullet-n}. 
\end{align}
There is a theory on the differential refinement of the pushforward map, explained e.g., in \cite[Appendix A]{YamashitaAndersondualPart2} for the case without boundary. This is also easily generalized to relative versions. 
Given a relative differential spin/string cycle $(N_n, M_{n-1}, g_N^\spin, g_M^{\stri})$ of dimension $n$ (or more generally an element in $\reallywidehat{\MSpin/\MString}_{n}$), we get the {\it differential pushforward map},
\begin{align}\label{eq_push_diff}
    \widehat{p}_*^{(N, M)} \colon \widehat{\TMF}{}^\bullet(N) \to (\reallywidehat{\KO((q))/\TMF})^{\bullet-n}.
\end{align}
Note that the pushforward maps \eqref{eq_push_top} and \eqref{eq_push_diff} depend on the relative (differential) spin/string structures, but we omit the reference to them from the notation for brevity.

\begin{rem}\label{rem_push_KO}
    The reader should have in mind the analogy to the case of the multiplicative genus $\mathrm{ABS} \colon \MSpin \to \KO$. 
    The topological pushforward map in this case is defined for closed spin manifolds $N_n$ of dimension $n$ and given by the $\KO^*$-valued index of twisted spin Dirac operators, 
    \begin{align}
        p^N_* = \mathrm{Ind}_N \colon \KO^\bullet(N) \to \KO^{\bullet-n}. 
    \end{align}
    The differential pushforward is defined for closed differential spin manifold $(N_n, g_N^\spin)$ as
    \begin{align}
        \widehat{p}^N_* \colon \widehat{\KO}{}^\bullet(N) \to \widehat{\KO}{}^{\bullet-n}. 
    \end{align}
    In particular when $n \equiv 3$ mod $4$, we have $\widehat{\KO}^{-n} \simeq \R/{\kappa_{n+1}}\Z$ and the differential pushforward is given by reduced eta invariants 
     of twisted spin Dirac operators, which is the real version of the work \cite{FreedLott}. 
\end{rem}
\begin{rem}\label{rem_push_KO((q))}
    A straightforward generalization of Remark \ref{rem_push_KO} above allows us to understand the differential pushforward refining the multiplicative genus $\Wit_\spin \colon \MSpin \to \KO((q))$. 
    For each closed differential spin manifold $(N_n, g_N^\spin)$ we have the map
    \begin{align}
         \widehat{p}^N_* \colon \widehat{\KO}((q))^d(N) \to \widehat{\KO}((q))^{d-n}. 
    \end{align}
    In the case where $d \equiv 0 \pmod 8$ and $n \equiv 3 \pmod 4$, we represent each element in $\widehat{\KO}((q))^d(N)$ as a formal power series $V $ of real vector bundles with connections over $N$.
    Then we have
    \begin{align}\label{eq_push_KO((q))}
       { \widehat{p}^N_*([V]) = \eta(\WitVec(TN) \otimes V) \pmod{\Z((q))} \in \widehat{\KO}((q))^{d-n} \simeq \R((q))/\kappa_{n+1}\Z((q)), }
    \end{align}
    where $\eta(\WitVec(TN) \otimes V)$ is the reduced eta invariant of twisted Dirac operator with twist given by formal power series of vector bundles $\WitVec(TN) \otimes V$,
    where $\WitVec$ was defined in \eqref{WitVec}.
\end{rem}

\begin{lem}
    We have a canonical isomorphism
    \begin{align}\label{eq_hat_KO/TMF_21}
        (\reallywidehat{\KO((q))/\TMF})^{21} \simeq \frac{\R((q))}{(\MF_{-10})_\R + {2}\Z((q))}. 
    \end{align}
\end{lem}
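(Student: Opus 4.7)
The plan is to invoke the standard exact sequence of generalized differential cohomology at a point. For any spectrum $E$ with differential extension $\widehat{E}^{\bullet}$, the axioms recalled in the introduction (via the structure maps $a$, $R$, $I$) produce an exact sequence
\begin{equation*}
E^{n-1}(\pt) \xrightarrow{c} \Omega^{n-1}(\pt; E_\R^\bullet)/\mathrm{im}(d) \xrightarrow{a} \widehat{E}^n(\pt) \xrightarrow{I} E^n(\pt) \to 0,
\end{equation*}
where $c$ is given by composing realification $E^{n-1}(\pt) \to E^{n-1}_\R(\pt)$ with the canonical comparison to closed forms. I will apply this with $E = \KO((q))/\TMF$ and $n = 21$.

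On a point, the de Rham complex is concentrated in form-degree zero, so the total-degree-$(n-1)$ differential forms valued in the graded vector space $E_\R^\bullet$ reduce to $E_\R^{n-1}(\pt) = \pi_{-n+1}(E)_\R$, with $\mathrm{im}(d) = 0$. Thus the middle term of the exact sequence becomes $\pi_{-20}(\KO((q))/\TMF)_\R$, which by Proposition~\ref{prop:KO/TMF20} (after realification, identifying $\pi_{-20}\KO((q)) \otimes \R \simeq \R((q))$ via the Tate $q$-expansion) is canonically $\R((q))/(\MF_{-10})_\R$. The rightmost term vanishes by Proposition~\ref{prop:KO/TMF}, which asserts $\pi_{-21}(\KO((q))/\TMF) = 0$. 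Hence $\widehat E^{21}(\pt) \simeq \mathrm{coker}(c)$.

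It remains to compute $\mathrm{im}(c)$. Under the description of Proposition~\ref{prop:KO/TMF20}, we have $\pi_{-20}(\KO((q))/\TMF) = 2\Z((q))/2\MF_{-10}$ and $c$ is induced by the inclusion $2\Z((q)) \hookrightarrow \R((q))$ followed by the quotient map to $\R((q))/(\MF_{-10})_\R$. Hence $\mathrm{im}(c) = (2\Z((q)) + (\MF_{-10})_\R)/(\MF_{-10})_\R$, and taking the cokernel yields
\begin{equation*}
\widehat{E}^{21}(\pt) \simeq \R((q))/\bigl((\MF_{-10})_\R + 2\Z((q))\bigr),
\end{equation*}
which is the claimed isomorphism. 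There is no genuine obstacle: the computation is a direct unpacking of the differential cohomology axioms on a point, given the two input vanishing/description statements for $\pi_{-21}$ and $\pi_{-20}$ of $\KO((q))/\TMF$. The only delicate point to track is the factor of $2$ originating from the convention $\pi_{8k+4}\KO \simeq 2\Z$ fixed in the introduction, which is precisely what produces the summand $2\Z((q))$ in the denominator rather than $\Z((q))$.
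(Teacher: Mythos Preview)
Your proof is correct and follows essentially the same approach as the paper: both invoke the differential cohomology exact sequence at a point, use Proposition~\ref{prop:KO/TMF} to kill the $(\KO((q))/\TMF)^{21}$ term, and use Proposition~\ref{prop:KO/TMF20} to identify the cokernel of the first map. Your write-up is in fact more explicit than the paper's about how the factor of $2$ and the image of $c$ enter the final quotient.
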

\begin{proof}
    This follows from the exact sequence in the axiom of differential cohomology, 
    \begin{multline*}
        (\KO((q))/\TMF)^{20}\to(\KO((q))/\TMF)_\R^{20} \\
        \xrightarrow{a} (\reallywidehat{\KO((q))/\TMF})^{21} \xrightarrow{I} (\KO((q))/\TMF)^{21} .
    \end{multline*}
    The first term was determined in Proposition~\ref{prop:KO/TMF20},
while    the last term is shown to be zero thanks to Proposition~\ref{prop:KO/TMF}.
    Then, the cokernel of the first map is canonically identified with the right hand side of \eqref{eq_hat_KO/TMF_21}. 
\end{proof}

Thus, given a relative differential spin/string cycle $(N_{-d-21}, M_{-d-22}, g_N^\spin, g_M^\stri)$ of dimension $n=-d-21$, the differential pushforward \eqref{eq_push_diff} becomes
\begin{align}\label{eq_push_21}
    \widehat{p}^{(N, M)}_* \colon \widehat{\TMF}^{-d}(N) \to \frac{\R((q))}{(\MF_{-10})_\R + {2}\Z((q))}. 
\end{align}
Further, the map
\begin{align}\label{eq_diff_pairing_R/Z}
    \frac{\R((q))}{(\MF_{-10})_\R + {2}\Z((q))} \to \R/\Z, \quad [f(q)] \mapsto {\frac12}\Delta(q) f(q)\Bigm|_{q^0} \pmod \Z. 
\end{align}
is well-defined thanks to Fact~\ref{fact:mf}.
In terms of these maps, we can express the differential pairing \eqref{eq_diff_pairing_MSpin/MString} as follows. 

\begin{prop}\label{prop_diff_pairing_generalformula}
For each $\widehat{x} \in \widehat{\TMF}^{-d}$ and $[N, M, g_N^\spin, g_M^\stri] \in \reallywidehat{\MSpin/\MString}_{-d-21}$, we have
\[
    \widehat{\alpha}_{\spin/\stri}\left(\widehat{x}, [N, M, g_N^\spin, g_M^\stri]\right) 
    = {\frac{1}{2}} \Delta(q) \cdot \left(\widehat{p}^{(N, M)}_*\circ (p^N)^*(x) \right)\Bigm|_{q^0} \pmod \Z, 
\]
i.e., the left hand side is the image of $\widehat{x}$ under the composition, 
\[
   \widehat{\TMF}^{-d} \xrightarrow{(p^N)^*} \widehat{\TMF}^{-d}(N) \xrightarrow{\widehat{p}^{(N, M)}_*}\frac{\R((q))}{(\MF_{-10})_\R +{2} \Z((q))}
   \xrightarrow{{\frac{1}{2}} \Delta(q) \cdot (-)\bigm|_{q^0}} \R/\Z. 
\]
Here the first arrow is the pullback along $p^N \colon N \to \pt$. 
\end{prop}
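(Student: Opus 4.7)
The plan is to trace the pairing through the factorization \eqref{eq_rel_genus_module} and reduce the claim to a computation of $\alpha_{\KO((q))/\TMF}$ evaluated on a differential pushforward class, then identify that evaluation with the constant-term functional \eqref{eq_diff_pairing_R/Z}.

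First, recall that $\alpha_{\spin/\stri}$ was obtained by applying $\alpha_{\KO((q))/\TMF}$ to the composition \eqref{eq_rel_genus_module}, that is, by combining the $\TMF$-module multiplication on $\KO((q))/\TMF$ with the map $\Wit_{\spin/\stri}$ on $\MSpin/\MString$. Taking differential refinements in the sense of \cite{YamashitaAndersondualPart2} (together with its straightforward relative-cycle generalization, which produces the pushforward \eqref{eq_push_diff} from a refinement of the map \eqref{eq_rel_genus_module}), the pairing $\widehat{\alpha}_{\spin/\stri}$ fits in the commutative diagram
\begin{equation*}
\vcenter{\xymatrix{
\widehat{\TMF}{}^{-d}\times\reallywidehat{\MSpin/\MString}_{-d-21}
\ar[r]^-{\widehat{\alpha}_{\spin/\stri}}
\ar[d]_-{((p^N)^*,\,\id)}
& \R/\Z \\
\widehat{\TMF}{}^{-d}(N)\times\reallywidehat{\MSpin/\MString}_{-d-21}
\ar[r]
& (\reallywidehat{\KO((q))/\TMF})^{21}
\ar[u]_-{\widehat{\alpha}_{\KO((q))/\TMF}}
}}
\end{equation*}
where the bottom arrow is $(\widehat{x},[N,M,g_N^\spin,g_M^\stri])\mapsto \widehat{p}^{(N,M)}_*\bigl((p^N)^*(\widehat{x})\bigr)$. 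This is the content of the pushforward description of the differential pairing: a differential cycle representative of $[N,M,g_N^\spin,g_M^\stri]$ is tautologically the image of $1\in\widehat{\TMF}{}^0(N)$ under the differential refinement of the relative Pontryagin--Thom collapse, so multiplicativity of the refinement converts the pairing into pullback followed by differential pushforward.

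Second, under the identification \eqref{eq_hat_KO/TMF_21}, the remaining arrow
\begin{equation*}
\widehat{\alpha}_{\KO((q))/\TMF}\colon (\reallywidehat{\KO((q))/\TMF})^{21}\simeq \R((q))/\bigl((\MF_{-10})_\R+2\Z((q))\bigr)\to \R/\Z
\end{equation*}
must equal the functional \eqref{eq_diff_pairing_R/Z}. To see this, note that by construction $\widehat{\alpha}_{\KO((q))/\TMF}$ is the differential refinement of the class $\alpha_{\KO((q))/\TMF}$ described in Proposition~\ref{rem:rat}. Since the source is already identified as a quotient of $(\KO((q))/\TMF)_\R^{20}$, the map $\widehat{\alpha}_{\KO((q))/\TMF}$ is fully determined by its realification, and this realification is precisely $\tfrac12\Delta(q)\cdot(-)\big|_{q^0}$ by Proposition~\ref{rem:rat}; well-definedness modulo $(\MF_{-10})_\R+2\Z((q))$ is exactly Fact~\ref{fact:mf} together with the integrality built into $\alpha_{\KO((q))}$.

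Combining these two steps gives the formula in the proposition. The main technical obstacle is the first step: one must be sure that the differential refinement of the multiplicative structure \eqref{eq_rel_genus_module} really does intertwine the differential pairing with differential pushforward for \emph{relative} spin/string cycles, rather than only for closed manifolds. This is where one cites \cite{YamashitaAndersondualPart2} for the closed case and the relative-cycle extension developed in Appendix~\ref{app:diff_bordism}; the uniqueness of multiplicative differential refinements of $\TMF$ guarantees that the two ways of producing $\widehat{\alpha}_{\spin/\stri}$ -- directly as a differential refinement of a map of spectra, versus via pushforward along relative differential cycles -- agree.
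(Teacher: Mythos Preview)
Your proof is correct and follows essentially the same approach as the paper's own proof, which simply cites the main result of \cite{YamashitaAndersondualPart2} (generalized to relative bordisms) together with the explicit formula \eqref{eq_diff_pairing_R/Z} for $\alpha_{\KO((q))/\TMF}$. You have unpacked the argument more explicitly, but the underlying strategy---factor through \eqref{eq_rel_genus_module}, interpret the pairing as pullback followed by differential pushforward, then evaluate via the constant-term functional---is exactly the paper's.
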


\begin{proof}
    The proof is a straightforward generalization of the main result in \cite{YamashitaAndersondualPart2}, 
    to the case of relative bordisms,
    by noting the fact that the element $\alpha_{\KO((q))/\TMF} \in \pi_{20} I_\Z  \KO((q))/\TMF
=\Hom(\pi_{-20}\KO((q))/\TMF, \Z)$ is given by the formula corresponding to \eqref{eq_diff_pairing_R/Z}. 
\end{proof}

Thus, to understand the differential pairing \eqref{eq_diff_pairing_MSpin/MString}, we are left to understand the differential pushforward \eqref{eq_push_21}. 
Again let us start from easier cases. 
We can understand the differential pushforward $\widehat{p}^{(N, M)}$ for $M = \varnothing$ by the following commutative diagram, 
\begin{align}\label{diag_push_boundary}
    \vcenter{\xymatrix{
\widehat{\TMF}{}^d(N) \ar[r]^-{\widehat{p}^{(N, \varnothing)}} \ar[d]^-{\widehat{\sigma}} & \reallywidehat{\KO((q))/\TMF}{}^{d-n} \\
    \widehat{\KO}((q))^d(N) \ar[r]^-{\widehat{p}^N_{\KO((q))}} & \widehat{\KO}((q))^{d-n} \ar[u]^-{\widehat{C\sigma}}
    }},
\end{align}
Here $\widehat{p}^N_{\KO((q))}$ is the differential pushforward in $\widehat{\KO}((q))$ for differential spin manifolds explained in Remark \ref{rem_push_KO((q))}. 
Using the formula \eqref{eq_push_KO((q))}, we see that the differential pairing in the case $d \equiv 0 \pmod 8$ and $M = \varnothing$ is given in terms of eta invariants as follows. 

\begin{prop}
Let $k$ be an integer. 
    For each $\widehat{x} \in \widehat{\TMF}{}^{-8k}$ and 
    \begin{equation}
    [N, \varnothing, g_N^\spin] \in \reallywidehat{\MSpin/\MString}_{-8k-21},
\end{equation}
     we have
    \begin{align}\label{eq_diff_pairing_without_boundary}
        \widehat{\alpha}_{\spin/\stri}\left(\widehat{x}, [N, \varnothing, g_N^\spin]\right) 
    = \left.  \frac{1}{2}\Delta(q) \sigma(I(\widehat{x})) \eta(\WitVec(TN)) \right|_{q^0} \pmod \Z,
    \end{align}
    Here $\sigma(I(\widehat{x}))$ is the image of $\widehat{x}$ under the composition 
    \begin{align}
        \widehat{\TMF}^{-8k} \xrightarrow{I} \pi_{8k}\TMF \xrightarrow{\sigma} \pi_{8k}\KO((q)) \simeq \Z((q)). 
    \end{align}
    In particular the value of the pairing \eqref{eq_diff_pairing_without_boundary} only depends on the image of $\widehat{x}$ in $\MF_{4k}$. 
\end{prop}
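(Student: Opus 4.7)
\medskip\noindent The plan is to reduce the desired pairing to the differential pushforward in $\widehat{\KO}((q))$, which has an explicit eta-invariant description from Remark~\ref{rem_push_KO((q))}, via the commutative diagram \eqref{diag_push_boundary} (valid precisely because $M=\varnothing$) and the general formula of Proposition~\ref{prop_diff_pairing_generalformula}. On a point and in degree $-8k$, the differential and topological $\KO((q))$-cohomology groups coincide, which is what lets us strip away all differential-geometric subtleties coming from $\widehat{x}$ itself and work with a virtual bundle of constant $q$-series-valued rank.

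First I would apply Proposition~\ref{prop_diff_pairing_generalformula} to rewrite
\[
\widehat{\alpha}_{\spin/\stri}(\widehat{x},[N,\varnothing,g_N^\spin]) = \tfrac{1}{2}\Delta(q)\cdot \left(\widehat{p}^{(N,\varnothing)}_*\circ (p^N)^*(\widehat{x})\right) \Bigm|_{q^0} \pmod{\Z}.
\]
Since the boundary is empty, diagram \eqref{diag_push_boundary} applies and allows me to replace the inner expression by $\widehat{C\sigma}\bigl(\widehat{p}^N_{\KO((q))}((p^N)^*\widehat{\sigma}(\widehat{x}))\bigr)$, which under the identification \eqref{eq_hat_KO/TMF_21} reduces the problem to computing $\widehat{p}^N_{\KO((q))}((p^N)^*\widehat{\sigma}(\widehat{x}))$ modulo $(\MF_{-10})_\R + 2\Z((q))$.

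The key simplification I would then establish is that $(p^N)^*\widehat{\sigma}(\widehat{x})$ is the constant virtual bundle on $N$ of $q$-series-valued rank $\sigma(I(\widehat{x}))\in\Z((q))$. This rests on the observation that, on a point, the natural map $I\colon \widehat{\KO}((q))^{-8k}(\pt) \to \KO((q))^{-8k}(\pt) \simeq \Z((q))$ is an isomorphism: by the axiomatic exact sequence, its kernel is the image of $a$ from $\Omega^{-8k-1}(\pt;\KO((q))_\R^\bullet)$, i.e.\ essentially $\pi_{8k+1}(\KO((q)))_\R$, which vanishes since $8k+1$ is odd and $(\KO((q)))_\R$ has homotopy concentrated in even degrees. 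Plugging this constant virtual bundle into \eqref{eq_push_KO((q))} and using the $\Z((q))$-linearity of the reduced eta invariant in the coefficient bundle gives
\[
\widehat{p}^N_{\KO((q))}((p^N)^*\widehat{\sigma}(\widehat{x})) = \sigma(I(\widehat{x})) \cdot \eta(\WitVec(TN)) \pmod{2\Z((q))},
\]
and substituting into the expression from the first step yields the claimed formula. The ``in particular'' clause is then immediate from the factorization $\pi_{8k}\TMF \to \MF_{4k}\hookrightarrow\Z((q))$ of $\sigma$.

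I do not foresee any serious obstacle: once the isomorphism $\widehat{\KO}((q))^{-8k}(\pt)\simeq\Z((q))$ is in hand, the argument is essentially formal bookkeeping combining Proposition~\ref{prop_diff_pairing_generalformula}, diagram~\eqref{diag_push_boundary}, and \eqref{eq_push_KO((q))}. The most delicate step is arguably the identification of $\widehat{\sigma}(\widehat{x})$ with a constant virtual bundle rank, for which the parity argument above is the essential input.
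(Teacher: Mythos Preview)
Your proposal is correct and follows essentially the same route as the paper: the paper derives this proposition directly by combining Proposition~\ref{prop_diff_pairing_generalformula}, the commutative diagram~\eqref{diag_push_boundary} (valid because $M=\varnothing$), and the eta-invariant formula~\eqref{eq_push_KO((q))}. Your additional parity argument showing $I\colon \widehat{\KO}((q))^{-8k}(\pt)\xrightarrow{\sim}\KO((q))^{-8k}(\pt)$ makes explicit a step the paper leaves implicit, but the overall strategy is the same.
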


The differential pushforward in \eqref{eq_push_21} in the presence of nontrivial boundary should be defined so that it generalize the formula in terms of eta invariants in the boundaryless case. 
It is natural to expect that we have a formula in terms of eta invariants of Dirac operators {\it with boundary}. 
By the work of Dai and Freed \cite{DaiFreed}, for differential spin manifolds with spin boundary $(N_{8\ell+3}, M_{8\ell+2}, g_N^\spin, g_M^\spin)$, the reduced eta invariants of the Dirac operators with the Atiyah-Patodi-Singer boundary condition are canonically defined as elements in the Pfaffian line associated to $(M, g_M^\spin)$, 
\begin{align}
    \exp\left(\sqrt{-1}\pi \cdot \eta(N_{8\ell+3}, M_{8\ell+2}, g_N^\spin, g_M^\spin)\right) \in \mathrm{Pf}(M, g_M^\spin), 
\end{align}
The Pfaffian line does not  admit a canonical trivialization in general, so we cannot talk about the reduced eta invariants as an element in $\R/\Z$ in this case. 

The parallel story is true for the Witten genus version of the eta invariant appearing in the formula in \eqref{eq_push_KO((q))}. 
The reduced Witten-eta invariant is defined as an element 
\begin{align}
    \exp\left(\sqrt{-1}\pi\cdot \eta \left(\WitVec(N_{8\ell+3}, M_{8\ell+2}, g_N^\spin, g_M^\spin)\right)\right) \in \mathrm{Pf}(\WitVec(M, g_M^\spin)), 
\end{align}
where $\mathrm{Pf}(\WitVec(M, g_M^\spin))$ is the Witten genus version of Pfaffian lines, a formal power series of complex lines. 
However, we conjecture that differential string structure on the boundary solves the problem:   

\begin{conj}\label{conj_diff_push_wit_eta}
    For each closed differential spin manifold $(M_{8\ell + 2}, g_M^\spin)$ of dimension $8 \ell + 2$, a lift of $g_M^\spin$ to a differential string structure $g_M^\stri$ gives a trivialization of $\mathrm{Pf}(\WitVec(M, g_M^\spin))$ up to multiplications of $\exp(i(\MF_{4\ell + 2})_\R)$, which makes the fractional reduced eta invariant a well-defined element 
    \[
       {\eta\left(\WitVec(N_{8\ell + 3}, M_{8\ell + 2}, g_N^\spin, g_M^\stri) \right) }\in \frac{\R((q))}{(\MF_{4\ell + 2})_\R + {2} \Z((q))}. 
    \]
    In terms of this, the differential pushforward \eqref{eq_push_21} for $d = 8k$ and elements $\widehat{x} \in \widehat{\TMF}{}^d$ is given by the formula
    \[
        \widehat{p}^{(N, M)}((p^N)^*(\widehat{x})) =  \frac{1}{2}\sigma(I(\widehat{x}))\eta\left(\WitVec(N, M, g_N^\spin, g_M^\stri) \right) \in \frac{\R((q))}{(\MF_{-10})_\R + {2}\Z((q))}. 
    \]
\end{conj}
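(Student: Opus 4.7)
The plan is to establish the conjecture by first constructing the required trivialization of $\mathrm{Pf}(\WitVec(M,g_M^\spin))$ from a differential string structure, then identifying the resulting $\R((q))$-valued fractional eta invariant with the differential pushforward by uniqueness of differential refinements. At the core, this is a Witten-genus enhancement of the Dai-Freed construction, and the string structure plays the role of the Green-Schwarz trivialization that cancels the anomaly of the chiral Witten-genus Dirac operator family.

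First I would analyze the curvature of the Pfaffian line bundle $\mathrm{Pf}(\WitVec(M,g_M^\spin))$ over the space of metrics and connections. By a Bismut-Freed-type family index computation, its Chern-Weil curvature is determined by $\widehat{A}(T)\ch(\WitVec(T))$ integrated along the fibers. Applying the factorization \eqref{factorized-characteristic-class}, namely $\widehat{A}(T)\ch(\WitVec(T)) = \tfrac{1}{2}p_1(T)\,\Witt(T)$, shows that all anomalous terms are multiples of $\tfrac{1}{2}p_1$. A differential string structure $g_M^\stri$ provides precisely a 3-form $H$ with $dH=\cw_{g_M}(\tfrac12 p_1)$, from which I would construct a section of $\mathrm{Pf}(\WitVec(M,g_M^\spin))$ by the standard transgression procedure coefficient-by-coefficient in the formal variable $q$. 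The ambiguity in this section comes from two sources: changes of differential string structure change $H$ by a closed 3-form with integer periods (shifting by elements of $\exp(2\pi i \Z)$), while the freedom to add $(\MF_{4\ell+2})_\R$-valued corrections reflects exactly the indeterminacy of the spin pushforward in $\pi_{-8\ell-4}\KO((q))_\R/\MF_{2\ell+1}$ via the Witten genus of spin manifolds.

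With the trivialization in hand, the Dai-Freed reduced eta invariant for $(N_{8\ell+3},M_{8\ell+2},g_N^\spin,g_M^\stri)$ becomes a well-defined element of $\R((q))/((\MF_{4\ell+2})_\R + 2\Z((q)))$, giving the candidate formula for the differential pushforward. To verify that this candidate indeed refines the topological pushforward \eqref{eq_push_21}, I would check the three standard axioms: (a) its image under $I$ in the quotient $\R((q))/(\MF_{4\ell+2})_\R \pmod{\Z}$ recovers $\tfrac12 \sigma(I(\widehat x))\cdot \Wit_{\spin/\stri,\R}([N,M])\big|_{q^0}$ modulo the constant-term pairing of Proposition~\ref{claim_int_pairing_formula}; (b) the curvature equation $R$ reproduces the Chern-Weil integrand $\widehat{A}(TN)\ch(\WitVec(TN))$ minus the boundary contribution $H\wedge\Witt(TM)$ prescribed in Proposition~\ref{prop_varpi_integration}; and (c) variation by an exact differential form $a(\omega)$ produces exactly the $\R((q))$-valued correction expected from the APS boundary condition. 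By the Bunke-Schick uniqueness of differential refinements of $\TMF$-theoretic pushforwards, any map satisfying (a), (b), (c) must coincide with $\widehat{p}^{(N,M)}_*$, so Proposition~\ref{prop_diff_pairing_generalformula} then gives the stated formula.

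The hard part is step (b)-(c) together with controlling the ambiguity by $(\MF_{4\ell+2})_\R$. The Pfaffian line is a formal power series of complex lines, one for each coefficient in the $q$-expansion of $\WitVec$, and the string structure must produce a compatible trivialization across all orders in $q$ simultaneously. The delicate point is that the change-of-differential-string-structure ambiguity must precisely match the image of the Witten genus of closed spin manifolds in $(\MF_{4\ell+2})_\R$; this requires identifying the residual anomaly after using $H$ as a primitive with a transgressed form whose integral over closed spin $(8\ell+4)$-manifolds lies in $\Z((q))$ via the usual Witten-genus integrality. A secondary difficulty is making the Dai-Freed construction fully functorial for the formal power-series Dirac operator twisted by $\WitVec$, which in practice means organizing the argument as a family version of Freed-Lott odd-degree differential $\KO((q))$-theory applied fiberwise, then cancelling the string anomaly as above.
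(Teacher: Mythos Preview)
The statement you are attempting to prove is labeled \emph{Conjecture}~\ref{conj_diff_push_wit_eta} in the paper, and the paper does not offer any proof of it. It is introduced with the sentence ``However, we conjecture that differential string structure on the boundary solves the problem'', and immediately afterwards the authors only derive a corollary \emph{assuming} the conjecture. So there is no ``paper's own proof'' to compare against: this is an open statement in the paper.

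As for your proposal itself: the overall strategy is the natural one and matches what the authors presumably have in mind --- trivialize the formal-power-series Pfaffian line using the string structure via the factorization $\widehat A(T)\ch(\WitVec(T)) = \tfrac12 p_1(T)\,\Witt(T)$, then invoke a Dai--Freed/Freed--Lott argument coefficient-by-coefficient in $q$. However, your write-up is a research outline rather than a proof. The two places you yourself flag as ``the hard part'' are exactly where the argument is incomplete: (i) you assert that the residual ambiguity after trivializing with $H$ lands precisely in $(\MF_{4\ell+2})_\R$, but you do not carry out the bookkeeping that would show this --- in particular, the passage from ``the curvature factors through $\tfrac12 p_1$'' to ``the section is well-defined up to $\exp(i(\MF_{4\ell+2})_\R)$'' requires tracking the holonomy of each $q$-coefficient line and matching the transgression ambiguities with Witten-genus integrality, none of which is spelled out; and (ii) your appeal to ``Bunke--Schick uniqueness of differential refinements of $\TMF$-theoretic pushforwards'' is not an established result in the form you need --- Bunke--Schick uniqueness concerns differential \emph{cohomology theories}, not relative pushforwards with boundary, and the paper's pushforward \eqref{eq_push_21} is a specific construction, not an axiomatically characterized object. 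Filling these gaps would constitute a genuine proof of the conjecture, which is more than the paper claims.
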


Assuming Conjecture \ref{conj_diff_push_wit_eta}, by Proposition \ref{prop_diff_pairing_generalformula} we get the following formula for the differential pairing for the case $d \equiv 0 \pmod 8$. 
\begin{cor}[{of Conjecture \ref{conj_diff_push_wit_eta}}]
    Let $d = 8k$ for an integer $k$.
    For each $\widehat{x} \in \widehat{\TMF}^{-d}$ and $[N, M, g_N^\spin, g_M^\stri] \in \reallywidehat{\MSpin/\MString}_{-d-21}$,
    we then have
    \[
        \widehat{\alpha}_{\spin/\stri}\left(\widehat{x}, [N, M, g_N^\spin, g_M^\stri]\right) =\left.  \frac{1}{2}\Delta(q) \sigma(I(\widehat{x})) \eta\left(\WitVec(N, M, g_N^\spin, g_M^\stri) \right) \right|_{q^0}. 
    \]
\end{cor}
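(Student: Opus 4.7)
The plan is immediate given the preparation: the corollary is obtained by substituting the eta-invariant expression of Conjecture~\ref{conj_diff_push_wit_eta} into the general formula of Proposition~\ref{prop_diff_pairing_generalformula}. First I would rewrite the pairing using Proposition~\ref{prop_diff_pairing_generalformula} as the composition
\[
\widehat{\TMF}^{-d}\xrightarrow{(p^N)^*}\widehat{\TMF}^{-d}(N)\xrightarrow{\widehat{p}^{(N,M)}_*}\frac{\R((q))}{(\MF_{-10})_\R+2\Z((q))}\xrightarrow{\tfrac12\Delta(q)\cdot(-)|_{q^0}}\R/\Z,
\]
valued in $\R/\Z$ modulo the well-definedness of the last arrow, which I would verify directly: multiplication by $\Delta(q)$ sends $(\MF_{-10})_\R$ into $(\MF_2)_\R$, on which the constant-term functional vanishes by Fact~\ref{fact:mf}, and $\tfrac12\Delta(q)\cdot f(q)|_{q^0}\in\Z$ whenever $f(q)\in 2\Z((q))$.

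Next, assuming Conjecture~\ref{conj_diff_push_wit_eta} (which is the only non-formal input and where all the real content sits), I would substitute the identity
\[
\widehat{p}^{(N,M)}_*\!\circ(p^N)^*(\widehat{x})\;=\;\tfrac12\,\sigma(I(\widehat{x}))\,\eta\!\left(\WitVec(N,M,g_N^\spin,g_M^\stri)\right)
\]
in $\R((q))/((\MF_{-10})_\R+2\Z((q)))$ into the displayed composition. Collecting factors and reading off the $q^0$-coefficient then yields the stated equality in $\R/\Z$, using the compatibility of the Witten-eta class with the image of $\sigma$ as in Remark~\ref{rem_push_KO((q))}.

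Since this is a corollary deriving a formula from already-given expressions, there is no genuine obstacle at this stage; the formal manipulation is straightforward, and the only subtlety I would double-check is the constant-multiple bookkeeping, making sure the factors of $\tfrac12$ coming from $\alpha_{\KO((q))/\TMF}$ on one side and from the Pfaffian-line trivialization in Conjecture~\ref{conj_diff_push_wit_eta} on the other are compatible with the normalization $\pi_{8k+4}\KO\simeq 2\Z$ used throughout this section (in contrast with the exceptional convention of Sec.~\ref{subsubsec_BM_tmf}). All of the genuine work has been pushed into Conjecture~\ref{conj_diff_push_wit_eta}, whose proof—the actual obstacle—requires constructing the trivialization of the Witten version of the Pfaffian line from a differential string structure on the boundary and then invoking a Dai--Freed-type gluing for the Witten-elliptic operator; that step lies outside the scope of the present corollary.
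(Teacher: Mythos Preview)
Your proposal is correct and matches the paper's approach exactly: the paper simply states that the corollary follows by combining Conjecture~\ref{conj_diff_push_wit_eta} with Proposition~\ref{prop_diff_pairing_generalformula}, which is precisely the substitution you describe. Your additional verification of the well-definedness of the constant-term map and your flag about the $\tfrac12$ bookkeeping are appropriate checks, though the former was already established around \eqref{eq_diff_pairing_R/Z}.
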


\section{On two TMF classes of string-theoretical interest}
\label{sec:diff}
In this section we employ our differential-geometric pairing and other techniques to identify certain $\TMF$ classes of string-theoretical interest.
These classes play an essential role in the formulation of the conjecture relating $\TMF$ classes and vertex operators algebras given in Appendix~\ref{app:VOA}.
To explain the $\TMF$ classes we study, let us start by recalling
a standard mathematical way to construct  $\TMF$ classes.
Take
a string manifold $M$ of dimension $d$.
This determines a class $[M]\in\pi_d\MString$,
and we can form $\Wit_\text{string}([M])\in \pi_d\TMF$,
where $\Wit_\text{string}:\MString\to\TMF$ is the string orientation.

A more general construction we use is the following:
\begin{const}
\label{const}
Suppose we have an element $u\in \TMF^{k+t}(BG)$,
where {$t \colon BG \to K(\Z, 4)$ is a map specifying a twist of $\TMF$ via the composition with $K(\Z, 4) \to BGL_1(\MString) \to BGL_1(\TMF)$; for twists of $\TMF$, see \cite{ABG}. }
We start from a manifold $M$ of dimension $n$
equipped with a $G$-bundle whose classifying map is $f:M\to BG$.
We assume that a string structure twisted by 
{$f^*(t) := f \circ t \colon M \to K(\Z, 4)$} is given on $M$;
see Appendix~\ref{app_subsec_diff_string} for the definition of twisted (differential) string structure.
This determines the fundamental class $[M]\in \MString_{n+f^*(t)}(M)$,
which then gives, via $\Wit_\text{string}$,
the fundamental class $[M]\in\TMF_{n+f^*(t)}(M)$
which we denote by the same symbol by a slight abuse of notation.
The pull-back $f^*(u)\in \TMF^{k+f^*(t)}(M)$ can now be paired against it,
defining a class $f^*(u)/[M] \in \TMF^{k-n}(\pt)$.
\end{const}

\begin{physremark}
This construction is ubiquitous in heterotic string theory, for which the class $u\in \TMF^{k+t}(BG)$ for $k=32$ represents the left-moving current algebra theory with $G$ symmetry.
Then, $M$ is the  $n$-dimensional internal space,
$f:M\to BG$ is the gauge field, and $f^*(u)/[M]$ is the $\TMF$ class
corresponding to the worldsheet theory for the compactification on this internal manifold and the gauge field.
\end{physremark}

In this section we consider two elements of this type, $x_{-d}\in \pi_{-d}\TMF$ for $d=31$ and $28$:
\begin{itemize}
\item 
For $x_{-31}$, we take an element $a\in \TMF^{32}(B\Sigma_2)$
where $\Sigma_2$ is a symmetric group on two points,
pull it back to $S^1$ via a map $\mu: S^1\to \Sigma_2$,
and take the slant product.
The element $a$ is a lift of the element $(e_8)^2\in \TMF^{32}(\pt)$,
where $e_8$ is the unique element specified by its modular form image $c_4/\Delta$,
and the $\Sigma_2$ action on $a$ exchanges the two factors of $e_8$.
\item
For $x_{-28}$, we start from a lift  $\hat e_8\in\TMF^{16+\tau}(BSU(2))$,
which we construct in Appendix~\ref{app:equivariant},
where $\tau \colon BSU(2) \to K(\Z, 4)$ is the map 
which represents $-c_2=\H^4(BSU(2); \Z)$.
Then, we consider $\hat e_8\hat e_8'\in \TMF^{32+\tau+\tau'}(B(SU(2)\times SU(2)))$,
where primes are used to refer to the second factor of $SU(2)$.
We pull it back to $S^4$ via a map $f:S^4\to SU(2)\times SU(2)$,
and take the slant product to define $x_{-28}$.
\end{itemize}

We will see that each of $x_{-d}$ for $d=31$ and $28$ is the generator of the kernel $A_{-d}\simeq\Z/2$ of $\sigma:\pi_{-d}\TMF\to\pi_{-d}\KO((q))$.
The methods to be used are completely different, however:
for $x_{-31}$ we use its construction as a power operation in $\TMF$ to study it in the Adams spectral sequence,
and for $x_{-28}$ we use the differential-geometric Anderson dual pairing we discussed in the previous section.

Furthermore, we will show
that there is 
both  $a$ and $\hat e_8 \hat e_8'$ can be lifted to a single element 
\begin{equation}
\tilde a \in \TMF^{32+\tilde \tau}(B\tilde G)
\quad
\text{where} 
\quad
\tilde G=(SU(2)\times SU(2))\rtimes \Sigma_2.
\end{equation}
Together with a twisted string bordism we explicitly describe,
this will allow us to derive $x_{-28}=x_{-31}\nu$, where $\nu$ is a generator of $\pi_3\TMF$.
It is known that the generators of $A_{-31}$ and $A_{-28}$ are similarly related by the multiplication by $\nu$ \cite[Table~10.1]{BrunerRognes}.
Therefore, the three statements, 
i) $x_{-31}$ generates $A_{-31}$,
ii) $x_{-28}$ generates $A_{-28}$, and
iii) $x_{-28}=x_{-31} \nu$ 
are related in such a way that if any two of them are proved, the last one automatically follows.
We emphasize that we prove the three statements i), ii) and iii) independently.
See Table~\ref{table:info} for a summary of the information presented so far.

\begin{table}[h]
\[
\qquad\qquad\begin{array}{ccccc}
&\multicolumn{3}{c}{
 \tilde a \in \TMF^{32+\tilde \tau}(B((SU(2)\times SU(2))\rtimes \Sigma_2))
 }
   \\
 &\swarrow && \searrow \\
\multicolumn{2}{c}{a \in \TMF^{32}(B\Sigma_2)} &&
\multicolumn{2}{c}{\hat e_8\hat e_8' \in\TMF^{32+\tau+\tau'}(B(SU(2)\times SU(2)))} \\
& \searrow && \swarrow \\
&\multicolumn{3}{c}{
(e_8)^2 \in \TMF^{32}(\pt) }
\end{array}
\]
\bigskip
\[
\begin{array}{|c||c|c|c||c|c|}
\hline
\text{class} & u & M & G & \text{generator of} & \text{method of determination} \\
\hline
 x_{-31}  & a & S^1 & \Sigma_2 & A_{-31}& \text{power operation in Adams S.S.} \\
 x_{-28} & \hat e_8\hat e_8' & S^4 & E_8\times E_8 &A_{-28}& \text{differential Anderson pairing} \\
\hline
\end{array}
\]
\caption{Basic information on the two $\TMF$ classes $x_{-31}$ and $x_{-28}$ we study,
where $u,M,G$ refer to Construction~\ref{const}.
We will also prove $x_{-28} = x_{-31}\nu$.
\label{table:info}}
\end{table}

\begin{physremark}
The classes $x_{-31}$  and $x_{-28}$ represent, under the Stolz-Teichner proposal,
the worldsheet superconformal field theory for the angular parts 
of the non-supersymmetric 7- and 4-branes discussed in \cite{KaidiOhmoriTachikawaYonekura,Kaidi:2024cbx}, respectively.
The desire to understand these branes, and in particular to compute their Green-Schwarz couplings, 
was a major part of the motivation to study these two classes in $\TMF$, 
and more broadly to develop the mathematical results  in this paper. 
\end{physremark}

The rest of this section is organized as follows.
\begin{itemize}
\item
In Sec.~\ref{subsec:31}, we introduce the class $x_{-31}\in \pi_{-31}\TMF$,
using a class $a\in\TMF^{32}(B\Sigma_2)$ and $M=S^1$ in the construction above.
We show that $x_{-31}$ is in the kernel $A_{-31}$ of $\sigma: \pi_{-31}\TMF\to \pi_{-31}\KO((q))$,
and state that $x_{-31}$ is in fact the generator of $A_{-31}\simeq \bZ/2$,
whose proof we postpone until Sec.~\ref{app:power}.
\item
In Sec.~\ref{subsec:28}, we introduce a class $x_{-28}\in \pi_{-28}\TMF$.
This will be done by starting from a class $\hat e_8\hat e_8'\in \TMF^{32+\tau+\tau'}(B(SU(2)\times SU(2)))$ and $M=S^4$ in the construction above.
We show that $x_{-28}$ is in the kernel $A_{-28}$ of $\sigma: \pi_{-28}\TMF\to \pi_{-28}\KO((q))$,
and state that $x_{-28}$ is in fact the generator of $A_{-28}\simeq \bZ/2$,
whose proof we postpone until Sec.~\ref{sec:computation}.
\item
In Sec.~\ref{subsec:preliminary}, we show that $x_{-28} = x_{-31} \nu$.
This will be done by lifting both of $a$ and $\hat e_8 \hat e_8'$ to a single element 
$\tilde a \in \TMF^{32+\tilde \tau}(B(SU(2)\times SU(2))\rtimes \Sigma_2)$
and by considering an explicit twisted string bordism relating them.
\item
In Sec.~\ref{app:power}, we prove that $x_{-31}$ generates $A_{-31}$ by studying the behavior of a power operation in $\TMF$ in the Adams spectral sequence.
\item
In Sec.~\ref{sec:computation}, we prove that $x_{-28}$ generates $A_{-28}$ by computing the differential-geometric pairing of $x_{-28}$ against the generator $\nu^2$ of the Pontryagin dual $A_{6}$ of $A_{-28}$.
\end{itemize}

\begin{rem}
The main results of Sections \ref{subsec:preliminary}, \ref{app:power} and \ref{sec:computation}
are derived completely independently of each other,
and these three subsections can be read independently.
See Figure~\ref{fig:logic} on the logical relationship among the subsections of this section.
\end{rem}
\begin{figure}[h]
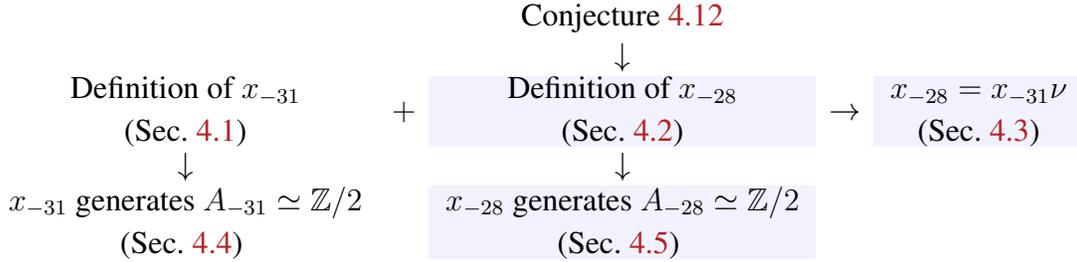

\[
\begin{array}{ccccc}
\displaystyle{
\text{Definition~of $x_{-31}$}
\atop
\text{(Sec.~\ref{subsec:31})} 
}
& + & 
\displaystyle{
\text{Definition~of $x_{-28}$}
\atop
\text{(Sec.~\ref{subsec:28})} 
}
& \to &
\displaystyle{
\text{$x_{-28}=x_{-31}\nu$}
\atop
\text{(Sec.~\ref{subsec:preliminary})}
} \\
\downarrow &&\downarrow \\
\displaystyle{
\text{$x_{-31}$ generates $A_{-31}\simeq\Z/2$}
\atop
\text{(Sec.~\ref{app:power}) }
}
&&
\displaystyle{
\text{$x_{-28}$ generates $A_{-28}\simeq\Z/2$}
\atop
\text{(Sec.~\ref{sec:computation}) }
}
\end{array}
\]
\caption{Logical relationship among subsections of this section.
The main statements of  Sections~\ref{subsec:preliminary},
\ref{app:power} and \ref{sec:computation}
are such that if any two of them are proved, the last one automatically follows.
\label{fig:logic}}
\end{figure}

\subsection{The class $x_{-31}\in \pi_{-31}\TMF$}
\label{subsec:31}
Our first class is given by a power operation applied on the following element $e_8$:
\begin{defn}
\label{defn:e8}
The class $e_8\in \pi_{-16}\TMF$  is the unique class 
specified by the condition that its image $\sigma(e_8)\in \pi_{-16}\KO((q))$ is given by $c_4/\Delta$.
\end{defn}

Recall that given an element $x\in \KO^{8n}(\pt)$ specified in terms of a vector space $V$ via $x=[V]$,
its square $x^2 \in \KO^{16n}(\pt)$ can be canonically lifted to a $\Sigma_2$-equivariant element 
$a\in \KO^{16n}(B\Sigma_2)$, where $\Sigma_2$ is the symmetric group acting on two points,
exchanging two factors of $V$ in $x^2=[V\otimes V]$.
As explained in \cite[Sec.~11.1]{BrunerRognes} and as we review in Sec.~\ref{app:power},
this process can be defined for any $E_\infty$ ring spectrum $E$,
defining an operation \begin{equation}
P_2 : E^{n}(\pt) \to E^{2n+n\rho}(B\Sigma_2),
\end{equation} where $\rho$ is the twist specified by the composition $B\Sigma_2 \to BO \to BGL_1 S \to BGL_1 E $, where the first map classifies the defining representation of $\Sigma_2$. 
We let $\mu:S^1\to B\Sigma_2$ be the classifying map of the M\"obius bundle.
Since the pullback of $2\rho$ to $S^1$ is trivial, 
we get a power operation \begin{equation}
\mu^* \circ P_2 : E^n(\pt)\to E^{2n}(S^1)
\end{equation} when $n$ is even.
We now apply this construction when $E=\TMF$ and $n=16$.
In this case $P_2$ in fact sends $\TMF^{16}(\pt)$ to $\TMF^{32+16\rho}(B\Sigma_2)=\TMF^{32}(B\Sigma_2)$,
since $\rho$ is of order $8$ in $BGL_1\TMF$.

\begin{defn}
\label{defn:31}
We let $a=P_2(e_8) \in \TMF^{32}(B\Sigma_2)$ as above,
canonically lifting the class $(e_8)^2\in \TMF^{32}(\pt)$ and equipping it with an action of $\Sigma_2$ exchanging two factors.
We then define \[
x_{-31} := \mu^*(a)/[S^1] \in \pi_{-31}\TMF,
\]
where we equip $S^1$ with the trivial string structure.
\end{defn}

We  note the following: 
\begin{prop}
\label{prop:image-in-KO-31}
 $x_{-31}$ is in the kernel $A_{-31}$ of  $\sigma:\pi_{-31}\TMF\to\pi_{-31}\KO((q))$.
\end{prop}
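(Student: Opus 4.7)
My plan is to exploit the $E_\infty$-ring structure on $\sigma\colon \TMF\to\KO((q))$ (a consequence of the Ando--Hopkins--Rezk string orientation together with the $E_\infty$ structure on the Tate construction): the map $\sigma$ strictly commutes with the $H_\infty$ power operation $P_2$, with $\mu^*$, and with the slant product against $[S^1]$. Combining the naturality of all three operations, I obtain
\[
\sigma(x_{-31}) \;=\; \mu^*(P_2(\sigma(e_8)))/[S^1] \;=\; \sigma(e_8)\cup_1 \sigma(e_8) \;=\; (c_4/\Delta)\cup_1(c_4/\Delta) \;\in\; \pi_{-31}\KO((q)),
\]
where $\cup_1\colon \pi_{2\ell}R\to \pi_{4\ell+1}R$ is the cup-$1$ power operation to be recognized in Remark~\ref{rem_cup1}. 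This reduces the proposition to showing that this cup-$1$ square vanishes in $\pi_{-31}\KO((q))\simeq (\bZ/2)((q))$.

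To verify the vanishing, I would first apply naturality along the complexification $c\colon \KO\to\K$, which is another $E_\infty$-ring map. Since $\pi_{-31}\K((q))=0$ (all odd-degree homotopy groups of $\K$ vanish), naturality forces $c\bigl((c_4/\Delta)\cup_1(c_4/\Delta)\bigr)=0$ automatically. By itself this is insufficient, because $c$ is already the zero map on $\pi_{-31}$, so its kernel is all of $(\bZ/2)((q))$. The remaining information must be extracted from the $2$-primary structure of $\cup_1$ on $\KO$.

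The cleanest way to finish is a direct computation of $P_2(c_4/\Delta)\in \KO((q))^{32}(B\Sigma_2)$: using the Atiyah--Hirzebruch spectral sequence for $B\Sigma_2\simeq \bR P^\infty$ together with the well-known structure of $\KO^*(\bR P^\infty)$ as a cyclic $\KO^*$-module, one expresses $P_2(c_4/\Delta)$ as a sum of a term pulled back from a point and a genuinely $B\Sigma_2$-dependent term. Integration over $S^1$ with the bounding string structure annihilates the pulled-back part, while the remaining $B\Sigma_2$-dependent part restricts along $\mu$ to a class on the M\"obius bundle which I expect to vanish for this specific input. The main obstacle is making this decomposition concrete enough to see the vanishing on the specific class $c_4/\Delta$. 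An alternative, possibly more transparent route, would leverage the formula for $\cup_1$ in terms of the Adams-operation difference $\psi^2(x)-x^2$ modulo $\eta$-multiplication, combined with the fact that the Witten genus intertwines Hecke operators on $\MF$ with Adams operations on $\KO((q))$; this reduces the vanishing to a modular-forms computation of $\psi^2(c_4/\Delta)-(c_4/\Delta)^2$, which is of a shape well within the reach of direct calculation.
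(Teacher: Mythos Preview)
Your reduction by naturality of the $E_\infty$-structure is exactly the paper's first step: since $\sigma$ is an $E_\infty$-ring map, $\sigma(x_{-31})$ is computed by applying the same cup-$1$ construction to $\sigma(e_8)=c_4/\Delta\in\pi_{-16}\KO((q))$. So far so good.

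The gap is that you do not actually finish. You yourself say ``The main obstacle is making this decomposition concrete enough to see the vanishing,'' and the two routes you sketch are either heavy (the AHSS for $\KO^*(\bR P^\infty)$, which you would still need to push through to the specific class) or not on firm ground (the formula ``$\cup_1 x=\psi^2(x)-x^2$ modulo $\eta$'' is not a standard identity for $\KO$, and the Hecke/Adams-operation intertwining you invoke for $\KO((q))$ would need justification and does not directly control the $\eta$-torsion piece you care about). The complexification step, as you note, is vacuous in degree $-31$.

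The paper's completion is much simpler and bypasses all of this. Write $c_4/\Delta=\sum_{n\ge -1}V_nq^n$ with $V_{-1}$ one-dimensional. Then the $q^{-2}$ coefficient of the cup-$1$ square is the mod-$2$ index on $S^1$ of $(V_{-1}\otimes V_{-1})\times_{\Sigma_2}P$; since the swap on a one-dimensional tensor square is trivial, this bundle is trivial, and the index vanishes. Hence $\sigma(x_{-31})$ has pole order at most $1$. Now one invokes a structural fact (from Appendix~\ref{app:TMF}): the image $\sigma(\pi_{-31}\TMF)\subset(\bZ/2)((q))$ is generated by $\eta(c_4/\Delta)^2J^k$ for $k\ge 0$, so every nonzero element of the image has pole order at least $2$. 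These two observations force $\sigma(x_{-31})=0$. You should replace your AHSS/Adams-operation plan with this pole-order argument.
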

\begin{proof}
The image $\sigma(x_{-31})$ can be obtained by performing the construction of Definition~\ref{defn:31} against $\sigma(e_8)=c_4/\Delta\in \pi_{-16}\KO((q))\simeq \bZ((q))$.
Let $V:=\bigoplus_{n\ge -1} V_n q^n$ be a real vector space representing $c_4/\Delta$
and let $P\to S^1$ be the nontrivial $\Sigma_2$ bundle over $S^1$.
Then we have \begin{equation}
\sigma(x_{-31})=\index_{S^1} V^{\otimes 2} \times_{\Sigma_2} (P\to  S^1)
\label{eq:mod2ind31}
\end{equation}
where $\index_{S^1}(E)$ is the mod-2 index of the real bundle $E$ on $S^1$ with trivial spin structure
and $\Sigma_2$ acts on $V^{\otimes 2}$ by exchanging two factors.

The expression \eqref{eq:mod2ind31} can be seen to vanish by a direct computation.
Another method to see its vanishing is as follows. 
Note that the leading term in \eqref{eq:mod2ind31} is $(\index_{S^1} \bR^{\otimes 2}\times_{\Sigma_2} (P\to  S^1) )q^{-2} = 0 q^{-2}$ 
and therefore the pole of $\sigma(x_{-31})$ can be at most of order 1.
Now, the image $\sigma(\pi_{-31}\TMF)\subset \pi_{-31}\KO((q)) \simeq \bZ/2((q))$ is known to be 
generated by $\eta (c_4/\Delta)^2 J^k$ for $k\ge 0$, see Appendix~\ref{app:TMF}.
Therefore any nontrivial element in $\sigma(\pi_{-31}\TMF)$ has a pole at least of order 2.
This means that $\sigma(x_{-31})$ is actually zero.
\end{proof}

According to the data gathered in Appendix~\ref{app:TMF},
we have  $A_{-31}\simeq \bZ/2$. 
In fact, as will be  proved in Sec.~\ref{app:power},
\begin{thm}
\label{thm:31}
$x_{-31}$ is the generator of $A_{-31}\simeq \bZ/2$,
\end{thm}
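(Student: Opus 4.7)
The plan is to work 2-locally throughout, since $A_{-31}\simeq \mathbb{Z}/2$ is purely 2-primary, and to identify the construction of Definition~\ref{defn:31} with a cup-1 power operation $\cup_1\colon \pi_{2\ell}\TMF \to \pi_{4\ell+1}\TMF$ as hinted at by Remark~\ref{rem_cup1}, so that $x_{-31} = e_8 \cup_1 e_8$. The reason is that the Möbius bundle $\mu\colon S^1\to B\Sigma_2$ represents the generator of $\pi_1 B\Sigma_2$, and slanting $P_2(e_8)$ against $[S^1]$ thus extracts precisely the piece of the extended power $P_2(e_8)$ coming from the $1$-cell of $B\Sigma_2$, which is the classical cup-$1$ construction. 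By Proposition~\ref{prop:image-in-KO-31} we already have $x_{-31}\in A_{-31}$, so it only remains to show $x_{-31}\neq 0$.

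The first concrete step is to locate $e_8 \in \pi_{-16}\TMF_{(2)}$ on the $E_\infty$-page of the $2$-primary Adams spectral sequence for $\pi_\bullet \TMF$, using the charts and computations in \cite{BrunerRognes}; this pins down its Adams filtration together with a specific cohomological representative $\bar e_8$. Next, I would invoke the general formalism of $H_\infty$-power operations in the Adams spectral sequence as developed in \cite[Ch.~11]{BrunerRognes}: for an $H_\infty$-ring spectrum $E$ and a permanent cycle $\bar x$ of internal degree $2\ell$ detecting $x\in \pi_{2\ell} E$, the class $x\cup_1 x$ is detected, modulo higher Adams filtration, by the image of $\bar x$ under a specific operation built out of the top Steenrod square $\mathrm{Sq}^{2\ell}$ on $\mathrm{Ext}^{\bullet,\bullet}$. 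I would then apply this recipe to $\bar e_8$ using the $\mathrm{Sq}^i$-action tabulated in \cite{BrunerRognes} to produce an explicit $E_\infty$-representative of $x_{-31}$.

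The final step is to match this representative with the known generator of $A_{-31}\simeq \mathbb{Z}/2$ in Bruner-Rognes' chart (so that it in particular is nonzero on $E_\infty$), and to verify that no Adams differential kills it and no hidden multiplicative extension identifies it with $0\in \pi_{-31}\TMF$. Since $A_{-31}$ has only two elements, nonvanishing on $E_\infty$ combined with the filtration of the generator computed from \cite{BrunerRognes} suffices.

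The main obstacle is bookkeeping in the Adams spectral sequence: correctly accounting for the Adams filtration shift induced by $\cup_1$, aligning the abstract $H_\infty$-power-operation formalism with the concrete operations that Bruner-Rognes record on $\pi_\bullet\tmf$ at the prime $2$, and ruling out hidden extensions. Once the correct page of the spectral sequence and the correct Steenrod-square action have been identified, the remaining computation is purely mechanical.
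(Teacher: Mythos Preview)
Your approach is essentially the paper's: identify $x_{-31}=\cup_1(e_8)$ and detect it on the Adams $E_2$-page via the algebraic Steenrod operations, using the data in \cite{BrunerRognes}. Two technical points you will need to fill in: since \cite{BrunerRognes} treats $\tmf$ rather than $\TMF$ and $e_8$ lives in negative degree, the paper first multiplies by the $2$-local periodicity element $M\in\pi_{192}\tmf^\wedge_2$ and uses the Cartan formula together with $\cup_1(M)=0$ to reduce to computing $\cup_1(B_7)$ for $B_7\in\pi_{176}\tmf^\wedge_2$ detected by $\delta\omega_2^3$; and the operation on the $E_2$-page detecting $\cup_1$ of a class in Adams filtration $s$ is $Sq^{s-1}$, not $Sq^{2\ell}$.
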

The proof is based on the fact that 
the definition of the element $x_{-31}$ in Definition~\ref{defn:31}
 is an example of power operations in $\TMF$ described in \cite[Chapter 11.1]{BrunerRognes}.
This allows us to compute it using the data provided there concerning power operations in the Adams spectral sequence.

\subsection{The class $x_{-28}\in\pi_{-28}\TMF$}
\label{subsec:28}
Our second element is in $\pi_{-28}\TMF$. To describe its construction, 
we need to use genuinely $SU(2)$-equivariant $\TMF$.
As details will be given in Appendix~\ref{app:equivariant},
we will be brief here.

Recall that we have a natural ring homomorphism $\pi_{\bullet}\TMF \to \MF_{\bullet/2}$.
This was compatible with the homomorphism $\pi_{4n}\TMF \to \pi_{4n}\KO((q))$
and the $q$-expansion of the modular forms.
When extended to $\RO(SU(2))$-graded genuinely equivariant $\TMF$, we have \begin{equation}
e_{SU(2)}: \TMF^{m+k\overline V_{SU(2)}}_{SU(2)}(\pt) \to \JF^{\ind = 2k}_{ \wt = \frac{m}{2}}, \label{toJF}
\end{equation}
where
$\overline V_{SU(2)} = V_{SU(2)} - 4\underline \R \in \RO(SU(2))$ is the virtual representation of $SU(2)$ associated to the fundamental representation $V_{SU(2)}$ of $SU(2)$, and
$\JF^{\ind =i}_{ \wt=w}$ is the index $i$, weight $w$ part of the ring of integral Jacobi forms,
for which we follow the notation of  \cite{Gritsenko2020382}.
This map $e_{SU(2)}$ is compatible with the homomorphism \begin{equation}
\TMF_{SU(2)}^{2n+k\overline{V}_{SU(2)}}(\pt) \to \KO^{2n}_{SU(2)}((q))(\pt) \subset \KU^{2n}_{U(1)}((q))(\pt) \simeq \bZ[y ,y ^{-1}]((q))
\end{equation}
when Jacobi forms are expanded using the standard variables $q=e^{2\pi i\tau}$ and $y =e^{2\pi i z}$,
where $y \in \KU^0_{U(1)}(\pt)$ now stands for the defining one-dimensional complex representation of $U(1)$.

In this section, we use the following fact, which will be proved in Appendix~\ref{app:equivariant}:
\begin{prop}\label{prop_TEJF2}
	The class $e_8\in \pi_{-16}\TMF$ has a preferred lift to an element in $RO(SU(2))$-graded genuinely equivariant $\TMF$,
	\begin{equation}
		\check e_8 \in \TMF^{16+\overline V_{SU(2)}}_{SU(2)}(\pt)
	\end{equation}
	which has an image 
	\begin{align}\label{eq_E41Delta}
		e_{SU(2)} (\check e_8) = E_{4, 1} / \Delta, 
	\end{align}
	where the notation $E_{4, 1} = c_4 + O(z^2) \in \JF^{\ind = 1}_{\wt = 8}$ follows \cite{Gritsenko2020382}. 
\end{prop}

The map from genuine fixed points to homotopy fixed points gives the map from $\RO(SU(2))$-graded $\TMF$ to the corresponding Borel equivariant twisted $\TMF$, 
\begin{align}
    \zeta \colon \TMF^{16+\overline V_{SU(2)}}_{SU(2)}(\pt) \to \TMF^{16+\tau}(BSU(2)). 
\end{align}
This map is the analogue of the Atiyah-Segal completion map. 

\begin{defn}
\label{defn:hat-e8}
We define $$\hat e_8 := \zeta (\check e_8) \in \TMF^{16+\tau}(BSU(2))$$.
\end{defn}

Consider the tangent bundle $TS^4$ of $S^4$ and fix an orientation. 
Its structure group uniquely lifts to $\Spin(4) \simeq SU(2)\times SU(2)$. 
Let $f:S^4\to BSU(2)\times BSU(2)$ be its classifying map, and
let $c_2$ and $c_2'$ be the standard generators of $\H^4(BSU(2)\times BSU(2);\bZ)\simeq \bZ\oplus \Z$.
We use the convention so that $\int_{S^4} f^*(c_2)=+1$ and $\int_{S^4} f^*(c_2')=-1$.
Recall that our map $\tau:BSU(2)\to K(\Z,4)$ represents $-c_2$.

\begin{defn}
\label{defn:28}
Let us use $f$ to pull-back $\hat e_8 \hat e_8' \in \TMF^{32+\tau+\tau'}(B(SU(2) \times SU(2)))$,
where the prime is added to distinguish two factors.
The resulting pull back is an element \[
f^*(\hat e_8\hat e_8') \in \TMF^{32+f^*(\tau+\tau')}(S^4)=\TMF^{32}(S^4)
\]
where we used $f^*(\tau+\tau')=-f^*(c_2)-f(c_2')=0$.
We then define \[
x_{-28} := f^*(\hat e_8\hat e_8')/[S^4] \in \pi_{-28}\TMF.
\]
\end{defn}

\begin{rem}
It is a good exercise for the reader to compute the rationalized image
of $\hat e_8\hat e_8'$ under $\alpha_\text{spin}\circ \sigma$ in $(I_\bZ\MSpin)^{12}(BSU(2)\times BSU(2))_\bQ$.
We have\begin{equation}
\alpha_\text{spin}\circ \sigma(\hat e_8 \hat e_8')_\Q  
= (\frac{p_1}2-\tau-\tau')
\frac{1}{48}
\left(p_2-3(\frac{p_1}{2})^2+2 p_1(\tau+\tau') - 8 (\tau^2 -\tau \tau' +\tau'{}^2)\right)
\end{equation}
which indeed vanishes when sent to $(I_\bZ \MString)^{12+\tau+\tau'}(BSU(2)\times BSU(2))_\bQ$.
This is a modern mathematical rephrasing of the fundamental discovery of Green and Schwarz in \cite{Green:1984sg}
of the perturbative anomaly cancellation of $E_8\times E_8$ heterotic string theory.
\end{rem}

\begin{prop}
\label{prop:image-in-KO-28}
 $x_{-28}$ is in the kernel $A_{-28}$ of  $\sigma:\pi_{-28}\TMF\to\pi_{-28}\KO((q))$.
\end{prop}
\begin{proof}
We have \begin{equation}
\sigma(x_{-28})= f^*(\sigma(\hat e_8) \sigma(\hat e_8'))/[S^4] = 0 q^{-2} + 0 q^{-1} + \cdots.
\end{equation}
Now, the image of $\sigma$ in $\pi_{-28}\KO((q))$ is generated by $(c_4c_6/\Delta) J^k$ for $k\ge 0$, see Appendix~\ref{app:TMF}.
These generators have nonzero poles at least of order 1,
and therefore $\sigma(x_{-28})=0$.
\end{proof}

According to the data gathered in Appendix~\ref{app:TMF},
the subgroup $A_{-28}$ is $\bZ/2$. 
In fact, we will show in Sec.~\ref{sec:computation} that:
\begin{thm}
\label{thm:28}
$x_{-28}$ is the generator of $A_{-28} \simeq \bZ/2$.
\end{thm}

This will be proved by computing the pairing of $x_{-28}$ 
against its Anderson dual element $\nu^2 \in A_6 \simeq \Z/2$ in a differential geometric manner, i.e.~by showing the following:
\begin{prop}
\label{prop:computation}
We have $\alpha_\text{spin/string}(x_{-28},y)=1/2$,
where  $y\in \pi_7\MSpin/\MString$ is a class lifting $\nu^2 \in \pi_6\MString$.
\end{prop}
Note that Proposition~\ref{prop:computation} immediately implies Theorem~\ref{thm:28} because of Proposition~\ref{pont-dual} and Proposition~\ref{prop:geometric-p}.

\subsection{Relation between $x_{-31}$ and $x_{-28}$}
\label{subsec:preliminary}
The two classes $x_{-31}$ and $x_{-28}$ we introduced are related in the following way.
Note that both $x_{-31}$ and $x_{-28}$ are defined by pulling back lifts of $(e_8)^2\in \pi_{-32}\TMF$,
in one case to $\TMF^{32}(B\Sigma_2)$ and in the other case to $\TMF^{32+\tau+\tau'}(B(SU(2) \times SU(2)))$, and then taking the slant product. 
The maps $\mu: S^1\to B\Sigma_2$ and $f:S^4\to B(SU(2)\times SU(2))$
can be both thought of as maps $\mu: S^1\to B\tilde G$ and $f:S^4\to B\tilde G$
where $\tilde G=(SU(2)\times SU(2))\rtimes \Sigma_2$.
Recall we are using a canonical lift $\hat{e}_8 \in \TMF^{16+\tau}(BSU(2))$ of $e_8$ provided by Definition \ref{defn:hat-e8}. 
The exterior product $\hat{e}_8  \hat{e}'_8 \in \TMF^{32+\tau+\tau'}(B(SU(2) \times SU(2)))$ is invariant under the $\Sigma_2$-action, and lifts canonically to an element $a \in \TMF^{32+\tilde\tau}(B\tilde G)$ as before,
where $\tilde\tau$ lifts $\tau+\tau'$.
Note that the trivial string structures on $S^{1}$ and $S^4$ are actually $(\tilde G,\tilde\tau)$-twisted string structures because both $\mu^*(\tilde\tau)$ and $f^*(\tilde\tau)$ are  trivial.
Summarizing the consideration above, we have:
\begin{prop}
We have $(\tilde G,\tilde\tau)$-twisted string bordism classes \begin{equation}
[S^1,\mu] \in \MString_{1+\tilde\tau}(B\tilde G), \qquad
[S^4,f]\in \MString_{4+\tilde\tau}(B\tilde G)
\label{eq:geom-classes}
\end{equation} such that \begin{equation}
x_{-31} = a/[S^1,\mu] , \qquad x_{-28} = a/[S^4,f].
\label{eq:our-classes}
\end{equation}
\label{prop:classes}
\end{prop}
Regarding the classes $[S^1,\mu]$ and $[S^4,f]$ in \eqref{eq:geom-classes}, we have
\begin{prop}
\label{prop:fun-with-bordism}
The twisted string bordism classes $[S^1,\mu] \times \nu $ and $ [S^4,f] $ in $\MString_{4+\tilde\tau}(B\tilde G)$ 
are equal,
where 
 $\nu$ is the standard generator of  $\MString_3(\pt)\simeq \bZ/24$. 
\end{prop}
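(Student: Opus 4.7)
The plan is to exhibit an explicit $(\tilde G, \tilde\tau)$-twisted string bordism $W^5$ with
\[
\partial W \;=\; S^4 \,\sqcup\, -(S^1 \times S^3),\qquad \pi\colon W \to B\tilde G,
\]
whose restrictions agree with $(f)$ on $S^4$ and with the product datum $(\mu \times \mathrm{triv},\ \text{product string structure with }\nu\text{-framing on }S^3)$ on $S^1 \times S^3$. Once such a $W$ is constructed, the desired equality of twisted string bordism classes follows by definition.

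The key geometric input is the following quaternionic picture. Write $S^4 = S(\R \oplus \mathbb{H})$; the double cover $\mathrm{Spin}(4) = \mathrm{Sp}(1) \times \mathrm{Sp}(1)$ of $\mathrm{SO}(TS^4)$ acts by $(p_L,p_R)\cdot(x,q) = (x,\, p_L q p_R^{-1})$, so the element of $\Sigma_2 \subset \tilde G$ that swaps the two $E_8$ factors lifts to the involution $(x,q)\mapsto (x,\bar q)$ on $S^4$. Its fixed locus is the equator $S^1 = S(\R\oplus\R)$, with trivial rank-$3$ normal bundle on which $\Sigma_2$ acts as $-\mathrm{id}$. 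Let $N \cong S^1 \times D^3$ be a closed tubular neighbourhood of this fixed $S^1$ and $U = S^4 \setminus \mathring N$; then $\Sigma_2$ acts freely on $U$ with $\partial U = S^1 \times S^2$, and the quotient map $U \to U/\Sigma_2$ refines, on $\pi_1$, the nontrivial class detected by $\mu$.

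Concretely, I would take $W$ to be the union of three pieces glued along their common boundaries: first, the cylinder $[0,1] \times S^1 \times S^3$; second, a $\Sigma_2$-equivariant $5$-dimensional filling that interpolates, using the free $\Sigma_2$-bundle $S^3 \to S^2$ given by the Hopf fibration, between $S^1 \times S^3$ with the $\mu$-twist and the free $\Sigma_2$-manifold $U$; and third, the solid torus $N = S^1 \times D^3$ reattached to recover $S^4$. The map to $B\tilde G$ on the cylinder is the pullback of $\mu$, on the middle piece it classifies the equivariant $\tilde G$-structure induced by the inclusion $U \hookrightarrow S^4$ together with $f$, and on $N$ it is the restriction of $f$ (which there factors through $BE_8 \times BE_8$ and is trivializable along the $D^3$-direction).

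The main obstacle will be matching the twisted string structures across the gluing loci. On the $S^4$-side, the twisted string structure is supplied by the nullhomotopy of $\tfrac12 p_1(TS^4)$ against $f^*(\tilde\tau) = f^*(\tau) + f^*(\tau') = c_2 - c_2' = 0$ coming from the identification of $TS^4$ with the $SU(2)\times SU(2)$-instanton bundle of bi-Chern class $(+1,-1)$. On the $(S^1\times S^3)$-side, the twisted string structure is the product of the trivial one on $S^1$ and the Lie-group (Sp$(1)$) framing $\nu$ on $S^3$. The required compatibility reduces to the classical identity that the boundary framing on $S^3 = \partial D^4 \subset S^4$ induced by the charge-$1$ $SU(2)$-instanton string structure is precisely the Lie-group framing representing $\nu \in \pi_3 \MString$; this in turn follows because the generator of $\pi_3(BSU(2)) = \Z$ is detected by the same $c_2$ whose trivialization along $S^3$ defines both framings. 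Granting this compatibility, the twisted string structure extends across the interpolating middle piece by obstruction theory, since the relevant obstructions lie in groups $H^{\le 5}(W;\pi_{\ge 4}\,\cdot\,)$ that vanish once the boundary data agree, completing the construction of $W$.
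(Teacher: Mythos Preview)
Your proposal has a genuine gap: the bordism $W$ is never actually constructed. The ``three pieces'' you describe do not assemble into a coherent $5$-manifold with boundary $S^4 \sqcup -(S^1\times S^3)$. The cylinder $[0,1]\times S^1\times S^3$ is $5$-dimensional, but the ``solid torus $N = S^1\times D^3$'' you list as the third piece is $4$-dimensional; it cannot be a piece of $W$, only a piece of $\partial W$. The crucial middle piece is described only as ``a $\Sigma_2$-equivariant $5$-dimensional filling'' with no construction given. Your reference to ``the free $\Sigma_2$-bundle $S^3\to S^2$ given by the Hopf fibration'' is incorrect: the Hopf fibration has fibre $S^1$, not $\Z/2$, and there is no free $\Z/2$-action on $S^3$ with quotient $S^2$. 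Finally, the complement $U = S^4\setminus\mathring N$ is diffeomorphic to $D^2\times S^2$, which is not obviously related to $S^1\times S^3$ by any simple interpolation; you would have to say exactly what $5$-manifold has these as boundary components and why the twisted string structure extends. The closing obstruction-theory sentence does not substitute for this, since you have not specified $W$ well enough to name the obstruction groups.

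The paper's argument is entirely different and much more concrete. It works inside the class of $\tilde G$-bundles over $S^1\times S^3$ built by placing localized $E_8$-instantons of specified charges at points $(x_i,p)\in [-\pi,\pi]\times S^3$ and then gluing the ends with the $E_8$-swap; call such a bundle $X(x_1,n_1;\ldots \mid y_1,m_1;\ldots)$. The bordism is then a sequence of explicit moves: first, $(S^4,f)$ is connect-summed with the trivial $X(\,\mid\,)$ to produce $X(0,+1\mid 0,-1)$ with $H=0$; then the $(-1)$-instanton is slid around the circle, crossing the gluing locus so that it becomes a $(-1)$-instanton in the \emph{other} $E_8$ factor; finally the $(+1)$ and $(-1)$ charges in the same $E_8$ annihilate, leaving $X(\,\mid\,)$ but now with one unit of $H$-flux on each $S^3$ slice, which is exactly $(S^1,\mu)\times(S^3,H_1)$. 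Throughout one tracks the $3$-form $H$ solving $dH=\cw_\nabla(\tilde\tau)$ explicitly. Your quaternionic $\Sigma_2$-picture is an interesting observation, but to turn it into a proof you would need to produce the interpolating $5$-manifold and the matching of string structures with the same level of care.
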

With this proposition, which will be proved later in this subsection,
it is easy to see the following proposition relating $x_{-31}$ and $x_{-28}$:
\begin{prop}
\label{prop:relation}
$x_{-31} \nu = x_{-28} \in \pi_{-28}\TMF$.
\end{prop}
\begin{proof}
Combine Proposition~\ref{prop:classes},
Proposition~\ref{prop:fun-with-bordism},
and the compatibility of the slant product with the multiplication.
\end{proof}

\begin{rem}
This proposition is compatible with Theorem~\ref{thm:31} and Theorem~\ref{thm:28}.
Indeed, according to \cite[Table 10.1]{BrunerRognes},
$A_{-31}$ and $A_{-28}$ are both $\bZ/2$ and
are generated by the elements $\nu_6\kappa/M$  and $\nu\nu_6\kappa/M$, respectively.
\end{rem}
\begin{rem}
The explicit bordism proving Proposition~\ref{prop:fun-with-bordism} presented below
was found during the collaboration which led  to \cite{KaidiOhmoriTachikawaYonekura,Kaidi:2024cbx}; the authors thank  J. Kaidi, K. Ohmori and K. Yonekura for allowing the authors to present it here.
We also note that a different indirect proof was given in \cite[Remark 2.34]{debray2023bordism},
which used the fact that the natural map from the bordism group of  $(E_8\times E_8,\tau+\tau')$-twisted string manifolds to that of $((E_8\times E_8)\rtimes \Sigma_2,\tilde\tau)$-twisted string manifolds is a surjection when the dimension is four.
In the same paper \cite{debray2023bordism}, the $((E_8\times E_8)\rtimes \Sigma_2,\tilde\tau)$-twisted string bordism groups for dimensions up to eleven were studied in detail.
\end{rem}

\begin{proof}[Proof of Proposition~\ref{prop:fun-with-bordism}]
A string manifold representing the generator $\nu\in \pi_3\TMF\simeq \bZ/24$
is given by a round $S^3$ equipped with the 3-form $H_1$, 
which is a part of the data of the differential string manifold, given by $\int_{S^3} H_1=1$.
Our aim is then to establish a bordism between $(S^1,\mu)\times (S^3, H_1)$ and $(S^4,f)$
in the category of string manifolds equipped with a map to $B\tilde G$,
where $\tilde G=(SU(2)\times SU(2)) \rtimes \Sigma_2$,
with the twist of the string structure specified by {a map $\tilde{\tau} \colon B\tilde G \to K(\Z, 4)$}. 
Here 
\begin{itemize}
\item {$\tilde{\tau} \colon  B\tilde G \to K(\Z, 4)$ is the map obtained by lifting the composition
\begin{align}
    BSU(2) \times BSU(2) \xrightarrow{\tau \times \tau'} K(\Z, 4) \times K(\Z, 4) \xrightarrow{+} K(\Z, 4)
\end{align}
to the homotopy orbit $(BSU(2) \times BSU(2)) \times_{\Sigma_2} E\Sigma_2 \simeq B\tilde{G}$. 
Here, in the first arrow, both $\tau$ and $\tau'$ refer to the same map which we have denoted $\tau$ so far, but here we use $\tau'$ to distinguish the two factors. 
}

\item $\mu:S^1\to B\tilde G$ is obtained from the nontrivial $\Sigma_2$ bundle classified by 
$\mu:S^1\to B\Sigma_2$ by the inclusion $\Sigma_2 \hookrightarrow \tilde G$, and 
\item  $f:S^4\to B\tilde G$ is obtained from the classifying map $f:S^4\to BSO(4)$
which can be uniquely lifted to $f:S^4\to B\Spin(4) \simeq B(SU(2)\times SU(2))\hookrightarrow B\tilde G$.
We fix the isomorphism $\Spin(4) \simeq SU(2) \times SU(2)$ so that $\int_{S^4}\tau=+1$, $\int_{S^4}\tau'=-1$. We have $\int_{S^4}(\tau+\tau')=0$.
\end{itemize}

Let us now consider the following $(\tilde G,\tilde\tau)$-twisted string manifolds.
We start from $[-\pi,\pi] \times S^3$.
We pick a point $p\in S^3$.
For $x_1, x_2, \cdots, x_k \in (-\pi,\pi)$ and a sequence of integers $n_1, n_2, \cdots, n_k$,
let us denote by $P(x_1,n_1;x_2,n_2;\ldots)$
an $E_8$ bundle with connection $\nabla$ over $[-\pi,\pi]\times S^3$
such that the $E_8$ curvature is supported on the interior of non-overlapping balls
$B_i$ around $(x_i,  p)$ 
so that $\int_{B_i} \cw_\nabla(\tau)=n_i$.
We now consider an $E_8\times E_8$ bundle over $[-\pi,\pi]\times S^3$
given by the fiber product of 
$P(x_1,n_1;\ldots)$ and $P(y_1, m_1;\ldots)$.

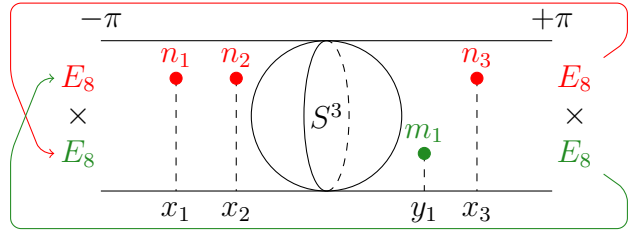
\begin{figure}
\centering
\[
\begin{tikzpicture}
\draw (-3,-1) -- (+3,-1) ;
\draw (-3,+1) node[above]{$-\pi$} -- (+3,+1) node[above] {$+\pi$} ;

\draw (0,0) ellipse (1 and 1);
\draw[dashed] (0,-1) arc[x radius=.3, y radius =1,start angle=-90,end angle=90];
\draw (0,1) arc[x radius=.3, y radius =1,start angle=90,end angle=270];
\node (S3) at (0,0) {$S^3$};

\node[above,red] (n1) at (-2,.5) {$n_1$};
\node[below] (x1) at (-2,-1) {$x_1$};
\draw[dashed] (n1) -- (x1);
\draw[red,fill=red]  (-2,.5) ellipse (.08 and .08);

\node[above,red] (n2) at (-1.2,.5) {$n_2$};
\node[below] (x2) at (-1.2,-1) {$x_2$};
\draw[dashed] (n2) -- (x2);
\draw[red,fill=red]  (-1.2,.5) ellipse (.08 and .08);

\node[above,red] (n3) at (2,.5) {$n_3$};
\node[below] (x3) at (2,-1) {$x_3$};
\draw[dashed] (n3) -- (x3);
\draw[red,fill=red]  (2,.5) ellipse (.08 and .08);

\node[above,ForestGreen] (m1) at (1.3,-.5) {$m_1$};
\node[below] (y1) at (1.3,-1) {$y_1$};
\draw[dashed] (m1) -- (y1);
\draw[ForestGreen,fill=ForestGreen]  (1.3,-.5) ellipse (.08 and .08);

\node[red] (AL) at (-3.3,.5) {$E_8$};
\node at (-3.3,0) {$\times$};
\node[ForestGreen] (BL) at (-3.3,-.5) {$E_8$};
\node[red] (AR) at (3.3,.5) {$E_8$};
\node at (3.3,0) {$\times$};
\node[ForestGreen] (BR) at (3.3,-.5) {$E_8$};

\draw[red,-<,rounded corners] 
(AR) -- (4,1) -- (4,1.5) -- (-4.2,1.5)-- (-4.2,.5)-- (-3.8,-.5)-- (BL) ;
\draw[ForestGreen,-<,rounded corners] 
(BR) -- (4,-1) -- (4,-1.5) -- (-4.2,-1.5)-- (-4.2,-.5) -- (-3.8,.5)-- (AL) ;

\end{tikzpicture}
\]
\caption{The $\tilde G$-bundle $X(x_1,n_1;x_2,n_2;x_3,n_3|y_1,m_1)$. 
We start from two $E_8$ principal bundles on $S^3\times [0,1]$, depicted with different colors,
which are trivial except within balls around finite number of points, depicted by blobs.
The characteristic numbers $\int_B \cw_\nabla(\tau)$ or $\int_B\cw_\nabla(\tau')$ at each ball are given by $n_i$ and $m_i$.
We then glue two ends of the segment $[0,1]$ by exchanging the two $E_8$ factors.
\label{fig:X}}
\end{figure}

We now make it into a $\tilde G$-bundle with connection 
over a spin manifold $S^1\times S^3$ by identifying the two ends of $[-\pi,\pi]$ 
while exchanging two $E_8$ fibers.
Let us denote the resulting $\tilde G$-bundle over $S^1\times S^3$
by $ 
X(x_1,n_1;\ldots, | y_1, m_1;\ldots).
$ 
See Fig.~\ref{fig:X} for a drawing.

\begin{figure}
\centering
\begin{align*}
(S^4,f) \# 
\begin{tikzpicture}[baseline=(0),scale=.6]
\node (0) at (0,-.2) {};
\draw (-2,1) node[above] {$-\pi$} --(2,1) node[above] {$\pi$};
\draw (-2,-1)   --(2,-1) ;
\end{tikzpicture}
&  \sim_\text{bord} 
\begin{tikzpicture}[baseline=(0),scale=.6]
\node (0) at (0,-.2) {};
\draw (-2,1) node[above] {$-\pi$} --(2,1) node[above] {$\pi$};
\draw (-2,-1)   --(2,-1) ;
\node[left,red] (r1) at (0,.5) {$+1$};
\node[right,ForestGreen] (g1) at (0,-.5) {$-1$};
\node[below] (a) at (0,-1) {$0$};
\draw[dashed] (0,.5)--(0,-1);
\draw[red,fill=red] (0,.5) ellipse (.16 and .16);
\draw[ForestGreen,fill=ForestGreen] (0,-.5) ellipse (.16 and .16);
\end{tikzpicture}
\qquad X(0,1|0,-1)
\\
&\sim_\text{bord} 
\begin{tikzpicture}[baseline=(0),scale=.6]
\path[fill=cyan!20!white] (0,1) rectangle (1,-1);
\node (0) at (0,-.2) {};
\draw (-2,1) node[above] {$-\pi$} --(2,1) node[above] {$\pi$};
\draw (-2,-1)   --(2,-1) ;
\node[left,red] (r1) at (0,.5) {$+1$};
\node[below]  at (0,-1) {$0$};
\draw[dashed] (0,.5)--(0,-1);
\draw[red,fill=red] (0,.5) ellipse (.16 and .16);
\node[right,ForestGreen] (g1) at (1,-.5) {$-1$};
\node[below]  at (1,-1) {$a$};
\draw[dashed] (1,-.5)--(1,-1);
\draw[ForestGreen,fill=ForestGreen] (1,-.5) ellipse (.16 and .16);
\end{tikzpicture}
\qquad X(0,1|a,-1)
\\
&\sim_\text{bord}
\begin{tikzpicture}[baseline=(0),scale=.6]
\path[fill=cyan!20!white] (0,1) rectangle (2,-1);
\path[fill=cyan!20!white] (-2,1) rectangle (-1.2,-1);
\node (0) at (0,-.2) {};
\draw (-2,1) node[above] {$-\pi$} --(2,1) node[above] {$\pi$};
\draw (-2,-1)   --(2,-1) ;
\node[left,red] (r1) at (0,.5) {$+1$};
\node[below]  at (0,-1) {$0$};
\draw[dashed] (0,.5)--(0,-1);
\draw[red,fill=red] (0,.5) ellipse (.16 and .16);
\node[right,red,xshift=-4] (g1) at (-1,-.5) {$-1$};
\node[below]  at (-1.2,-1) {$a$};
\draw[dashed] (-1.2,-.5)--(-1.2,-1);
\draw[red,fill=red] (-1.2,-.5) ellipse (.16 and .16);
\end{tikzpicture}
\qquad X(a,-1,0,1|)
\\
&\sim_\text{bord} 
\begin{tikzpicture}[baseline=(0),scale=.6]
\path[fill=cyan!20!white] (-2,1) rectangle (2,-1);
\node (0) at (0,-.2) {};
\draw (-2,1) node[above] {$-\pi$} --(2,1) node[above] {$\pi$};
\draw (-2,-1)   --(2,-1) ;
\end{tikzpicture}
\quad = (S^1,\mu) \times (S^3,H_1)
\end{align*}
\caption{The bordism from $(S^4,f)$ to $(S^1,\mu)\times (S^3, H_1)$. 
We follow the convention of Fig.~\ref{fig:X} for the drawing.
In the unshaded region and in the shaded region, we have $\int_{S^3} H=0$ and $\int_{S^3} H=1$,
respectively.
\label{fig:bordism}}
\end{figure}
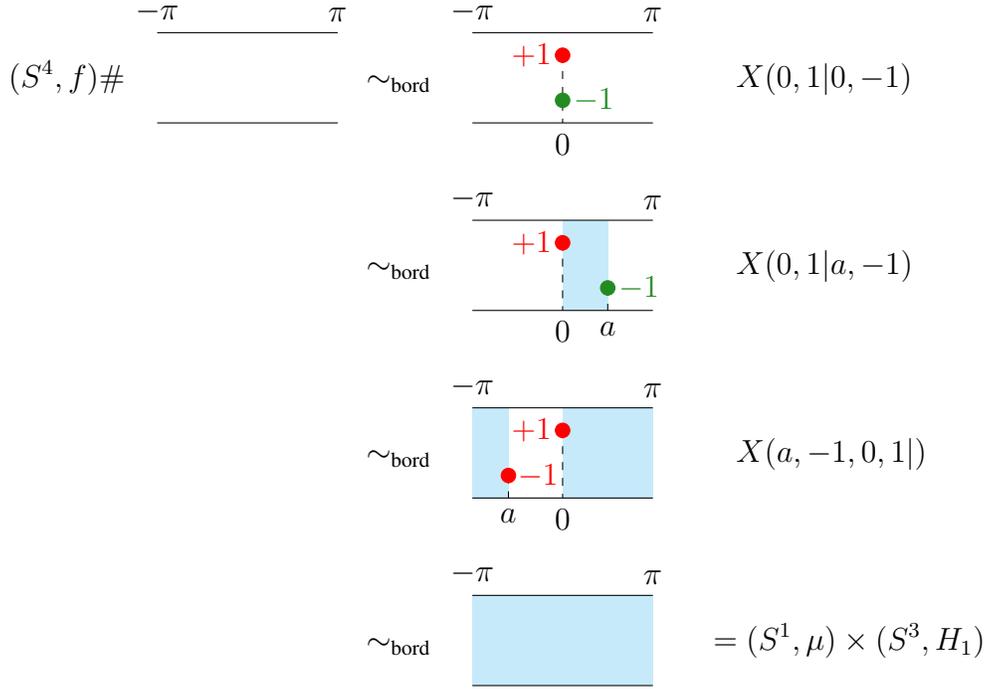

There are two choices of spin structure; we pick a bounding one.
The only obstruction to equip it with a $(\tilde G,\tilde\tau)$-twisted differential string structure is $[\tilde\tau ] \in \H^4(S^1\times S^3;\bZ)\simeq \bZ$,
where $[\tilde \tau]$ is $\sum n_i  + \sum m_j$ times the generator.
Assuming that $\sum n_i + \sum m_j=0$, then,
we can choose a $(\tilde G,\tilde \tau)$-twisted string structure,
whose data include a 3-form $H$ satisfying $dH=\cw_\nabla(\tilde\tau)$. 
Since $\H^3(S^1 \times S^3; \Z) \simeq \Z$, the data of the 3-form $H$ is sufficient to specify the $(\tilde G,\tilde\tau)$-twisted differential string structure on the $\tilde G$ bundle $X(x_1, n_1;\ldots, | y_1, m_1;\ldots)$ over $S^1 \times S^3$ up to isomorphism. 
This allows us to denote the resulting $(\tilde G,\tilde\tau)$-twisted string manifold by 
$(X(x_1, n_1;\ldots, | y_1, m_1;\ldots), H)$.

Let us move on to the construction of the bordism between $(S^1,\mu)\times (S^3, H_1)$
and $(S^4,f)$.
We note that $(S^1,\mu)\times (S^3, H_1)$ is $(X(|),H_1)$ 
where we abuse the notation to denote the pullback of $H_1 \in \Omega^3(S^3)$ by the same symbol. 
The following discussion of the bordism is drawn also as a sequence of figures in Fig.~\ref{fig:bordism},
which might be of some help to some of the readers.

We first note that $(X(|),0)$ is null bordant.
We pick a small ball around $(0, p) \in S^1\times S^3$
and glue $(S^4,f)$ there. 
The result is $(X(0,+1|0,-1),0)$.
This means that \begin{equation}
(S^4,f) \sim_{\mathrm{bord}} (S^4,f) \# (X(|),0) \sim_{\mathrm{bord} }(X(0,+1|0,-1),0),
\end{equation}
where we denote the bordism relation by $\sim_{\mathrm{bord}}$.

$X(0,+1|0,-1)$ can be continuously deformed to $X(0,+1|a,-1)$ for $0<a<\pi$.
The solution to $dH=\cw_\nabla(\tilde \tau)$ is now deformed so that 
\begin{equation}
\int_{x \times S^3} H = \begin{cases}
1 & (\epsilonelem<x<a-\epsilonelem),\\
0 & (\text{otherwise}).
\end{cases}
\end{equation}

We can now let $a$ `move from $x=+\pi$ to $x=-\pi$'.
Due to the exchange of two $E_8$ factors, we now have 
$(X(a,-1;0,1|),H)$ where $-\pi < a < 0$ and the solution to $dH=\cw_\nabla(\tilde \tau)$ is given by \begin{equation}
\int_{x \times S^3} H = \begin{cases}
0 & (-a+\epsilonelem<x<-\epsilonelem),\\
1 & (\text{otherwise}).
\end{cases}
\end{equation}
As the final step, we can continuously deform $X(a,-1;0,1|)$ to $X(|)$,
where the solution to $dH=\tilde \tau$ now
satisfies $\int_{x\times S^3}H=1$ for every $x$. This shows \begin{equation}
(X(0,+1|0,-1),0)\sim_{\mathrm{bord}} (X(|),H_1).
\end{equation}
This completes the proof.
\end{proof}

\subsection{Determination of $x_{-31}$}
\label{app:power}
Here we provide the proof of Theorem~\ref{thm:31},
i.e.~we prove $x_{-31}$ is the generator of $A_{-31}$.
This is done using the Adams spectral sequence and examining the power operation there.

First let us describe the details of the power operation used in defining  $x_{-31}$ in Definition \ref{defn:31}. 
We use the following form of the operations. 
Let $E$ be an $E_\infty$ ring spectrum. 
The data of $E_\infty$-structure includes the following refinement of the multiplication map, 
\begin{align}
    \xi_2 \colon E\Sigma_{2+} \wedge_{\Sigma_2}(E \wedge E) \to E. 
\end{align}
Thus an element $x \in \pi_n E$ represented by a map $x \colon \Sigma^n \mathbb{S} \to E$ induces a map
\begin{align}
    E\Sigma_{2+} \wedge_{\Sigma_2}(\Sigma^n \mathbb{S} \wedge \Sigma^n \mathbb{S}) \xrightarrow{x \wedge x} E\Sigma_{2+} \wedge_{\Sigma_2}(E \wedge E) \xrightarrow{\xi_2} E. 
\end{align}
This defines a map
\begin{align}\label{eq_P2_general}
    P_2 \colon \pi_n E \to E^0(E\Sigma_{2+} \wedge_{\Sigma_2}(\Sigma^n \mathbb{S} \wedge \Sigma^n \mathbb{S})). 
\end{align}
$E\Sigma_2 \wedge_{\Sigma_2}(\Sigma^n \mathbb{S} \wedge \Sigma^n \mathbb{S})$ is understood as a virtual spherical fibration of rank $2n$ over $B\Sigma_2$. 
So in general the codomain of the map \eqref{eq_P2_general} is understood as an $E$-cohomology of $B\Sigma_2$ with twisted coefficient, whose twist is given by $n\rho$, 
where $\rho$ is specified by the composition $B\Sigma_2 \to BO \to BGL_1 \mathbb{S} \to BGL_1 E $, 
such that the first map classifies the defining representation of $\Sigma_2$. 

Denote the map classifying the M\"obius bundle $\widetilde{S^1} \to S^1$ by $\mu \colon S^1 \to B\Sigma_2$. 
The composition with \eqref{eq_P2_general} gives the map
\begin{align}
    \mu \circ P_2 \colon \pi_n E \to E^0(\widetilde{S^1}_+\wedge_{\Sigma_2}(\Sigma^n \mathbb{S} \wedge \Sigma^n \mathbb{S})), 
\end{align}
Note that if $n$ is even, the spherical bundle $\widetilde{S^1}_+\wedge_{\Sigma_2}(\Sigma^n \mathbb{S} \wedge \Sigma^n \mathbb{S})$ over 
$S^1$ is trivial. Thus we get
\begin{align}\label{eq_P2_thom}
    \cup_1 \colon \pi_n E \xrightarrow{\mu \circ P_2} E^0(\widetilde{S^1}_+\wedge_{\Sigma_2}(\Sigma^n S \wedge \Sigma^n S)) \simeq E^{-2n}(S^1) \xrightarrow{\int_{S^1}} \pi_{2n+1}E ,\quad n \mbox{ : even}. 
\end{align}
In other words, the trivialization of the spherical bundle over $S^1$ gives a nontrivial element in $\pi_{2n+1}(E\Sigma_{2+} \wedge_{\Sigma_2}(\Sigma^n \mathbb{S} \wedge \Sigma^n \mathbb{S}))$ coming from $\pi_{2n+1}(\widetilde{S^1}_+\wedge_{\Sigma_2}(\Sigma^n \mathbb{S} \wedge \Sigma^n \mathbb{S}))$, and the operation $\cup_1$ is the composition of \eqref{eq_P2_general} and this element. 

\begin{rem}\label{rem_cup1}
    The notation $\cup_1$ is justified by the fact that the element in $\pi_{2n+1}(E\Sigma_{2+} \wedge_{\Sigma_2}(\Sigma^n \mathbb{S} \wedge \Sigma^n \mathbb{S})$ above maps to the generator of $H_{2n+1}(E\Sigma_{2+} \wedge_{\Sigma_2}(\Sigma^n \mathbb{S} \wedge \Sigma^n \mathbb{S}); \Z/2) \simeq \Z/2$ under the mod two Hurewicz map, see \cite[Remark 11.7]{BrunerRognes}. 
    As explained there, this notation is potentially ambiguous since the Hurewicz map is not injective in this case. 
    We make a particular choice of the element $\cup_1 \in \pi_{2n+1}(E\Sigma_{2+} \wedge_{\Sigma_2}(\Sigma^n \mathbb{S} \wedge \Sigma^n \mathbb{S}))$ by the above procedure, but it turns out that the result of this subsection still holds if we use other elements with the same Hurewicz image. 
\end{rem}

We are interested in the case of $E = \TMF$ and $n=-16$, 
\begin{align}
    \cup_1 \colon \pi_{-16}\TMF \to \pi_{-31}\TMF. 
\end{align}
Definition \ref{defn:31} says that
\begin{align}\label{eq_def_31}
    x_{-31} := \cup_1(e_8). 
\end{align}

In this subsection, we compute the result of this power operation using the Adams spectral sequence.
Recall that the Adams spectral sequences are given in terms of $H\mathbb{F}_2$, whose power operation is the Steenrod operations. 
These operations induce the operations in the spectral sequences. 
For an $E_\infty$-ring spectrum $E$, the power operations on $E$ and the Steenrod operations on the Adams spectral sequence for $E$ are known to be compatible \cite[Theorem 11.13]{BrunerRognes}. 
We apply this to $E = \tmf$ and use the computation of Steenrod operations given in \cite{BrunerRognes} to compute the right hand side of \eqref{eq_def_31}. 

Since we are interested in detecting $\Z/2$-torsion, we take $2$-adic completion in the rest of this subsection. 
In order to use the computation of \cite{BrunerRognes}, we first need to pass from $\TMF^{\wedge}_2$ to $\tmf^{\wedge}_2$.
\begin{lem}
    We have
    \[
        \cup_1(e_8) \cdot M^2 = \cup_1(e_8M)
    \]
    in $(\TMF^\wedge_2)_{2 \cdot 192 -31}$, where $M \in \pi_{192}\TMF^\wedge_2$ is the periodicity element. 
\end{lem}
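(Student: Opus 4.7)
The plan is to reduce the lemma to a Cartan-type formula for $\cup_1$ combined with a vanishing result.

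\textbf{Step 1: Cartan-type formula.} I would first establish, in any $E_\infty$-ring spectrum $E$ and for $x,y \in \pi_{2\bullet}E$ of even degrees, the identity
\[
\cup_1(xy) \;=\; x^2\cdot \cup_1(y)\;+\;y^2\cdot \cup_1(x).
\]
This is the Cartan formula for Dyer-Lashof operations specialized to $Q^{|z|}(z) = z^2$ and $Q^{|z|+1}(z)=\cup_1(z)$, with all other Dyer-Lashof terms of the correct total degree being absent by construction. Concretely, it follows from multiplicativity of the total power operation, $P_2(xy)=P_2(x)\cdot P_2(y)$ in $E^*(B\Sigma_2)$ with the appropriate twists (which are pulled back to zero via $\mu\colon S^1\to B\Sigma_2$ for even-degree $z$), together with the decomposition
\[
\mu^*P_2(z) \;=\; z^2 \;+\; \cup_1(z)\cdot e_{S^1}\;\in\;E^{-2|z|}(S^1),
\]
where $e_{S^1}$ generates $\widetilde{E}{}^{-2|z|}(S^1)\simeq \pi_{2|z|+1}E$. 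Multiplying two such decompositions, using $e_{S^1}^2=0$ (since $S^1$ is one-dimensional), and then integrating over $S^1$ yields the identity.

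\textbf{Step 2: Reduction.} Specializing to $E=\TMF^\wedge_2$, $x=e_8$ and $y=M$ gives
\[
\cup_1(e_8\, M)\;=\;M^2\cdot \cup_1(e_8)\;+\;e_8^{\,2}\cdot \cup_1(M)
\]
in $(\TMF^\wedge_2)_{2\cdot 192-31}$, so the lemma becomes equivalent to the vanishing $e_8^{\,2}\cdot \cup_1(M)=0$.

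\textbf{Step 3: Vanishing.} For the last vanishing I would use the compatibility of power operations with the Adams spectral sequence of $\tmf^\wedge_2$ as stated in \cite[Theorem~11.13]{BrunerRognes}. The periodicity element $M\in \pi_{192}\tmf^\wedge_2$ is a permanent cycle of very high Adams filtration, so any nonzero $\cup_1(M)\in \pi_{385}\tmf^\wedge_2$ necessarily lies in an even higher Adams filtration. Multiplying this by (the image of) $e_8^{\,2}\in \pi_{-32}\TMF^\wedge_2$, whose Adams filtration can be read off from the same source, lands $e_8^{\,2}\cdot \cup_1(M)$ above the vanishing line in degree $353$ of the Bruner-Rognes charts for $\pi_\bullet\TMF^\wedge_2$, yielding the required vanishing.

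The main obstacle will be Step 3: while the Cartan formula of Step 1 is essentially formal once $\mu^*P_2$ is unpacked, producing a sharp Adams-filtration estimate for $\cup_1(M)$ from the power operation on the $E_2$-page and then tracking it through multiplication by $e_8^{\,2}$ requires careful bookkeeping with the tables in \cite{BrunerRognes}.
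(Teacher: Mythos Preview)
Your Cartan-formula reduction (Steps 1 and 2) is exactly what the paper does: it derives $\cup_1(xy)=\cup_1(x)\,y^2+x^2\,\cup_1(y)$ from $P_2(xy)=P_2(x)P_2(y)$ and then reduces the lemma to the vanishing of $\cup_1(M)$.

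Your Step 3, however, has a genuine gap. The periodicity element $M$ is detected by $\omega_2^4$ in Adams filtration $32$, and since $Sq^{31}(\omega_2^4)=0$ (by the Cartan formula, using $Sq^i(\omega_2)=0$ for $i\neq 8$), the sharpest filtration bound you obtain on $\cup_1(M)\in\pi_{385}\tmf^\wedge_2$ is $\ge 64$. But $\eta M^2\in\pi_{385}\tmf^\wedge_2$ is nonzero of filtration $65$, so this bound does not force $\cup_1(M)=0$. Worse, $e_8^2\cdot\eta M^2=(e_8M)^2\eta=B_7^2\eta$ is nonzero in $\pi_{353}\TMF^\wedge_2$, since its image under $\sigma$ is $c_4^2\Delta^{14}\eta\neq 0$ in $(\bZ/2)((q))$; hence no vanishing-line argument can separate $e_8^2\cup_1(M)$ from $B_7^2\eta$, as any filtration bound satisfied by the former is also satisfied by the latter.

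The paper's own Step 3 is a one-liner appealing to a vanishing homotopy group, though there is a slip in the stated degree. A clean argument runs as follows: since $\sigma\colon\TMF\to\KO((q))$ is an $E_\infty$ map, one has $\sigma(\cup_1(M))=\cup_1(\sigma(M))$; but $\sigma(M)=\Delta^8=(\Delta^4)^2$ is a square in $\pi_{192}\KO((q))^\wedge_2$, so your own Cartan identity gives $\cup_1(\sigma(M))=2(\Delta^4)^2\cup_1(\Delta^4)=0$ in the $2$-torsion group $\pi_{385}\KO((q))$. Thus $\cup_1(M)\in A_{385}$, and $A_{385}=A_1=0$ by the tables in Appendix~\ref{app:TMF}.
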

\begin{proof}
    In the general setting, the map \eqref{eq_P2_general} satisfies the Cartan formula, 
    \begin{align}
        P_2(xy) = P_2(x)P_2(y). 
    \end{align}
    This implies $\cup_1(xy)=\cup_1(x) \cdot y^2 + x^2 \cdot \cup_1(y)$. 
    Thus it is enough to show that $\cup_1 (M) = 0$. 
   But this is immediate since $\pi_{2\cdot 192-1}\tmf_2^\wedge$ is zero.
\end{proof}

The element $e_8M \in \pi_{192-16=176}\TMF^{\wedge}_2$ is the image of $B_7 \in \pi_{176}\tmf^{\wedge}_2$ in the notation of \cite{BrunerRognes}. 
Since the map $\tmf \to \TMF$ is an $E_\infty$-map, we are left to compute $\cup_1(B_7)$. 

\begin{proof}[Proof of Theorem \ref{thm:31}]
We use the notations of \cite{BrunerRognes}. 
The element $B_7$ is detected by $\delta \omega_2^3 \in E_2^{31, 207}$ in the $E_2$-page of Adams spectral sequence of $\tmf$, where $\delta \in E_{2}^{7, 39}$ and $\omega_2 \in E_2^{8, 56}$. 
Recall that the Steenrod operations on $E_2^{s, t} = \mathrm{Ext}_{\mathcal{A}(2)}(\mathbb{F}_2, \mathbb{F}_2)$ are the maps
\begin{align}
    Sq^i \colon E_2^{s, t} \to E_2^{(s+i), 2t}, 
\end{align}
which are nontrivial only for $0 \le i \le s$. 
The total Steenrod operation
\begin{align}
    Sq=Sq^s + Sq^{s-1} + \cdots Sq^0
\end{align}
satisfies the Cartan formula $Sq(xy) = Sq(x)Sq(y)$.

By \cite[Theorem 11.13]{BrunerRognes}, the operation $\cup_1$ is compatible with the corresponding operation in the Adams spectral sequence. We see that the element $\cup_1(B_7)$ is detected by the element $Sq^{31-1}(\delta \omega_2^3) \in E_2^{61, 414}$ up to higher filtration. 
The result of Steenrod operations was given in \cite[Theorem 1.20]{BrunerRognes} as 
\begin{align}
    Sq^i(\delta) &= \begin{cases}
        h_0e_0\omega_2 & (i = 6), \\
        h_2\beta \omega_2 & (i=5), \\
        0 & \mbox{otherwise}, 
    \end{cases} \\
    Sq^i(\omega_2) &= \begin{cases}
        \omega_2^2 & (i = 8), \\
        0 & \mbox{otherwise}. 
    \end{cases}
\end{align}
By the Cartan formula we get 
\begin{align}\label{eq_proof_power_ss_1}
    Sq^{30}(\delta \omega_2^3) = h_0e_0\omega_2^7. 
\end{align}
We can see that this element detects the generator 
\begin{equation}
\nu_6 \kappa M \in \ker\left(\pi_{161+192}\tmf_2^{\wedge} \to \pi_{161+192}\ko[[q]]^{\wedge}_2\right) = \Z/2,
\end{equation}
by e.g.~using the relation $h_2d_0 = h_0e_0$, rewriting $h_0e_0 \omega_2^7 = h_2\omega_2^3d_0\omega_2^4$ and 
then using the fact that $h_2\omega_2^3$, $d_0$ and $\omega_2^4$ detect $\nu_6$, $\kappa$ and $M$, respectively. 
Thus we see that $\cup_1(B_7) = \nu_6 \kappa M + x$ with some element $x \in \pi_{161+192}\tmf$ detected by an element with higher Adams filtration. 
But we already know that $\cup_1(B_7)$ is in the kernel of $\pi_{161+192}\tmf \to \pi_{161+192}\ko[[q]]$ by Proposition \ref{prop:image-in-KO-31}, and also $\nu_6 \kappa M$ is in the kernel by the results in \cite{BrunerRognes}.
Moreover, any nontrivial element in $\pi_{161+192}\tmf$ detected by an element with Adams filtration higher than $h_0e_0\omega_2^7$ is not $B$-power torsion, so its image in $\pi_{161+192}\ko[[q]]$ is nontrivial. 
Thus we get $x=0$ so that
\begin{align}
    \cup_1(B_7) = \nu_6 \kappa M. 
\end{align} 
This completes the proof of the claim on $x_{-31}$. 
\end{proof}

\subsection{Determination of $x_{-28}$}
\label{sec:computation}

In this section, we prove Proposition \ref{prop:computation}, i.e.~
we compute the pairing $\alpha_\text{spin/string}(x_{-28},y)$ for a class $y\in\pi_7\MSpin/\MString$ lifting $\nu^2\in\pi_6\MString$
and confirm it to be nonzero.
This is done to prove Theorem~\ref{thm:28}, which says that $x_{-28}$ is the generator of $A_{-28}\simeq \bZ/2$.
This proof is independent of the result in Subsection~\ref{app:power}. 

Our demonstration of Proposition~\ref{prop:computation} is based on the following general consideration.
Let $x\in \pi_{-d} \TMF$ be a torsion element
and $[N,M] \in \pi_{d-21} \MSpin/\MString$ be given by a string manifold $M$ 
which is a boundary of a spin manifold $N$.
We choose compatible differential string structure and differential spin structure on $N$ and $M$ respectively. 

Assume that there is a group $G$ and a twist {$t \colon BG \to K(\Z, 4)$}
such that $x$ lifts to a possibly-non-torsion element $\hat x \in \TMF^{d+t}(BG)$ 
and $M$ is null-bordant in $\MString_{d-22+t}(BG)$,
i.e.~there is a manifold $\tilde N$ with a twisted differential string structure 
given in terms of a $G$-bundle $P\to N'$ with a connection $\nabla$ 
such that $(P, \nabla)$ is trivial around $M=\partial N'$.
In particular, $N'$ is equipped with a $3$-form $H \in \Omega^3(N')$ which satisfies $dH=p_1(T N')/2 - \mathrm{cw}_\nabla(t)$,
where $\mathrm{cw}_\nabla(t)$ denotes the closed $4$-form
obtained by the Chern-Weil construction.
We now form a closed spin manifold $\tilde N=\overline{N'} \sqcup N$.
We extend the $G$-bundle $P$ with connection $\nabla$ trivially on $N$ and denote the resulting bundle and connection on $\tilde{N}$ by the same symbols. 

Recall that the rationalization of $\alpha_\text{spin}(\hat x)$ is given by \begin{equation}
X:=\frac12\Delta (q) \ch(U)\hat A(T) \ch(\WitVec(T)) \Bigm|_{q^0}.
\end{equation} where $U=\sigma(\hat x)  \in \KO((q))^d(BG)$.
Here, $T$ is  the universal bundle over $B\Spin$ which pulls back to the tangent bundle by the classifying map of a spin manifold,
and $\WitVec(V)$ for a real vector bundle was defined in \eqref{WitVec}.
For more details, see~\cite[Sec.~2.2.4]{TachikawaYamashita}.

As $\alpha_\text{string}(\hat x)$ vanishes, we know that $X$ factorizes as \begin{equation}
X=\left(\frac{p_1(T)}2 - f^*(t)\right) Y, 
\end{equation}
where $f \colon \tilde{N} \to BG$ is the classifying map for $P$. 
Finally, define $V\in \KO^d(\tilde N)$ via \begin{equation}
\label{defV}
V: = \Delta(q) f^*(U) \otimes \WitVec (T\tilde N\oplus \bR) \Bigm|_{q^0}.
\end{equation}

\begin{prop}
\label{prop:pairing}
Let $d\equiv 4 \pmod 8$. Under these assumptions, we have \begin{equation}
\label{eq:pairing}
\alpha_\text{spin/string}(x,[N, M])
= \frac12\eta(V) - \int_{N'} H\wedge \cw(Y) \pmod \Z
\end{equation}
where $\eta$ is the reduced eta invariant and $\cw(Y) \in \Omega_\clo^*(N')$ is the characteristic form obtained by applying Chen-Weil construction to $Y$ with respect to the differential spin structure on $N'$.
\end{prop}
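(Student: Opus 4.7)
The plan is to compute the pairing via the differential-pushforward formula of Proposition~\ref{prop_diff_pairing_generalformula} and evaluate it using Atiyah-Patodi-Singer gluing on the closed spin manifold $\tilde N$. Since $x\in\pi_{-d}\TMF$ is torsion, I would first lift it to a flat differential class $\widehat x\in\widehat{\TMF}^{d}$ with $R(\widehat x)=0$; by Lemma~\ref{lem_diff_pairing_torsion} the torsion pairing $\alpha_{\spin/\stri}(x,[N,M])$ then equals the differential pairing on any differential refinement of $(N,M)$, so that Proposition~\ref{prop_diff_pairing_generalformula} rewrites this as
\[
\alpha_{\spin/\stri}(x,[N,M])=\tfrac{1}{2}\Delta(q)\cdot\widehat p^{(N,M)}_{*}\bigl((p^N)^{*}\widehat x\bigr)\Big|_{q^{0}}\pmod\Z,
\]
with $\widehat p^{(N,M)}_{*}$ landing in $\reallywidehat{\KO((q))/\TMF}^{21}$.

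Next I would close up $N$ with $\overline{N'}$. Since the $G$-bundle $(P,\nabla)$ is trivial near $\partial N'=M$, it extends trivially across $N$, making $\tilde N=\overline{N'}\cup_M N$ a closed differential spin manifold carrying a classifying map $f\colon\tilde N\to BG$. On the closed manifold, the commutative diagram \eqref{diag_push_boundary} together with Remark~\ref{rem_push_KO((q))} identifies the $\TMF$ differential pushforward with the reduced Witten-eta invariant of the twisted Dirac operator acting on $V$; because $\dim\tilde N=d-21\equiv 7\pmod 8$, applying $\tfrac{1}{2}\Delta(q)\cdot(-)|_{q^{0}}$ yields the contribution $\tfrac{1}{2}\eta(V)\pmod\Z$.

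The discrepancy between the pushforwards on $(N,M)$ and on $\tilde N$ is accounted for by the contribution of $\overline{N'}$, and identifying this piece is the heart of the argument. Here the twisted differential string structure enters decisively: the factorization $X=(\tfrac{p_1}{2}-f^{*}(t))Y$ combined with $dH=\tfrac{p_1(TN')}{2}-\cw_\nabla(t)$ exhibits $H\wedge\cw(Y)$ as a canonical primitive for $\cw(X)|_{N'}$, thereby trivializing the Pfaffian-line obstruction for the APS eta invariant of the boundary Dirac operator. The APS gluing then identifies the $\overline{N'}$ contribution with $-\int_{N'}H\wedge\cw(Y)\pmod\Z$, and adding it to the closed-manifold piece produces the claimed formula.

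The main obstacle is precisely this last identification. Making it rigorous requires either invoking a version of Conjecture~\ref{conj_diff_push_wit_eta}, or adapting the analysis of \cite{DaiFreed} to the twisted Witten-genus Dirac family, in order to certify that the twisted differential string structure canonically trivializes the Pfaffian line and that the resulting secondary term takes the explicit Chern-Weil form $\int_{N'}H\wedge\cw(Y)$. Morally this is the eta-invariant shadow of the $b^{\text{geom}}$ formula of Bunke--Naumann (Proposition~\ref{prop_varpi_integration}), with $H$ playing the same boundary-correction role as there.
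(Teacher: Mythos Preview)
Your approach has a genuine gap, and it lies in how you handle the lift of $x$. You work throughout with the \emph{flat differential lift} $\widehat x\in\widehat{\TMF}^d$ of the torsion element $x$, but the quantities $U$, $V$, $X$, $Y$ appearing in the formula are all defined in terms of the \emph{equivariant lift} $\hat x\in\TMF^{d+t}(BG)$ given in the hypotheses. These are entirely different objects, and the passage between them is exactly what the proof must supply. Concretely: since $d\equiv 4\pmod 8$ and $x$ is torsion, we have $\sigma(x)=0$ in $\pi_{-d}\KO((q))$. Thus if you feed $(p^{\tilde N})^*\widehat x$ into diagram~\eqref{diag_push_boundary} and Remark~\ref{rem_push_KO((q))} on the closed manifold $\tilde N$, the result is computed from $\sigma(I(\widehat x))=\sigma(x)=0$ and you get zero, not $\tfrac12\eta(V)$. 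The nontrivial vector bundle $V$ only appears because $\sigma(\hat x)=U\neq 0$ in $\KO((q))^d(BG)$; the equivariant lift ``de-torsions'' $x$.

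The paper's proof is organized around this point. It lifts the pairing to the $(G,t)$-twisted level via the compatibility diagram~\eqref{diag_lift_BG_pairing}, so that one is pairing $\hat x$ (not $\widehat x$) against classes in $(\reallywidehat{(\MSpin/\MString)\wedge_t BG_+})_{d-21}$. There the bordism decomposition $i_*[N,M]_\diff+[\overline{N'},M,P,\nabla]_\diff=j[\tilde N,P,\nabla]_\diff$ splits the pairing into two pieces. The closed piece drops to $\widehat\alpha_{\spin}(\sigma(\hat x),[\tilde N,P,\nabla]_\diff)=\tfrac12\eta(V)$ via \cite{YamashitaAndersondualPart2}; the $N'$ piece is handled not by APS gluing or Conjecture~\ref{conj_diff_push_wit_eta}, but by exhibiting an explicit twisted spin/string null-bordism $W=(N'\times[0,1])/{\sim}$ with corners, so that $[N',M,P,\nabla]_\diff=a(\cw[W,\partial_0 W,\pi^*(P,\nabla)])$ lies in the image of $a$. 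The compatibility Lemma~\ref{lem_diff_generalized_pairing_compatibility} then reduces this piece to the real pairing with $\alpha_{\spin/\stri}^\R(\hat x)$, and the relative Chern--Weil formula~\eqref{eq_cw_MSpin/MString2} produces $-\int_{N'}H\wedge\cw(Y)$ directly. No analytic gluing theorem and no unproven conjecture is needed; the whole argument stays within the differential-cohomology formalism.
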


\begin{proof}
Consider the following diagram, 
\begin{align}\label{diag_lift_BG_pairing}
    \vcenter{\xymatrixcolsep{0pc}
    \xymatrix{
   {\TMF}^d  &\times& \widehat{\MSpin/\MString}_{d-21}\ar[d]^-{i_*} \ar[rrrrrr]^-{\widehat{\alpha}_\text{spin/string}} &&&&&& \R/\Z \ar@{=}[d]\\
    {\TMF}^{d+t}(BG) \ar[d]^-{\sigma}\ar[u]^-{i^*}&\times& (\reallywidehat{\MSpin/\MString) \wedge_{t}BG_+})_{d-21} \ar[rrrrrr]^-{\widehat{\alpha}_\text{spin/string}}  &&&&&& \R/\Z \ar@{=}[d] \\
    {\KO((q))}^d(BG) &\times& (\reallywidehat{\MSpin \wedge BG_+})_{d-21} \ar[rrrrrr]^-{\widehat{\alpha}_\spin} \ar[u]^-{j}&&&&&& \R/\Z 
    }}
\end{align}
where we fix an inclusion $i \colon \pt \to BG$. 
The horizontal arrows are the {\it differential pairings} introduced in Subsection \ref{subsec_diff_pairing}\footnote{
The middle horizontal arrow is associated to \[
\alpha_{\spin/\stri} \colon \TMF \wedge_t BG_+ \to \Sigma^{-21}I_{\Z} \left(\MSpin/\MString) \wedge_{t}BG_+ \right),
\] and the bottom horizontal arrow is associated to \[
\alpha_{\spin} \colon \KO((q))\wedge BG_+ \to \Sigma^{-21}I_{\Z} \left(\MSpin/\MString) \wedge BG_+ \right).
\] 
All of them satisfy the conditions (1) and (3) in Subsubsection \ref{subsubsection_general_diff_paiaing}, so the differential pairing is reduced to the form \eqref{eq_diff_pairing}. 
\label{footnote_pairing}}. The second column consists of {\it differential bordism groups}, which are explained in Appendix \ref{app:diff_bordism} below. 
The differential spin/string structure on $(N, M)$ 
and 
the differential spin structure and the $G$-connection
determine the classes
\begin{equation}
[N, M]_\diff \in (\reallywidehat{\MSpin/\MString})_{d-20},\qquad
[\tilde{N}, P, \nabla]_\diff \in (\reallywidehat{\MSpin \wedge BG_+})_{d-20}.
\end{equation}
Here and below in this proof, 
we do not include the reference to the differential spin/string structures in notations for brevity.

We then have
\begin{align}
\alpha_\text{spin/string}(x,[N, M]) =
     \widehat{\alpha}_\text{spin/string}(x,[N, M]_\diff)
=\widehat{\alpha}_\text{spin/string}(\hat{x},i_* [N, M]_\diff), 
\end{align}
where the first equality was discussed in Sec.~\ref{subsubsection_general_diff_paiaing} and the second equality follows from the functoriality of the pairing $\widehat{\alpha}_\text{spin/string}$ applied to the first and second rows of \eqref{diag_lift_BG_pairing}. 
We have $i_*[N, M]_\diff + [\overline{N'}, M, P, \nabla]_\diff = j[\tilde{N}, P, \nabla]_\diff$ in $(\reallywidehat{(\MSpin/\MString) \wedge_t BG_+})_{d-20}$. Thus we have
\begin{align}
    \widehat{\alpha}_\text{spin/string}(\hat{x},i_* [N, M]_\diff)
   & = \widehat{\alpha}_\text{spin/string}(\hat{x},j[\tilde{N}, P, \nabla]_\diff) - \widehat{\alpha}_\text{spin/string}(\hat{x},[\overline{N'}, \overline{M}, P, \nabla]_\diff) \\
   &= \widehat{\alpha}_\spin(\sigma(\hat{x}), [\tilde{N}, P, \nabla]_\diff) + \widehat{\alpha}_\text{spin/string}(\hat{x},[{N'}, M, P, \nabla]_\diff), \label{eq_lift_BG_pairing_1}
\end{align}
where on the second equation we used the functoriality of the pairing in the second and third rows of \eqref{diag_lift_BG_pairing}. 

For the first term of \eqref{eq_lift_BG_pairing_1}, from the form of $\alpha_\spin$ given in Definition~\ref{eq_def_alpha_spin},  we can apply the result of \cite{YamashitaAndersondualPart2} to get the formula of the differential pairing $\widehat{\alpha}_\spin$ in terms of the pushforward in differential $\KO((q)) \wedge BG_+$ as we did in Subsubsection \ref{subsubsec_diff_pairing_BunkeNaumann}. 
By the formula \eqref{eq_push_KO((q))} for the pushforward, we get
\begin{align}\label{eq_lift_BG_pairing_2}
    \widehat{\alpha}_\spin(\sigma(\hat{x}), [\tilde{N}, P, \nabla]_\diff) = \frac{1}{2} \eta(V) \pmod \Z. 
\end{align}

For the second term of \eqref{eq_lift_BG_pairing_1}, note that the element $[N', M, P, \nabla]_\diff$ maps to zero in the topological group $\MSpin/\MString_{d-20+t}(BG)$. 
Explicitly, we use the following  $(G,t)$-twisted $\MSpin/\MString$-bordism from $\varnothing$ to $(N', M, P, \nabla)$. 
The underlying manifold with corners (more precisely, $\langle 2\rangle$-manifold in the sense of \cite{Janich1968}, also recalled in \cite[Section 2.3]{Yamashita:2021cao}) is $W :=(N' \times [0, 1])/\sim$, where the quotient is taken by the relation $(m, t) \sim (m, t')$ for each $m \in M$ and $t, t' \in [0, 1]$. 
In the notation of \cite[Section 2.3]{Yamashita:2021cao}, the $\langle 2\rangle$-manifold structure on $W$ is given by the decomposition of the boundary $\del_0 W = N' \times \{0\}$, $\del_1 W = N' \times \{1\}$ along the corner $\del_{01}W=\del_0 W \cap \del_1 W =M$, see Fig.~\ref{fig:corner}.
We have the quotient map $\pi \colon W \to N'$. 
We equip $W$ with the pullback $G$-bundle with connection $\pi^*(P, \nabla)$. 
The triple $(W, \del_0 W, \pi^*(P, \nabla))$ gives a differential twisted $\MSpin/\MString$-null bordism of $(N', M, P, \nabla) \simeq \del_1(W, \del_0 W, \pi^*(P, \nabla))$. 

\begin{figure}[h]
\centering
\[
\begin{tikzpicture}[yscale=1.3]
\node (0) at (-2,0) {};
\node (1) at (+2,0) {};
\node (B) at (0,-1) {};
\draw[black] (-2,0) node[above]{$0$}  -- (+2,0) node[above]{$1$} ;
\draw[blue] (-2,0) --node [left, yshift=-2.6] {$N'=\partial_0 W$ }  (0,-1) node[below] {$M=\partial_{01}W$};
\draw[red] (+2,0) --node[right, yshift=-2.6]{$N'=\partial_1 W$} (0,-1) ;
\node (W) at (0,-.4) {$W$};
\end{tikzpicture}
\]
\caption{The null bordism $W$ of $N'=\partial_1 W$ (drawn in red).
This $W$ is obtained by first considering $N'\times [0,1]$ and collapsing $M\times [0,1]$ where $M=\partial N'$ into $M$.
We have a differential twisted string structure on $N'=\partial_0 W$ (drawn in blue) lifting the differential spin structure on $W$ restricted there.
\label{fig:corner}}
\end{figure}
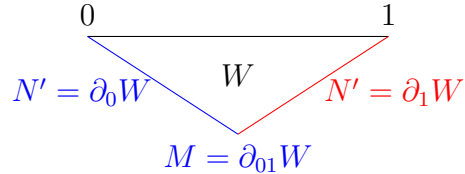

We have the exact sequence for differential homology \eqref{eq_exact_MTB}, 
\begin{equation}
\begin{aligned}
    \H_{d-20}((\MSpin/\MString) \wedge BG_+; \R) &\xrightarrow[]{a} (\reallywidehat{\MSpin/\MString) \wedge_{t}BG_+})_{d-21} \\
    & \xrightarrow[]{I} \MSpin/\MString_{d-21+t}(BG) \to 0. 
\end{aligned}
\end{equation}
It is convenient to view the first term as
\begin{equation}
\begin{aligned}
    &\H_{d-20}((\MSpin/\MString) \wedge BG_+; \R) \\
    &\simeq \Hom_\R(\H^{d-20}((\MSpin/\MString) \wedge BG_+; \R ), \R)  \\
    &\simeq \Hom_\R\left(\left(\left(\H^\bullet(BG; \R)\otimes_\R \ker(\H^*(\MSpin; \R) \to \H^*(\MString; \R)\right)\right)^{d-20}, \R\right), \label{eq_bordism_homology_R}
\end{aligned}
\end{equation}
where for the second isomorphism we used the assumption that $d$ is even. 
We have
\begin{align}
    a\left(\cw([W, \del_0 W, \pi^*(P, \nabla)])\right) = [N', M, P, \nabla]_\diff, 
\end{align}
where $\cw([W, \del_0 W, \pi^*(P, \nabla)])$ is the homomorphism in \eqref{eq_bordism_homology_R} which evaluates characteristic polynomials on $(W, \pi^*(P, \nabla))$, explicitly given by the integration of the relative closed form \eqref{eq_cw_MSpin/MString2}. 

Lemma \ref{lem_diff_generalized_pairing_compatibility} in this case implies the compatibility of the two pairings in the following diagram, 
\begin{equation}
    \hskip-5em\vcenter{\xymatrixcolsep{0pc}
    \xymatrix{
     {\TMF}^{d+t}(BG) \ar[d]^-{\alpha_\text{spin/string}^\R}&\times& (\reallywidehat{\MSpin/\MString) \wedge_{t}BG_+})_{d-21}\ar[rrr]^-{\alpha_\text{spin/string}}  &&& \R/\Z  \\
     \H^{d-20}((\MSpin/\MString) \wedge BG_+; \R )&\times& \H_{d-20}((\MSpin/\MString) \wedge BG_+; \R) \ar[rrr]^-{\langle \cdot , \cdot\rangle} \ar[u]^-{a} &&& \R \ar[u]^{\mod \Z}. 
    }}
\end{equation}
Thus we have
\begin{align}\label{eq_lift_BG_pairing_3}
\begin{aligned}
    \alpha_\text{spin/string}(\hat{x},[{N'}, M, P, \nabla]_\diff)
    &=\left\langle\alpha_\text{spin/string}^\R(\hat{x}) , \cw([W, \del_0 W, \pi^*(P, \nabla)])\right\rangle \\
    &= \int_W  \cw(X) - \int_{\del_0 W \simeq N'} H \wedge \cw(Y) \pmod \Z  \\
    &= -\int_{N'} H \wedge \cw(Y) , 
\end{aligned}
\end{align}
where the first equality uses the formula \eqref{eq_cw_MSpin/MString2} and the last equality follows from the fact that $W$ has the cylindrical structure. 
Combining \eqref{eq_lift_BG_pairing_1}, \eqref{eq_lift_BG_pairing_2} and \eqref{eq_lift_BG_pairing_3}, we get the desired result. This completes the proof of Proposition \ref{prop:pairing}. 
\end{proof}

With this we can proceed to the computation of our pairing. 
For that, we need an enhancement of Proposition~\ref{prop_TEJF2},
where we refine $SU(2)$ equivariance to $SU(2)\times SU(2)$ equivariance.
Below, we sometimes put primes to objects belonging to the second $SU(2)$.

Recall that $SU(2)$-equivariant $\TMF$ had a map to the ring of integral Jacobi forms \eqref{toJF}.
With $SU(2)\times SU(2)'$-equivariance, we have \begin{equation}
e_{\JF\times \JF}\colon \TMF^{m+k\overline V_{SU(2)}+k'\overline V_{SU(2)}'}_{SU(2)\times SU(2)} \to \mathrm{JJF}^{\text{ind}=(k,k')}_{ \text{wt}=\frac m2},
\label{eq_char_}
\end{equation}
where $\mathrm{JJF}^{\text{ind}=(k,k')}_{ \text{wt}=\frac m2}$ is 
the space of what can be called as \emph{double Jacobi forms $f(\tau;z,z')$ of index $(k,k')$ and weight $w$},
which are functions such that it is a Jacobi form index-$k$ weight-$w$ Jacobi form with respect to the variables $(\tau, z)$ and that with index-$k'$ weight-$w$ with respect to the variables $(\tau, z')$, and moreover has integral coefficients when expanded in the variable $(q = e^{2\pi i \tau}, y = e^{2\pi i z}, y' = e^{2\pi i z'})$.
\def\mathcalEdefinition#1{
Let us consider the following element in $\mathrm{JJF}^{\ind=(1,1')}_{\wt = 4}$, 
\begin{equation}
\begin{aligned}
	\mathcal{E}_4(\tau;z, z') &=\frac{1}{144} \Bigl( c_4 \phi_{0,1} \phi_{0, 1}' - c_6 \phi_{-2,1} \phi_{0,1}'
	 - c_6 \phi_{0,1} \phi_{-2,1}' + c_4^2 \phi_{-2,1} \phi_{-2,1}\Bigr)  \\
	 &= 1+ (58+(y  ^2+1+\frac1{y ^{2}}) + 32(y  +\frac1y )+\\
	 & \qquad\qquad (y '{}^2+1+\frac{1}{y '{}^{2}})+32(y ' +\frac1{y '})+12(y  +\frac1{y })(y ' +\frac1{y '{}^{-1}}) )+
	 O(q)^2 
\end{aligned}
\label{#1}
\end{equation}
where $c_4$, $c_6$ are modular forms in $q=e^{2\pi i\tau}$,
$\phi_{0,1}$, $\phi_{-2,1}$ are Jacobi forms in $q=e^{2\pi i\tau}$, $y =e^{2\pi i z}$,
and 
$\phi_{0,1}'$, $\phi_{-2,1}'$ are Jacobi forms in $q=e^{2\pi i\tau},y '=e^{2\pi i z'}$,
where we again follow the notation of \cite{Gritsenko2020382}.
We can show that the right hand side has integral formal coefficients. Indeed, the $\mathcal{E}_4$ is characterized by the property that
\begin{align}\label{eq_cE4}
		\mathcal{E}_4(\tau;z, 0) = E_{4,1}(z), \quad \mathcal{E}_4(\tau;z, z) = E_{4,2}(z), \quad \mathcal{E}_4(\tau;z_1, z_2 ) = \mathcal{E}_4(\tau;z_2, z_1 ) .
\end{align}
}
\mathcalEdefinition{calE4}

We will use the following proposition, which will be proved in Appendix~\ref{app:equivariant}:
\begin{prop}\label{prop_TEJFTEJF}
	There is an element $\check{\check{e}}_8 \in \TMF^{16+\overline V_{SU(2)} + \overline V_{SU(2)}'}_{SU(2)\times SU(2)}(\pt)$ which maps to $\mathcal{E}_{4}/\Delta$ under the map \eqref{eq_char_}. 
	Moreover, this element pulls back to $\check e_8$ via both inclusions $(\mathrm{id},1)$, $(1,\mathrm{id}) \colon SU(2)\to SU(2)\times SU(2)$.
\end{prop}

\begin{proof}[Proof of Proposition~\ref{prop:computation}]
\parindent15pt
The element $x_{-28}$ was obtained by pulling back an element 
\begin{equation}
\check{\check{e}}_8\check{\check{e}}_8'\in \TMF^{32+\overline V_{SU(2)}+\overline V_{SU(2)}+\overline V_{SU(2)}'+\overline V_{SU(2)}'}_{(SU(2) \times SU(2))\times (SU(2) \times SU(2))}(\pt)
\end{equation}
using the map 
\begin{equation}
S^4 \to BSU(2)\times BSU(2)\xrightarrow{(\id \times 1) \times (\id \times 1)} B(SU(2) \times SU(2))\times B(SU(2) \times SU(2)).
\end{equation}
Therefore, $x_{-28}$ can be lifted to an element $\check x_{-28}$ of $\TMF^{28+\overline V_{SU(2)}+\overline V_{SU(2)}'}_{SU(2)\times SU(2)}(\pt)$.

Its image $\sigma(\check{x}_{-28})$ in  $\KO^{28}_{SU(2)\times SU(2)}((q))(\pt)$
can be  computed by the KO-theory pushforward  using \eqref{calE4} and Proposition~\ref{prop_TEJFTEJF}.
The result is 
\begin{equation}
\sigma(\check{x}_{-28}) = (12\, \mathfrak{A}-12\, \mathfrak{A}') q + O(q^2),
\label{sigma-hat-x}
\end{equation}
where $\mathfrak{A}$ and $\mathfrak{A}'$ are the defining two-dimensional representations of $SU(2)$ and $SU(2)'$, respectively.
$\alpha_\text{spin}(\check x_{-28})$ can be computed from the $q$-expansion given in \eqref{sigma-hat-x}, and is given after rationalization by \begin{equation}
\alpha_\text{spin}(\check x_{-28}) = (\frac{p_1(T)}2 - \tau-\tau')\frac12 (\tau-\tau').
\end{equation}

Let us move on to the analysis of $\nu^2 \in \pi_6\MString$.
Let us focus on $\nu\in\pi_3\MString$ first.
As $\nu$ is a boundary of $B^4$,
we can take $[B^4, \nu] \in \pi_4\MSpin/\MString$.
Next we consider $B^4$ equipped with an $SU(2)$ bundle $P\to B^4$
supported away from the boundary
such that $\int_{B^4} \tau(P)=+1$.
We can pick a 3-form $H$ satisfying $dH=p_1(TN')/2 - \tau(P)$ on $B^4$.
For simplicity we pick a metric which makes $B^4$ into a completely round hemisphere
and $S^3=\partial B^4$ a completely round sphere.
This in particular makes $\int_{S^3} H=1$ on the boundary.
These data make $B^4$ into an $(SU(2),\tau)$-twisted string null bordism of $\nu$.

With these information we deal with $\nu^2\in \pi_6\MString$ itself.
Its spin null bordism is given by $N=B^4 \nu$,
and consider $[N,\nu^2] \in \pi_7 \MSpin/\MString$.
We also need to use $N'=B^4 \nu$ equipped with the $E_7$-twisted string structure constructed as above on the side of $B^4$.
Then $\tilde N= \overline{N'}\sqcup N$ is $S^4\times S^3$
which is equipped with an $SU(2)$ bundle $P\to \tilde N$
which is pulled back from an $SU(2)$ bundle $P\to S^4$
such that $\int_{S^4} \tau(P)=1$.
The class $V\in \KO^{28}(S^4\times S^3)$
as defined in \eqref{defV}
is pulled back from $V\in \KO^{28}(S^4)$
which is given by $P\times_{SU(2)} (12\,\mathfrak{A}- 24) $.

We can finally invoke Proposition~\ref{prop:pairing}.
The first term on the right hand side of \eqref{eq:pairing} is \begin{equation}
\frac12\eta_{\tilde N}( V  ) = \frac12\mathrm{ind}_{S^4} (V)  \eta_{S^3} = 0.
\end{equation} 
Here the first equality uses the product formula of $\eta$ on a product manifold
and the second equality uses the fact that $\eta$ of round $S^3$ is zero.

In contrast, the second term of  \eqref{eq:pairing} is \begin{equation}
\int_{B^4 \times S^3} H \frac12\tau = \frac12 \int_{B^4}\tau \int_{S^3} H = \frac12.
\end{equation}
We therefore conclude that \begin{equation}
\alpha_\text{spin/string}(x,\nu^2) = \frac12.
\end{equation}
This is what we wanted to determine.
\end{proof}


\appendix

\section{Conjectures on vertex operator algebras and $\TMF$}
\label{app:VOA}

\subsection{Generalities}
The Stolz-Teichner proposal \cite{StolzTeichner1,StolzTeichner2}
says the following:
\begin{proposal}
\label{conj:SST}
Let $\SQFT_{n}$ be the `space' of 2d unitary spin \Nequals{(0,1)} supersymmetric quantum field theories (SQFT) {with degree $n \in \Z$. }
The sequence $\{\SQFT_\bullet\}$ forms an $\Omega$-spectrum, and agrees with $\TMF$,
the spectrum of  topological modular forms. 
\end{proposal}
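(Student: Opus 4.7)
The plan is to acknowledge at the outset that Proposal~\ref{conj:SST} is the full Stolz--Teichner conjecture, which remains open; what I can offer is a route that follows the cleanest currently-visible strategy, building on the intermediate results surveyed in the introduction (in particular the work of Berwick-Evans on field theories and $\KO_{\MF}$, and the sigma orientation $\Wit_{\stri}$ of Ando--Hopkins--Rezk). The proof would naturally split into three stages: (i) giving the collection $\{\SQFT_n\}$ the structure of an $\Omega$-spectrum; (ii) constructing a morphism of spectra $\mathcal{P}\colon \SQFT_\bullet \to \TMF$; and (iii) proving that $\mathcal{P}$ is an equivalence. Stage (i) requires a precise definition of \Nequals{(0,1)} SQFTs as objects in a symmetric monoidal $\infty$-category of bordism-type, together with a notion of deformation. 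The most promising approach is to model these as $(0|1)$-dimensional Euclidean field theories over the Stolz--Teichner bordism category $2|1\text{-EBord}$, and then extract $\Omega$-spectrum structure from the smooth stack of such theories, following ideas of Hopkins--Stolz--Teichner and more recently the geometric realizations in the work cited in the paper.

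For stage (ii), the key input is that the partition function of a 2d \Nequals{(0,1)} SQFT on an elliptic curve with spin structure defines a weakly holomorphic modular form of the appropriate weight, giving at least a rational map $\SQFT_\bullet\to \TMF_\bQ$. To refine this to an integral map of spectra, I would proceed stack-locally on $\mathcal{M}_\text{ell}$: restrict an SQFT to a formal neighborhood of a given elliptic curve $E$, extract the associated elliptic cohomology class from the chiral algebra / state space, and verify the gluing data against the Goerss--Hopkins--Miller sheaf $\mathcal{O}^{\text{top}}$ used in the Hill--Lawson construction of $\TMF$. At the Tate curve, this morphism must reduce to the spin Witten genus and therefore fit into the commuting square with $\sigma\colon\TMF\to\KO((q))$ and the SQFT partition function over $\KO((q))$; consistency at the Tate cusp is essentially what Berwick-Evans establishes for $\KO_{\MF}$, so the residual content is lifting past the cusp data to the whole $\mathcal{M}_\text{ell}$.

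Stage (iii), the equivalence, is by far the main obstacle. Rationally it reduces to matching partition functions with the ring $\MF_\bullet$, which is essentially classical. The serious difficulty is the 2-primary and 3-primary torsion: $\pi_\bullet\TMF$ contains subtle classes such as the elements $\nu\nu_6\kappa/M$ and $\beta^4/H$ appearing in our discussion, and one has to produce SQFT representatives of each and prove that no further deformation classes occur. I would attack this in two steps: first, surjectivity, by exhibiting explicit SQFTs (holomorphic VOSAs in negative degrees, following the philosophy of our Appendix~\ref{app:VOA}, and right-moving SCFTs in positive degrees as in \cite{Gaiotto:2018ypj}) that hit a generating set of the homotopy groups, cross-checked against the charts of $\pi_\bullet\TMF$ from \cite{BrunerRognes}; and second, injectivity, by using the Anderson self-duality of $\TMF$ proved in Theorem~\ref{main} together with a detection theorem on the SQFT side---any SQFT pairing trivially with all its Anderson duals via mapping-torus constructions must be deformation-trivial. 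The hard part is this last detection step: it would require an SQFT incarnation of the differential pairing $\widehat\alpha_{\spin/\stri}$ of Section~\ref{subsec_diff_pairing}, realized via path-integral eta invariants on spin manifolds with string-structured boundaries, plus a uniqueness result for deformation classes of SQFTs with specified modular-form data and specified anomaly pairings. Absent such a uniqueness result, one should at least be able to reduce the statement to a finite, prime-by-prime list of conjectural SQFT realizations, which is the spirit in which we formulate the rest of the paper.
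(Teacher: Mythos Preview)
The paper does not prove this statement. It is labeled a \emph{Proposal}, i.e.\ the Stolz--Teichner conjecture, and is stated without proof as the motivating physical input for the rest of Appendix~\ref{app:VOA} and indeed for much of the paper; the paper uses it heuristically to formulate further conjectures about VOSAs and $\TMF$ classes, but nowhere attempts to establish it. So there is no ``paper's own proof'' to compare your attempt against.

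You correctly recognize this at the outset, noting that the statement is open. Your outline is a reasonable sketch of what a proof \emph{might} look like, but it is not a proof and should not be presented as one: each of your stages (i)--(iii) contains steps that are themselves major open problems (a rigorous $\Omega$-spectrum structure on $\SQFT_\bullet$, a spectrum-level map to $\TMF$ refining the partition function, and especially the injectivity/detection step). The appropriate response here is simply to note that the statement is a conjecture and is not accompanied by a proof in the paper.
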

Before proceeding, we need some comments: \begin{itemize}
\item  The adjective `spin' in the statement stresses the condition that the SQFT in question is formulated in the category of spin manifolds.
 \item With suitable regularity assumptions,
 unitary quantum field theories defined on spacetimes with Lorentzian signature are believed to be in one-to-one correspondence
 with reflection positive quantum field theories defined on spacetimes with Euclidean signature.
With this expectation,  we always consider unitary Lorentzian theories or equivalently reflection-positive Euclidean theories
in this paper. 
\item The {\it degree} specifies the {\it anomaly} of an SQFT. 
This point requires a more detailed remark given below.
\end{itemize}

{
We take the point of view that an anomalous $d$-dimensional QFT is a boundary theory of a $(d+1)$-dimensional invertible theory. 
In view of the conjecture by Freed and Hopkins of the classification of possibly non-topological invertible QFT \cite{FreedHopkins2021}, the spectrum $\Sigma^{4}I_\Z \MSpin$ should classify the deformation classes of invertible three-dimensional spin QFTs.
Then, a theory of degree $n$ is by definition a boundary theory of the three-dimensional invertible phase whose deformation class is specified by 
\begin{align}
    n \in \Z \simeq (I_\Z \MSpin)^4(\pt). 
\end{align}
We note here that a three-dimensional invertible theory whose deformation class corresponds to the generator $1 \in \Z \simeq (I_\Z \MSpin)^4(\pt)$ was given as an element of the differential Anderson dual of the spin bordism group in \cite[Example 4.69]{Yamashita:2021cao}. }

{Moreover, there is a map
\begin{align}
    \Z \times BO\langle 0, \ldots, 4 \rangle \to \Sigma^4 I_\Z \MSpin
\end{align}
which exhibits the left hand side as a connective cover of the right hand side \cite[Proposition C.5]{TachikawaYamashita}. This means that the space $\Z \times BO \langle 0, \ldots, 4 \rangle$, which mathematically classifies the degree and the twist of $\TMF$, can be used as a classifying {\it space} of the anomaly of SQFTs in our context. }
Before proceeding, we note that there have been mathematical works that seek to implement this proposal to various degrees, e.g.~\cite{Cheung,berwickevans2023field}.

We now consider a family of SQFTs with symmetry $G$.
In view of the comments above, an anomaly of such a theory is specified by a map
\begin{align}\label{eq_G_twist}
    (n, k) \colon BG \to \Z \times BO\langle 0, \cdots, 4 \rangle.  
\end{align}
Mathematically, we expect to have a genuinely equivariant version of $\TMF$ twisted by a map of the form \eqref{eq_G_twist},
although 
the theory of twisted genuinely-equivariant topological modular forms has not been fully developed yet.
In this appendix we assume the existence of such a theory, together with an appropriate generalization 
of the Atiyah-Segal completion map $c$, such that we have
the following commuting square:  \begin{equation}
\vcenter{\xymatrix{
\TMF_G^{d+k}(\pt) \ar[d]^-{c} \ar[r]^-{\sigma} &
\KO_G^{d+\tilde k}(\pt) \ar[d]^-{c} \\
\TMF^{d+k}(BG)  \ar[r]^-{\sigma} & 
\KO^{d+\tilde k}((q))(BG)
}},
\end{equation}
where {$\tilde k$ is the composition of $k$ and the natural projection $BO\langle0,1,2,3,4\rangle\to BO \langle0,1,2\rangle$. }
We note that we do use $\RO(G)$-graded equivariant $\TMF$ in this paper,
whose theory is recalled in Appendix~\ref{app:equivariant}.
The $\RO(G)$-graded version only allows certain subsets of the twists
and not the completely general ones given in \eqref{eq_G_twist}.

With this caveat, we have the following generalized version of the Stolz-Teichner proposal \cite{Johnson-Freyd:2020itv,TachikawaYamashita}:
\begin{proposal}
The deformation classes of 2d unitary spin \Nequals{(0,1)} supersymmetric quantum field theories with symmetry group $G$, whose anomaly is specified by the element
{$(n, k) \colon BG \to \Z \times BO \langle 0, \cdots, 4 \rangle$,}
form the group  $
\TMF^{n+k}_G(\pt)
$.
\end{proposal}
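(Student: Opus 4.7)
The plan is to realize this Proposal as a $G$-equivariant refinement of the Stolz-Teichner correspondence \ref{conj:SST}, promoting each ingredient of the non-equivariant case to the equivariant setting. First I would construct a functorial/bordism-theoretic model for the space of 2d spin \Nequals{(0,1)} SQFTs equipped with a $G$-symmetry action, assembling them into a spectrum $\SQFT_G$. By the viewpoint (already used implicitly above) that an anomalous 2d theory is a boundary condition for a 3d invertible phase, the deformation classes of $G$-symmetric theories naturally decompose according to the class of the ambient $G$-equivariant invertible phase, which is parametrized by a map $(n, k) \colon BG \to \Z \times BO\langle 0, \ldots, 4 \rangle$. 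This reproduces the anomaly label appearing in the statement and gives the decomposition $\SQFT_G = \bigsqcup_{(n, k)} \SQFT_G^{n+k}$.

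Next I would construct a comparison morphism $\SQFT_G^{n+k}(\pt) \to \TMF_G^{n+k}(\pt)$. The non-equivariant Stolz-Teichner picture is the guide: one uses the partition function of the SQFT on an elliptic curve, which for $(0,1)$ theories produces a Witten-genus-type invariant, and promotes this to a section of the sheaf of spectra defining $\TMF$ over the moduli stack of elliptic curves. In the $G$-equivariant setting I would work over a moduli stack of $G$-equivariant elliptic curves and show that the $G$-equivariant character of the SQFT, which reduces to an element of $\KO_G((q))$ under $\sigma$, admits a canonical lift to genuinely $G$-equivariant $\TMF$. Compatibility with the Atiyah-Segal completion map would then yield the corresponding Borel-equivariant statements used in the body of the paper, providing concrete checks via classes such as $\hat e_8$ and $\hat e_8 \hat e_8'$.

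The hard part will be to show the comparison map is an equivalence, and the main obstacle is twofold. On one hand, the non-equivariant version of this claim is the longstanding Stolz-Teichner conjecture itself, which is open, so any genuine proof must surmount that fundamental barrier. On the other hand, as flagged in footnote \ref{footnote_genuine}, a satisfactory theory of genuinely $G$-equivariant $\TMF$ with twists of the form $(n, k) \colon BG \to \Z \times BO\langle 0, \ldots, 4 \rangle$ has not yet been developed, so the right-hand side of the proposed equivalence lacks a rigorous definition in sufficient generality. A serious attack would therefore begin by establishing this foundational theory—including a twisted genuinely-equivariant sigma orientation $\MString_G \to \TMF_G$ and a completion comparison to $\KO_G((q))$—and would then verify the proposal on tractable classes of examples (free $G$-invariant theories, chiral WZW models, and the VOSA-based constructions of Appendix~\ref{app:VOA}) before attempting a general equivalence.
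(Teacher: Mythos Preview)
The paper does not prove this statement at all. It is presented as a \emph{Proposal}---the paper's terminology for a physics-motivated conjecture---and is simply stated as ``the following generalized version of the Stolz-Teichner proposal'' with a citation to \cite{Johnson-Freyd:2020itv,TachikawaYamashita}. No argument, not even a heuristic one, is given in the paper; the Proposal functions as an organizing hypothesis for the conjectures about VOSAs and $\TMF$ classes that follow in the appendix.

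Your plan is therefore not comparable to anything in the paper, but you have correctly diagnosed why no proof exists: the non-equivariant case is the open Stolz--Teichner conjecture, and the target $\TMF_G^{n+k}(\pt)$ in the genuinely equivariant twisted sense has not even been constructed (as the paper itself flags in footnote~\ref{footnote_genuine}). Your outline of what a proof would require---an equivariant bordism-type model of the SQFT spectrum, a comparison map built from equivariant elliptic characters, and foundational work on twisted genuine $\TMF_G$---is a sensible wish list, but it is a research program rather than a proof sketch. The honest summary is that the statement is conjectural and the paper treats it as such.
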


A subclass of 2d quantum field theories is given by 2d superconformal field theories (CFTs),
which have commuting actions of two Virasoro algebras.
Two copies of Virasoro algebras are distinguished by calling them as left-moving
and right-moving\footnote{%
On a two-dimensional flat spacetime with Lorentzian signature, 
operators depending on $t+x$ but not on $t-x$ are called left-moving 
and those depending on $t-x$ but not on $t+x$ are called right-moving.
After the Wick rotation to put it on a two-dimensional flat spacetime with Euclidean signature,
one set of operators become holomorphic, depending on $z=x+iy$ but not on $\bar z$,
and the other set of operators become anti-holomorphic, depending on $\bar z$ but not on $z$.
}, respectively,
We denote their central charges by $c_L$ and $c_R$.
In this case the degree $n\in (I_\bZ\Omega^\text{spin})^4(\pt)\simeq \bZ$ is known to be given by $n=2(c_L-c_R)$.
In a unitary theory, the central charges $c_{L,R}$ are non-negative,
and when $c_L$ (or $c_R$) is zero,
the left-moving (or the right-moving) side is empty.
The theories where $c_L>0$, $c_R=0$ are known as purely left-moving
and those for which $c_L=0$, $c_R>0$ are known as purely right-moving.

CFTs can be considered either in the category of oriented manifolds
or in the category of spin manifolds.
Physicists often call the former bosonic CFTs,
and the latter spin (or fermionic) CFTs.

For a CFT to be \Nequals{(0,1)} supersymmetric,
the action of the right-moving Virasoro algebra has to be extended to the action of the super-Virasoro algebra,
while the left-moving Virasoro algebra does not have to be extended.
This in particular means that a purely left-moving not-necessarily-supersymmetric CFT is actually \Nequals{(0,1)} supersymmetric,
where the action of the right-moving super  Virasoro algebra is trivial.

Therefore, as tractable subcases of the Stolz-Teichner proposal,
we can consider either 
a purely right-moving supersymmetric spin CFT,
or a purely left-moving spin CFT which is not necessarily supersymmetric.
The former was the topic of \cite{Gaiotto:2018ypj},
and the aim of this appendix is to discuss the latter.

A vertex operator algebra (VOA) axiomatizes the properties of subalgebras of left-moving operators of a bosonic CFT.
For a spin CFT,  subalgebras of left-moving operators form a vertex operator superalgebra  (VOSA) instead.
Here the adjective `super' means that the grading is in $\frac12\bZ$,
and we do not require that it has a super-Virasoro subalgebra.
We are mainly interested in unitary quantum field theories,
and therefore all VOAs and VOSAs of our interest should carry a unitary structure,
for whose details we refer the reader e.g.~to \cite[Sec.~5]{Kawahigashi} and \cite[Sec.~3]{carpi2023vosa}.

The entirety of operators of a purely left-moving spin CFT $T$ forms a VOSA,
which we denote by $\V(T)$. 
If $T$ has a symmetry $G$ which is a simply-connected simple compact Lie group,
{the deformation class of the anomaly is specified by an element in $ (\widetilde{I_\bZ\Omega^\text{spin}})^4(BG)=\H^4(BG;\bZ)\simeq\bZ$},
which is usually called its level.
Then $\V(T)$ should contain the affine Lie algebra VOA $\mathfrak{g}_k$  at level $k$ as a subalgebra.

It is not yet known exactly which properties of a VOSA $\V$ guarantees that it arises as $\V(T)$ for some $T$. \if0
One very strong constraint comes from the modular invariance, which states that the character \begin{equation}
\chi_\V(q) = \tr\nolimits_\V q^{L_0-c/24}
\end{equation}
gives a projective one-dimensional representation 
and of the group $\Gamma_0(2)$ in the case of spin CFTs.
\fi
A strong candidate for such a condition is that the VOSA in question is
\emph{self-dual} (or equivalently \emph{holomorphic}) in the sense of the VOA theory,
i.e.~that any reasonable representation is the direct sum of copies of $\V$ itself \cite{Zhu}.
The most optimistic formulation is then
\begin{proposal}
A self-dual unitary  VOSA $\V$ containing
 the affine Lie algebra VOA $\mathfrak{g}_k$  comes from 
a unique purely left-moving unitary spin CFT $T$ with $G$ symmetry at level $k$,
so that $\V=\V(T)$.
\end{proposal}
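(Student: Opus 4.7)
The plan is to build the purely left-moving spin CFT $T$ from $\V$ by assembling its correlation functions on Riemann surfaces with spin structure, and then verify that the resulting object is a unitary CFT whose left-moving operator algebra recovers $\V$. First I would establish the torus partition functions. Zhu's modular invariance theorem, in its super extension (as refined for example by Dong-Zhao and van Ekeren), implies that the graded characters $\tr_{\V} q^{L_0-c/24}$ on the Neveu-Schwarz and Ramond sectors assemble into a vector-valued modular form for $\Gamma_0(2)$ with the correct weight, giving partition functions for all four spin structures on $T^2$. Self-duality forces the module category of $\V$ to be trivial, so these torus partition functions are essentially unique and manifestly modular invariant.

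Next, higher-genus partition functions and multi-point correlators must be constructed. I would use the sewing/factorization formalism: the invariant bilinear pairing on $\V$ coming from self-duality provides the gluing tensor, and three-point functions are computable from the OPE of $\V$. Compatibility of different sewings would follow from the associativity and modular covariance of conformal blocks for self-dual VOSAs, a package assembled in the bosonic setting by the work of Huang-Lepowsky and Huang, which must be upgraded to the spin/super setting.

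Third, the $G$-symmetry at level $k$ would be produced by exponentiating the zero modes of $\hat{\mathfrak{g}}_k \subset \V$: these generate a $\mathfrak{g}$-action by derivations on $\V$, and the unitary structure together with finiteness of the relevant weight spaces should allow one to integrate the action to the simply-connected group $G$. The level $k$ would identify with the anomaly class in $\H^4(BG;\bZ) \simeq \bZ$ via the standard anomaly-inflow argument, matching the twist of $\TMF$ used in the main text. Uniqueness of $T$ should follow from a reconstruction theorem: the chiral algebra of a purely left-moving spin CFT recovers all of its local operators, and the full collection of correlation functions is rigidified by the VOSA data via the sewing construction above.

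The main obstacle is that a fully axiomatized notion of a unitary spin CFT in the Segal sense has not yet been pinned down, and the analytic and geometric pieces of the construction on spin surfaces of arbitrary genus are only partially developed even in the non-super case. In particular, the modular tensor category machinery that makes the bosonic, holomorphic case tractable must be upgraded to the super/spin setting in order to handle gluing across all spin structures coherently, and subtle issues such as the behavior of Ramond sector vacua under sewing would need to be controlled. The proposal should therefore be read as asserting that these well-known technical gaps in CFT axiomatics are the \emph{only} obstructions, and that no new representation-theoretic input beyond self-duality of $\V$ is required.
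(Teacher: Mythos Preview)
The paper does not prove this statement. It is explicitly presented as a \emph{Proposal}---a conjecture-level assertion in the same family as the Stolz--Teichner proposal itself---and is introduced with the words ``The most optimistic formulation is then\ldots''. No argument is offered; the surrounding discussion only explains why self-duality is a plausible sufficient condition and then immediately moves on to use the Proposal (together with further physics input) as motivation for Conjecture~\ref{conj:affinevoa}.

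Your outline is therefore not comparable to anything in the paper. That said, what you wrote is a reasonable sketch of the standard reconstruction strategy, and you correctly identify the genuine obstruction: there is at present no rigorous definition of a unitary spin CFT in the Segal sense against which one could verify the axioms, and the super/spin upgrade of the Huang--Lepowsky sewing and modularity machinery is incomplete. The paper implicitly agrees with you on this point---it treats the passage from VOSA to CFT as a black box throughout and never claims otherwise. So there is no gap in your reasoning relative to the paper; rather, you have attempted to prove something the authors deliberately left as a conjecture.
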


\subsection{VOA and $\TMF$ classes}
\label{sec:vosa}
\subsubsection{Self-dual VOSAs}
Let us recall some of the structures associated to  self-dual VOSAs.
For details, we refer the reader to a recent paper \cite{HoehnMoeller}.
One unfortunate situation for us is that, in most of the mathematical literature on VOAs  and VOSAs,
the unitarity condition is not imposed and a different set of `regularity conditions' are used instead,
see e.g.~\cite[Sec.2.3]{HoehnMoeller}.
In this appendix we tacitly assume that these regularity conditions follow from the unitarity condition.

The underlying vector space of a VOSA $\V=\V(T)$ is
the Hilbert space of the theory $T$ on $S^1$ with trivial spin structure.
The Hilbert space of the theory $T$ on $S^1$ with nontrivial spin structure
defines a \emph{canonically $\bZ/2$-twisted} module of $\V$,
which we denote by $\W$.
$\V$ itself has a decomposition $\V=\V_0\oplus \V_1$ 
so that $\V_0$ has integer gradings and is an ordinary VOA,
whereas $\V_1$ has half-integer gradings.
When $\V_1$ is empty, $\V$ is actually a self-dual VOA,
and $\W\simeq \V$.
Otherwise,
$\W$ is either irreducible or a sum of two irreducibles as a representation of $\V_0$.
Then  $\V_0$, $\V_1$, and the irreducible components of $\W$
generate the representation category $\cM(\V_0)$ of $\V_0$,
and therefore $\V_0$ is rational in the sense of the VOA theory.
$\cM(\V_0)$ is known to have a natural structure as a modular tensor category.

A basic series of examples of such VOSAs are given by Clifford super VOAs $\V(n)$, see e.g.~\cite{KacWang,DuncanMackCrane}.
They are  VOSAs constructed from $\bR^n$ with positive inner product,
generalizing the construction of the Clifford algebra $\mathrm{Cliff}(n)$ from the same starting point.
It has central charge $n/2$
and $\V(n)_0$ is equal to the affine Lie VOA $\mathfrak{so}(n)_1$ for $n\ge 5$.
We denote its canonically $\bZ/2$-twisted module by $\W(n)$,
whose highest weight vectors form the Clifford module for $\mathrm{Cliff}(n)$.

Given a self-dual VOSA $\V$, 
its central charge is of the form $c=n/2$.
The unitarity implies  a certain reality condition on the underlying vector space of $\V$ and $\W$,
extending the reality condition for a unitary VOA given in \cite{Kawahigashi}.
The reality condition on $\V$ was recently detailed in \cite{carpi2023vosa}.
To state the reality condition on $\W$ expected from physics,
let $8i$ be the smallest multiple of eight above or equal to $n$.
Then $\V\otimes \V(8i-n)$ is a VOSA with $c=4i$,
and its canonical $\bZ/2$-twisted module $\W\otimes \W(8i-n)$ 
should be a complexification of a real $\bZ/2$-graded vector space
with an action of $\mathrm{Cliff}(8i-n)$.

Let us now describe how the symmetries of VOA enter the story.
When $\V$ contains $\mathfrak{g}_k$,
there should be an action of $G$ (the compact simply-connected group corresponding to $\mathfrak{g}$) on $\W\otimes \W(8i-n)$,
compatible with the $\bZ/2$-grading, the real structure, and the action of $\mathrm{Cliff}(8i-n)$.
This means that each graded piece of $\W$ naturally determines an element of $\KO_G^n(\pt)$.
The grading is provided by $L_0-c/24=L_0-n/48$, and the eigenvalues of $L_0$ on $\W$
is of the form $n/16+\Z$.
Then, $\W$ naturally determines an element  $[\W]\in q^{n/24}\KO_G^n((q))(\pt)$.

Combining the considerations thus far, we can now formulate the following conjecture:
\begin{conj}
\label{conj:affinevoa}
Let $\V$ be a unitary self-dual VOSA
whose central charge is $c=n/2$ such that
it contains the affine Lie VOA $\mathfrak{g}_k$.
Let $\W$ be its canonically $\bZ/2$-twisted module.
Then there is a class $[\V]\in \TMF_G^{n+k\tau}(\pt)$\footnote{
{Mathematically, for each integer $k$ we have a preferred choice of the map $k\tau \colon BG \to K(\Z, 4)$ \cite[Example 5.1.5]{FSSdifferentialtwistedstring} representing $k$ times the generator $\tau$ of $\H^4(BG,\bZ)\simeq \bZ$. This allows us to talk about $\TMF_G^{n+k\tau}$. }},
such that its image $\sigma([\V])\in \KO_G^n((q))(\pt)$ is given by \[
\sigma([\V]) = \eta(q)^{-n} [\W] \in \KO_G^n((q))(\pt).
\]
\end{conj}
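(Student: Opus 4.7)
The plan is to derive the conjecture from the Stolz-Teichner proposal together with the expected VOSA-to-CFT reconstruction. Given a unitary self-dual VOSA $\V$ of central charge $n/2$ with $\mathfrak{g}_k$ subalgebra, one first reconstructs a purely left-moving unitary spin CFT $T$ with $G$-symmetry at level $k$ such that $\V(T) = \V$. Regarding $T$ as an \Nequals{(0,1)} SQFT with empty right-moving sector, its degree equals $2(c_L - c_R) = n$ and its $G$-anomaly is identified with the level $k$ via the preferred map $k\tau \colon BG \to K(\Z,4)$. The (genuinely $G$-equivariant, twisted) Stolz-Teichner functor then assigns $T$ a class in $\TMF_G^{n+k\tau}(\pt)$, and this would be the definition of $[\V]$.

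To establish the formula $\sigma([\V]) = \eta(q)^{-n}[\W]$, recall that $\sigma$ is induced by restriction to the Tate moduli and physically computes the $G$-equivariant elliptic genus of $T$ on $T^2$. For a purely left-moving theory this collapses to the character $\tr_{\W} q^{L_0-c/24}$ of the canonically twisted module, naturally valued in $q^{n/24}R_G((q))$ where $R_G$ is the representation ring of $G$; after stabilizing by the Clifford super VOA $\V(8i-n)$ so that the resulting Hilbert space is a genuine $\mathrm{Cliff}(8i-n)$-module with real structure, this character lifts to an element of $\KO_G^n((q))(\pt)$, which is precisely $[\W]$. The universal factor $\eta(q)^{-n}$ then appears as the normalization of the Witten-genus map, exactly as in the formula \eqref{WitVec} for $\WitVec$ applied to a rank-$n$ bundle; equivalently, it accounts for the mismatch between the physical supertrace normalization of the torus partition function and the $\KO((q))$-class convention dictated by the sigma orientation $\Wit_\text{spin}$.

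First I would prove the non-equivariant case $G = 1$, for which the genuinely equivariant framework is unnecessary; a direct argument paralleling the treatment of purely right-moving theories in \cite{Gaiotto:2018ypj} should work. The hard part is twofold. First, the VOSA-to-CFT reconstruction is itself open: even the modular invariance of the spin torus partition function of a general unitary self-dual VOSA is only conjectural at present, and the higher-genus and boundary compatibilities needed to produce a genuine SQFT are more delicate still. Second, and more seriously for the statement as formulated, a genuinely $G$-equivariant twisted $\TMF$ together with a Stolz-Teichner functor into it and an Atiyah-Segal completion fitting into the assumed commuting square is not yet developed (see Footnote~\ref{footnote_genuine}). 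Granting these two foundational inputs, the final matching of $q$-series reduces to a concrete character identity that can be checked using the Jacobi triple product for $\W(8i-n)$ together with the modular bootstrap for $\V$; this last step is the most computationally transparent and the real work lies in the preceding structural inputs.
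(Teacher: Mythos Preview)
The statement you are addressing is a \emph{conjecture} in the paper, not a theorem; the paper offers no proof and does not claim one. What the paper provides in Appendix~\ref{app:VOA} is precisely the heuristic motivation you have reconstructed: the Stolz-Teichner Proposal~\ref{conj:SST} and its equivariant refinement, together with the expectation that a self-dual unitary VOSA comes from a unique purely left-moving spin CFT, lead one to \emph{formulate} the conjecture. Your proposal accurately traces this motivational chain, but it is not a proof and cannot be made into one with current technology.

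You yourself identify the obstructions correctly: the VOSA-to-CFT reconstruction is open, the genuinely $G$-equivariant twisted $\TMF$ with the required completion square does not yet exist, and the Stolz-Teichner proposal itself remains a proposal. These are not technical gaps to be filled by a more careful argument; they are the substance of why the statement is recorded as a conjecture. Your ``non-equivariant case $G=1$'' would still rest on the unproved Stolz-Teichner map $\{\text{SQFTs}\} \to \pi_\bullet\TMF$, so even that base case is out of reach. The final ``character identity'' step you describe as computationally transparent is indeed the easy part, but it presupposes that $[\V]$ has already been defined as a $\TMF$ class, which is exactly what is conjectural. In short: your write-up is a faithful account of the \emph{motivation} the paper gives for the conjecture, but it should not be labeled a proof proposal.
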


It is known that we can start from a (possibly odd) self-dual lattice $L$
and form a self-dual VOSA $\V(L)$.
$\V(L)$ is unitary when $L$ is positive definite,
and $\V(L)$ is VOA when $L$ is even.
If $L$ contains a root lattice $\Lambda_{\mathfrak{g}}$ for a simple Lie algebra $\mathfrak{g}$,
$\V(L)$ contains an affine Lie VOA $\mathfrak{g}_1$ at level 1 as a subalgebra.
This level corresponds to the generator $\tau$ of $\H^4(BG;\bZ)\simeq \bZ$.
Combined with the conjecture above,
we now have a plethora of $\TMF$ classes
associated to unimodular lattices.

For example, the root lattice of $E_8$ is the unique even self-dual lattice of rank 8,
and there are two even self-dual lattices of rank 16, one is $E_8\times E_8$
and another containing the root lattice of $D_{16}$.
We denote the corresponding VOAs by $\V_{E_8}$,
$\V_{E_8\times E_8}$, $\V_{D_{16}}$, respectively.

Combining these analyses, we have the following conjecture:
\begin{conj}\label{conj_e8}
The class $e_8\in \pi_{-16}\TMF$, specified by its image $\sigma(e_8)=c_4/\Delta\in \pi_{-16}\KO((q))$, lifts to an element \[
\breve e_8 = [\V_{E_8}]\in \TMF_{E_8}^{16+\tau}(\pt),
\] such that its image in $\KO_{E_8}^{16}((q))(\pt)$ is given by 
\[
\sigma(\breve e_8) = \eta(q)^{-16} \chi_{(E_8)_1}
\]
where $\chi_{(E_8)_1}$ is the unique integrable representation of the affine Lie algebra $(\mathfrak{e}_8)_1$
regarded as an element of $q^{2/3}\KO_{\mathfrak{e}_8}^0((q))(\pt)$, so that \[
\chi_{(E_8)_1} = q^{-1/3}(1+ \mathfrak{e}_8 q + \cdots ).
\]
\end{conj}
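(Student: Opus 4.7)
The plan is to deduce the statement from the general Conjecture \ref{conj:affinevoa} specialized to the $E_8$ lattice VOA $\V_{E_8}$, once we have verified that the predicted $\KO$-theoretic image matches $\eta(q)^{-16}\chi_{(E_8)_1}$ and that the underlying non-equivariant class is indeed $e_8$. The VOA $\V_{E_8}$ is self-dual, unitary, of central charge $c=8$ (so $n=16$), and contains $(\widehat{\mathfrak{e}_8})_1$ as the affine subalgebra generated by its weight-one part. The level equals the generator $\tau \in \H^4(BE_8;\bZ) \simeq \bZ$, so Conjecture \ref{conj:affinevoa} would place $[\V_{E_8}] \in \TMF_{E_8}^{16+\tau}(\pt)$.

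For the image formula, the key observation is that $\V_{E_8}$ is a pure VOA with integer grading, so its canonically $\bZ/2$-twisted module $\W$ coincides with $\V_{E_8}$ itself. Moreover, $\V_{E_8}$ realizes the basic level-one representation of $(\widehat{\mathfrak{e}_8})_1$, and its graded character with respect to $L_0 - c/24$, viewed as a formal $E_8$-equivariant $\KO$-class, is precisely $\chi_{(E_8)_1} \in q^{-1/3}\KO_{E_8}^0((q))(\pt)$. Plugging this into the formula $\sigma([\V]) = \eta(q)^{-n}[\W]$ from Conjecture \ref{conj:affinevoa} would then yield $\sigma(\check e_8) = \eta(q)^{-16}\chi_{(E_8)_1}$.

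To identify the non-equivariant reduction with $e_8$, recall from Definition \ref{defn:e8} that $e_8$ is uniquely characterized by $\sigma(e_8) = c_4/\Delta \in \pi_{-16}\KO((q))$. Restricting $\chi_{(E_8)_1}$ to the trivial $E_8$-character recovers the classical character of the basic representation, which equals $\Theta_{E_8}/\eta^8 = E_4/\eta^8$ up to standard normalizations. Then $\eta^{-16} \cdot E_4/\eta^8 = E_4/\eta^{24} = c_4/\Delta$, matching $\sigma(e_8)$. Since the map $\sigma$ is injective on this component of $\pi_{-16}\TMF$, the restriction of $\check e_8$ to the non-equivariant class via the Atiyah-Segal completion and the collapse of the twist must equal $e_8$.

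The principal obstacle is foundational rather than computational: Conjecture \ref{conj:affinevoa} presupposes both the Stolz-Teichner correspondence between 2d $(0,1)$-SQFTs and $\TMF$ and a robust theory of genuinely equivariant $\TMF$ with twists of the form $k\tau$, neither of which is at present rigorously established in the generality required (see Footnote \ref{footnote_genuine}). An alternative approach that bypasses these foundations would construct $\check e_8$ directly via a loop group model of positive-energy representations of $LE_8$ at level one, in the spirit of the work of Ando, French, Ganter and of Lurie on $G$-equivariant elliptic cohomology; in such a framework, the verification of the $\KO$-image would reduce to an equivariant Witten-genus computation on the Tate curve, while the identification with $e_8$ would proceed as in the previous paragraph.
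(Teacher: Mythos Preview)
The statement you are addressing is a \emph{Conjecture}, not a theorem; the paper does not prove it. What the paper does is exactly what you outline in your first two paragraphs: it presents this statement as the specialization of the general Conjecture~\ref{conj:affinevoa} to $\V=\V_{E_8}$, noting that $\V_{E_8}$ is a self-dual VOA (so $\W\simeq\V$), that its affine subalgebra is $(\mathfrak{e}_8)_1$, and that the predicted $\KO$-image is $\eta(q)^{-16}\chi_{(E_8)_1}$. Your identification of the non-equivariant reduction with $e_8$ via $E_4/\eta^{24}=c_4/\Delta$ is also consistent with the paper's discussion.

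Your third paragraph correctly names the actual gap: both Conjecture~\ref{conj:affinevoa} and the existence of genuinely $G$-equivariant twisted $\TMF$ are unproved hypotheses (the paper flags the latter explicitly in Footnote~\ref{footnote_genuine}). So your ``proof'' is not a proof but a reduction to a more general conjecture, which is precisely how the paper treats it. The alternative route you sketch via loop-group models and equivariant elliptic cohomology is a reasonable suggestion for future work, but the paper does not pursue it either.
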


\if0
\begin{conj}
There is an element \[
\check d_{16} =[\V_{D_{16}}]\in \TMF_{D_{16}}^{32+\tau}(\pt)
\] lifting an element $d_{16}\in \pi_{-32}\TMF$
such that $\sigma(d_{16})=c_4^2/\Delta^2 \in \pi_{-32}\KO((q))$.
\end{conj}
\fi
\if0
\begin{rem}
We have $\sigma(d_{16})=\sigma((e_8)^2)$.
This means that $d_{16}-(e_8)^2$ is a multiple of $\beta^4/H \in \pi_{-32}\TMF$,
an element of order 3, cf.~\cite[Chapter 13]{BrunerRognes}.
It would be interesting to determine if it is zero or not.
\end{rem}
\fi
\if0
\subsubsection{When $G$ is finite}
As another example, let us consider the simplest Clifford  VOSA $\V(1)$ itself.
At the most naive level, this defines an element in $\TMF^1(\pt)$, but $\TMF^1(\pt)=0$ and our statement is vacuous.
We are then led to consider the Clifford VOSA $\V(1)$ with equivariance under $\bZ/2$.

Let us slightly generalize the situation and consider an arbitrary  super VOSA $\V$ which is equivariant under a finite group $G$.
Physically it is expected that the anomaly of $G$ is specified by
an element $k\in  (\widetilde{I_\bZ\Omega^\text{spin}})^4(BG) = [BG,BO\langle0,1,2,3,4\rangle]$.
VOA-theoretically, this element $k$ should determine the associator of the $G$-graded category of $g$-twisted modules of $\V$.

As discussed above, the morphism $\sigma$ in the twisted equivariant case is a map \begin{equation}
\sigma : \TMF^{n+k}_G(\pt) \to \KO^{n+\tilde k}_G((q))(\pt)
\end{equation}
where $\tilde k\in (\widetilde{I_\bZ\Omega^\text{pin-}})^3(BG) = [BG,BO\langle0,1,2\rangle]$ 
is determined by  the natural projection \begin{equation}
BO\langle0,1,2,3,4\rangle\to BO \langle0,1,2\rangle.
\end{equation}
We note that \begin{equation}
\mathrm{GrBr}(X) := [X,BO\langle0,1,2\rangle]= H^1(X,\bZ/2)\times H^2(X,\bZ_2)
\end{equation} as a set, where the sum on the right hand side is given by \begin{equation}
(a_1,b_1) + (a_2,b_2)=(a_1+a_2, b_1+b_2+a_1a_2),
\end{equation} is known as the graded Brauer group of $X$ \cite{DonovanKaroubi}.
For $\tilde k =(a,b)\in H^1(BG,\bZ/2)\times H^2(BG,\bZ/2)$,
it then naturally defines a superalgebra $A:=\mathbb{R}[G]_{(a,b)}$, 
generated by 
elements $e_g$ for $g\in G$
such that $e_g$ is even or odd depending on $a(g)=0$ or $=1$,
and $e_{gh}=(-1)^{b(g,h)}e_g e_h$, cf.~\cite{IchikawaTachikawa}.
Then, the group $\KO^{-n+\tilde k}_G(\pt)$ for finite $G$
is given, as a slight extension of \cite{ABS,AtiyahSegal}, 
by the short exact sequence \begin{equation}
0\to M_{8s+n+1}^A \to M_{8s+n}^A \to \KO^{-n+\tilde k}_G(\pt)
\end{equation} for a large enough $s$,
where $M_\ell^A$ is the Grothendieck group of the monoid of $\bZ/2$-graded real representation of the real superalgebra 
$\mathrm{Cliff}(\ell)\hat\otimes A$
where  $\hat\otimes$ is the super tensor product.

Summarizing, we have the following conjecture.
\begin{conj}
\label{conjvoa-finite}
Let $\V$ be a  unitary self-dual  VOSA
whose central charge is $c=n/2$ and
equivariant under a finite group $G$
with an anomaly $k \in (\widetilde{I_\bZ\Omega^\text{spin}})^4(BG)$.
Let $\W$ be its canonically $\bZ/2$-twisted module.
Then there is a class $[\V]\in \TMF_G^{n+k}(\pt)$,
such that its image $\sigma([\W])\in \KO_G^{n+\tilde k}(\pt)$ is given by \[
\sigma([\V]) = \eta(q)^{-n} \W \in \KO_G^{n+\tilde k}((q))(\pt).
\]
\end{conj}

Let us now come back to the case of $\V(1)$ with $\bZ/2$ equivariance.
We specify this $\bZ/2$ equivariance tautologically,
by assigning the even and odd part of $\V(1)$ (as super VOA) 
as even and odd under the $\bZ/2$ equivariance.
This $\bZ/2$ symmetry has a nonzero anomaly $k$ given by the generator of
$(\widetilde{I_\bZ\Omega^\text{spin}})^4(B\bZ/2)=\bZ/8$.
Therefore we have \begin{equation}
[\V(1)]\in \TMF^{1+k}_{\bZ/2}(\pt).
\end{equation}

When $G=\bZ/2$,
the twist $\tilde k$ is given by the reduction 
via $ (\widetilde{I_\bZ\Omega^\text{spin}})^4(B\bZ/2)=\bZ/8
\to \tilde k\in (\widetilde{I_\bZ\Omega^\text{pin-}})^3(B\bZ/2)=\bZ/4$
which sends the generator to the generator.
For our $\V(1)$, $\tilde k$ is given by $\tilde k=(1,0)\in H^1(B\bZ/2,\bZ/2)\times H^2(B\bZ/2,\bZ/2)$,
for which $\mathbb{R}[G]_{(1,0)} \simeq \mathrm{Cliff}(1)$.
This means that $\KO^{n+\tilde k}_{\bZ/2}(\pt)=\KO^{n-1}(\pt)$,
and therefore we have \begin{equation}
\sigma([\V(1)]) \in \KO^{1+\tilde k}_{\bZ/2}((q))(\pt)=\bZ((q)).
\end{equation}
A routine computation using Conjecture~\ref{conjvoa-finite}
shows that $\sigma([\V(1)])=1$.
Summarizing, we have: 
\begin{conj}
There is a class $\psi \in \TMF^{1+k}_{\bZ/2}(\pt)$
such that \[
\sigma(\psi) =1\in \KO^{1+\tilde k}_{\bZ/2}((q))(\pt)=\bZ((q)).
\]
Here $k\in (\widetilde{I_\bZ\Omega^\text{spin}})^4(B\bZ/2)=\bZ/8$
is a generator such that its image in 
$\tilde k\in (\widetilde{I_\bZ\Omega^\text{pin-}})^3(B\bZ/2)
=\mathrm{GrBr}(B\bZ/2)=\bZ/4$ is also a generator
such that $\KO^{1+\tilde k}_{\bZ/2}(\pt)=\bZ$.
\end{conj}
\fi

\subsubsection{VOA and $\TMF$ classes in the range $c\le 16$}
Self-dual VOAs of central charges $c\le 16$ were classified by \cite{DongMason} without the unitarity assumption.
In general, the central charge $c$ of a self-dual VOA is a multiple of eight,
and it was found in \cite{DongMason} that all such VOAs with $c\le 16$ 
are actually lattice VOAs associated to even self-dual lattices of rank $8$ and $16$.

Self-dual  VOSAs with central charge $c\le 24$ were recently rigorously classified in \cite{HoehnMoeller}.\footnote{%
See also a physics-level discussion up to $c\le 16$ in \cite{ManyPeople} and up to $c<23$ in \cite{Rayhaun}.
}
In the subrange $c\le 16$ 
they are given by the products of the basic self-dual VOSAs tabulated in Table~\ref{table:voa}.
\begin{table}[h]
\begin{tabular}{|l|l|l|l|l|}
\hline
name & $c$ &  affine Lie sub VOA &  in $\pi_{-2c}\KO((q))$ &  comment \\
\hline
\hline
$\V(1)$ & $c=1/2$ & none & 0&   Clifford VOSA \\
$\V_{E_8}$ & $c=8$ & $(\mathfrak{e}_8)_1$ & $c_4/\Delta$ & VOA \\
$\V_{D_{12}}$ & $c=12$ & $(\mathfrak{d}_{12})_1$ & $24/\Delta$ &  VOSA of \cite{DuncanMackCrane} \\
$\V_{E_7\times E_7}$ &  $c=14$ & $(\mathfrak{e}_7)_1\oplus (\mathfrak{e}_7)_1$& 0&\\ 
$\V_{A_{15}}$  &  $c=15$ &  $(\mathfrak{a}_{15})_1$ &0 &\\
$\V_{E_8,2}$ & $c=31/2$ &  $(\mathfrak{e}_{8})_2$&0&  \\
$\V_{D_{8}\times D_8}$ & $c=16$ & $(\mathfrak{d}_8)_1 \oplus (\mathfrak{d}_8)_1$& 0 &\\
$\V_{D_{16}}$ & $c=16$ & $(\mathfrak{d}_{16})_1$ & $(c_4/\Delta)^2$ & VOA\\
\hline
\end{tabular}
\caption{List of holomorphic non-product super VOAs with $c\le 16$. \label{table:voa}}
\end{table}

These basic  VOSAs $\V$ should give rise to classes $[\V]\in \TMF^{2c+\tau}_G(\pt)$ which can be reduced to $[\V] \in \pi_{-2c}\TMF$.
By passing to $\KO((q))$ via $\sigma$,
we also have elements $\sigma([\V])$ in $\KO^{2c}_G((q))(\pt)$
and in $\pi_{-2c}\KO((q))(\pt)$.
$\sigma([\V])$ as an element in $\KO^{2c+\tilde k}_G((q))(\pt)$ can be found using Conjecture~\ref{conj:affinevoa}.
It turns out to be non-zero when $c>1/2$,
where we take $G$ to be 
the simply-connected simple group for the affine subalgebra $\mathfrak{g}_k$. 
As an element in $\KO^{2c}((q))(\pt)$, however, $\sigma([\V])$ is often zero.
For such a $\V$, 
$[\V]\in \pi_{-2c}\TMF$ is in the kernel $A_{-2c}$ of $\sigma:\pi_{-2c}\TMF\to \pi_{-2c}\KO((q))$.

It is tantalizing that the values $c=14$, $15$, $31/2$ and $32$ we see in Table~\ref{table:voa} are exactly where
the kernel $A_{-2c}$ of $\sigma:\pi_{-2c}\TMF\to \pi_{-2c}\KO((q))$ is nontrivial
in the range $0\le 2c \le 32$,
according the data gathered in Appendix~\ref{app:TMF}.
The kernel is isomorphic to $\bZ/2$, $\bZ/2$, $\bZ/2$ and $\bZ/3$, respectively.

There are physics reasons to believe that the elements $x_{-31}\in \pi_{-31}\TMF$ and $x_{-28}\in \pi_{-28}\TMF$ defined in Definitions~\ref{defn:31} and \ref{defn:28}
agree with the classes $[\V_{E_8,2}]$  and $[\V_{E_7\times E_7}]$.
One indication is the following. 
The construction of $x_{-31}$ and $x_{-28}$, given in our Definitions just referred to,
starts from $e_8\in \pi_{-16}\TMF$,
which can be lifted to $\breve e_8\in\TMF^{16+\tau}_{E_8}(\pt)$ by Conjecture~\ref{conj:above}.
This then implies that $x_{-31}$ can be lifted to $\TMF^{31+2\tau}_{E_8}(\pt)$
and that $x_{-28}$ can be lifted to $\TMF^{28+\tau+\tau'}_{E_7\times E_7}(\pt)$,
respectively. 

Now, our Theorem~\ref{thm:31} 
 in the main text
says that $x_{-31}$ and $x_{-28}$ are indeed the generators 
of the kernel $A_{-2c}$ of $\sigma:\pi_{-2c}\TMF\to \pi_{-2c}\KO((q))$ at $c=31/2$ and $c=14$,
respectively.
These circumstantial pieces of evidence strongly suggest the following general conjecture:
\begin{conj}
The generator of the kernel $A_{-2c}$ of $\sigma:\pi_{-2c}\TMF\to \pi_{-2c}\KO((q))$
at $c=32$, $c=31/2$, $c=15$ and $c=14$,
isomorphic to $\bZ/3$, $\bZ/2$, $\bZ/2$, $\bZ/2$ respectively,
comes from the corresponding self-dual  VOSA $\V$ given in Table~\ref{table:voa}.
In particular, the generator of the kernel 
can be lifted to $\TMF^{2c+t}_G(\pt)$ for an appropriate choice of $G$ and $t\colon BG \to K(\Z, 4)$,
whose image in $\KO^{2c}_G((q))(\pt)$ is nonzero and is given by 
the class determined by $\eta(q)^{-2c} [\W]$,
where $\W$ is the canonically $\bZ/2$-twisted module of $\V$.
\end{conj}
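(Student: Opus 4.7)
The plan, assuming Conjecture~\ref{conj:affinevoa} throughout, is to reduce each of the four cases to checking non-vanishing of the proposed class $[\V] \in A_{-2c}$ by invoking the perfect torsion pairing of Proposition~\ref{pont-dual}. For each VOSA in Table~\ref{table:voa}, the equivariant class $[\V] \in \TMF^{2c+k\tau}_G(\pt)$ produces a non-equivariant class in $\pi_{-2c}\TMF$ whose $\sigma$-image vanishes (by the zero entries, respectively the equality $\sigma(\V_{D_{16}}) = \sigma((e_8)^2)$, recorded in Table~\ref{table:voa}); this places $[\V]$ inside $A_{-2c}$ so the issue is just whether $[\V] \ne 0$. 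Since the Anderson duality of Proposition~\ref{pont-dual} detects a generator via any element of $A_{2c-22}$ pairing nontrivially against it, each case reduces to (i) locating an explicit Anderson-dual class $y \in A_{2c-22}$ realized as $y = \Wit_\stri([M])$ for a string manifold $M$ and a spin null-bordism $(N,M)$, and (ii) computing $\alpha_\text{spin/string}([\V], [N,M])$ through the integration formula of Proposition~\ref{prop:pairing}.

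For the two cases $c = 31/2$ and $c = 14$, my plan is to sidestep (ii) by identifying $[\V]$ with the classes $x_{-31}$ and $x_{-28}$ already studied in Sec.~\ref{subsec:31} and Sec.~\ref{subsec:28}. The standard physical description of $\V_{E_8,2}$ as the bosonic $\Sigma_2$-orbifold of $\V_{E_8} \otimes \V_{E_8}$ should, under Conjecture~\ref{conj:affinevoa}, translate into the power-operation construction $x_{-31} = \mu^{*}P_2(e_8)/[S^1]$ of Definition~\ref{defn:31}; modulo making this correspondence precise in the still-to-be-developed twisted equivariant $\TMF$, Theorem~\ref{thm:31} then supplies the generator claim. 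Similarly, $\V_{E_7\times E_7}$ fits naturally with the embedding $SU(2)\times E_7 \hookrightarrow E_8$ used in Definition~\ref{defn:28} (applied to each factor), giving an identification $[\V_{E_7\times E_7}] = x_{-28}$ so that Theorem~\ref{thm:28} (proved via Proposition~\ref{prop:computation}) concludes.

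For $c = 15$ with $\V_{A_{15}}$ ($G = SU(16)$), and for the $c = 16$ case at prime $3$ with $\V_{D_{16}}$ corresponding to $A_{-32}^{(3)} \simeq \bZ/3$, no such power-operation shortcut is apparent and I would carry out step (ii) directly. The Anderson dual generator of $A_{-30}$ is at prime $2$ and of $\kappa$-type (by Remark~\ref{rem:AApairing}), so one would seek a string $8$-manifold $M$ with an explicit spin null-bordism $(N,M)$ carrying an $SU(16)$-bundle of appropriate instanton number, and then evaluate the Witten-genus eta invariant of Conjecture~\ref{conj_diff_push_wit_eta} plus the $H$-integral of Proposition~\ref{prop:pairing}. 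The $c = 16$, $\bZ/3$ case is paired (at prime~$3$) against $\beta \in A_{10}$ (the $Sp(2)$ Lie-framed class, by Remark~\ref{rem:AApairing}), so one would adapt the pairing machinery of Sec.~\ref{subsec_diff_pairing} to detect $\bZ/3$-torsion via an appropriate mod-$3$ refinement of the Witten-eta invariant. The chief obstacles, beyond the foundational task of making Conjecture~\ref{conj:affinevoa} rigorous and beyond Conjecture~\ref{conj_diff_push_wit_eta}, are twofold: constructing the explicit null-bordism data $(N,M,P,\nabla)$ with controlled Chern-Weil representatives analogous to the elegant bordism of Proposition~\ref{prop:fun-with-bordism} in the $\V_{E_7\times E_7}$ case; and, for $\V_{D_{16}}$, developing a $\bZ/3$-analogue of the integrality/eta-invariant arguments of Sec.~\ref{sec:computation}. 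In my view the genuine novelty would lie in the latter, since the whole differential-pairing apparatus of this paper is currently flavored by the $\bZ/2$-nature of $\alpha_{\KO((q))}$.
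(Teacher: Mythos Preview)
The statement you are attempting to prove is a \emph{Conjecture} in the paper, not a theorem; the paper explicitly does not prove it and offers only ``circumstantial pieces of evidence''. So there is no paper proof to compare against, and your task is inherently to produce something the paper does not.

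Your plan has the right shape but rests on inputs that are themselves open. You assume Conjecture~\ref{conj:affinevoa} throughout, which is precisely the unproven assertion that a self-dual VOSA yields a class in twisted equivariant $\TMF$ with prescribed $\sigma$-image; absent this, the symbol $[\V]$ has no mathematical meaning and there is nothing to pair. More subtly, for $c=31/2$ and $c=14$ you propose to identify $[\V_{E_8,2}]$ with $x_{-31}$ and $[\V_{E_7\times E_7}]$ with $x_{-28}$; the paper states only that ``there are physics reasons to believe'' these identifications hold, and making them rigorous requires not just Conjecture~\ref{conj:affinevoa} but a mathematical comparison between the VOSA-theoretic orbifold/coset constructions and the $\TMF$-theoretic power operation and slant-product constructions of Definitions~\ref{defn:31} and \ref{defn:28}. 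That comparison is nowhere in the paper, and your phrase ``modulo making this correspondence precise'' hides exactly the content of the conjecture in those cases.

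For $c=15$ and $c=16$, you correctly identify that no shortcut exists; the paper offers nothing beyond a reference to physics-level evidence in \cite{Tachikawa:2024ucm} for the $\bZ/3$ case. Your proposed differential-pairing computations would require, in addition to Conjectures~\ref{conj:affinevoa} and \ref{conj_diff_push_wit_eta}, new geometric input (explicit twisted string null-bordisms carrying $SU(16)$- or $\Spin(32)/\bZ_2$-bundles) that neither you nor the paper supplies. In short, your proposal is a reasonable research program conditional on several open conjectures, but it is not, and cannot currently be made into, a proof.
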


We note that a physics piece of evidence that $\V_{D_{16}}$ generates $A_{-32}=\bZ/3$ was given in \cite{Tachikawa:2024ucm},
and a physics piece of evidence that $\V_{E_8,2}$ gives the generator $x_{-31}$ of $A_{-31}=\bZ_2$
was given in \cite{Saxena:2024eil}.
We also note that  the list of self-dual VOAs in the wider range up to $c\le 24$ is given in \cite{HoehnMoeller} and \cite{Rayhaun}.
It would be interesting to compare this list against the $\TMF$ classes in the corresponding range.



\section{Two proofs of Theorem~\ref{main}}
\label{app:proofs}
Here we provide two different proofs of our theorem~\ref{main}, which states that 
the secondary morphism 
    \begin{align}\label{eq_main_thm_x}
        \alpha_{\KO((q))/\TMF} \colon \KO((q))/\TMF \to \Sigma^{-20}I_\Z \TMF
    \end{align}
    is an equivalence. 
Our first proof uses the Anderson duality of $\tmf$ as formulated in \cite[Chapters 10, 13]{BrunerRognes}
and is rather computational,
whereas the second proof uses the Anderson self-duality of $\Tmf$ as originally proved by \cite{Sto1,Sto2}
and is more conceptual.
This second proof was kindly provided by Sanath Devalapurkar.

\subsection{A computational proof}
\label{app:original-proof}
\if0
Here we provide a proof of our theorem~\ref{main}, that 
the secondary morphism 
    \begin{align}\label{eq_main_thm_x}
        \alpha_{\KO((q))/\TMF} \colon \KO((q))/\TMF \to \Sigma^{-20}I_\Z \TMF
    \end{align}
    is an equivalence. 
Originally the Anderson duality of topological modular forms was formulated and proved 
as the Anderson self-duality of $\Tmf$ in the form $\Tmf \simeq \Sigma^{-21} I_\bZ \Tmf$ by \cite{Sto1,Sto2}.
This was expressed in terms of $\tmf$ in \cite[Chapter 10 and 13]{BrunerRognes}.
We recall the necessary results, based on \cite{BrunerRognes},
and reformulate the Anderson duality in terms of $\TMF$.
This will lead us to \eqref{eq_main_thm_x}.
\fi

First we introduce notations. 
We always use tensor product over $\Z$ so that we use the notation $\otimes := \otimes_{\Z}$. 
For an $E_\infty$ ring spectrum $R$, an element $x \in \pi_d(R)$ and an $R$-module spectrum $M$, we define the $R$-module spectrum $M[x^{-1}]$ by
\begin{align}
    M[x^{-1}] := \hocolim \left(M \xrightarrow{x\cdot} \Sigma^{-d}M \xrightarrow{x\cdot} \Sigma^{-2d}M \xrightarrow{x\cdot} \cdots \right). 
\end{align}
Denote by $M/x^\infty$ the homotopy cofiber of the structure map $M \to M[x^{-1}]$. 
We have  the relation $\pi_\bullet(M[x^{-1}]) = \pi_\bullet(M) [x^{-1}]$, and an exact sequence
\begin{align}
    0 \to \pi_\bullet(M)/x^\infty \to \pi_\bullet (M/x^\infty) \to \Gamma_x \pi_{\bullet-1}(M)\to 0, 
    \label{short-exact-seq}
\end{align}
where $\Gamma_x\pi_{\bullet-1}(M)$ denotes the $x$-power torsion submodule of $\pi_{\bullet-1}(M)$, and $\mathfrak{M}/\mathfrak{x}^\infty$ for an element $\mathfrak{x} \in \mathfrak{R}$ in an ordinary graded ring $\mathfrak{R}$ and an $\mathfrak{R}$-module $\mathfrak{M}$ is defined just as $\mathfrak{M}/\mathfrak{x}^\infty:= \mathrm{coker}\left( \mathfrak{M} \to \mathfrak{M}[\mathfrak{x}^{-1}]\right)$. 
For a pair of elements $x, y \in \pi_\bullet(R)$, we iterate the above construction to define 
\begin{align}
    R[x^{-1}, y^{-1}] := (R[x^{-1}])[y^{-1}], \quad R/(x^\infty, y^\infty):= (R/x^\infty)/y^\infty. 
\end{align}
It is known that $R/(x^\infty, y^\infty) \simeq R/(y^\infty, x^\infty)$.

We turn to $\TMF$ and $\tmf$. 
 Let $B\in \pi_8\tmf$ be a class introduced in \cite{BrunerRognes} whose image by $\sigma$ is $\sigma(B) =c_4$.\footnote{
 There are two  elements in $\pi_8\tmf$, $B$ and $\tilde B:= B +\epsilonelem$, whose image by $\sigma$ is $c_4$.
 $B$ and $\tilde B$ are distinguished by their Adams filtrations.
}
Let $X \in \pi_{576}\tmf$ denote the element whose image in $\mf$ is $\Delta^{24}$. $X$ is the periodicity element of $\TMF = \tmf[X^{-1}]$. 
We use $\TMF/B^\infty$ defined by  the cofiber sequence \begin{equation}
\xymatrix{
\TMF  \ar[r]^-{\phi} &
\TMF[B^{-1}]  \ar[r]^-{C\phi} &
\TMF/B^\infty \\
}.
\end{equation}
We also define 
\begin{align}
    \tmf' := \tmf / (  B^\infty, X^\infty)
\end{align}
\if0
Its homotopy groups fit into the short exact sequence 
\begin{align}
    0 \to \pi_\bullet(\tmf)/(X^\infty,B^\infty ) \to \pi_\bullet \tmf' \to \Gamma_B \pi_{\bullet-1}(\tmf)/X^\infty \to 0. 
\end{align}
\fi
which fits in the cofiber sequence \begin{equation}
\tmf/B^\infty 
\xrightarrow{\rho}
(\tmf /B^\infty)[X^{-1}] \simeq 
\TMF/B^\infty 
\xrightarrow{C\rho}
(\tmf/B^\infty)/X^\infty \simeq  \tmf'.
\label{Crho}
\end{equation}

\begin{fact}[{\cite[Proposition 10.12 and Proposition 13.21]{BrunerRognes}}]\label{fact_duality_tmf'}
We have a $\tmf$-module equivalence
\begin{align}\label{eq_isom_tmf'}
    \alpha_{\tmf'} \colon \tmf' \simeq \Sigma^{-20}I_{\Z} \tmf, 
\end{align}
which is specified by the element 
\begin{align}
       \alpha_{\tmf'}  \in \pi_{20}( I_{\Z} \tmf') \simeq \mathrm{Hom}(\pi_{-20}\tmf', \Z) \simeq \mathrm{Hom}(\Z, \Z). 
\end{align}
Here we use $\pi_{-21}\tmf' = 0$ and $\pi_{-20}\tmf' \simeq \Z$, with the generator whose modular form image is $2c_6/(c_4 \Delta)$; see \cite[p.382]{BrunerRognes} and \cite[p.590]{BrunerRognes} for the $2$ and $3$-completed results, respectively. 
\end{fact}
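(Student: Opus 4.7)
My plan is to derive Fact~\ref{fact_duality_tmf'} from Stojanoska's Anderson self-duality $\Tmf \simeq \Sigma^{-21}I_\Z\Tmf$ by a diagram chase through a chain of cofiber sequences, and then verify the equivalence on homotopy groups using the explicit Bruner--Rognes description of $\pi_\bullet\tmf$.

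First, I would construct the candidate map. The vanishing $\pi_{-21}\tmf'=0$ gives the identification $\pi_{20}I_\Z\tmf' \simeq \Hom(\pi_{-20}\tmf',\Z) \simeq \Z$, and selecting the identity generator and invoking Lemma~\ref{lemma:basic} with the natural $\tmf$-module structure on $\tmf'$ produces the $\tmf$-module morphism $\alpha_{\tmf'}\colon \tmf' \to \Sigma^{-20}I_\Z\tmf$. Independently of any further input, this step is formal.

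Next, to compare with Stojanoska, I would set up the following two cofiber sequences of $\tmf$-modules and their Anderson duals:
\begin{align*}
    \tmf \xrightarrow{\phi_B} \tmf[B^{-1}] &\xrightarrow{C\phi_B} \tmf/B^\infty,\\
    \tmf/B^\infty \xrightarrow{\rho} (\tmf/B^\infty)[X^{-1}] &\xrightarrow{C\rho} \tmf/(B^\infty,X^\infty) = \tmf'.
\end{align*}
The plan is to identify $(\tmf/B^\infty)[X^{-1}]$ with a localization computable from $\Tmf$: since both $B$ and $X$ map to invertible classes (namely $c_4$ and $\Delta^{24}$) on the restriction to the cusp, the composite $\tmf \to \tmf[B^{-1},X^{-1}]$ factors through the Tate part of $\Tmf$, and the cofiber of $\tmf \to \Tmf$ together with Stojanoska's equivalence lets one pin down the Anderson dual of each term up to a known shift. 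A careful bookkeeping of the shifts, using $I_\Z\tmf' \simeq \Sigma^{21}(\text{something related to }I_\Z\Tmf)$, should let one identify the map $\alpha_{\tmf'}$ on the level of cofibers. At this stage I would argue that any two morphisms $\tmf'\to \Sigma^{-20}I_\Z\tmf$ whose induced map $\Hom(\pi_{-20}\tmf',\Z)$ agrees on the identity must coincide, since $\pi_{-21}\tmf'=0$ forces rigidity via \eqref{seqseq}.

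The main obstacle is controlling the passage between $\Tmf$ and $\tmf$, which differ in negative degrees where $\Tmf$ carries Serre-duality information from the moduli stack. Concretely, one must show that the shifts in the Anderson duality of each of $\tmf[B^{-1}]$ and $(\tmf/B^\infty)[X^{-1}]$ conspire with that of $\tmf'$ to produce exactly $\Sigma^{-20}$, matching Stojanoska's $\Sigma^{-21}$ after accounting for the extra suspension contributed by the cofiber sequence. I expect the cleanest verification to be prime-by-prime: since $\pi_\bullet\tmf$ has only $2$- and $3$-primary torsion, it suffices to check the claim at $p=2$ using the $\Z[h_1,h_2,c_4,c_6,\ldots]$-description in \cite[Ch.~9]{BrunerRognes} and at $p=3$ using the sparser presentation in \cite[Ch.~12]{BrunerRognes}. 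In both cases the perfectness of the induced pairing between $\pi_n\tmf'$ and $\pi_{-n-20}\tmf$ can be checked on a small set of module generators, because once it holds modulo the actions of $B$ and $X$ (which become invertible on one side and shift the other predictably), $B,X$-linearity propagates it to all bidegrees. The bookkeeping of individual torsion elements and hidden extensions in the Adams spectral sequence is where the real work lies.
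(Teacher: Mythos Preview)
The paper does not prove this statement; it is quoted as a \emph{Fact} from \cite{BrunerRognes}, where it is established by direct Adams spectral sequence computations at $p=2$ and $p=3$. There is therefore no proof in the paper to compare your proposal against --- the paper simply takes this as input and uses it in Appendix~\ref{app:original-proof} to deduce the main theorem.

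Your step~2 contains a genuine error. You assert that $X$ maps to an invertible class on the restriction to the cusp, but the image of $X$ is $\Delta^{24}$, and $\Delta = q\prod_{n\ge1}(1-q^n)^{24}$ vanishes at $q=0$. Hence $\tmf[B^{-1},X^{-1}] = \TMF[B^{-1}]$ does \emph{not} factor through the Tate part $\KO[[q]]$ of $\Tmf$; it maps only into $\KO((q))$. The bridge you are hoping for between the localizations of $\tmf$ and the constituents of $\Tmf$ does not exist in the form you describe, and the ``careful bookkeeping of shifts'' you allude to has no concrete mechanism behind it. Indeed, the paper's Appendix~\ref{app:sanath-proof} shows how to pass from Stojanoska's $\Tmf$-duality to the main theorem on $\KO((q))/\TMF$ \emph{directly}, without ever going through $\tmf'$; the $\tmf'$ route and the $\Tmf$ route are essentially transverse, which is why the paper presents them as two independent proofs.

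Your fallback step~3 --- verifying perfectness of the induced pairing generator-by-generator at $p=2$ and $p=3$ using the explicit module structure --- is precisely what Bruner and Rognes do in the cited propositions (Chapter~10 for $p=2$, Chapter~13, not~12, for $p=3$). So if you carried it out you would be reproducing the referenced proof rather than giving an alternative.
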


For our purposes, we need to understand the $\bZ[X]$-module structure of the homotopy groups involved in the Anderson duality given above.
For this purpose, we use the following two facts:
\begin{fact}[{\cite[Definition 10.4, Definition 13.15 and Lemma 13.16]{BrunerRognes}}]
\label{tmfBX}
Let $N_\bullet\subset\pi_\bullet\tmf$ be the $\bZ[B]$-submodule
generated by elements of degree less than $576$.
We then have  \begin{equation}
\pi_\bullet  \tmf \simeq N_\bullet \otimes \bZ[X],\label{def_N}
\end{equation} as $\bZ[B, X]$-modules,
where the isomorphism is given by the multiplication.
With this $N$, we have
 we have \begin{equation}\label{444}
\pi_\bullet  \TMF \simeq N_\bullet \otimes \bZ[X^{-1},X]
\end{equation}
as $\bZ[B, X]$-modules.
\end{fact}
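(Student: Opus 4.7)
The plan is to establish the stated $\bZ[B,X]$-module decomposition by localizing at each prime and invoking the detailed Adams spectral sequence data of Bruner-Rognes, then reassembling via the arithmetic fracture square. The key observation is that the periodicity element $X = \Delta^{24} \in \pi_{576}\tmf$ should act freely with a finite fundamental domain of generators in degrees less than $576$; my plan reduces the claim to verifying this at each prime. More precisely, I would show that the multiplication map $\mu \colon N_\bullet \otimes \bZ[X] \to \pi_\bullet \tmf$ is a $\bZ[B,X]$-module isomorphism, which splits into proving (a) $X$-multiplication is injective on $\pi_\bullet \tmf$, and (b) each degree is reached uniquely by $X^k$ times an element of $N_\bullet$.

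At primes $p \geq 5$, the edge homomorphism $\pi_\bullet\tmf_{(p)} \to (\mf)_{(p)}$ is an isomorphism of graded rings. The claim then reduces to an elementary statement in commutative algebra: $\bZ_{(p)}[c_4, c_6, \Delta]/(c_4^3 - c_6^2 - 1728\Delta)$ is a free $\bZ_{(p)}[\Delta^{24}]$-module on a finite basis spanning the degrees $0, 2, \ldots, 574$, which can be verified by a Hilbert series computation. At $p = 2$ and $p = 3$, the primitive periodicity classes are $M \in \pi_{192}\tmf^\wedge_2$ and $H \in \pi_{72}\tmf^\wedge_3$ respectively, with $576 = 3 \cdot 192 = 8 \cdot 72$. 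I would first verify that $M^3$ and $H^8$ represent $X$ (up to units in the respective local rings) by noting that the sigma morphism gives $\sigma(M^3) = \Delta^{24} = \sigma(X)$ and invoking injectivity of $\sigma$ in degree $576$ modulo $B$-power torsion. Then I would invoke the explicit tabulations of $\pi_\bullet \tmf^\wedge_2$ as a finitely generated $\bZ^\wedge_2[M]$-module on generators in degrees less than $192$, and of $\pi_\bullet \tmf^\wedge_3$ as a $\bZ^\wedge_3[H]$-module on generators in degrees less than $72$, both of which directly yield the desired $\bZ^\wedge_p[X]$-module presentation on generators in degrees less than $576$.

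Having established the isomorphism $\mu$ after each $p$-completion and rationally, I would conclude the integral isomorphism from the fracture square, using finiteness of $N_\bullet$ in each bounded range of degrees. The corresponding statement for $\pi_\bullet \TMF$ is then purely formal: since $\TMF = \tmf[X^{-1}]$, tensoring the isomorphism with $\bZ[X^{\pm 1}]$ over $\bZ[X]$ gives $\pi_\bullet \TMF \simeq N_\bullet \otimes \bZ[X^{\pm 1}]$ as $\bZ[B,X]$-modules, which is \eqref{444}.

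The main obstacle will be the $2$-primary verification. The Adams $E_\infty$-page at $p = 2$ carries numerous hidden multiplicative extensions, and the periodicity operator $M$ (of high Adams filtration) interacts intricately with low-filtration elements; one must rigorously verify freeness of $X$-multiplication including on the $B$-power torsion submodule $\Gamma_B \pi_\bullet \tmf$, where the spectral sequence computations are most delicate. In practice, this is exactly the content of the cited sections of Bruner-Rognes, and I would extract the claim directly from their Chapters~9 and~13 rather than redoing the computation from scratch.
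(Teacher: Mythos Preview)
The paper does not prove this statement at all: it is stated as a \emph{Fact} and simply cited from \cite[Definition 10.4, Definition 13.15 and Lemma 13.16]{BrunerRognes}, with no argument given beyond the reference. Your proposal is a correct and reasonable elaboration of what that citation amounts to---working prime by prime with the periodicity elements $M\in\pi_{192}\tmf^\wedge_2$ and $H\in\pi_{72}\tmf^\wedge_3$ and the identification with modular forms at $p\ge 5$, then reassembling integrally---so there is nothing further to compare.
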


\begin{fact}[{\cite[Lemma10.16 and Lemma 13.23]{BrunerRognes}}]
    There exists a unique, up to isomorphism, $\Z[B]$-module extension 
    \begin{align}\label{eq_def_N'}
        0 \to N_\bullet / B^\infty \to N'_\bullet \to \Gamma_B N_{\bullet - 1} \to 0
    \end{align}
    such that we have a commutative diagram of $\Z[B, X]$-modules,
    \begin{align}
        \xymatrix{
        0 \ar[r] & (\pi_\bullet\tmf)/B^\infty \ar[r] \ar[d]^-{\simeq}_-{\eqref{def_N}}& \pi_\bullet (\tmf/B^\infty) \ar[r] \ar[d]^-{\simeq} &  \Gamma_B \pi_{\bullet-1}\tmf \ar[d]^-{\simeq}_-{\eqref{def_N}}\ar[r] &  0 \\
        0 \ar[r] & N_\bullet / B^\infty \otimes \Z[X] \ar[r] & N'_\bullet \otimes \Z[X] \ar[r] &  \Gamma_B N_{\bullet - 1} \otimes \Z[X] \ar[r] &  0 . 
        }
    \end{align}
    Here the upper row is the exact sequence \eqref{short-exact-seq} and the lower row is the exact sequence obtained by tensoring the flat $\Z$-module $\Z[X]$ to \eqref{eq_def_N'}. 
    With this $N'$, we have canonical isomorphisms of $\Z[B, X]$-modules,
    \begin{align}
\pi_\bullet  (\tmf/B^\infty) &\simeq N'_\bullet \otimes \bZ[X],\label{111}\\
\pi_\bullet  (\TMF/B^\infty) &\simeq N'_\bullet \otimes \bZ[X^{-1},X],\label{222}\\
\pi_\bullet\tmf' &\simeq N'_\bullet \otimes  \bZ[X^{-1},X]/\bZ[X]\label{333}.
\end{align}
\end{fact}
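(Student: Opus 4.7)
The plan is to construct $N'_\bullet$ as the quotient $\pi_\bullet(\tmf/B^\infty) / X\cdot\pi_\bullet(\tmf/B^\infty)$, establish the desired short exact sequence via a $\mathrm{Tor}$ computation, and then verify both compatibility and uniqueness through an $\Ext^1$ base-change argument.

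First, I would apply the long exact sequence of homotopy groups to the cofiber sequence $\tmf \xrightarrow{\phi} \tmf[B^{-1}] \to \tmf/B^\infty$; combining \eqref{short-exact-seq} (with $M=\tmf$) with Fact~\ref{tmfBX}, this produces the short exact sequence of $\Z[B,X]$-modules
\[
0 \to (N_\bullet/B^\infty)\otimes\Z[X] \to \pi_\bullet(\tmf/B^\infty) \to (\Gamma_B N_{\bullet-1})\otimes\Z[X] \to 0.
\]
A standard diagram chase, using that both outer terms are $X$-torsion-free, shows $X$ acts as a non-zero-divisor on $\pi_\bullet(\tmf/B^\infty)$. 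Setting $N'_\bullet := \pi_\bullet(\tmf/B^\infty) / X\cdot\pi_\bullet(\tmf/B^\infty)$, I would apply the functor $-\otimes_{\Z[X]}\Z$ to the SES above; the $\mathrm{Tor}_1^{\Z[X]}$ term at the right-hand end vanishes by the $X$-torsion-freeness of $(\Gamma_B N_{\bullet-1})\otimes\Z[X]$, producing the desired $\Z[B]$-module extension
\[
0 \to N_\bullet/B^\infty \to N'_\bullet \to \Gamma_B N_{\bullet-1} \to 0.
\]

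The main obstacle is to verify that the natural comparison map $N'_\bullet \otimes \Z[X] \to \pi_\bullet(\tmf/B^\infty)$ is an isomorphism of $\Z[B,X]$-modules. I would rephrase this via $\Ext^1$: tensoring a $\Z[B]$-free resolution of $\Gamma_B N$ with $\Z[X]$ and using flatness of $\Z[X]$ over $\Z[B]$, one obtains the natural base-change identification
\[
\Ext^1_{\Z[B,X]}\bigl(A\otimes\Z[X],\, C\otimes\Z[X]\bigr) \;\simeq\; \Ext^1_{\Z[B]}(A,C)\otimes\Z[X]
\]
for graded $\Z[B]$-modules $A,C$. Applied to our SES, the classifying element $\xi$ (living in internal degree zero) decomposes as $\xi = \sum_{k\geq 0} X^k\xi_k$ with $\xi_k \in \Ext^1_{\Z[B]}(\Gamma_B N, N/B^\infty)$ in internal degree $-576k$; by construction the $X^0$-component $\xi_0$ classifies the $N'_\bullet$ extension just obtained. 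The comparison map is an isomorphism if and only if $\xi_k = 0$ for all $k \geq 1$, which I would establish by combining the bounded internal-degree support of $\Gamma_B N$ and $N/B^\infty$ (since $N$ is concentrated in one $X$-period by Fact~\ref{tmfBX}) with the explicit Adams spectral sequence data for $\pi_\bullet\tmf$ at the primes $2$ and $3$ in \cite{BrunerRognes}.

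Uniqueness then follows immediately: for any $\widetilde{N}_\bullet$ satisfying the stated conditions, applying $-\otimes_{\Z[X]}\Z$ to the given isomorphism $\widetilde{N}_\bullet \otimes \Z[X] \simeq \pi_\bullet(\tmf/B^\infty)$ recovers $\widetilde{N}_\bullet \simeq N'_\bullet$ as $\Z[B]$-modules.
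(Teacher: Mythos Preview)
The paper does not supply its own proof of this statement: it is recorded as a Fact and attributed to \cite[Lemma~10.16 and Lemma~13.23]{BrunerRognes}, so there is no in-paper argument to compare your proposal against.

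That said, your outline is a sound reconstruction of how such a proof could go, with two remarks. First, the phrase ``flatness of $\Z[X]$ over $\Z[B]$'' is not well-posed since $\Z[X]$ carries no $\Z[B]$-module structure here; what you want is that $\Z[B,X]$ is free over $\Z[B]$ (equivalently, $\Z[X]$ is free over $\Z$), and that is what yields the base-change isomorphism for $\Ext^1$. Second, the vanishing of the higher components $\xi_k$ for $k\ge 1$ is the substantive step, and you defer it to the Adams spectral sequence data in \cite{BrunerRognes}; this is reasonable, but it is exactly where Bruner--Rognes do the prime-by-prime work, so your sketch is not independent of the cited source. Finally, your argument stops at the isomorphism~\eqref{111}; the remaining identifications~\eqref{222} and~\eqref{333} follow routinely from it by inverting $X$ and by using the $X$-torsion-freeness of $\pi_\bullet(\tmf/B^\infty)$ you already established (so that the short exact sequence~\eqref{short-exact-seq} for $M=\tmf/B^\infty$ and $x=X$ collapses), but you should say so explicitly.
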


We are going to define a $\TMF$-module map  $\alpha_{\TMF/B^\infty} :\TMF/B^\infty \to \Sigma^{-20} I_\bZ \TMF$.
We start with a lemma:
\begin{lem}
     There is an isomorphism \begin{equation}\label{eq_IZ_TMF/Binfty}
     \pi_{20}(I_\bZ (\TMF/B^\infty) ) \simeq \Hom(\pi_{-20}(\TMF/B^\infty), \bZ).
     \end{equation}
\end{lem}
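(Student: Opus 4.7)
The plan is to apply the universal coefficient short exact sequence for Anderson duality
\begin{equation*}
0 \to \Ext(\pi_{-21}(\TMF/B^\infty), \bZ) \to \pi_{20}I_\bZ(\TMF/B^\infty) \to \Hom(\pi_{-20}(\TMF/B^\infty), \bZ) \to 0
\end{equation*}
and to deduce the desired isomorphism by showing that the left-hand Ext term vanishes. In fact, we will establish the stronger statement $\pi_{-21}(\TMF/B^\infty) = 0$.

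To prove the vanishing, I would use the cofiber sequence \eqref{Crho}, namely
\begin{equation*}
\tmf/B^\infty \xrightarrow{\rho} \TMF/B^\infty \xrightarrow{C\rho} \tmf',
\end{equation*}
which yields the long exact sequence fragment
\begin{equation*}
\pi_{-21}(\tmf/B^\infty) \to \pi_{-21}(\TMF/B^\infty) \to \pi_{-21}\tmf'.
\end{equation*}
The right-hand term vanishes by Fact~\ref{fact_duality_tmf'}. For the left-hand term, I would invoke the isomorphism \eqref{111}, $\pi_\bullet(\tmf/B^\infty) \simeq N'_\bullet \otimes \bZ[X]$: since $\tmf$ is connective, $N_\bullet \subset \pi_\bullet \tmf$ is concentrated in degrees $\ge 0$, whence so are $N_\bullet/B^\infty$ and $\Gamma_B N_{\bullet-1}$, and therefore (by the defining extension \eqref{eq_def_N'}) so is $N'_\bullet$; since $X$ has positive degree $576$, the tensor product $N'_\bullet \otimes \bZ[X]$ is also concentrated in nonnegative degrees, so $\pi_{-21}(\tmf/B^\infty) = 0$. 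Combining these two vanishings forces $\pi_{-21}(\TMF/B^\infty) = 0$, completing the proof.

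The argument is essentially formal once Facts~\ref{tmfBX} and \ref{fact_duality_tmf'} are available; the only real point to verify is the connectivity bookkeeping for $N'_\bullet$, which is straightforward from the connectivity of $\tmf$.
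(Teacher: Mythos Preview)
Your overall strategy---deduce the isomorphism from $\pi_{-21}(\TMF/B^\infty)=0$ via the Anderson-dual short exact sequence---matches the paper. However, your argument for $\pi_{-21}(\tmf/B^\infty)=0$ contains a genuine error: the claim that $N_\bullet/B^\infty$ (and hence $N'_\bullet$) is concentrated in nonnegative degrees is false. The module $N/B^\infty$ is the cokernel of $N_\bullet \to N_\bullet[B^{-1}]$, and $N_\bullet[B^{-1}]$ is certainly nonzero in negative degrees: for instance $1/B$ gives a nonzero class in degree $-8$. Connectivity of $\tmf$ tells you $N_\bullet$ vanishes in negative degrees, but localization at $B$ destroys that. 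So the ``connectivity bookkeeping'' step fails.

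The conclusion $\pi_{-21}(\tmf/B^\infty)=0$ is nonetheless correct, but it needs the same input the paper uses for $\TMF/B^\infty$: one must argue that $\pi_{-21}(\tmf[B^{-1}])=0$. This holds because $-21\equiv 3\pmod 8$ and $\pi_d\tmf$ is entirely $B$-power torsion in such degrees (the $B$-torsion-free quotient injects into $\pi_d\ko[[q]]=0$); combined with $\pi_{-22}\tmf=0$ by connectivity, the cofiber long exact sequence for $\tmf\to\tmf[B^{-1}]\to\tmf/B^\infty$ gives the vanishing. Note that once you are forced to make this argument, your route via \eqref{Crho} and Fact~\ref{fact_duality_tmf'} is strictly longer than the paper's, which runs the identical $B$-torsion argument directly for $\TMF$ (using $\pi_{-21}\TMF[B^{-1}]=0$ and $A_{-22}=0$) without the detour through $\tmf/B^\infty$ and $\tmf'$.
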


\begin{proof}
This follows from $\pi_{-21}\TMF/B^\infty=0$,
which in turn follows from the long exact sequence of the homotopy groups
associated to the cofiber sequence above: \begin{equation}
\pi_{-21} \TMF[B^{-1}] \xrightarrow{C\phi} \pi_{-21} \TMF/B^\infty \xrightarrow{\delta} \pi_{-22} \TMF \xrightarrow{\phi} \pi_{-22} \TMF[B^{-1}].
\end{equation}
Indeed, $\pi_{-21}\TMF[B^{-1}]$ is zero, since $\pi_{-21}\TMF$ is purely of $B$-power torsion. 
Furthermore, the kernel of the morphism $\phi$ on the right is $A_{-22}$ from Proposition~\ref{kernel=Btorsion}, which is trivial.
Therefore, $\pi_{-21}\TMF/B^\infty=0$.
\end{proof}

\begin{defn}\label{def_a}
     We define \[
     \alpha_{\TMF/B^\infty} :\TMF/B^\infty \to \Sigma^{-20} I_\bZ \TMF
     \] in terms of 
     $\alpha_{\TMF/B^\infty}\in \pi_{20} I_\bZ (\TMF/B^\infty)$
     using the isomorphism \eqref{eq_IZ_TMF/Binfty} via 
     \begin{equation}
     \label{eq_duality_formula}
     \begin{aligned}
         \alpha_{\TMF/B^\infty} \colon (\TMF/B^\infty)_{-20} \hookrightarrow (\MF/c_4^\infty)_{-10} &\to \bZ, \\
         [\phi(q)] &\mapsto \frac{1}{2}\Delta(q)\phi(q)\big|_{q^0}.  
     \end{aligned}
\end{equation}
\end{defn}

\begin{rem}
Note that $\MF/c_4^\infty$ is the cokernel of the map $\MF \to \MF[c_4^{-1}]$.
Then the well-definedness of the formula \eqref{eq_duality_formula} follows by the fact that $\Delta(q)f(q)|_{q^0} = 0$ for all $f\in \MF_{-10}$, thanks to Fact~\ref{fact:mf}.
\end{rem}

\begin{prop}
The following diagram commutes: 
\begin{equation}
 \vcenter{\xymatrix{
 \TMF/B^\infty 
\ar[rr]^-{\alpha_{\TMF/B^\infty}}  
 \ar[d]^-{C\rho}  
 && \Sigma^{-20} I_\bZ \TMF 
 \ar[d]
 \\
 \tmf' 
 \ar[rr]^-{\alpha_{\tmf'}}_-{\sim} && \Sigma^{-20} I_\bZ \tmf \\
 }}
 \label{TMFtmf}
 \end{equation}
where $C\rho$ is the morphism given in \eqref{Crho}.
\end{prop}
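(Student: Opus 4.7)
The plan is to invoke Lemma~\ref{lemma:basic} twice and reduce the commutativity statement to a comparison of two $\bZ$-valued homomorphisms on $\pi_{-20}(\TMF/B^\infty)$. Both compositions in the diagram are $\tmf$-module maps from $\TMF/B^\infty$ to $\Sigma^{-20}I_\bZ\tmf$: the top one because $\alpha_{\TMF/B^\infty}$ is a $\TMF$-module map (and $\tmf\to \TMF$ makes $\TMF$-modules into $\tmf$-modules), the bottom one because $C\rho$ and $\alpha_{\tmf'}$ are both $\tmf$-module maps. By Lemma~\ref{lemma:basic} applied to $R=\tmf$, each is classified by its underlying element of $\pi_{20}I_\bZ(\TMF/B^\infty)$: the upper composition corresponds to $\alpha_{\TMF/B^\infty}$ itself, while the lower composition corresponds to $(C\rho)^{\ast}(\alpha_{\tmf'})$. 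Since $\pi_{-21}(\TMF/B^\infty)=0$ (as shown in the proof preceding Definition~\ref{def_a}), we have $\pi_{20}I_\bZ(\TMF/B^\infty)\simeq \Hom(\pi_{-20}(\TMF/B^\infty),\bZ)$, reducing the problem to comparing homomorphisms on $\pi_{-20}(\TMF/B^\infty)$.

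Next, the vanishings $\pi_{-20}(\tmf/B^\infty)=0=\pi_{-21}(\tmf/B^\infty)$ follow immediately from \eqref{short-exact-seq} using the connectivity of $\tmf$, and combined with $\pi_{-21}\tmf'=0$ (Fact~\ref{fact_duality_tmf'}), the long exact sequence of the cofiber sequence $\tmf/B^\infty \to \TMF/B^\infty \to \tmf'$ shows that $C\rho$ induces an isomorphism $\pi_{-20}(\TMF/B^\infty)\xrightarrow{\sim}\pi_{-20}\tmf'\simeq \bZ$. Thus the two homomorphisms agree if and only if they agree on a single generator. By Fact~\ref{fact_duality_tmf'}, the generator of $\pi_{-20}\tmf'$ has modular form image $2c_6/(c_4\Delta)$; pulling back via the isomorphism, the corresponding generator of $\pi_{-20}(\TMF/B^\infty)$ maps to the class $[2c_6/(c_4\Delta)]\in (\MF/c_4^\infty)_{-10}$.

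Applying the formula from Definition~\ref{def_a} to this class gives
\[
\tfrac12\Delta(q)\cdot\frac{2c_6(q)}{c_4(q)\Delta(q)}\bigg|_{q^0}=\frac{c_6(q)}{c_4(q)}\bigg|_{q^0}=1,
\]
using $c_4=1+240q+\cdots$ and $c_6=1-504q+\cdots$, while $\alpha_{\tmf'}$ evaluates to $1\in\bZ$ on its generator by the Anderson self-duality statement in Fact~\ref{fact_duality_tmf'}. Both homomorphisms agree, completing the argument. The one delicate point will be the compatibility of sign conventions: our $\alpha_{\TMF/B^\infty}$ is pinned down by the choice \eqref{eq:choice-alpha}, while the sign of $\alpha_{\tmf'}$ must be chosen from Fact~\ref{fact_duality_tmf'} compatibly; in the event of a mismatch, the sign can be absorbed into the choice of $\alpha_{\tmf'}$ used here.
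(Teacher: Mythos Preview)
Your proof is correct and follows essentially the same approach as the paper's: both reduce to checking that $(I_\bZ C\rho)(\alpha_{\tmf'}) = \alpha_{\TMF/B^\infty}$ as elements of $\pi_{20}I_\bZ(\TMF/B^\infty)\simeq \Hom(\pi_{-20}(\TMF/B^\infty),\bZ)$, and verify this by evaluating the formula \eqref{eq_duality_formula} on the generator with modular-form image $2c_6/(c_4\Delta)$ to obtain~$1$. Your write-up spells out more of the intermediate steps (the $\tmf$-module structure of the two compositions, the isomorphism $C\rho\colon \pi_{-20}(\TMF/B^\infty)\xrightarrow{\sim}\pi_{-20}\tmf'$ via connectivity, and the sign caveat for $\alpha_{\tmf'}$), but the argument is the same.
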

\begin{proof}
We need to show that 
the element $\alpha_{\tmf'}\in \pi_{-20}I_\bZ \tmf'$
is sent to $\alpha_{\TMF/B^\infty} \in \pi_{-20}I_\bZ \TMF/B^\infty$
via $I_\bZ C\rho$.
This follows by the characterization of $\alpha_{\tmf'}$ in Fact \ref{fact_duality_tmf'} and 
the fact that the formula \eqref{eq_duality_formula} maps $2c_6/(c_4 \Delta) \in (\MF/c_4^\infty)_{-10}$ to $1 \in \Z$. 
\end{proof}

We have $\TMF \simeq \tmf[X^{-1}] = \hocolim \left( \tmf \xrightarrow{X \cdot} \Sigma^{-576}\tmf \xrightarrow{X \cdot} \cdots \right)$. Taking the Anderson dual, we get a canonical identification
\begin{align}\label{eq_IZTMF}
    I_\Z \TMF \simeq \mathrm{holim} \left(\cdots \xrightarrow{X \cdot}\Sigma^{2 \times 576} I_\Z \tmf  \xrightarrow{X \cdot}\Sigma^{576} I_\Z \tmf \xrightarrow{X
    \cdot}  I_\Z \tmf  \right). 
\end{align}

\begin{prop}\label{prop_N'TMFtmf}
    The commutative diagram \eqref{TMFtmf}, the isomorphisms \eqref{222} and \eqref{333} combine and naturally extend to form the following commutative diagram of $\Z[X]$-modules, 
\begin{align*}\label{diag_N'TMFtmf}
    \vcenter{\hbox{\xymatrixcolsep{.5cm}\xymatrix{ 
    N'_\bullet \otimes \Z[X^{-1}, X]  \ar[ddd]^-{{\rm mod } \ \Z[X]} \ar@{^{(}-_>}[rrrr] &&&& N'_\bullet \otimes  \Z[X^{-1}, X]] \ar[ddd]^-{{\rm mod } \ \Z[[X]]}\\ 
& \pi_\bullet \TMF/B^\infty \ar[lu]^-{\simeq}_-{\eqref{222}}
\ar[rr]^-{\alpha_{\TMF/B^\infty}}  
 \ar[d]^-{C\rho}  
 && \pi_\bullet \Sigma^{-20} I_\bZ \TMF 
 \ar[d] \ar[ru]^-{\simeq}&
 \\
 &\pi_\bullet \tmf' \ar[ld]^-{\simeq}_-{\eqref{333}}
 \ar[rr]^-{\alpha_{\tmf'}}_-{\simeq} && \pi_\bullet \Sigma^{-20} I_\bZ \tmf \ar[rd]^-{\simeq}  &\\
 N'_\bullet \otimes \Z[X^{-1}, X] /\Z[X] \ar@{=}[rrrr] &&&& N'_\bullet \otimes  \Z[X^{-1}, X]/\Z[X] \\ 
 }}}
\end{align*}
where the upper trapezium is also commutative as a diagram of $\Z[X^{-1}, X]$-modules. 
\end{prop}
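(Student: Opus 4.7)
The middle square in the asserted diagram is the content of \eqref{TMFtmf}, already established, and the left column's identifications are exactly \eqref{222} and \eqref{333}. The plan is to identify the two right-hand corners as $\Z[X]$-modules via Anderson duality, and then verify that the horizontal arrows and the outer vertical arrows correspond to the claimed algebraic maps.

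For the bottom-right corner, $\alpha_{\tmf'}$ from Fact \ref{fact_duality_tmf'} gives an isomorphism of graded $\Z[X]$-modules $\pi_\bullet \Sigma^{-20} I_\Z \tmf \simeq \pi_\bullet \tmf'$, which combines with \eqref{333} to yield the bottom-right identification. For the top-right corner, apply \eqref{eq_IZTMF} and take $\pi_\bullet$, which produces a Milnor sequence
\[
0 \to \mathrm{lim}{}^{1}_n\, \pi_{\bullet+1-576n}\tmf' \to \pi_\bullet \Sigma^{-20} I_\Z \TMF \to \lim_n \pi_{\bullet-576n}\tmf' \to 0
\]
whose transition maps are multiplication by $X$ on $N'_\bullet \otimes \Z[X^{-1}, X]/\Z[X]$. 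Because the action of $X$ on $\Z[X^{-1}, X]/\Z[X]$ is degreewise surjective (with kernel the line spanned by $X^{-1}$), the $\mathrm{lim}^1$ term vanishes, and the inverse limit itself assembles the coherent tower data into the formal sum module $N'_\bullet \otimes \Z[X^{-1}, X]]$, where $\Z[X^{-1}, X]]$ should be interpreted as the $X^{-1}$-adic completion of $\Z[X^{-1}, X]$ (equivalently $\Z((X)) = \Z[[X]][X^{-1}]$). The top-right identification is then $\Z[X^{-1}, X]$-linear by construction.

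With all four corners identified, the commutativity of the middle square \eqref{TMFtmf} forces the outer rectangle to commute. The outer-column arrows are the natural quotient maps: on the left, the map $\Z[X^{-1}, X] \twoheadrightarrow \Z[X^{-1}, X]/\Z[X]$ is precisely what $C\rho_*$ becomes after \eqref{222} and \eqref{333}; on the right, the map $\Z[X^{-1}, X]] \twoheadrightarrow \Z[X^{-1}, X]/\Z[X]$ is the projection to the $n=0$ layer of the holim tower, which is the reduction modulo $\Z[[X]] \subset \Z((X))$. The commutativity of the top trapezium as $\Z[X^{-1}, X]$-modules then follows by a straightforward diagram chase, using that the bottom trapezium commutes already as an isomorphism and that the right vertical arrow is surjective.

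The main technical obstacle is keeping the bookkeeping straight for the right-hand column. Three points must be coherently verified: (i) that the Anderson-dualized multiplication-by-$X$ in the $\holim$ tower for $I_\Z \TMF$ matches multiplication by $X$ in $\pi_\bullet \tmf'$ under $\alpha_{\tmf'}$; (ii) that the $\mathrm{lim}^1$ really vanishes in each fixed degree, which uses the explicit description of $N'_\bullet$ from \eqref{eq_def_N'} together with the boundedness of generators of $N_\bullet$ from Fact \ref{tmfBX}; and (iii) that the horizontal map $\pi_\bullet \TMF/B^\infty \to \pi_\bullet \Sigma^{-20} I_\Z \TMF$ induced by $\alpha_{\TMF/B^\infty}$ lands exactly in the sub-$\Z[X^{-1}, X]$-module $N'_\bullet \otimes \Z[X^{-1}, X]$ of $N'_\bullet \otimes \Z[X^{-1}, X]]$, i.e.\ is the claimed inclusion. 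Points (i) and (ii) are diagram chases through the formation of \eqref{eq_IZTMF}, while (iii) is forced by the commutativity of the outer rectangle and the fact that $\Z[X^{-1}, X]$ is exactly the preimage of $\Z[X^{-1}, X]/\Z[X]$ in $\Z[X^{-1}, X]]$ under the mod-$\Z[[X]]$ projection.
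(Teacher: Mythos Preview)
Your approach is essentially the same as the paper's: both establish the top-right identification via the Milnor/Mittag-Leffler argument applied to \eqref{eq_IZTMF}, and then argue that the upper trapezium commutes by a uniqueness-of-lift argument. Your treatment of points (i) and (ii) is fine and matches the paper.

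However, your justification of point (iii) contains a genuine error. You claim that ``$\Z[X^{-1}, X]$ is exactly the preimage of $\Z[X^{-1}, X]/\Z[X]$ in $\Z[X^{-1}, X]]$ under the mod-$\Z[[X]]$ projection.'' This is false: since $\Z[X^{-1},X]]/\Z[[X]] \simeq \Z[X^{-1},X]/\Z[X]$ (both have $\Z$-basis $\{X^{-n}\}_{n\ge 1}$), the preimage of the \emph{entire} target is all of $\Z[X^{-1},X]]$, not just $\Z[X^{-1},X]$. So commutativity of the outer rectangle alone does not pin down the top map; it only says the top map is \emph{some} $\Z[X^{-1},X]$-linear lift of the bottom identity.

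The paper closes this gap with the observation you are missing: the inclusion $N'_\bullet \otimes \Z[X^{-1},X] \hookrightarrow N'_\bullet \otimes \Z[X^{-1},X]]$ is the \emph{unique} $\Z[X^{-1},X]$-module map lifting the bottom isomorphism. Indeed, any two such lifts differ by a $\Z[X^{-1},X]$-linear map with image in the kernel $N'_\bullet \otimes \Z[[X]]$; but the image of a $\Z[X^{-1},X]$-linear map is closed under multiplication by $X^{-1}$, and no nonzero element of $N'_\bullet \otimes \Z[[X]]$ survives arbitrarily many applications of $X^{-1}$ inside $N'_\bullet \otimes \Z[X^{-1},X]]$. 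Hence the difference is zero. Once you invoke this uniqueness, your argument is complete and coincides with the paper's.
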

\begin{proof}
We already know the commutaivity of the left, middle and bottom trapezium. 
The isomorphism $\pi_\bullet \Sigma^{-20}I_\Z \TMF \simeq N'_\bullet \otimes \Z[X^{-1}, X]]$ is induced by the sequence \eqref{eq_IZTMF} as follows. Since \eqref{eq_IZTMF} satisfies the Mittag-Leffler condition, we have
\begin{align}
    \pi_\bullet \Sigma^{-20}I_\Z \TMF &\simeq \lim \left( \cdots \xrightarrow{X \cdot}\Sigma^{2 \times 576-20} \pi_\bullet I_\Z \tmf  \xrightarrow{X \cdot}\pi_\bullet\Sigma^{576-20} I_\Z \tmf \xrightarrow{X
    \cdot} \pi_\bullet \Sigma^{-20} I_\Z \tmf  \right) \\
    &\simeq \lim \left( \cdots \xrightarrow{X \cdot}N'_{\bullet-576} \otimes \Z[X^{-1}, X] /\Z[X] \xrightarrow{X
    \cdot} N'_\bullet \otimes \Z[X^{-1}, X] /\Z[X]  \right) \\
    &\simeq N'_\bullet \otimes \Z[X^{-1}, X]. 
\end{align}
Then the commutativity of the right trapezium in the diagram 
is automatic. 
Then we are left to prove that the upper trapezium is commutative as a diagram of $\Z[X^{-1}, X]$-modules. But this follows by the fact that the inclusion $N'_\bullet \otimes \Z[X^{-1}, X] \hookrightarrow N'_\bullet \otimes \Z[X^{-1}, X]]$ is the unique $\Z[X^{-1}, X]$-module homomorphism between the two modules which lifts the isomorphism $N' \otimes \Z[X^{-1}, X]/\Z[X] \simeq N' \otimes \Z[X^{-1}, X]]/\Z[[X]]$. 
\end{proof}

Consider now the commutative diagram of fiber sequences of $\TMF$-modules
    \begin{align}\label{diag_completion_TMF/B}
\vcenter{\xymatrix{
\TMF \ar@{=}[d] \ar[r]^-{\phi} &
\TMF[B^{-1}] \ar[d]^-{p} \ar[r]^-{C\phi} &
\TMF/B^\infty \ar[d]^-{\varrho} \\
\TMF \ar[r]^-{\sigma}& 
\KO((q))  \ar[r]^-{C\sigma} & 
\KO((q))/\TMF
}}
\end{align} where $p$ is induced by the invertibility of $\sigma(B)=c_4$ in $\pi_\bullet \KO((q))$ and we fix a $\TMF$-module morphism $\varrho$ to make the diagram commute. 

\begin{prop}
\label{prop:relation-secondary-anderson}
The following diagram commutes: \begin{equation}
\vcenter{\xymatrix{
\TMF/B^\infty 
\ar[rr]^-{\alpha_{\TMF/B^\infty}}  
\ar[d]_-{\varrho}  
&& \Sigma^{-20} I_\bZ \TMF 
\ar@{=}[d]
\\
\KO((q))/\TMF \ar[rr]^-{\alpha_{\KO((q))/\TMF}} && \Sigma^{-20} I_\bZ \TMF \\
}}
\label{bigdiagram}
\end{equation}
\end{prop}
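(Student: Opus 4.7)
The plan is to reduce the commutativity of \eqref{bigdiagram} to an equality of two explicit homomorphisms on $\pi_{-20}(\TMF/B^\infty)$, and then observe that both homomorphisms are given by the same formula $[\phi(q)] \mapsto \frac12\Delta(q)\phi(q)|_{q^0}$.

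First, by Lemma~\ref{lemma:basic}, any $\TMF$-module morphism $\TMF/B^\infty \to \Sigma^{-20}I_\bZ\TMF$ is determined by its underlying element of $\pi_{20}I_\bZ(\TMF/B^\infty)$. As shown in the proof of Definition~\ref{def_a}, we have $\pi_{-21}(\TMF/B^\infty)=0$, so
\[
\pi_{20} I_\bZ(\TMF/B^\infty) \simeq \Hom\bigl(\pi_{-20}(\TMF/B^\infty),\,\bZ\bigr).
\]
Thus it suffices to check that the two homomorphisms $\pi_{-20}(\TMF/B^\infty)\to\bZ$ classifying $\alpha_{\TMF/B^\infty}$ and $\alpha_{\KO((q))/\TMF}\circ\varrho$ coincide.

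Next, I will identify the map $\varrho_*\colon \pi_{-20}(\TMF/B^\infty)\to \pi_{-20}(\KO((q))/\TMF)$ on homotopy groups. Using the long exact sequence of the top row of \eqref{diag_completion_TMF/B}, an element $[\phi]\in \pi_{-20}(\TMF/B^\infty)$ lifts via $C\phi$ to an element of $\pi_{-20}\TMF[B^{-1}]$, which via the $\TMF$-module map $\phi$ embeds into $(\MF[c_4^{-1}])_{-10}\subset\bZ((q))$; modding out by the image of $\pi_{-20}\TMF$, this realizes the inclusion $\pi_{-20}(\TMF/B^\infty)\hookrightarrow(\MF/c_4^\infty)_{-10}$ used in Definition~\ref{def_a}. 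The diagram \eqref{diag_completion_TMF/B} commutes, and the middle map $p\colon \TMF[B^{-1}]\to \KO((q))$ after taking $\pi_{-20}$ is precisely the $q$-expansion map, sending a weakly holomorphic modular form to its image in $\pi_{-20}\KO((q)) \simeq 2\bZ((q))$. Passing to cofibers, we conclude that under the isomorphism $\pi_{-20}(\KO((q))/\TMF)\simeq 2\bZ((q))/2\MF_{-10}$ of Proposition~\ref{prop:KO/TMF20}, the map $\varrho_*$ is simply the $q$-expansion map $(\MF/c_4^\infty)_{-10}\to 2\bZ((q))/2\MF_{-10}$ (restricted to the subgroup $\pi_{-20}(\TMF/B^\infty)$). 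The well-definedness of this $q$-expansion on $\MF/c_4^\infty$ follows because every element of $\MF_{-10}$ already has a $q$-expansion in $\bZ((q))$.

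Finally, composing $\varrho_*$ with the homomorphism of Proposition~\ref{rem:rat},
\[
[f(q)]\in 2\bZ((q))/2\MF_{-10}\;\longmapsto\;\tfrac12\Delta(q)f(q)\big|_{q^0}\in\bZ,
\]
yields the map $[\phi(q)]\mapsto \frac12\Delta(q)\phi(q)|_{q^0}$ on $\pi_{-20}(\TMF/B^\infty)\subset(\MF/c_4^\infty)_{-10}$, which is exactly the defining formula \eqref{eq_duality_formula} for $\alpha_{\TMF/B^\infty}$ in Definition~\ref{def_a}. Hence the two homomorphisms agree, proving the commutativity of \eqref{bigdiagram}.

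The main (minor) obstacle is the bookkeeping in the second paragraph: one must verify that the $q$-expansion assembled from the cofiber sequence is compatible with both the embedding $\pi_{-20}(\TMF/B^\infty)\hookrightarrow(\MF/c_4^\infty)_{-10}$ and the identification of $\pi_{-20}(\KO((q))/\TMF)$ with $2\bZ((q))/2\MF_{-10}$; but this is entirely formal given the explicit descriptions already recorded in Fact~\ref{fact:imageZ}, Proposition~\ref{prop:KO/TMF20}, and the compatibility of $p$ with the Tate $q$-expansion built into the definition of $\sigma\colon\TMF\to\KO((q))$.
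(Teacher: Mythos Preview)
Your proposal is correct and follows essentially the same approach as the paper's proof: both reduce to checking that the element $I_\bZ\varrho(\alpha_{\KO((q))/\TMF})\in\pi_{20}I_\bZ(\TMF/B^\infty)\simeq\Hom(\pi_{-20}(\TMF/B^\infty),\bZ)$ agrees with $\alpha_{\TMF/B^\infty}$, and then compare the explicit formulas \eqref{alpha-KO/TMF} and \eqref{eq_duality_formula}. The paper does this in two sentences, while you spell out the identification of $\varrho_*$ on $\pi_{-20}$ via the $q$-expansion; your extra bookkeeping is sound (the lift to $\pi_{-20}\TMF[B^{-1}]$ exists because $\pi_{-21}\TMF=0$), so there is no substantive difference.
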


\begin{proof}

We  need to show that 
the element $\alpha_{\KO((q))/\TMF} \in \pi_{-20} I_\bZ \KO((q))/\TMF$
is sent to $\alpha_{\TMF/B^\infty}$
via $I_\bZ\varrho$.
This  follows 
by comparing \eqref{alpha-KO/TMF} and \eqref{eq_duality_formula}.
\if0
The factor of $1/2$ is due to the fact that $\MF \hookrightarrow \bZ((q))$ 
is done via $\MF_{2n} \hookrightarrow \pi_{4n}\K((q))\simeq \bZ((q))$
whereas $\pi_{8k+4}\KO((q)) \simeq \bZ((q)) \to \pi_{8k+4}\K((q)) \simeq \bZ((q))$ is a multiplication by 2.
\fi
\end{proof}

Our aim was to show that the second row of the diagram \eqref{bigdiagram} is an equivalence.
We already know $\pi_\bullet (\TMF/B^\infty)\simeq N'_\bullet\otimes \bZ[X^{-1},X]$
and $\pi_\bullet\Sigma^{-20}I_\bZ\TMF\simeq N'_\bullet \otimes \bZ[X^{-1},X]]$.
What we need to do then is to show that the effect of $\varrho$ on the homotopy group is the $X$-adic completion.

\begin{lem}\label{lem_TMF_completion}
There is an isomorphism $N_\bullet[B^{-1}] \otimes \bZ[X^{-1},X]]\simeq \pi_\bullet \KO((q))$ of $\Z[X^{-1},X]$-modules which makes the following diagram commute:
\begin{align}
    \vcenter{\hbox{\xymatrix{
    \pi_\bullet \TMF[B^{-1}] \ar[r]^-{p} & \pi_\bullet \KO((q)) \\
    N_\bullet[B^{-1}] \otimes \bZ[X^{-1},X] \ar@{^{(}-_>}[r] \ar[u]^-{\simeq}_-{\eqref{444}}& N_\bullet[B^{-1}] \otimes \bZ[X^{-1},X]] \ar[u]^-{\simeq}
    }}}. 
\end{align}

\end{lem}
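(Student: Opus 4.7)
The plan is to exhibit $\pi_\bullet \KO((q))$ as the $X$-adic completion of $\pi_\bullet \TMF[B^{-1}] \simeq N_\bullet[B^{-1}] \otimes \bZ[X^{\pm 1}]$, with the isomorphism induced by $p$. The starting observation is that $p(X) = \sigma(X) = \Delta(q)^{24}$, which has $q$-adic valuation exactly $24$ in $\pi_{576}\KO((q)) \simeq \bZ((q))$. Since $p(B) = c_4(q)$ is a unit in $\bZ[[q]]$, inverting $B$ does not affect $q$-valuations, and any element of $N_\bullet[B^{-1}]$ maps into $\pi_\bullet\KO[[q]]$ with $q$-valuation between $0$ and $23$. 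Consequently $p$ carries elements of $X$-adic valuation $\geq N$ to $q$-series with $q$-valuation $\geq 24N - 23$, so it extends by continuity to a $\bZ[X^{\pm 1}]$-linear map $\widetilde{p} \colon N_\bullet[B^{-1}] \otimes \bZ[X^{-1}, X]] \to \pi_\bullet \KO((q))$ fitting into the desired commutative square.

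To prove $\widetilde{p}$ is an isomorphism in each degree, I would fix $d$ and analyze both sides as modules over $\bZ((q))$. Consider for instance $d \equiv 0 \pmod 8$, where $\pi_d\KO((q)) \simeq \bZ((q))$. Injectivity follows from a leading-$X$-term argument: for a nonzero $\sum_{n \geq N_0}f_n X^n$ with $f_{N_0}\neq 0$, the image $p(f_{N_0})\Delta(q)^{24N_0}$ has its lowest nonzero $q$-term at some position $q^{24N_0+v}$ with $0 \leq v \leq 23$, while every higher-$n$ contribution has $q$-valuation at least $24(N_0+1) > 24N_0 + 23$, preventing cancellation. For surjectivity, I would inductively approximate any target $g(q) = \sum a_m q^m \in \pi_d\KO((q))$: at the $n$-th step, the $24$ elements $\Delta^j c_4^a \in N_\bullet[B^{-1}]$ (for $j = 0,\ldots,23$ and suitable $a$ chosen to land in the correct degree) project to $q$-series with leading terms $q^j$, and so a suitable $\bZ$-linear combination $f_n$ can be found so that $p\bigl(\sum_{m \leq n}f_m X^m\bigr)$ agrees with $g(q)$ modulo $q^{24n+24}$. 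The resulting sequence is $X$-adically Cauchy and its limit maps to $g(q)$. The torsion degrees ($d \equiv 1,2 \pmod 8$) are handled similarly using $\eta$- and $\eta^2$-multiples; degrees $d \equiv 8k+4$ work the same way with the convention $\pi_{8k+4}\KO\simeq 2\bZ$.

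The main obstacle is ensuring that $N_\bullet[B^{-1}]$ is actually large enough to hit each residue $j \in \{0,\ldots,23\}$ in every required degree, which requires a concrete description of $N_\bullet$. Rationally and at primes $p \geq 5$, this is transparent from $\pi_\bullet\tmf_{(p)} \simeq \bZ_{(p)}[c_4,c_6,\Delta]$, where $\Delta^j$ for $0 \leq j \leq 23$ directly provides the needed family of leading $q$-terms. At primes $2$ and $3$, the matching uses the explicit structure of $N_\bullet$ recorded in \cite{BrunerRognes}, together with the fact that the map $\pi_\bullet\tmf \to \pi_\bullet \ko[[q]]$ is well-understood on $N$-generators. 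Once the density of the image at each residue mod $24$ is established, commutativity of the square in the statement is automatic, since both horizontal maps agree on the dense subring $N_\bullet[B^{-1}] \otimes \bZ[X^{\pm 1}]$ and the completion determines the extension uniquely.
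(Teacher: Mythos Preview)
Your approach is correct in outline and reaches the same conclusion, but the paper's route is considerably more economical. Rather than working directly with $q$-valuations, the paper first shows that in each degree $d$ the map $p$ identifies $\pi_d\TMF[B^{-1}]$ with the free rank-one $\pi_0\TMF[B^{-1}]\simeq \bZ[J,J^{-1}]$-module $c_4^k\cdot\pi_d\KO[J,J^{-1}]$ (or the analogous $c_4^{k-1}c_6$-module when $d\equiv 4\pmod 8$). Injectivity comes from Proposition~\ref{kernel=Btorsion}, and the image is pinned down simply because $p$ is $\bZ[J,J^{-1}]$-linear and the generator $c_4^k$ (resp.\ $c_4^{k-1}c_6$) is visibly hit. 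The lemma then follows in one line: since $X$ and $J^{-24}$ differ by the unit $c_4^{72}$, the $X$-adic and $J^{-1}$-adic topologies coincide, and the $J^{-1}$-adic completion of $\bZ[J^{\pm1}]$ is $\bZ((q))$.

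Your hands-on argument recovers the same facts but spreads the work out. Two points are worth flagging. First, the upper bound ``$q$-valuation $\leq 23$'' on the image of $N_\bullet[B^{-1}]$ is genuinely needed for your injectivity step, and is less immediate than you suggest: it amounts to knowing that $p(N_\bullet[B^{-1}])\cap \Delta^{24}\bZ[[q]]=0$, which is essentially the direct-sum decomposition translated through $p$. The paper's $\bZ[J^{\pm1}]$-module description sidesteps this entirely. Second, your worry about a prime-by-prime analysis at $2$ and $3$ is unnecessary: the lifts $c_4\Delta^j\in\pi_{8+24j}\tmf$ for $0\le j\le 23$ (and their $\eta$, $\eta^2$ multiples) all exist integrally by Facts~\ref{fact:imageZ} and~\ref{fact:imageZ/2}, since $a_{1,0,j}=1$ and the cokernels listed in Fact~\ref{fact:imageZ/2} never involve modular forms of weight $\equiv 4\pmod{12}$. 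So no appeal to the detailed $p$-local structure in \cite{BrunerRognes} is needed.
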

\begin{proof}
We note first that, for each degree $d$, the middle vertical arrow $p$ in \eqref{diag_completion_TMF/B} induces the inclusion of $\pi_0 \TMF = \Z[J]$-modules, 
\begin{align}
    p \colon \pi_d \TMF[B^{-1}] &\xrightarrow{\simeq}
    \begin{cases}
        c_4^k \cdot  \pi_{d} \KO[J, J^{-1}], & d = 8k, 8k+1, 8k+2, \\
        c_4^{k-1}c_6 \cdot  \pi_{d} \KO[J, J^{-1}], & d = 8k+4, \\
        0 & \mbox{otherwise}
    \end{cases} \label{eq_isom_TMF_KO[JJ-1]} \\
    & \hookrightarrow \pi_d \KO((q)). \notag 
\end{align}
Indeed, the injectivity of $p \colon \pi_d \TMF[B^{-1}] \to \pi_d \KO((q))$ follows from Proposition \ref{kernel=Btorsion}, and the statement about the image is verified by the fact that $p$ is a $\pi_0\TMF[B^{-1}] \simeq \Z[J, J^{-1}]$-module map and the fact that the generators $c_4^k$ or $c_4^{k-1}c_6$ of the modules of the right hand side are contained in the image. 
The statement then follows  by noticing that, at each fixed degree, the $X$-adic completion is the $J^{-1}$-adic completion.
\end{proof}

\begin{prop}\label{prop_completion_TMF/B}
There is an isomorphism $N'_\bullet \otimes \bZ [X^{-1},X] \simeq \pi_\bullet \KO((q))/\TMF $ of $\Z[X^{-1}, X]$-modules which makes the following diagram commute:
\begin{align}
    \vcenter{\hbox{\xymatrix{
    \pi_\bullet \TMF/B^\infty \ar[r]^-{\varrho } & \pi_\bullet \KO((q))/\TMF  \\
    N'_\bullet \otimes \bZ [X^{-1},X] \ar[u]^-{\simeq}_{\eqref{222}} \ar@{^{(}-_>}[r] & N'_\bullet \otimes \bZ [X^{-1},X]]\ar[u]^-{\simeq}
    }}}.
\end{align}

\end{prop}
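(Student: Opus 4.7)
The plan mirrors the structure of Lemma \ref{lem_TMF_completion}. Taking homotopy groups of the two cofiber sequences in \eqref{diag_completion_TMF/B} yields a map of short exact sequences whose left column is $\bar{p}_\ast : \mathrm{coker}(\phi_\ast)_n \to \mathrm{coker}(\sigma_\ast)_n$, whose middle column is $\varrho_\ast : \pi_n \TMF/B^\infty \to \pi_n \KO((q))/\TMF$, and whose right column is the identity on $\ker(\phi_\ast)_{n-1} = \ker(\sigma_\ast)_{n-1} = \Gamma_B \pi_{n-1}\TMF$ (using $\sigma_\ast = p_\ast \circ \phi_\ast$ together with the injectivity of $p_\ast$ from Lemma \ref{lem_TMF_completion}). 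Because $p_\ast$ is injective, so is the induced map $\bar{p}_\ast$ on cokernels: if $p(x) \in \sigma_\ast(\pi_n \TMF) = p(\phi_\ast(\pi_n \TMF))$, then $x \in \phi_\ast(\pi_n \TMF)$. The five lemma then forces $\varrho_\ast$ to be injective, producing a short exact sequence
\begin{equation*}
0 \to \pi_\bullet \TMF/B^\infty \xrightarrow{\varrho_\ast} \pi_\bullet \KO((q))/\TMF \to \mathrm{coker}(\varrho_\ast) \to 0,
\end{equation*}
with $\mathrm{coker}(\varrho_\ast) \simeq \mathrm{coker}(\bar{p}_\ast)$ by a second diagram chase.

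Next, I would compute $\mathrm{coker}(\bar{p}_\ast)$ using Lemma \ref{lem_TMF_completion}: from $\pi_\bullet \TMF[B^{-1}] \simeq N_\bullet[B^{-1}] \otimes \bZ[X^{-1}, X]$ and $\pi_\bullet \KO((q)) \simeq N_\bullet[B^{-1}] \otimes \bZ[X^{-1}, X]]$, the cokernel is identified degree by degree with $N_\bullet[B^{-1}] \otimes (\bZ[X^{-1}, X]]/\bZ[X^{-1}, X])$. To rewrite this in terms of $N'$, I combine the defining extension \eqref{eq_def_N'}, $0 \to N_\bullet/B^\infty \to N'_\bullet \to \Gamma_B N_{\bullet-1} \to 0$, with the short exact sequence $0 \to N_\bullet/\Gamma_B N_\bullet \to N_\bullet[B^{-1}] \to N_\bullet/B^\infty \to 0$, and exploit the crucial observation that $\Gamma_B N_\bullet$ is supported in a bounded range of degrees (cf.\ Table 10.1 and Section 13.6 of \cite{BrunerRognes}); hence $\Gamma_B N_\bullet \otimes (\bZ[X^{-1}, X]]/\bZ[X^{-1}, X])$ vanishes at every degree. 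A short diagram chase then yields the identification
\begin{equation*}
N_\bullet[B^{-1}] \otimes (\bZ[X^{-1}, X]]/\bZ[X^{-1}, X]) \simeq N'_\bullet \otimes (\bZ[X^{-1}, X]]/\bZ[X^{-1}, X]),
\end{equation*}
so $\pi_\bullet \KO((q))/\TMF$ fits in exactly the same short exact sequence that defines $N'_\bullet \otimes \bZ[X^{-1}, X]]$.

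The main obstacle lies in the final step: promoting this coincidence of short exact sequences to an isomorphism of $\bZ[X^{-1}, X]$-modules compatible with $\varrho_\ast$. Two extensions with identical kernel and cokernel need not be isomorphic in general, so some extra input is required. Following the uniqueness argument at the end of Proposition \ref{prop_N'TMFtmf}, the needed rigidification comes from the $X^{-1}$-adic completeness of $\pi_\bullet \KO((q))/\TMF$, inherited from the already $X^{-1}$-adically complete $\pi_\bullet \KO((q)) \simeq N_\bullet[B^{-1}] \otimes \bZ[X^{-1}, X]]$; the universal property of the $X^{-1}$-adic completion of $N'_\bullet \otimes \bZ[X^{-1}, X]$ then produces a unique $\bZ[X^{-1}, X]$-linear lift $N'_\bullet \otimes \bZ[X^{-1}, X]] \to \pi_\bullet \KO((q))/\TMF$ extending $\varrho_\ast$, which a comparison of the two short exact sequences shows to be an isomorphism.
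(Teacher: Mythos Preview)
Your approach is essentially the same as the paper's: both use the commutative diagram of long exact sequences arising from the two cofiber sequences in \eqref{diag_completion_TMF/B}, together with the already-established identifications of $\pi_\bullet \TMF$, $\pi_\bullet \TMF[B^{-1}]$, $\pi_\bullet \TMF/B^\infty$, and $\pi_\bullet \KO((q))$. The paper's proof is far terser---it simply records these identifications and concludes with ``Combining, we get an isomorphism''---so your explicit treatment of the extension problem (and its resolution via completeness, parallel to the uniqueness argument at the end of Proposition~\ref{prop_N'TMFtmf}) fills in a step the paper leaves to the reader.
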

\begin{proof}
We have the commutative diagram of long exact sequences, 
\begin{align}
\hskip-1cm\vcenter{\xymatrix{
\cdots \ar[r]& \pi_d \TMF \ar[r]^-{\phi} \ar@{=}[d]& \pi_d \TMF[B^{-1}] \ar[r]^-{C\phi} \ar[d]^-{p} & \pi_d \TMF/B^\infty \ar[r] \ar[d]^-{\varrho} & \pi_{d-1}\TMF \ar[r] \ar@{=}[d] & \cdots \\
\cdots \ar[r]&
\pi_d\TMF \ar[r]^-{\sigma}& 
\pi_d \KO((q))  \ar[r]^-{C\sigma} & 
\pi_d \KO((q))/\TMF \ar[r]& \pi_{d-1}\TMF \ar[r]& \cdots 
}}.
\end{align}
We know $\pi_\bullet \TMF\simeq N_\bullet \otimes \bZ[X]$,
$\pi_\bullet \TMF[B^{-1}]\simeq N_\bullet[B^{-1}] \otimes \bZ[X^{-1},X]$,
$\pi_\bullet( \TMF/B^\infty) \simeq N'_\bullet \otimes \bZ[X^{-1},X]$
and
$\pi_\bullet \KO((q)) \simeq N_\bullet[B^{-1}] \otimes \bZ[X^{-1},X]]$
from previous propositions.
Combining, we get an isomorphism $\pi_\bullet\KO((q))/\TMF \simeq N'_\bullet \otimes \bZ[X^{-1},X]]$ with the desired property.
\end{proof}

The main result of this subsection is the following:
\begin{thm}[=Theorem~\ref{main}]
The morphism \[
\alpha_{\KO((q))/\TMF} \colon  \KO((q))/\TMF \to \Sigma^{-20} I_\bZ \TMF
\] is an equivalence of $\TMF$-modules. 
\end{thm}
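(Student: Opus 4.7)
The plan is to combine the three commutative diagrams established in this appendix: \eqref{bigdiagram} (which relates $\alpha_{\TMF/B^\infty}$ to $\alpha_{\KO((q))/\TMF}$ through $\varrho$), together with Proposition~\ref{prop_N'TMFtmf} and Proposition~\ref{prop_completion_TMF/B}, which identify the homotopy-group level maps in terms of the inclusion $N'_\bullet \otimes \bZ[X^{-1},X] \hookrightarrow N'_\bullet \otimes \bZ[X^{-1},X]]$ of a module into its $X$-adic completion. Since a map of spectra is an equivalence precisely when it is an isomorphism on homotopy groups, it suffices to verify the latter.

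First, assemble the identifications. On $\pi_\bullet$, Proposition~\ref{prop_completion_TMF/B} expresses $\varrho$ as the canonical inclusion
\[
    N'_\bullet\otimes \bZ[X^{-1},X] \hookrightarrow N'_\bullet\otimes \bZ[X^{-1},X]],
\]
while the upper trapezium of the diagram in Proposition~\ref{prop_N'TMFtmf} expresses $\alpha_{\TMF/B^\infty}$ as the same inclusion (via the identification of $\pi_\bullet \Sigma^{-20}I_\bZ \TMF$ with $N'_\bullet\otimes \bZ[X^{-1},X]]$). Because the square \eqref{bigdiagram} commutes and $I_\bZ$ is a functor, we get the equality
\[
\alpha_{\KO((q))/\TMF}\circ \varrho = \alpha_{\TMF/B^\infty}
\]
of homomorphisms of $\pi_\bullet\TMF$-modules. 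Combining the three identifications shows that, on the dense $\bZ[X]$-submodule $N'_\bullet\otimes \bZ[X^{-1},X] \subset N'_\bullet\otimes \bZ[X^{-1},X]]$, the map $\alpha_{\KO((q))/\TMF}$ agrees with the identity endomorphism.

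Next, promote this to an equality on the entire $X$-adic completion. The map $\alpha_{\KO((q))/\TMF}$ is a morphism of $\pi_\bullet\TMF$-modules, and hence in particular of $\bZ[X]$-modules. Given any element $y \in N'_\bullet\otimes \bZ[X^{-1},X]]$ in fixed total degree, for each $k\gg 0$ we may write $y = y_k + X^k z_k$ with $y_k \in N'_\bullet\otimes \bZ[X^{-1},X]$ and $z_k$ in the completion of appropriate degree. Applying $\bZ[X]$-linearity, the previous paragraph gives
\[
\alpha_{\KO((q))/\TMF}(y) - y = X^k\bigl(\alpha_{\KO((q))/\TMF}(z_k) - z_k\bigr) \in X^k\cdot N'_\bullet\otimes \bZ[X^{-1},X]].
\]
Because each graded piece of $N'_\bullet\otimes \bZ[X^{-1},X]]$ is $X$-adically separated (the intersection $\bigcap_{k\ge 0} X^k\cdot N'_\bullet\otimes \bZ[X^{-1},X]]$ vanishes), we conclude $\alpha_{\KO((q))/\TMF}(y) = y$. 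Thus $\alpha_{\KO((q))/\TMF}$ induces the identity on every homotopy group, and is therefore a $\TMF$-module equivalence.

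The only step with a hint of subtlety is the $X$-adic density argument, and even this is essentially formal once Propositions~\ref{prop_N'TMFtmf} and \ref{prop_completion_TMF/B} have been established; the real substantive content has already been packaged into those two propositions and into the $\tmf'$-level self-duality \eqref{eq_isom_tmf'} imported from \cite{BrunerRognes}. Once the structural identifications in terms of $N'_\bullet$ are in hand, verifying that $\alpha_{\KO((q))/\TMF}$ matches the identity on the dense $\Z[X]$-submodule and extending by separatedness is routine.
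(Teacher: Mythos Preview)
Your plan is the same as the paper's: invoke Propositions~\ref{prop_N'TMFtmf}, \ref{prop:relation-secondary-anderson} and~\ref{prop_completion_TMF/B} and read off the equivalence. The problem is the separatedness step. You assert that $\bigcap_{k\ge 0} X^k\cdot \bigl(N'_\bullet\otimes \bZ[X^{-1},X]]\bigr)$ vanishes, but $X^{-1}$ already lies in $\bZ[X^{-1},X]]$, so $X$ is a \emph{unit} there; hence $X^k\cdot M = M$ for every $k$ and the intersection is all of $M$, not zero. The relation $\alpha_{\KO((q))/\TMF}(y)-y\in X^k M$ is therefore vacuous, and the density argument as written does not conclude anything.

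The filtration you actually want is $F^m := N'_\bullet\otimes X^m\bZ[[X]]$ inside the completion; this one \emph{is} separated, and $N'_\bullet\otimes\bZ[X^{-1},X]$ is dense for it. But preservation of $\{F^m\}$ by $\alpha_{\KO((q))/\TMF}$ is not a formal consequence of $\bZ[X^{-1},X]$-linearity on homotopy groups --- a $\bZ[X^{-1},X]$-linear self-map commutes with the invertible $X$-action and so carries no filtration information. What supplies the missing continuity is the spectrum-level inverse limit
\[
I_\bZ\TMF \simeq \mathrm{holim}\bigl(\cdots \xrightarrow{X\cdot}\Sigma^{576}I_\bZ\tmf \xrightarrow{X\cdot} I_\bZ\tmf\bigr)
\]
from \eqref{eq_IZTMF}, which is precisely what the right-hand column of the large diagram in Proposition~\ref{prop_N'TMFtmf} encodes. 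A map of spectra into this holim is determined by its projections to the finite stages $\Sigma^{576k-20}I_\bZ\tmf$, and on $\pi_\bullet$ each such projection lands in a finite truncation $\bigoplus_{n<k}N'_{\bullet-576n}$; agreement on the dense submodule then genuinely suffices stage by stage, and passing to the limit gives the isomorphism. This spectrum-level step is what the paper's terse ``immediately follows'' is gesturing at; your purely module-theoretic density argument cannot substitute for it.
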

\begin{proof}
This immediately follows by combining Proposition~\ref{prop_N'TMFtmf}, Proposition~\ref{prop:relation-secondary-anderson} and Proposition~\ref{prop_completion_TMF/B}.
\end{proof}


\subsection{A more conceptual proof}
\label{app:sanath-proof}
Here we provide another proof of our theorem~\ref{main},
that the morphism \eqref{eq_main_thm_x} is an equivalence.
\if0
Here we provide another proof of our theorem~\ref{main}, that 
the secondary morphism 
    \begin{align}
        \alpha_{\KO((q))/\TMF} \colon \KO((q))/\TMF \to \Sigma^{-20}I_\Z \TMF
    \end{align}
    is an equivalence. 
This section was  kindly provided by Sanath Devalapurkar.
\fi
Recall that originally the Anderson duality of topological modular forms was formulated and proved by Stojanoska \cite{Sto1,Sto2} in terms of the Anderson self-duality of $\Tmf$ as follows.

\begin{fact}[\cite{Sto1, Sto2}]\label{fact_duality_Tmf}
    We have a $\Tmf$-module equivalence
    \begin{align}
        \alpha_{\Tmf} \colon \Tmf \simeq \Sigma^{-21} I_\bZ \Tmf. 
    \end{align}
    This equivalence is given by the multiplication by a generator of
    \begin{align}
        \pi_{21}I_\Z \Tmf \simeq \Hom(\pi_{-21}\Tmf, \Z) \simeq \Z, 
    \end{align}
    where we use $\pi_{-21}\Tmf \simeq \Z$ and $\pi_{-22}\Tmf =0$. 
We  also denote this generator by $\alpha_\Tmf$.
\end{fact}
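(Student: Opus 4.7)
The proof I would attempt follows Stojanoska's strategy, which realizes the duality via Grothendieck--Serre duality on the compactified moduli stack $\overline{\mathcal{M}}_{\mathrm{ell}}$ of generalized elliptic curves. The starting point is the presentation
\[
\Tmf \;\simeq\; \Gamma\bigl(\overline{\mathcal{M}}_{\mathrm{ell}},\, \mathcal{O}^{\mathrm{top}}\bigr)
\]
of $\Tmf$ as the global sections of a sheaf of $E_\infty$-ring spectra constructed in \cite{AHR10, HillLawson}, whose homotopy sheaves satisfy $\pi_{2k}\mathcal{O}^{\mathrm{top}} \simeq \omega^{\otimes k}$, where $\omega$ is the Hodge bundle.

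The main geometric input is a topological Grothendieck--Serre duality statement for the proper structure map $p \colon \overline{\mathcal{M}}_{\mathrm{ell}} \to \mathrm{Spec}\,\mathbb{S}$: the Anderson dual should be computed as
\[
I_\Z\, p_*(\mathcal{F}) \;\simeq\; p_*\bigl(\underline{\mathrm{Hom}}(\mathcal{F}, \omega_p)\bigr)
\]
for a dualizing sheaf $\omega_p$. Applied to $\mathcal{F} = \mathcal{O}^{\mathrm{top}}$, viewed as a module over itself, this would give $I_\Z \Tmf \simeq \Gamma(\overline{\mathcal{M}}_{\mathrm{ell}}, \omega_p)$, reducing the question to an explicit identification of $\omega_p$.

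The next step is to identify $\omega_p$ explicitly as a twist of $\mathcal{O}^{\mathrm{top}}$. Since $\overline{\mathcal{M}}_{\mathrm{ell}}$ is a proper $1$-dimensional Deligne--Mumford stack, classical Serre duality pairs $H^0$ with $H^1$, introducing a cohomological shift of $1$. Combined with the classical identification of the canonical bundle of $\overline{\mathcal{M}}_{\mathrm{ell}}$ with (a twist of) $\omega^{\otimes 10}$ incorporating the cusp divisor, and the factor of $2$ coming from $\pi_{2k}\mathcal{O}^{\mathrm{top}} \simeq \omega^{\otimes k}$, one predicts an overall shift by $21 = 2\cdot 10 + 1$, so that $\omega_p \simeq \Sigma^{-21}\mathcal{O}^{\mathrm{top}}$ as sheaves of $\mathcal{O}^{\mathrm{top}}$-modules. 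The specific duality element $\alpha_{\Tmf}$ would then be read off from the generator of the relevant $H^1$.

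The main obstacle will be establishing the topological Serre duality carefully at the integral level. The compactification at the cusp, where $\mathcal{O}^{\mathrm{top}}$ restricts to $\KO[[q]]$, and the residual $2$- and $3$-torsion stackiness at $j=0$ and $j=1728$ (with their automorphism groups of order $6$ and $4$) make the precise identification subtle; one expects to work prime by prime and reassemble. A clean way to finish is to check the statement after inverting $6$ (where everything reduces to a statement about the sheaf of modular forms, and $H^1(\overline{\mathcal{M}}_{\mathrm{ell}}[1/6], \omega^{\otimes k})$ can be computed directly by the usual pairing on modular forms), then treat the $2$- and $3$-primary parts via the descent spectral sequence.
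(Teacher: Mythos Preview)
The paper does not prove this statement at all: it records it as a \emph{Fact} with citation to \cite{Sto1,Sto2} and uses it as a black-box input, both in Section~\ref{sec:xxx} and in Appendix~\ref{app:sanath-proof}. There is therefore no ``paper's own proof'' to compare against.

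Your proposal is a reasonable high-level sketch of Stojanoska's actual argument in those references: she does realize the Anderson dual via Serre duality on $\overline{\mathcal{M}}_{\mathrm{ell}}$, identifies the dualizing sheaf, and treats the primes $2$ and $3$ separately (in \cite{Sto1} via $\Tmf(2)$ and Galois descent after inverting $2$, and in \cite{Sto2} via $\Tmf_1(3)$ at the prime $2$). Your shift-count $21 = 2\cdot 10 + 1$ and your expectation of working prime by prime are both consistent with how the argument goes. Since the paper simply imports this result, there is nothing further to reconcile.
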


\begin{rem}
$\alpha_\Tmf$ is determined only up to a sign at this point. 
This ambiguity will be fixed along the way.
\end{rem}

We are going to relate this self-duality of $\Tmf$ with our secondary morphism $\alpha_{\KO((q))/\TMF}$. 
The following lemma follows from the construction of $\Tmf$ in \cite{HillLawson}:
\begin{lem}\label{lem_Tmf_TMF}
    We have a homotopy pullback square
    \begin{align}\label{diag_Tmf_TMF}
        \vcenter{\xymatrix{
\Tmf \ar[r] \ar[d]^-{\sigma'} & \TMF \ar[d]^{\sigma} \\
\KO[[q]] \ar[r] & \KO((q))
        }}.
    \end{align}
\end{lem}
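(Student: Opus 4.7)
The plan is to derive this pullback square from the sheaf-theoretic construction of $\Tmf$ given in \cite{HillLawson}. Recall that $\Tmf$ is defined as the global sections of a sheaf $\mathcal{O}^\top$ of $E_\infty$-ring spectra on the (log-)étale site of the compactified Deligne--Mumford moduli stack $\overline{\mathcal{M}}_{\rm ell}$ of generalized elliptic curves, with $\TMF = \mathcal{O}^\top(\mathcal{M}_{\rm ell})$ denoting the sections over the open substack where the discriminant is invertible.

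First I would exhibit an open cover $\{\mathcal{M}_{\rm ell}, \mathcal{U}\}$ of $\overline{\mathcal{M}}_{\rm ell}$, where $\mathcal{U}$ is an (log-)étale neighborhood of the cuspidal locus $\overline{\mathcal{M}}_{\rm ell}\setminus\mathcal{M}_{\rm ell}$ modeled by the Tate curve and giving a chart of the form $\mathrm{Spf}(\Z[[q]])$. This is indeed a cover because the complement of $\mathcal{M}_{\rm ell}$ is precisely the cuspidal locus, which lies in $\mathcal{U}$. The intersection $\mathcal{M}_{\rm ell}\cap\mathcal{U}$ is the punctured neighborhood corresponding to $\mathrm{Spec}(\Z((q)))$, obtained by inverting $q$.

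Second I would invoke the computation underlying the construction of the sigma orientation in \cite{AHR10,HillLawson}, which identifies the values of the sheaf on these pieces as
\[
\mathcal{O}^\top(\mathcal{U})\simeq \KO[[q]],\qquad \mathcal{O}^\top(\mathcal{M}_{\rm ell}\cap\mathcal{U})\simeq \KO((q)),
\]
with the restriction map between them being the canonical inclusion. The morphism $\sigma\colon\TMF\to\KO((q))$ is precisely the restriction along $\mathcal{M}_{\rm ell}\cap\mathcal{U}\hookrightarrow\mathcal{M}_{\rm ell}$, and $\sigma'$ is the analogous restriction along $\mathcal{U}\hookrightarrow\overline{\mathcal{M}}_{\rm ell}$.

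Third, since $\mathcal{O}^\top$ satisfies hyperdescent for the (log-)étale topology, the Čech/Mayer--Vietoris square for the cover $\{\mathcal{M}_{\rm ell},\mathcal{U}\}$ applied to global sections yields exactly the asserted homotopy pullback diagram \eqref{diag_Tmf_TMF}. The only potential obstacle is bookkeeping around the log-étale topology and the appropriate formal-versus-affine model of the Tate neighborhood (so that sections of $\mathcal{O}^\top$ really are $\KO[[q]]$ rather than some completion thereof); this is already handled in \cite{HillLawson}, and I would simply cite their descent result rather than redoing the verification.
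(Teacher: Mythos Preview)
Your proposal is correct and follows essentially the same route as the paper, which likewise derives the square from the Hill--Lawson construction of $\Tmf$. The paper's phrasing is marginally more direct: it notes that in \cite{HillLawson} the structure sheaf on $\overline{\mathcal{M}}_{\rm ell}$ is \emph{defined} as the homotopy pullback of $\mathcal{O}^{\rm smooth}$ and $\mathcal{O}^{\rm Tate}$ over $\Delta^{-1}\mathcal{O}^{\rm Tate}$ (so the square is a pullback by construction rather than by a separate descent argument), and records that the Tate neighborhood is the stacky quotient $[\mathrm{Spf}(\Z[[q]])//(\Z/2)]$ rather than $\mathrm{Spf}(\Z[[q]])$ itself, which is why its sections are $\KO[[q]]$ rather than $\K[[q]]$.
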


\begin{proof}
We follow \cite{HillLawson}. 
Let $\mathcal{M}_{\mathrm{ell}}$ be the moduli stack of smooth elliptic curves and  $\overline{\mathcal{M}}_{\mathrm{ell}}$ be its compactification which classifies elliptic curves with possible nodal singularities.
The formal neighborhood of the complement of $\mathcal{M}_{\mathrm{ell}}$ in $\overline{\mathcal{M}}_{\mathrm{ell}}$ is  the moduli 
stack of the Tate curves ${\mathcal{M}}_{\mathrm{Tate}} \simeq \left[\mathrm{Spf}(\Z[[q]])//\Z/2\right]$, see \cite[Subsection 3.5]{HillLawson}. 
We have presheaves $\mathcal{O}^{\mathrm{smooth}}$ and $\mathcal{O}^{\mathrm{Tate}}$ of elliptic spectra on $\mathcal{M}_{\mathrm{ell}}$ and ${\mathcal{M}}_{\mathrm{Tate}}$, respectively. 
The structure presheaf $\mathcal{O}$ on $\overline{\mathcal{M}}_{\mathrm{ell}}$ is by definition the homotopy pullback of the presheaves $\mathcal{O}^{\mathrm{smooth}}$ and $\mathcal{O}^{\mathrm{Tate}}$ over $\Delta^{-1}\mathcal{O}^{\mathrm{Tate}}$, see \cite[Section 5]{HillLawson}. 
We have $\Tmf := \Gamma(\overline{\mathcal{M}}_{\mathrm{ell}}; \mathcal{O})$, $\TMF := \Gamma(\mathcal{M}_{\mathrm{ell}}; \mathcal{O}^{\mathrm{smooth}})$ and $\KO[[q]] \simeq \Gamma({\mathcal{M}}_{\mathrm{Tate}}; \mathcal{O}^{\mathrm{Tate}})$, and the diagram \eqref{diag_Tmf_TMF} is the homotopy pullback square induced on the global sections. 
\end{proof}

Denoting the cofiber of $\KO[[q]] \to \KO((q))$ by $\KO((q))/\KO[[q]]$, Lemma \ref{lem_Tmf_TMF} implies that we have a fiber sequence
\begin{align}\label{eq_Tmf_TMF_fiberseq}
    \Sigma^{-1}\left(\KO((q))/\KO[[q]]\right) \to \Tmf \to \TMF. 
\end{align}
Note that $\KO((q))/\KO[[q]]$ is naturally equipped with a $\KO[[q]]$-module structure. 

\begin{lem}\label{lem_duality_KO[[q]]}
    Let $\alpha_{\KO((q))/\KO[[q]]} \in \pi_{20} \left( \KO((q))/\KO[[q]]\right) \simeq \Hom\left(\pi_{20}\KO((q))/\pi_{20}\KO[[q]], \Z\right) $ be the element specified as follows:
    \begin{equation}
    \label{eq_duality_KO[[q]]_formula}
    \begin{aligned}
       \alpha_{\KO((q))/\KO[[q]]} \colon \pi_{20}\KO((q))/\pi_{20}\KO[[q]] \simeq 
       {2\Z((q))/2\Z[[q]]} &\to \Z, \\
       \phi &\mapsto {\frac{1}{2}} \Delta \phi |_{q^0}. 
    \end{aligned}
\end{equation}
    Let us also denote by 
    \begin{align}\label{eq_duality_KO[[q]]}
        \alpha_{\KO((q))/\KO[[q]]} \colon \KO[[q]] \to \Sigma^{-20}I_\Z \left(\KO((q))/\KO[[q]]\right)
    \end{align}
    the $\KO[[q]]$-module morphism given by the multiplication by the above element. 
    This \eqref{eq_duality_KO[[q]]} is an equivalence of $\KO[[q]]$-modules. 
\end{lem}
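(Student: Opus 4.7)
The plan is to verify that $\alpha_{\KO((q))/\KO[[q]]}$ induces isomorphisms on all homotopy groups; since both source and target are $\KO[[q]]$-modules and the map is $\KO[[q]]$-linear, this will yield the stated $\KO[[q]]$-module equivalence. By the $8$-periodicity of $\KO$ it suffices to check the eight residues $n \pmod 8$.

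First I would compute the homotopy groups of both sides. Since the inclusion $(\pi_n\KO)[[q]] \hookrightarrow (\pi_n\KO)((q))$ is injective in every degree, the long exact sequence from the cofiber $\KO[[q]] \to \KO((q)) \to \KO((q))/\KO[[q]]$ splits into short exact sequences, giving
\begin{equation*}
\pi_n(\KO((q))/\KO[[q]]) \simeq (\pi_n\KO) \otimes q^{-1}\Z[q^{-1}].
\end{equation*}
The universal coefficient sequence for Anderson duality,
\begin{equation*}
0 \to \Ext\bigl(\pi_{-n-21}(\KO((q))/\KO[[q]]),\, \Z\bigr) \to \pi_{n+20}I_\Z(\KO((q))/\KO[[q]]) \to \Hom\bigl(\pi_{-n-20}(\KO((q))/\KO[[q]]),\, \Z\bigr) \to 0,
\end{equation*}
then abstractly identifies $\pi_{n+20}I_\Z(\KO((q))/\KO[[q]])$ with $\pi_n\KO[[q]]$ in each residue class (matching free parts via $\Hom$ and torsion via $\Ext$).

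Next I would compute the induced map explicitly. By $\KO[[q]]$-linearity, on $\pi_n$ it is given by multiplication with $\alpha_{\KO((q))/\KO[[q]]}$. For $n \equiv 0 \pmod 4$, this produces the $\Z$-valued pairing
\begin{equation*}
\pi_n\KO[[q]] \times \pi_{-n-20}(\KO((q))/\KO[[q]]) \to \Z, \qquad (x, y) \mapsto \tfrac{1}{2}\Delta(q)\,(xy)\bigm|_{q^0},
\end{equation*}
whose matrix, in the natural monomial bases $\{q^j\}_{j \geq 0}$ and $\{q^{-m}\}_{m \geq 1}$ (suitably normalized so that integrality holds), is $(d_{m-j})$, where $d_k$ is the $k$th Fourier coefficient of $\Delta(q) = \sum_{k \geq 1} d_k q^k$. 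Since $d_1 = 1$ and $d_k = 0$ for $k \leq 0$, this matrix, after a trivial reindexing, is upper triangular with $1$'s on the diagonal, hence invertible over $\Z$.

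The same Fourier-coefficient matrix, reduced modulo $2$, governs the $\Q/\Z$-valued torsion pairing in degrees $n \equiv 1, 2 \pmod 8$, and it remains upper triangular with $1$'s on the diagonal. The main technical obstacle is this torsion case, where the expression $\tfrac{1}{2}\Delta(q)\cdot(-)\bigm|_{q^0}$ does not make literal sense and one must argue via the $\Ext$-term in the universal coefficient sequence, using the identification $\Ext(q^{-1}(\Z/2)[q^{-1}], \Z) \simeq (\Z/2)[[q]]$ and carefully tracking the $\KO[[q]]$-module action of $\eta, \eta^2 \in \pi_\bullet\KO$. Once this is done, the upper-triangularity argument goes through in every residue class, completing the proof.
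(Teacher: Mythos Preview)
Your approach is viable and the free-degree part is carried out correctly, but it is a more laborious route than the paper's. The paper dispatches the lemma in one sentence, invoking the Anderson self-duality of $\KO$ and Bott periodicity. Unwound, this means the following. Write $\KO((q))/\KO[[q]] \simeq \bigvee_{m \geq 1}\KO$ as a wedge indexed by negative powers of $q$, so that $I_\Z(\KO((q))/\KO[[q]]) \simeq \prod_{m \geq 1} I_\Z\KO$; then apply the $\KO$-module equivalence $\zeta_\KO\colon \KO \xrightarrow{\sim} \Sigma^4 I_\Z\KO$ factor by factor, together with $\Sigma^{24}\KO \simeq \KO$. This identifies the target $\Sigma^{-20}I_\Z(\KO((q))/\KO[[q]])$ with a free rank-one $\KO[[q]]$-module, and the map $\alpha_{\KO((q))/\KO[[q]]}$ becomes multiplication by $\Delta(q)/q = 1 - 24q + \cdots \in \Z[[q]]^\times$. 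Because this argument works at the level of $\KO$-module spectra, every homotopy degree---torsion included---is handled at once.

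Your degree-by-degree computation recovers precisely this in the free degrees: your upper-triangular matrix $(d_{m-j})$ is exactly the matrix of multiplication by $\Delta/q$. The ``main technical obstacle'' you flag in the torsion degrees is real if one insists on working purely with homotopy groups, and your sketch via the $\Ext$-term and the $\eta$-action can be completed; but to pin down the $\eta$-action on $\pi_\bullet I_\Z(\KO((q))/\KO[[q]])$ you will most naturally end up invoking $\zeta_\KO$ anyway, at which point the paper's spectrum-level route is both shorter and cleaner.
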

\begin{proof}
    This follows by the Anderson self-duality of $\KO$ and the Bott periodicity. 
\end{proof}

\begin{lem}\label{penultimate_step}
	We can choose the so far undetermined sign of $\alpha_{\Tmf}$ so that 
    the following diagram commutes:
    \begin{align}\label{diag_duality_relation}
        \vcenter{\xymatrix{
         \Tmf \ar[r] \ar[d]_-{\simeq }^-{\alpha_{\Tmf}} & \KO[[q]] \ar[d]_-{\simeq}^-{\alpha_{\KO((q))/\KO[[q]]}} \ar[r] & \KO((q))/\TMF \ar[d]^-{\alpha_{\KO((q))/\TMF}}  \\
         \Sigma^{-21}I_\Z \Tmf \ar[r] & \Sigma^{-20}I_\Z \left(\KO((q))/\KO[[q]]\right) \ar[r] &\Sigma^{-20}I_\Z \TMF 
        }}.
    \end{align}
\end{lem}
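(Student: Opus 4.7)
The plan is to convert the commutativity of each square in \eqref{diag_duality_relation} into an equality of elements in an Anderson-dual homotopy group, using Lemma \ref{basiclemmaintro} (and its relative version stating that an $R$-module morphism $M\to \Sigma^{-s}I_\Z N$ between $R$-modules is specified by an element of $\pi_s I_\Z(M\wedge_R N)$). The structural input is the pullback square of Lemma \ref{lem_Tmf_TMF}, which yields two cofiber sequences of $\Tmf$-modules, $\Tmf\to\TMF\to\KO((q))/\KO[[q]]$ and $\Tmf\to\KO[[q]]\to\KO((q))/\TMF$. The top row of \eqref{diag_duality_relation} is precisely the second of these, while the bottom row is $\Sigma^{-20}I_\Z$ applied to the first, rotated by one.

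For the left square, this translation gives the identity $\partial(\alpha_\Tmf)=\alpha_{\KO((q))/\KO[[q]]}$ in $\pi_{20}I_\Z(\KO((q))/\KO[[q]])$, where $\partial$ is the boundary in the long exact sequence of the fiber sequence $I_\Z(\KO((q))/\KO[[q]])\to I_\Z\TMF\to I_\Z\Tmf$ dual to the first cofiber sequence. By naturality of Anderson duality, $\partial$ is computed by the connecting homomorphism $\pi_{-20}(\KO((q))/\KO[[q]])\to \pi_{-21}\Tmf$. The Mayer--Vietoris sequence of the pullback (together with $\pi_{-21}\TMF=\pi_{-21}\KO[[q]]=0$) identifies $\pi_{-21}\Tmf\cong 2\Z((q))/(2\MF_{-10}+2\Z[[q]])$. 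The functional $\phi\mapsto \tfrac{1}{2}\Delta\phi|_{q^0}$ on $2\Z((q))$ descends to this quotient (vanishing on $2\MF_{-10}$ by Fact \ref{fact:mf} and on $2\Z[[q]]$ because $\Delta\in q\,\Z[[q]]$); it is surjective (e.g.\ $2q^{-1}\mapsto 1$), and combined with the $\pi_{-21}\Tmf\cong\Z$ of Fact \ref{fact_duality_Tmf} it must be an isomorphism. I then fix the sign of $\alpha_\Tmf$ (hitherto determined only up to sign) so that under $\pi_{21}I_\Z\Tmf\cong \Hom(\pi_{-21}\Tmf,\Z)$ it corresponds to this preferred generator; with that choice $\partial(\alpha_\Tmf)=\alpha_{\KO((q))/\KO[[q]]}$ holds tautologically.

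The right square is handled in the same style, using the Anderson dual of $\Tmf\to\KO[[q]]\to\KO((q))/\TMF$. The key observation is that $\alpha_{\KO((q))/\KO[[q]]}$ (Lemma \ref{lem_duality_KO[[q]]}) and $\alpha_{\KO((q))/\TMF}$ (Proposition \ref{rem:rat}) are manifestly induced from the \emph{same} underlying functional $\tfrac12\Delta(-)|_{q^0}$ on $\pi_{-20}\KO((q))=2\Z((q))$, merely restricted to the two different quotients $2\Z((q))/2\Z[[q]]$ and $2\Z((q))/2\MF_{-10}$. Since the horizontal maps of the pullback induce on $\pi_{-20}$ precisely the change of quotient determined by the sum $2\MF_{-10}+2\Z[[q]]$, the two paths agree at the level of elements in the Anderson-dual homotopy groups, proving commutativity.

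The main obstacle is the first step in the left-square verification: showing that the explicit functional $\tfrac12\Delta(-)|_{q^0}$ actually realizes the abstract isomorphism $\pi_{-21}\Tmf\cong\Z$. Surjectivity is immediate from a one-element computation, but injectivity is not directly visible from the $q$-expansion picture and requires invoking Fact \ref{fact_duality_Tmf} together with the Mayer--Vietoris identification above. Once this hurdle is cleared, the rest---setting up the two cofiber sequences from the pullback, applying Lemma \ref{basiclemmaintro}, and treating the right square via the same formula---is mostly bookkeeping, and the sign ambiguity in Fact \ref{fact_duality_Tmf} is pinned down precisely by requiring that the left square commute.
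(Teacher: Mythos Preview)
Your proposal is correct and follows essentially the same approach as the paper's proof. Both arguments reduce the commutativity of each square to an equality of elements in Anderson-dual homotopy groups via Lemma~\ref{lemma:basic}, identify $\pi_{-21}\Tmf$ with a cokernel using the long exact sequence (you phrase it as Mayer--Vietoris, the paper uses the cofiber sequence $\Tmf\to\TMF\to\KO((q))/\KO[[q]]$; these are equivalent), show that $\tfrac12\Delta(-)|_{q^0}$ descends to a surjection onto $\Z$ and hence must be a generator of $\Hom(\pi_{-21}\Tmf,\Z)$, and fix the sign of $\alpha_{\Tmf}$ accordingly; the right square is dispatched in both cases by the literal agreement of the defining formulas in Proposition~\ref{rem:rat} and Lemma~\ref{lem_duality_KO[[q]]}.
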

\begin{proof}
    First we show the commutativity of the  square on the left of \eqref{diag_duality_relation}. 
    Recall that the vertical arrows are $\Tmf$ and $\KO[[q]]$-module homomorphisms induced by the elements \begin{equation}
    \alpha_{\Tmf} \in \pi_{21} I_\Z \Tmf \simeq \Hom(\pi_{-21}\Tmf, \Z)
    \end{equation}
    and \begin{equation}
    \alpha_{\KO((q))/\KO[[q]]} \in \pi_{20} I_\Z \left(\KO((q))/\KO[[q]]\right) \simeq \Hom(\pi_{-20}\left(\KO((q))/\KO[[q]] \right), \Z),\end{equation} respectively. By the explicit formulas for those homomorphisms, it is enough to show that the following diagram can be made to commute by choosing the sign of $\alpha_{\Tmf}$: 
    \begin{align}\label{diag_duality_compatibility_Tmf_KO}
        \vcenter{\xymatrix{
        \pi_{-20} \left( \KO((q))/\KO[[q]]\right) \ar[rd]_-{{\frac{1}{2}}\Delta \cdot - |_{q^{0}}} \ar[r]^-{\del} & \pi_{-21} \Tmf \simeq \Z \ar[d]^-{\alpha_{\Tmf}} \\
        & \Z
        }},
    \end{align}
    where the horizontal arrow is a part of the exact sequence
    \begin{align}
        \pi_{-20}\TMF \to \pi_{-20}\KO((q))/\KO[[q]] \xrightarrow{\del} \pi_{-21}\Tmf \to \pi_{-21} \TMF =0. 
    \end{align}
    Recall that  $\Delta\phi|_{q^0} = 0 $ for $\phi \in \MF_{-10}$ from Fact~\ref{fact:mf}. 
    Therefore  the diagonal arrow in \eqref{diag_duality_compatibility_Tmf_KO} defines a map from $\mathrm{coker}\left(\pi_{-20}\TMF \to \pi_{-20}\KO((q))/\KO[[q]] \right) \simeq \pi_{-21}\Tmf$. 
    Then, by the fact that $\pi_{-21}\Tmf \simeq \Z$ and that the diagonal map is a surjection, the induced map should be a generator in $\Hom(\pi_{-21}\Tmf, \Z)$, which indeed characterizes the element $\alpha_{\Tmf}$ as desired.

    Next we show the commutativity of the  square on the right of \eqref{diag_duality_relation}. 
    But this follows directly from the agreement of the formulas for $\alpha_{\KO((q))/\TMF}$ in Proposition \ref{rem:rat} and for $\alpha_{\KO((q))/\KO[[q]]}$ in Lemma \ref{lem_duality_KO[[q]]}. This completes the proof. 
\end{proof}

We can now give the proof to our fundamental theorem:
\begin{thm}[=Theorem~\ref{main}]
 The secondary morphism 
    \begin{align}
        \alpha_{\KO((q))/\TMF} \colon \KO((q))/\TMF \to \Sigma^{-20}I_\Z \TMF
    \end{align}
    is an equivalence. 
\end{thm}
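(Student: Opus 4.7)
The plan is to deduce the theorem from Lemma~\ref{penultimate_step} by a straightforward five-lemma/two-out-of-three argument, once we verify that both rows of the diagram \eqref{diag_duality_relation} are cofiber sequences of spectra.

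First I would analyze the top row $\Tmf \to \KO[[q]] \to \KO((q))/\TMF$. Starting from the homotopy pullback square \eqref{diag_Tmf_TMF} of Lemma~\ref{lem_Tmf_TMF}, the octahedral axiom (or the standard fact that a pullback square induces an identification of horizontal cofibers) gives that the cofiber of $\Tmf \to \KO[[q]]$ is canonically equivalent to the cofiber of $\TMF \xrightarrow{\sigma} \KO((q))$, namely $\KO((q))/\TMF$. This identifies the top row as a cofiber sequence. As a byproduct, the pullback square also yields the fiber sequence $\Tmf \to \TMF \to \KO((q))/\KO[[q]]$, which is \eqref{eq_Tmf_TMF_fiberseq}.

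Next I would analyze the bottom row. Applying $I_\Z$ to the fiber sequence $\Tmf \to \TMF \to \KO((q))/\KO[[q]]$ and rotating yields the fiber sequence
\begin{equation*}
    \Sigma^{-21}I_\Z \Tmf \to \Sigma^{-20} I_\Z\left(\KO((q))/\KO[[q]]\right) \to \Sigma^{-20} I_\Z \TMF,
\end{equation*}
which is exactly the bottom row of \eqref{diag_duality_relation}. So both rows are cofiber sequences of spectra.

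Finally, by Fact~\ref{fact_duality_Tmf} and Lemma~\ref{lem_duality_KO[[q]]}, the left and middle vertical arrows $\alpha_\Tmf$ and $\alpha_{\KO((q))/\KO[[q]]}$ in \eqref{diag_duality_relation} are equivalences. Since \eqref{diag_duality_relation} is a commutative diagram of cofiber sequences in the stable $\infty$-category of spectra whose two leftmost vertical arrows are equivalences, the third vertical arrow $\alpha_{\KO((q))/\TMF}$ is automatically an equivalence as well (equivalently, apply the five lemma to the induced long exact sequences of homotopy groups). This completes the proof. The only nontrivial input was the construction of the compatibility diagram in Lemma~\ref{penultimate_step}; once this is in place, the argument here is essentially formal, and no further obstacle arises.
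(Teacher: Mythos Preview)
Your proof is correct and follows essentially the same approach as the paper's second proof (Appendix~\ref{app:sanath-proof}, due to Devalapurkar): both identify the rows of \eqref{diag_duality_relation} as cofiber sequences via Lemma~\ref{lem_Tmf_TMF} and the Anderson dual of \eqref{eq_Tmf_TMF_fiberseq}, then conclude by the two-out-of-three property using Fact~\ref{fact_duality_Tmf} and Lemma~\ref{lem_duality_KO[[q]]}. The paper also gives an alternative, more computational proof in Appendix~\ref{app:original-proof} based on the $\tmf$ self-duality and an explicit analysis of $\pi_\bullet$ as $\Z[B,X]$-modules, but your argument matches the conceptual one.
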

\begin{proof}
We examine the commutative diagram given in Lemma \ref{penultimate_step}.
    The upper row is a fiber sequence by Lemma \ref{lem_Tmf_TMF} and the lower row is the Anderson dual of the fiber sequence \eqref{eq_Tmf_TMF_fiberseq}, and in particular is also a fiber sequence. 
    The vertical arrow on the left is an equivalence by Fact \ref{fact_duality_Tmf},
    and  the middle vertical arrow is also an equivalence by  Lemma \ref{lem_duality_KO[[q]]}.
    Therefore the  vertical arrow on the right is also an equivalence.
\end{proof}

\section{The structure of $\pi_\bullet\TMF$ and its Anderson duality}
\label{app:TMF}

Here we provide a summary of the structure of $\pi_\bullet\TMF$ as  Abelian groups,
which we mainly take from \cite{TMFBook,BrunerRognes}.
We also discuss how the Anderson duality affects them.

\subsection{$\pi_\bullet \TMF$ as Abelian groups}
We start with a definition.

\begin{defn}
\label{defn:A}
We let $A_d$, $U_d$  be the kernel and the image of $\sigma:\pi_d\TMF \to \pi_d\KO((q))$, so that 
we have the short exact sequence \begin{equation}
\label{short} 0 \to A_d \to \pi_d\TMF\to U_d\to 0.
\end{equation}
\end{defn}
The image $U_d$ of $\sigma$ can be nonzero only when $d=4k$, $d=8k+1$ or $d=8k+2$.
Let us first discuss the image when $d=4k$.
We first recall that the composition $\pi_{4k}\TMF\to\pi_{4k}\KO((q))\to \pi_{4k}\K((q))$ is factored by
$\pi_{4k}\TMF\to \MF_{2k}$, where $\MF_{2k}\to \pi_{4k}\K((q))\simeq \bZ((q))$ 
is given by the $q$-expansion.
Then we have:
\begin{fact}[= \cite{Hopkins2002}, Proposition 4.6]
\label{fact:imageZ}
The image of the ring homomorphism $\sigma:\pi_{4\bullet}\TMF\to \MF_{2\bullet}$ 
has a $\bZ$-basis given by \begin{equation}
a_{i,j,k} c_4^i c_6^j \Delta^k, \qquad \text{$i\ge 0$; $j=0,1$; $k\in\bZ$}
\end{equation}where \begin{equation}
a_{i,j,k} = \begin{cases}
24/\gcd(24,k) & \text{if $i=j=0$},\\
2 & \text{if $j=1$},\\
1 & \text{otherwise}.
\end{cases}\label{hop}
\end{equation}
\end{fact}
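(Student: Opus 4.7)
\medskip

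\noindent\textit{Proof proposal.} The plan is to deduce the integral image statement by reducing it to a prime-by-prime analysis, exploiting the rational equivalence together with the detailed structure of $\pi_\bullet \TMF$ at each prime as worked out via the descent (elliptic) spectral sequence. First, rationally one knows $\pi_{4\bullet}\TMF\otimes\bQ \xrightarrow{\sim} (\MF_{2\bullet})_\bQ$, so the image $U_{4\bullet}$ is a full rank $\bZ$-sublattice of $\MF_{2\bullet}$. Moreover, the target $\MF_{2\bullet}$ is a free $\bZ$-module with the basis $\{c_4^i c_6^j \Delta^k\}$ with $i\ge0$, $j\in\{0,1\}$, $k\in\bZ$ (this is the classical integral structure of weakly-holomorphic modular forms). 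Hence it suffices to determine, for each basis element, the smallest positive integer multiple that lies in $U_{4\bullet}$.

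Next, I would localize at each prime $p$ and work with $\pi_\bullet \TMF_{(p)}$. For $p\ge 5$, the descent spectral sequence for $\TMF_{(p)}$ collapses with no higher differentials and no extensions, giving $\pi_{4\bullet}\TMF_{(p)} \simeq (\MF_{2\bullet})_{(p)}$; in particular $U_{4\bullet}\otimes\bZ_{(p)} = (\MF_{2\bullet})_{(p)}$, consistent with \eqref{hop} since the coefficients $a_{i,j,k}$ are all units at such primes. For $p=2$ and $p=3$, the descent spectral sequence has non-trivial $d_3$, $d_5$, (and for $p=2$ also $d_7$, $d_9$) differentials whose targets are precisely responsible for the obstructions:

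\begin{itemize}
\item The factor $24/\gcd(24,k)$ on $\Delta^k$ encodes, at $p=2$ the $8$-divisibility and at $p=3$ the $3$-divisibility, ultimately traced to the differentials hitting $\Delta^k$ and its powers. Concretely, $\Delta$ itself is not a permanent cycle, but $24\Delta$ is; more generally $24\Delta^k/\gcd(24,k)$ is the smallest permanent cycle in the $\Delta^k$-line, detected by the fact that $\nu\in\pi_3\TMF\simeq \bZ/24$ acts as $\nu\cdot(\Delta^k\text{-related class})$.
\item The factor $2$ on $c_4^i c_6$ comes from the $2$-primary differential on $c_6$: in the descent spectral sequence at $p=2$ one has $d_5(c_6)\neq 0$, so $c_6$ survives only after multiplication by $2$. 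Multiplying by $c_4^i$ preserves this obstruction (since $c_4^i$ is a permanent cycle).
\item For $i\ge 1$, $j=0$: the classes $c_4^i\Delta^k$ survive without divisibility, as $c_4$ is a permanent cycle at every prime and multiplication by $c_4$ clears the $\Delta$-obstructions.
\end{itemize}

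The main obstacle is the bookkeeping of the differentials and multiplicative extensions at $p=2$, where the situation is most intricate; this is done in \cite[Chapter~5]{TMFBook} and \cite[Chapter~9]{BrunerRognes}, on which one can rely. Combining the contributions at all primes via the arithmetic square, one obtains that $U_{4k}$ is spanned by the classes with the stated coefficients $a_{i,j,k}$, as claimed. Finally, one checks $\bZ$-linear independence of $\{a_{i,j,k}c_4^i c_6^j \Delta^k\}$ in $\MF_{2k}$, which is immediate from the linear independence of the monomials $c_4^i c_6^j \Delta^k$ themselves. This gives the desired $\bZ$-basis description of the image. \qed
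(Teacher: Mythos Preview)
The paper does not give its own proof of this statement: it is recorded as a \emph{Fact} and attributed directly to \cite[Proposition~4.6]{Hopkins2002}, with no further argument. So there is no ``paper's proof'' to compare against.

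As for your proposal itself, the strategy is the standard and correct one: the result is a statement about the edge homomorphism of the descent spectral sequence, and one proves it by working prime by prime, using that the spectral sequence degenerates for $p\ge 5$ and analyzing the relevant differentials at $p=2$ and $p=3$. Your outline correctly identifies the three phenomena (the $24/\gcd(24,k)$ on $\Delta^k$, the factor $2$ on $c_6$, and the survival of $c_4$) and points to the right sources for the detailed computations. That said, the proposal is really a sketch rather than a proof: the actual work lies in verifying the differentials and extensions you allude to, and you defer all of that to \cite{TMFBook} and \cite{BrunerRognes}. In particular, the sentence ``multiplication by $c_4$ clears the $\Delta$-obstructions'' while ``multiplying by $c_4^i$ preserves this obstruction'' for $c_6$ deserves justification, since superficially both are ``multiply by a permanent cycle''; the distinction comes from where in the spectral sequence the obstructions live, and this is exactly the content that your citations would have to supply.
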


\begin{rem}
The factor $2$ when $j=1$ in \eqref{hop} is due to the multiplication by 2 in $\pi_{8k+4}\KO\simeq \bZ \to \pi_{8k+4} \K\simeq \bZ$ and is not very surprising. 
The nontrivial information is therefore mainly in the $i=j=0$ case.
\end{rem}

Let us next discuss the images of $\sigma$ when $d=8k+1$ or $d=8k+2$. 
In \cite{TYY} the following statement was proved, which was an easy corollary of the information contained in \cite{BrunerRognes}:
\begin{fact}[=\cite{TYY}, Sec.~4.1]
\label{fact:imageZ/2}
The image of $\sigma:\pi_{d}\TMF\to \pi_{d}\KO((q))$ when $d=8k+1$ and $8k+2$
is contained in $\eta \MF_{4k}$ and $\eta^2 \MF_{4k}$, respectively,
where $\eta\in \pi_{1}\KO$ is the generator. 

Furthermore, the cokernel of $\sigma$ as a homomorphism into $\eta \MF_{4k}$ and $\eta^2\MF_{4k}$ 
is spanned by $[\eta\Delta^{8t+i}]$ with $i=2,3,5,6,7$ 
and $[\eta^2\Delta^{8t+j}]$ with $j=3,6,7$.
\end{fact}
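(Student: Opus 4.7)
The plan is to factor the analysis through the localization $\TMF[B^{-1}]$, exploiting that $A_d = \Gamma_B\pi_d\TMF$ consists of $B$-power torsion classes and that $\sigma(B) = c_4$ is invertible in $\pi_\bullet\KO((q))$. Consequently $\sigma$ factors as
\begin{equation*}
\pi_d\TMF \twoheadrightarrow \pi_d\TMF/A_d \hookrightarrow \pi_d\TMF[B^{-1}] \hookrightarrow \pi_d\KO((q)),
\end{equation*}
where the last inclusion is identified concretely in Lemma~\ref{lem_TMF_completion} as $c_4^k \cdot \pi_{d}\KO[J, J^{-1}]$ for $d \in \{8k, 8k+1, 8k+2\}$.

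For the containment assertion, I will evaluate this factorization in degrees $d = 8k+1$ and $d = 8k+2$: the image of $\pi_d\TMF[B^{-1}] \hookrightarrow \pi_d\KO((q))$ is $\eta c_4^k \cdot (\Z/2)[J, J^{-1}]$ and $\eta^2 c_4^k \cdot (\Z/2)[J, J^{-1}]$ respectively. Each $c_4^k J^m = c_4^{k+3m}\Delta^{-m}$ is a weakly holomorphic integral modular form of weight $4k$, so its reduction mod $2$ lies in $\eta\MF_{4k}$ (respectively $\eta^2\MF_{4k}$), giving the stated containment.

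For the cokernel in degree $d = 24m+1$, I will combine Fact~\ref{fact:imageZ} with the detailed structure of $\pi_\bullet \tmf^{\wedge}_{2}$ given in \cite{BrunerRognes}. Using $c_4 \equiv 1 \pmod 2$ (since its non-constant coefficients are multiples of $240 = 2^4 \cdot 15$), the reduction mod $2$ of the image of $\sigma: \pi_{24m}\TMF \to \MF_{12m}$ described by Fact~\ref{fact:imageZ} is spanned by $\{\Delta^\ell \bmod 2 : \ell < m\}$ together with $\Delta^m$ itself precisely when $24/\gcd(24, m)$ is odd, i.e., when $8 \mid m$. Thus $\eta$-multiplication from $\pi_{24m}\TMF$ already covers the case $m \equiv 0 \pmod 8$, putting $[\eta\Delta^m]$ in the image. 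The additional cases $m \equiv 1, 4 \pmod 8$ will have to be realized by classes in $\pi_{24m+1}\TMF$ that are not of the form $\eta \cdot (\text{element of } \pi_{24m}\TMF)$; such classes arise at higher Adams filtrations from the Toda brackets and hidden $\eta$-extensions tabulated in \cite{BrunerRognes}. The remaining residues $m \equiv 2, 3, 5, 6, 7 \pmod 8$ will be ruled out by checking that no class in $\pi_{24m+1}(\tmf^{\wedge}_{2}/B^\infty)$ has $\sigma$-image hitting $[\eta\Delta^m]$.

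The main obstacle will be the explicit identification of the higher-filtration classes realizing $[\eta\Delta^{8t+1}]$ and $[\eta\Delta^{8t+4}]$, and the analogous $\eta^2$-lifts for $j \in \{0, 1, 2, 4, 5\}$. The $8$-periodicity in the exponent matches the $2$-local $\tmf$-periodicity generated by $M \in \pi_{192}\tmf^{\wedge}_{2}$ (with $192 = 24 \cdot 8$), so it suffices to perform the computation in the fundamental range $0 \le m < 8$, with the remaining cases following by multiplication by $M$. At the prime $3$ there is nothing to verify since $\eta$ and $\eta^2$ are $2$-primary, making the whole analysis purely $2$-local.
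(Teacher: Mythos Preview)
The paper does not prove this statement; it is stated as a Fact cited from \cite{TYY} and described as ``an easy corollary of the information contained in \cite{BrunerRognes}.'' So the implicit argument is simply: read off the image $U_d = \pi_d\TMF/\Gamma_B\pi_d\TMF$ from the explicit description in \cite[Theorem~9.26]{BrunerRognes} and check both the containment and the cokernel generator by generator in a fundamental range, then propagate by the $M$-periodicity. Your plan for the cokernel (reduce to $0\le m<8$ via $M$, use Fact~\ref{fact:imageZ} for the $\eta$-multiples, and locate the remaining hits $m\equiv 1,4$ from higher-filtration classes in \cite{BrunerRognes}) is exactly in this spirit and is fine.

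Your containment argument, however, has a genuine gap. You pass to $\TMF[B^{-1}]$ and assert that the image $\eta\,c_4^k\cdot(\bZ/2)[J,J^{-1}]$ lies in $\eta\,\MF_{4k}$ because ``each $c_4^k J^m = c_4^{k+3m}\Delta^{-m}$ is a weakly holomorphic integral modular form of weight $4k$.'' This is false once $k+3m<0$. Since $c_4\equiv 1\pmod 2$, the image of $\pi_d\TMF[B^{-1}]$ in $(\bZ/2)((q))$ is the $(\bZ/2)$-span of $\{\Delta^\ell:\ell\in\bZ\}$, whereas $\MF_{4k}\pmod 2$ is only the span of $\{\Delta^\ell:\ell\le\lfloor k/3\rfloor\}$ (the integral basis of $\MF_{4k}$ is $\{c_4^{k-3\ell}\Delta^\ell\}_{\ell\le\lfloor k/3\rfloor}$, and each reduces to $\Delta^\ell$). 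So the image of $\pi_d\TMF[B^{-1}]$ is \emph{strictly larger} than $\eta\,\MF_{4k}$, and factoring through the localization overshoots: you cannot deduce the containment for $\pi_d\TMF$ this way.

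The fix is not to invert $B$ at all for the containment step. Use the same \cite{BrunerRognes} data you invoke for the cokernel: in the fundamental $2$-local range $0\le d<192$, list the generators of $\pi_d\tmf/\Gamma_B\pi_d\tmf$ in degrees $d\equiv 1,2\pmod 8$ and verify directly that each lands in $\eta\,\MF_{4k}$ or $\eta^2\,\MF_{4k}$; then the $M$-periodicity (with $\sigma(M)=\Delta^8$, and $\Delta^8\cdot\MF_{4k}\subset\MF_{4(k+24)}$) extends this to all degrees. In other words, both halves of the Fact are bookkeeping from \cite{BrunerRognes}, and there is no shortcut for the containment via $\TMF[B^{-1}]$.
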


\begin{rem}
\label{rem:KO2}
Complimentarily, $\eta\Delta^{8t+\hat\imath}$ is hit by $\sigma$ if and only if $\hat\imath=0,1,4$
and $\eta^2\Delta^{8t+\hat\jmath}$ is hit if and only if $\hat\jmath=0,1,2,4,5$,
exhibiting the duality between the cokernel and the kernel of $\sigma$ under $i+\hat\jmath = \hat\imath + j =7$.
This is a manifestation of the Anderson duality given below.
\end{rem}

Let us move on to the description of $A_d$:

\begin{prop}
\label{A=torsion}
$A_d$ are torsion.
\end{prop}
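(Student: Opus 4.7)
The plan is to reduce the statement to a rational injectivity check for $\sigma$, separating into degrees where $\pi_d\TMF$ has possible free rank and those where it does not.

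First I would recall the rationalization $(\pi_{2\bullet}\TMF)_\bQ \simeq (\MF_{\bullet})_\bQ$ stated in the introduction. Since $\MF_k$ vanishes for odd weights $k$, it follows that $\pi_d\TMF \otimes \bQ = 0$ whenever $d \not\equiv 0 \pmod 4$. In those degrees $\pi_d\TMF$ is itself torsion, and hence its subgroup $A_d$ is torsion, handling every degree except $d=4k$.

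For $d = 4k$ I would tensor the defining exact sequence
\[
0 \to A_{4k} \to \pi_{4k}\TMF \xrightarrow{\sigma} \pi_{4k}\KO((q))
\]
with $\bQ$ and observe that the rationalized $\sigma$ factors through $\pi_{4k}\K((q))_\bQ \simeq \bQ((q))$ as the classical $q$-expansion map
\[
(\MF_{2k})_\bQ \hookrightarrow \bQ((q)).
\]
By the classical (and easily cited) fact that the $q$-expansion of a weakly-holomorphic modular form is injective, $\sigma \otimes \bQ$ is injective, so $A_{4k} \otimes \bQ = 0$ and $A_{4k}$ is torsion.

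The proof really is this short; the only conceptual point is identifying the rationalized $\sigma$ with the $q$-expansion, which is built into the construction of $\TMF$ via the Tate curve (as used implicitly already in the surrounding discussion, e.g.\ Fact~\ref{fact:imageZ}). No spectral sequence computation is required, and there is no real obstacle.
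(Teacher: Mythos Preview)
Your argument is correct and is essentially the same as the paper's: the paper compresses it to the single sentence ``All non-torsion elements of $\pi_\bullet\TMF$ have a nonzero image in $\MF_{\bullet/2}$,'' which is exactly the rational injectivity of $\sigma$ (via the $q$-expansion) that you spell out with the degree split.
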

\begin{proof}
All non-torsion elements of $\pi_\bullet\TMF$ have a nonzero image in $\MF_{\bullet/2}$.
\end{proof}

\begin{prop}
\label{kernel=Btorsion}
The subgroup $A_d=\Ker(\sigma)\subset \pi_\bullet\TMF $ agrees with 
the subgroup $\Gamma_B \pi_\bullet\TMF$ of $B$-power-torsion elements.
\end{prop}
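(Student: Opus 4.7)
The plan is to establish the two inclusions separately, with the reverse direction reduced to the structural calculations of \cite{BrunerRognes}.

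First I would prove the easy inclusion $\Gamma_B\pi_\bullet\TMF\subseteq A_\bullet$. The image $\sigma(B)=c_4$ has $q$-expansion equal to the Eisenstein series $E_4(q)=1+240q+\cdots$, which is a unit in $\bZ((q))$; consequently, multiplication by $c_4$ acts injectively on every homotopy group $\pi_d\KO((q))$, which is respectively $\bZ((q))$, $\bZ/2((q))$, $\bZ/2((q))$, or $2\bZ((q))$ according to $d\bmod 8$ (noting that $E_4(q)\equiv 1\pmod 2$ as well). Therefore, if $B^n x=0$ for some $n$, then $c_4^n\sigma(x)=\sigma(B^n x)=0$ forces $\sigma(x)=0$.

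For the reverse inclusion $A_\bullet\subseteq\Gamma_B\pi_\bullet\TMF$, I would use that $\sigma(B)=c_4$ is already a unit in $\pi_\bullet\KO((q))$, so the morphism $\sigma$ factors uniquely as $\sigma=p\circ\phi$ through the Bott localization $\phi\colon\TMF\to\TMF[B^{-1}]$, for a $\TMF[B^{-1}]$-module map $p\colon\TMF[B^{-1}]\to\KO((q))$. Since $\ker(\phi_\bullet)=\Gamma_B\pi_\bullet\TMF$ by the very definition of the Bott localization, the claim becomes equivalent to the injectivity of $p$ on homotopy groups.

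To establish the injectivity of $p$ I would use Fact~\ref{tmfBX} to write $\pi_\bullet\TMF[B^{-1}]\simeq N_\bullet[B^{-1}]\otimes\bZ[X^{-1},X]$, which is the localization at $X$ of $\pi_\bullet\tmf[B^{-1}]\simeq N_\bullet[B^{-1}]\otimes\bZ[X]$. The key structural input is that the connective $q$-expansion morphism $\sigma'\colon\pi_\bullet\tmf\to\pi_\bullet\ko[[q]]$ has kernel exactly $\Gamma_B\pi_\bullet\tmf$; this is obtained in \cite[Chapters~9 and~13]{BrunerRognes} by identifying the $B$-power-torsion submodule $\Theta N_\bullet\subseteq N_\bullet$ explicitly from the Adams spectral sequence and observing that its complement in $N_\bullet$ maps isomorphically onto the subring of modular forms (together with their $\eta,\eta^2$ twists) described in Facts~\ref{fact:imageZ} and~\ref{fact:imageZ/2}. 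Inverting $B$ then gives an injection $p'\colon\pi_\bullet\tmf[B^{-1}]\hookrightarrow\pi_\bullet\KO[[q]]$. Finally, inverting $X$ on the source corresponds to inverting $\Delta(q)$ on the target via $\sigma'(X)=\Delta(q)^{24}$; since $\Delta(q)$ differs from $q$ by a unit in $\bZ[[q]]$ and $q$ acts as a non-zero-divisor on $\pi_\bullet\KO[[q]]$, the resulting map $p$ remains injective.

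The main obstacle will be extracting the equality $\ker(\sigma')=\Gamma_B\pi_\bullet\tmf$ cleanly from \cite{BrunerRognes} without circularity with the statement we are proving, i.e., matching the $B$-power-torsion submodule $\Theta N_\bullet$ prime-by-prime against the kernel of the $q$-expansion map. Once this is in place, the remainder is formal: $\Gamma_B$ commutes with the flat base change $-\otimes_{\bZ[X]}\bZ[X^{-1},X]$, so $\Gamma_B\pi_\bullet\TMF=(\Gamma_B\pi_\bullet\tmf)\otimes_{\bZ[X]}\bZ[X^{-1},X]$ agrees with $A_\bullet$.
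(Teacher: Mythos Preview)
Your proposal is correct and follows essentially the same strategy as the paper: the inclusion $\Gamma_B\pi_\bullet\TMF\subseteq A_\bullet$ is the formal argument from invertibility of $c_4$ in $\pi_\bullet\KO((q))$, and the reverse inclusion is extracted from the Bruner--Rognes computations. The only organizational difference is that the paper cites \cite[Theorem~9.26 and Theorem~13.18]{BrunerRognes} directly for the structure of $\pi_\bullet\TMF/\Gamma_B\pi_\bullet\TMF$ and checks by inspection that its generators have nonzero image under $\sigma$, whereas you work at the connective $\tmf$ level first and then localize at $X$; both routes rest on the same computational input.
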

\begin{proof}
Let $x$ be annihilated by $B^k$.
Then $\sigma(B^kx)=\sigma(B)^k \sigma(x)=0$. As $\sigma(B)=c_4$ is invertible, we find $\sigma(x)=0$.
Therefore $\Gamma_B \pi_\bullet\TMF \subset \Ker\sigma$.
The structure of $\pi_\bullet\TMF/\Gamma_B\pi_\bullet\TMF$
was explicitly given for $p=2$ and $p=3$ in Theorem 9.26  and  Theorem 13.18 of \cite{BrunerRognes}.
By inspection, all the generators there can be seen to map to nonzero elements in $\pi_\bullet\KO((q))$, completing the proof.
\end{proof}

\begin{prop}
\label{fact:split}
The short exact sequence \eqref{short} splits.
\end{prop}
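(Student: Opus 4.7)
The plan is to prove the splitting by case analysis on $d \bmod 8$. When $\pi_d\KO((q)) = 0$---which occurs unless $d \equiv 0 \pmod 4$ or $d \equiv 1, 2 \pmod 8$---we have $U_d = 0$ and the statement is vacuous. When $d \equiv 0 \pmod 4$, Fact~\ref{fact:imageZ} presents $U_d$ as a free abelian group on the $\Z$-basis $\{a_{i,j,k} c_4^i c_6^j \Delta^k\}$; since free abelian groups are projective, $\Ext^1(U_d, A_d) = 0$ and the sequence splits automatically, with any set-theoretic lift of this basis to $\pi_d\TMF$ extending uniquely to a homomorphic section.

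The remaining case is $d \equiv 1, 2 \pmod 8$, where $\pi_d\KO((q)) \simeq (\Z/2)((q))$ is a $\Z/2$-vector space and so is $U_d$. Since $\Ext^1(U_d, A_d)$ is then a $2$-group, the splitting question is purely $2$-local. Using Fact~\ref{tmfBX} to write $\pi_d\TMF_{(2)} = \bigoplus_k N_{d-576k}$, together with $A_d = \bigoplus_k \Gamma_B N_{d-576k}$ coming from Proposition~\ref{kernel=Btorsion}, the problem reduces to splitting
\[
0 \to \Gamma_B N_\ell \to N_\ell \to N_\ell / \Gamma_B N_\ell \to 0
\]
for each $\ell$ in the fundamental range $0 \le \ell < 576$ with $\ell \equiv 1, 2 \pmod 8$. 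In each such residue class I would use the explicit $\pi_\bullet\tmf_{(2)}$-charts of \cite[Chapter~10]{BrunerRognes} to exhibit, for every generator of $N_\ell/\Gamma_B N_\ell$, an order-$2$ preimage in $N_\ell$; assembling these lifts then yields the desired section.

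The main obstacle lies in this final finite verification: it amounts to ruling out hidden $2$-extensions in the Adams spectral sequence between $B$-power-torsion classes and $\sigma$-nontrivial classes in degrees congruent to $1$ or $2$ modulo $8$. For classes of the form $\eta x$ or $\eta^2 x$ with $x$ a $\sigma$-nontrivial class in the adjacent degree, an order-$2$ lift is immediate, since $2\eta = 0$ in $\pi_1 \mathbb{S}$. However, Remark~\ref{rem:KO2} warns us that not every generator of $U_d$ arises so simply: for instance, $\eta \Delta^{8t+1}$ lies in $U_{24(8t+1)+1}$ even though $\Delta^{8t+1}$ itself is not in $\sigma(\pi_{24(8t+1)}\TMF_{(2)}) \bmod 2$ (the coefficient $a_{0,0,8t+1} \in \{8, 24\}$ is always even, while mod $2$ the elements $c_4, c_6$ reduce to the unit so that no other monomial in the image hits $\Delta^{8t+1}$ mod $2$). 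For such exceptional generators the existence of an order-$2$ lift has to be confirmed by direct inspection of the Adams spectral sequence charts of~\cite{BrunerRognes}.
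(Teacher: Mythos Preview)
Your reduction to $d \equiv 1, 2 \pmod 8$ at the prime $2$ is correct and matches the paper's proof exactly. Rather than carrying out the chart-by-chart inspection you outline, the paper simply invokes \cite[Theorem~9.26]{BrunerRognes}, which already records this splitting at $p = 2$; your proposed verification would amount to reproving the relevant content of that theorem.
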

\begin{proof}
As $A_d$ are all torsion, the short exact sequence \eqref{short} can only possibly be non-split
when $\pi_d\KO((q))$ is torsion, i.e.~when $d=8k+1$ or $d=8k+2$.
In these degrees all nonzero elements of $\pi_d\KO((q))$ have order 2,
so what matters is the $p=2$ case.
That the sequence splits at this prime is \cite[Theorem~9.26]{BrunerRognes}.
\end{proof}

\begin{table}
\[
\hskip-1em\begin{array}{|c|ccccccccccccc|}
\hline
d & -11 & -10 & -9 & -8 & -7 & -6 & -5 & -4 & -3 & -2 & -1 & 0 & \\
A_d & & & & & & & & & & & & & \\
A_d & & & & & & & & & & & \cellcolor{xgray}& & \\
d & -11 & -12 & -13 & -14 & -15 & -16 & -17 & -18 & -19 & -20 & \cellcolor{xgray}-21 & -22 & \\
\hline
d & 1 & 2 & \cellcolor{xgray} 3 & 4 & 5 & 6 & 7 & 8 & 9 & 10 & 11 & 12 & \\
A_d & & & \cellcolor{xgray} 8& & &2 & & 2& 2& & & & \\
A_d & & &  & & &2 & & 2 & 2& & & & \\
d & -23 & -24 & -25 & -26 & -27 & -28 & -29 & -30 & -31 & -32 & -33 & -34 & \\
\hline
d & 13 & 14 & 15 & 16 & 17 & 18 & 19 & 20 & 21 & 22 & 23 & 24 & \\
A_d & & 2& 2& & 2  & & & 8 & 2& 2 & & & \\
A_d & & 2& 2& & 2& & & 8 & 2& 2& \cellcolor{xgray}8& & \\
d & -35 & -36 & -37 & -38 & -39 & -40 & -41 & -42 & -43 & -44 &\cellcolor{xgray} -45 & -46 & \\
\hline
d & 25 & 26 &\cellcolor{xgray} 27 & 28 & 29 & 30 & 31 & 32 & 33 & 34 & 35 & 36 & \\
A_d & & & \cellcolor{xgray}4& 2& & & & 2&2 & 2& 2 & & \\
A_d & & & & 2& & & & 2&2 & 2& 2 & & \\
d & -47 & -48 & -49 & -50 & -51 & -52 & -53 & -54 & -55 & -56 & -57 & -58 & \\
\hline
d & 37 & 38 & 39 & 40 & 41 & 42 & 43 & 44 & 45 & 46 & 47 & 48 & \\
A_d & & & 2&4 &2 &2 & & &2 & 2 & & & \\
A_d & & & 2&4 &2 &2 & & &2 & 2& \cellcolor{xgray}4& & \\
d & -59 & -60 & -61 & -62 & -63 & -64 & -65 & -66 & -67 & -68 & \cellcolor{xgray}-69 & -70 & \\
\hline
d & 49 & 50 & \cellcolor{xgray}51 & 52 & 53 & 54 & 55 & 56 & 57 & 58 & 59 & 60 & \\
A_d & & & \cellcolor{xgray}8& 2& 2& 4& & &2 & &2 & 4 & \\
A_d & & & & 2& 2& 4& & &2 & &2 & 4 & \\
d & -71 & -72 & -73 & -74 & -75 & -76 & -77 & -78 & -79 & -80 & -81 & -82 & \\
\hline
d & 61 & 62 & 63 & 64 & 65 & 66 & 67 & 68 & 69 & 70 & 71 & 72 & \\
A_d & & & & &2\times2 &2 & & 2 & &2 & & & \\
A_d & & & & &2\times2 &2 & & 2 & &2 & \cellcolor{xgray}8 & & \\
d & -83 & -84 & -85 & -86 & -87 & -88 & -89 & -90 & -91 & -92 & \cellcolor{xgray}-93 & -94 & \\
\hline
d & 73 & 74 &\cellcolor{xgray} 75 & 76 & 77 & 78 & 79 & 80 & 81 & 82 & 83 & 84  & 85\\
A_d & & & \cellcolor{xgray}2& & & & & 2 & & & & &2\\
A_d & & & & & & & &2  & & & & &2\\
d & -95 & -96 & -97 & -98 & -99 & -100 & -101 & -102 & -103 & -104 & -105 & -106  & -107\\ 
\hline
\end{array}
\]
\caption{2-primary components of $A_d=\Ker \sigma = \Gamma_B \pi_d \TMF$.  
It is 192-periodic and $A_d$ and $A_{-d-22}$ are Pontryagin dual
if $d\neq 24k+3$ and $24k -1$. 
We abbreviate $\bZ/n$ simply as $n$, and $\bZ/2 \times \bZ/2$ as $2\times2$.
The blank entries denote trivial groups.
The entries for $d=24k+3$ are shaded; the entries for $d=24k-1$ are always empty.
\label{tab:p=2}}
\end{table}

\begin{table}
\[
\begin{array}{|c|ccccccccccccc|}
\hline
d & -11 & -10 & -9 & -8 & -7 & -6 & -5 & -4 & -3 & -2 & -1 & 0 & \\
A_d & & & & & & & & & & & & & \\
A_d & & & & & & & & & & & \cellcolor{xgray}& & \\
d & -11 & -12 & -13 & -14 & -15 & -16 & -17 & -18 & -19 & -20 & \cellcolor{xgray} -21 & -22 & \\
\hline
d & 1 & 2 & \cellcolor{xgray}3 & 4 & 5 & 6 & 7 & 8 & 9 & 10 & 11 & 12 & \\
A_d & & & \cellcolor{xgray}3& & & & & & & 3& & & \\
A_d & & &  & & & & &  & & 3 & & & \\
d & -23 & -24 & -25 & -26 & -27 & -28 & -29 & -30 & -31 & -32 & -33 & -34 & \\
\hline
d & 13 & 14 & 15 & 16 & 17 & 18 & 19 & 20 & 21 & 22 & 23 & 24 & 25\\
A_d & 3& & & &   & & &3   & &  & & & \\
A_d & 3& & & & & & &  3&  & &\cellcolor{xgray} 3& & \\
d & -35 & -36 & -37 & -38 & -39 & -40 & -41 & -42 & -43 & -44 & \cellcolor{xgray}-45 & -46 & -47\\
\hline
\end{array}
\]
\caption{3-primary components of $A_d=\Ker \sigma = \Gamma_B \pi_d \TMF$.  
It is 72-periodic and $A_d$ and $A_{-d-22}$ are Pontryagin dual
unless $d=24k+3$ or $d=24k-1$. 
We abbreviate $\bZ/n$ simply as $n$.
The blank entries denote trivial groups.
The entries for $d=24k+3$ are shaded;
the entries for $d=24k-1$ are always empty.
\label{tab:p=3}}
\end{table}

The combination of Definition \ref{defn:A}, the short exact sequence \eqref{short},
Facts \ref{fact:imageZ} and \ref{fact:imageZ/2}
and Proposition \ref{fact:split} determines $\pi_d\TMF$ as abstract Abelian groups,
once $A_d$ is given.
We tabulate this important group $A_d$ in Table~\ref{tab:p=2} and Table~\ref{tab:p=3},
for $p=2$ and $p=3$, respectively,
taking information from  Figure 10.1 (for $p=2$) and Lemma 13.17 (for $p=3$) of \cite{BrunerRognes}.
The 2-primary and 3-primary components of $A_d$ have periodicity $192$ and $72$ respectively,
and we arranged them to make its duality manifest.
For more details including the generators and the structure as a graded ring,
the readers should consult the original descriptions in \cite{BrunerRognes}.

\subsection{Pairings on $\pi_\bullet\TMF$ induced by the Anderson duality}
\label{app:duality}
Let us start by recalling the case of the Anderson self-duality of $\KO$-theory, which has the form $\zeta: \KO\to \Sigma^{4}I_\bZ \KO$.
In this case the $\bZ$-valued pairing  is \begin{equation}
\pi_d \KO \times \pi_{4-d}\KO \to \bZ
\end{equation} and the $\bQ/\bZ$-valued pairing  is \begin{equation}
(\pi_d \KO)_\text{torsion} \times (\pi_{3-d} \KO)_\text{torsion}\to \Q/\bZ.
\end{equation}
Using the 8-periodicity of $\KO$, we see that these provide perfect pairings \begin{equation}
{\pi_0\KO}\times {\pi_4 \KO}  \to \bZ
\end{equation} and \begin{equation}
{\pi_1\KO} \times {\pi_2 \KO} \to \bZ/2
\end{equation} on the nonzero entries of $\pi_\bullet \KO$, which are \begin{equation}
\pi_0\KO\simeq \bZ,\quad
\pi_1\KO\simeq \bZ/2,\quad
\pi_2\KO\simeq \bZ/2,\quad
\pi_4\KO\simeq {2}\bZ.
\end{equation}

In the case of the Anderson duality of $\TMF$, the situation is more intricate,
because we have a pairing between $\pi_\bullet \TMF$ and $\pi_\bullet\KO((q))/\TMF$,
where the latter is the $X$-adic completion of $\pi_\bullet(\TMF/B^\infty)$,
with $X$ being the periodicity element in $\pi_{576}\TMF$.
We decompose these homotopy groups as follows.

First, for $\pi_\bullet(\TMF)$, we have
\begin{equation}
\vcenter{\xymatrix{
0\ar[r] & A_\bullet \ar[r] 
& \pi_\bullet(\TMF) \ar[r] 
&  U_{\bullet} \ar[r]
 & 0
}}, \label{D17}
\end{equation}
where \begin{equation}
A_\bullet =\mathrm{ker}\ \phi = \mathrm{ker}\ \sigma,\qquad
U_\bullet  =\mathrm{image}\ \phi = \mathrm{image}\ \sigma
\end{equation}
for  the morphisms $\phi$ and $\sigma$ 
\begin{equation}
\phi : \pi_\bullet\TMF \to (\pi_\bullet\TMF)[B^{-1}], \qquad
\sigma : \pi_\bullet\TMF \to \pi\KO((q)),
\end{equation}
as discussed in Proposition~\ref{kernel=Btorsion}.
The sequence \eqref{D17} always splits according to Proposition~\ref{fact:split}.

Second, for $\pi_\bullet\TMF/B^\infty$ and $\pi_\bullet \KO((q))/\TMF$, we have
\begin{equation}
\vcenter{\xymatrix{
0\ar[r] & C_\bullet \ar[r] \ar[d] & \pi_\bullet(\TMF/B^\infty) \ar[r] \ar[d] &  A_{\bullet-1} \ar[r]\ar@{=}[d] & 0\\
0\ar[r] & \overline{C_\bullet} \ar[r]  & \pi_\bullet(\KO((q))/\TMF) \ar[r]  &  A_{\bullet-1} \ar[r] & 0
}},
\label{9999}
\end{equation}
where 
\begin{equation}
C_\bullet = \mathrm{coker}\ \phi, \qquad 
\overline{C_\bullet} = \mathrm{coker}\ \sigma
\end{equation}
as discussed in Proposition~\ref{prop_completion_TMF/B}.
In \eqref{9999},  the second row is the $X$-adic completion of the first row, as we saw  in 
Lemma~\ref{lem_TMF_completion} and Proposition~\ref{prop_completion_TMF/B}.
We also recall that these two sequences in \eqref{9999} split unless $\bullet\equiv 4$ mod $24$, as we saw in Sec.~\ref{subsubsec_BM_tmf}.


More explicitly, the Abelian groups appearing above are all $\pi_0\TMF=\bZ[J]$-modules
where $J$ is the unique inverse image of  $c_4^3/\Delta$. Concretely, as $\bZ[J]$-modules, \begin{itemize}
\item $A_d$ were given in Tables~\ref{tab:p=2} and \ref{tab:p=3},
where $J$ acts trivially.
\item For $d=4\ell$,
\begin{itemize}
\item $U_d$ is of the form $a_d \bZ[J] + J \bZ[J] \subset \bZ[J]$  for an integer $a_d$.
This integer $a_d$ is $1$ unless $d=24k$, for which $a_{d}=24/\gcd(24,k)$. This follows from \eqref{hop} in Proposition~\ref{fact:imageZ}.
\item  $C_d$ is $\bZ[J,J^{-1}]/U_d$ and $\overline{C_d}$ is  its completion in $\pi_d(\KO((q))/\TMF)$.
\end{itemize}
\item For $d=8\ell+1$ or $8\ell+2$,
\begin{itemize}
\item $U_d=\bZ/2[J]$, 
\item  $C_d=\bZ/2[J,J^{-1}]/ U_d$ and $\overline{C_d}$ is its completion in $\pi_d(\KO((q))/\TMF)$.
\end{itemize}
\end{itemize}

This leads to the following four-way decomposition of the Anderson duality pairings, as was found by \cite{BrunerRognes} for $\tmf$.
The perfectness of the pairing between $U_d$ and $C_{-d-20}$ below
is with respect to the $J$-adic topology on $U_d$
and the $J^{-1}$-adic topology on $C_{-d-20}$.

\begin{itemize}
\item First, there is a $\bZ$-valued pairing for $d\equiv 0$ mod $4$ \begin{equation}
U_d \times C_{-d-20} \to \bZ,\label{firstpairing}
\end{equation} 
which can be detected at the level of their images in $\pi_d\KO((q))$;
the rationalized expression of the pairing was given in Proposition~\ref{rem:rat}.
This $\Z$-pairing is 
\begin{itemize}
\item perfect when $d\not\equiv 0$ and $4$ mod $24$,
\item not perfect when $d\equiv 0$ mod $24$, in which case $C_{-d-20}$ is a nontrivial subgroup of $\pi_{-d-20}(\TMF/B^\infty)$ and the sequence \eqref{9999} does not split, and
\item not  perfect when $d\equiv 4$ mod $24$, in which case $C_{-d-20}$ contains torsion,
because of the factor $24/\gcd(24,k)$ in the $i=j=0$ case of \eqref{hop}.
\end{itemize}
In the last two cases, the perfectness is restored by taking into account the third pairing \eqref{thirdpairing} given below.
\item Second, there is the perfect $\bZ/2$-valued pairing for $d\equiv 1$ and $2$ mod $8$, \begin{equation}
U_d \times C_{-d-21} \to \bZ/2,
\end{equation} 
see also Remark~\ref{rem:KO2}.
This pairing can also be detected by their images in $\pi_d\KO((q))$.
\item Third, there is a perfect pairing \begin{equation}
A_d  \times (C_{-d-21})_\text{torsion} \to \Q/\Z\label{thirdpairing}
\end{equation}
when $d\equiv 3$ mod $24$,
associated to the fact that the homomorphism $\pi_{-d-21}\TMF \to \MF_{(-d-21)/2}$ has a nontrivial cokernel
precisely in these degrees.
This pairing also accounts for the non-perfectness of the first pairing \eqref{firstpairing} given above,
and is detected by the invariants of Bunke and Naumann.
\item Fourth and finally, there is a perfect pairing \begin{equation}
A_d \times A_{-d-22}  \to \Q/\Z
\end{equation}
when $d\not\equiv 3,-1$ mod $24$. This pairing is tabulated in Tables~\ref{tab:p=2} and~\ref{tab:p=3}.
We note that the method to determine the  pairings between concrete elements from the information in \cite{BrunerRognes} is outlined in Remark~\ref{rem:AApairing}.
\end{itemize}

\section{On equivariant $\TMF$}
\label{app:equivariant}

Here we review the theory of equivariant topological modular forms to 
a minimal degree necessary for the purposes of this paper,
and prove two theorems concerning the equivariant lift of our favorite element $e_8\in \pi_{-16}\TMF$.
We start with the generalities. 

\subsection{Generalities}
The content of this subsection is reviewd in more detail in \cite[Section~2]{lin2024topologicalellipticgenerai}, so we refer there and briefly sketch the necessary part here. We always assume a group $G$ to be a compact Lie group. 
Genuine $G$-equivariant topological modular forms were constructed in \cite{GepnerMeier},
although only the non-twisted version was treated there.
It is widely believed that the $G$-equivariant topological modular forms can be twisted by 
any map $\tau:BG\to BO\langle 0,\ldots,4\rangle$,
but the corresponding general theory of twisted equivariant topological modular forms is not yet available.
That said,
the $\RO(G)$-graded theory already follows by the construction in \cite{GepnerMeier},
which provides us with a way of dealing with subset of twists, coming from (virtual) orthogonal representations of $G$.
Let us work in the $\RO(G)$-graded language.

Let us first fix the notations. 
Given a genuine $G$-equivariant spectrum $E$ and $V \in \RO(G)$ of $G$,  
we set $E[V]:=E\otimes S^V$. 
For $V$ an orthogonal {\it representation} (not merely a virtual representation), we have an associated Euler class $\chi(V): E\to E\otimes S^V$. We also regard it as an element $\chi(V) \in \pi_0 E[V]$. We use the notation
\begin{align}
	\overline V := V - \underline \R^{\dim V} \in \mathrm{RO}(G).
\end{align}
We denote the genuine $G$-fixed point spectrum of $E$ by $E^G$. 
Also, for $G$ one of $U(n), \SU(n)$ and $\Spin(n)$, we denote by $V_G$ its vector representation (i.e., the orthogonal representation pulled back from the defining representations of $\SO(2n)$ or $\SO(n)$). 
For any compact Lie group $G$, we denote by $\mathrm{Ad}(G)$ its adjoint representation.  

\paragraph{Equivariant $\TMF$ and character maps}
Genuine equivariant $\TMF$ is constructed by making sense of the `moduli space of elliptic curves with flat $G$-connections in spectral algebraic geometry', constructing the quasi-coherent sheaves over it and taking the global sections. 
In particular, there is a canonical map functorially for any $V \in \RO(G)$ given by the base change to $\bC$, 
\begin{align}\label{eq_evC}
	\mathrm{red}_\bC \colon \pi_\bullet \TMF[V]^{G} \to \Gamma(\moduli_\bC^G; \cL_\bC(-V) \otimes p^*\omega^{\bullet/2}). 
\end{align}
where $\moduli_\bC^G$ is the moduli stack of complex-analytic elliptic curves with flat $G$-bundles, $\cL_\bC(-V) \in \Pic(\moduli_\bC^G)$ is the line bundle associated to $V\in \RO(G)$. 

Furthermore, if $G$ is connected and $\pi_1 G$ is torsion-free, choosing a maximal torus $T \subset G$ with the Weyl group $W$, we have a canonical identification
\begin{align}\label{eq_moduliG=moduliT/W}
	\moduli_\bC^G \simeq \moduli_\bC^{T}/W  , \quad  \moduli_\bC^{T} \simeq \mathcal{E}_\bC \times_\Z \Hom(S^1, T)
\end{align}
where $\cE_\bC$ is the total space of the universal elliptic fibration on $\moduli_\bC$. This allows us to identify sections of sheaves over $\moduli^G_\bC$ in terms of those over $\moduli_\bC^{T}$.
For those $G$, we can make sense of the $\Z$-graded $\MF$-module of {\it integral $V$-twisted $G$-equivariant modular forms} denoted by $\MF[V]^G$ (see \cite[Section~2.2.2]{lin2024topologicalellipticgenerai}. Importantly, it is {\it canonically} associated to the homotopy class $[V] \in \Z \times [BG, BO\langle 0,1, \cdots, 4 \rangle]$). We have
\begin{align}
	\MF[k \overline V_{U(1)}]^{U(1)}|_{\deg = \bullet} = \JF^{\ind = \frac{k}{2}}_{\wt = \bullet/2}, 
\end{align}
where $\JF$ is the ring of integral, weakly-holomorphic Jacobi forms,
and $\ind$ and $\wt$ specify the index and the weight, respectively.
More generally, $\MF[V]^T$ for compact connected abelian Lie group $T$ is defined in terms of {\it multi-variable} integral weakly holomorphic Jacobi forms. 
For $G$ connected and $\pi_1 G$ torsion-free, we define
\begin{align}
	\MF[V]^G := \MF[\res_G^T V]^W
\end{align}
where $W$ and $T$ are as before. 
The map \eqref{eq_evC} factors through integral $G$-equivariant modular forms as
\begin{align}\label{eq_character}
	\mathrm{red}_\bC \colon \pi_\bullet \TMF[V]^{G} \xrightarrow{e_G} \left( \MF[V]^G\right)  |_{\deg = \bullet} \hookrightarrow
	\Gamma(\moduli_\bC^G; \cL_\bC(-V) \otimes p^*\omega^{\bullet/2}). 
\end{align}
We call the first map $e_G$ as the {\it $G$-equivariant character map}. 

The character of the equivariant Euler class $\chi(V) \in \pi_0 \TMF[V]^G$ for the orthogonal representation $V_G$ for $G=SO(2r)$  is given by the {\it theta function}, 
\begin{align}\label{eq_Phi_V=chiV}
	e_G(\chi(V_G)) = \Theta_V = \eta(q)^{-3r} \prod_{i=1}^r \theta_{11}(\tau;z_i),
\end{align}
where, in the last term, we have identified the maximal torus $ T \subset G$ with $U(1)^r$.
Our convention for the theta function is \begin{equation}
\theta_{11}(\tau;z)=q^{1/8}(y^{1/2}-y^{-1/2})\prod_{n\ge 1}(1-q^ny)(1-q^ny^{-1})(1-q^n),
\end{equation}
where $q=e^{2\pi i\tau}$ and $y=e^{2\pi i z}$.

\begin{rem}
In the conjectural framework in Sec.~\ref{sec:vosa} of the correspondence between $\TMF$ classes and VOSAs,
$\chi(V_{SO(2r)})$ corresponds to the VOSA $\V(2r)$  discussed there,
and $e_G(\chi(V_G))$ above agrees with the character of $\eta(q)^{-2r}[\W(2r)]$,
which we discussed in Conjecture~\ref{conj:affinevoa}.
\end{rem}

An important case for us is when $G=\SU(2)$. Using the standard maximal torus $U(1) \subset \SU(2)$ with Weyl group $\Z/2$, the $\SU(2)$-equivariant modular forms are identified with Jacobi forms which are {\it even} in the elliptic coordinate $z$. We have
\begin{align}
	\MF[k\overline V_{\SU(2)}]^{\SU(2)}|_{\deg = m} = \begin{cases}
		\JF^{\ind = k}_{\wt =m/2}  & m \equiv 0 \pmod 4, \\
		0 & \mbox{otherwise}. 
	\end{cases}. 
\end{align}
So we may regard the character map as a map
\begin{align}\label{eq_char_SU2}
	e_{\SU(2)} \colon \pi_{2m} \TMF[k\overline V_{\SU(2)}]^{\SU(2)} \to \JF^{\ind=k}_{\wt = m}. 
\end{align}

We will also need a few facts concerning  $\JF$. For more details, see \cite{Gritsenko2020382}. 

\begin{prop}
	\label{prop:JF1}
	$\JF^{\ind=1}_\bQ= \phi_{0,1} \MF_\bQ \oplus \phi_{-2,1}\MF_\bQ$,  where \[
	\phi_{0,1}=y +y ^{-1}+10+O(q),\quad
	\phi_{-2,1}= y +y ^{-1}-2+O(q)
	\] are certain Jacobi forms of index $0$, weight $1$ and of index $-2$, weight $1$, respectively.
	Furthermore, the kernel of $\JF^{\ind=1}_\bQ\to \MF_\bQ$ is $\phi_{-2,1}\cdot \MF_\bQ$.
	Here and in the following, we follow the notation of \cite{Gritsenko2020382}, except we use $y$ instead of their $\zeta$.
\end{prop}

\begin{prop}[\cite{Gritsenko2020382}]
	\label{prop:JF2}
	$\JF^{\ind=1}$ is a $\bZ$-module within $\JF^{\ind=1}_\bQ$ generated by \[
	\phi_{0,1},\quad
	\phi_{-2,1},\quad
	E_{4,1}=\frac{c_4\phi_{0,1}-c_6\phi_{-2,1}}{12},\quad
	E_{6,1}=\frac{c_6\phi_{0,1}-c_4^2\phi_{-2,1}}{12}.
	\]
	Furthermore, the kernel of $\res_{U(1)}^e = \mathrm{ev}_{z = 0} \colon \JF^{\ind=1}\to \MF$ is $\phi_{-2,1}\cdot \MF$.
\end{prop}

Another important group for us is $\Spin(4) \simeq \SU(2) \times \SU(2)$, and in the notations below we put primes ($'$) to objects associated to the latter component of the product.
Let us focus on the case where the $\RO(G)$-grading is given by $V_{\Spin(4)} \in \RO(\Spin(4))$. 
The character map becomes
\begin{align}\label{eq_char}
	e_{\Spin(4)} \colon	\pi_{2m} \TMF[\overline V_{\Spin(4)}]^{\Spin(4)}\to 
	\mathrm{JJF}^{\ind=(1,1')}_{\wt=m},
\end{align}
where the domain is the two-variable integral Jacobi forms with level $(1,1)$,
i.e.~those weakly holomorphic formal power series $\phi(z, z', q)$ with integral coefficients, 
which transform as Jacobi forms with index $1$ for each variable $z$ and $z'$. 

\mathcalEdefinition{calE4app}
This is the function introduced in \eqref{calE4}.

\paragraph{The transfer}
An important feature of the genuine equivariant $\TMF$ is the existence of {\it equivariant transfer maps} along {\it any} homomorphism $f \colon G \to H$ of compact Lie groups. It is a map in the covariant direction, 
\begin{align}\label{eq_tr_f}
	\tr_f \colon	\TMF[-\mathrm{Ad}(G) + \res_G^H V]^G \to \TMF[-\mathrm{Ad}(H) + V]^H 
\end{align}
for $V \in \RO(H)$. 
This extends the transfer map along inclusions $G \hookrightarrow H$, which exists for any genuine $H$-equivariant spectra. The existence of this general transfer is a special feature of the genuine equivariant $\TMF$. 

\begin{rem}
As will be explained in an upcoming paper by the second author with Y.~Lin \cite{LinYamashitaNew} there is a nice physical interpretation in terms of {\it gauging}. 
In the language of the VOSA under the correspondence we discussed in Sec.~\ref{sec:vosa},
the transfer should correspond to {\it orbifolding}.
\end{rem}

From now on, let us focus on the situation where $K \subset G$ is a finite-index normal subgroup of $G$ and $f$ is the quotient homomorphism $f \colon G \to H=G/K$. 
Then we have $\res_G^H \mathrm{Ad}(H) \simeq \mathrm{Ad}(G)$ so that the transfer simplifies to 
\begin{align}\label{eq_tr_without_ad}
		\tr_f \colon	\TMF[ \res_G^H V]^G \to \TMF[ V]^H . 
\end{align}
Morally speaking, the map \eqref{eq_tr_without_ad} is given by considering the covering map
\begin{align}
f \colon	\moduli^G \to \moduli^H
\end{align} 
and taking the weighted sum of the sections of quasi-coherent sheaves along the fibers. 
By the base change to $\bC$, the transfer map is compatible with the corresponding operation on the equivariant modular forms. 
We have a commutative diagram, 
\begin{align}\label{diag_tr}
	\vcenter{\xymatrix{
	\TMF[\res_G^H V]^{G} \ar[r]^-{\mathrm{red}_\bC} \ar[d]^-{\tr_f}& \Gamma(\moduli_\bC^G; f^* \cL_\bC(-V) \otimes p^*\omega^{\bullet/2}) \ar[d]^-{\tr_f} \\
		\TMF[V]^{H} \ar[r]^-{\mathrm{red}_\bC}& \Gamma(\moduli_\bC^H; \cL_\bC(-V) \otimes p^*\omega^{\bullet/2})
	}}
\end{align}
where the right vertical arrow is that, for each complex point $q \colon \mathrm{Spec}(\bC) \to \moduli_\bC^H $, we have
\begin{align}
	(\tr_f \phi) (q) := \sum_{\tilde q \in f_\bC^{-1}(q) } \frac{\phi(\tilde q)}{|\Aut_{f_\bC^{-1}(q)}(\tilde q)|}. 
\end{align}
where $f_\bC \colon \moduli_\bC^G \to \moduli_\bC^H$ is associated to $f$.

\subsection{Equivariant lifts}
Equipped with these preliminaries,
we move on to the proofs of two results required in the main part of the paper.
Recall that  our class $e_8\in \pi_{-16}\TMF$ is a unique element in $\pi_{-16}\TMF$
whose modular form image is $c_4/\Delta$.
The goal of this section is to construct canonical $\SU(2)$ and $\Spin(4)$-equivariant lifts of the element $e_8$. 

First consider the group $\Spin(16)$. Recall that there is a central subgroup $\Z/2 \subset \Spin(16)$ 
such that the quotient is different from $\SO(16)$. The quotient is called the {\it Semispin group}
and we denote it by  $\Semispin(16)$. 
Explicitly, if we take the standard realization of the $\Spin(16)$ as a subgroup of the even part of the real Clifford algebra $\Cl_{16}$ with Clifford generators $\{e_i\}_{1 \le i \le 16}$, the central $\Z/2$ is realized by the central order-two element $\omega := e_1e_2 \cdots e_{16} \in \Spin(16)$. 
On the other hand, let us also consider the standard inclusion $\Spin(4)\subset Spin(16)$. Since $\omega \notin \Spin(4)$, the inclusion descends to the inclusion $i \colon \Spin(4) \hookrightarrow \Semispin(16) = \Spin(16)/\{1, \omega\}$. 

Define the subgroup $\widetilde{\Spin(4)} \stackrel{\tilde i}{\hookrightarrow} \Spin(16)$ by the following pullback, 
\begin{align}
	\vcenter{\xymatrix{
		\widetilde{\Spin(4)} \ar@{^{(}->}[r]^-{\tilde{i}} \ar[d]^-{q} & \Spin(16) \ar[d] \\
		\Spin(4) \ar@{^{(}->}[r]^-{i} & \Semispin(16)
	}}
	\label{spin4pull}
\end{align}
As a group, we have $\widetilde{\Spin(4)} \simeq \Spin(4) \times \Z/2$. 

\begin{lem}\label{lem_stringstr_ROSpin4}
	The element $\res_{\tilde i} (V_{\Spin(16)}) -\res_q (V_{\Spin(4)}) \in \RO(\widetilde{\Spin(4)})$ admits a string structure (in the sense of \cite[Section 2.3]{lin2024topologicalellipticgenerai}), i.e., the associated map
\begin{align}\label{eq_stringstr_Spin4}
	B\widetilde{\Spin(4)} \xrightarrow{\res_{\tilde i} (\overline V_{\Spin(16)}) -\res_q (\overline V_{\Spin(4)})} \BO \xrightarrow{} \BO\langle 0, \cdots, 4 \rangle
\end{align}
is nullhomotopic. 
\end{lem}
\begin{proof}
	Recall that we have $\widetilde{\Spin(4)} \simeq \Z/2 \times \Spin(4)$ as a group. If we denote the two inclusions of the factors by $j_1 \colon \Z/2 \hookrightarrow \widetilde{\Spin(4)}$ and $j_2 \colon \Spin(4) \hookrightarrow
	\widetilde{\Spin(4)}$, we have
	\begin{multline}
	j_1^* \oplus j_2^* \colon	[B\widetilde{\Spin(4)}, \BO\langle 0, \cdots, 4 \rangle] \\
	{\simeq}  [B\Z/2, \BO\langle 0, \cdots, 4 \rangle] \oplus [B\Spin(4), \BO\langle 0, \cdots, 4 \rangle]. 
	\end{multline}
	Thus it is enough to show that the homotopy class of \eqref{eq_stringstr_Spin4} pulls back to zero by both $j_1$ and $j_2$. 
	
	For $j_1$, we have the following equality in $\RO(\Z/2) $, 
	\begin{align}\label{eq_stringstr_proof}
		\res_{j_1} \circ \left( \res_{\tilde i} (V_{\Spin(16)}) -\res_q (V_{\Spin(4)})\right)
		= 16V_{\Z/2} - 4\underline{\R}
	\end{align}
	where $V_{\Z/2}$ denotes the real $1$-dimensional sign representation of $\Z/2$. On the other hand, we have
	\begin{align}\label{eq_Z2sigma}
		[B\Z/2, BO\langle 0, \cdots, 4\rangle] \simeq \Z[\overline{V}_{\Z/2}]/(8[\overline{V}_{\Z/2}])
	\end{align}
	so that the homotopy class of \eqref{eq_stringstr_Spin4} pulls back to zero by $j_1$. 
	
	For $j_2$, we have
	\begin{align}
			\res_{j_2} \circ \left( \res_{\tilde i} (V_{\Spin(16)}) -\res_q (V_{\Spin(4)})\right)
		= V_{\Spin(4)} + 12\underline{\R} - V_{\Spin(4)}  = 12\underline{\R}
	\end{align}
	in $\RO(\Spin(4))$. So the desired statement also holds for $j_2$. This completes the proof. 
\end{proof}
Choose a string structure $\mathfrak{s} $ for the virtual representation in Lemma \ref{lem_stringstr_ROSpin4}. 
The choice is not unique up to homotopy, 
but we may choose any one of them.\footnote{The choice of string structure here a priori affects the results of the construction from now on. But since the choice of string structure does not affect the result of the character maps, we can achieve Proposition \ref{prop_char_check}. But actually with a little more effort we can show that $\pi_{-16}\TMF[16 + \overline V_{\Spin(4)}]^{\Spin(4)}$ is torsion-free, which implies that the resulting elements $\check{\check{e}}_8$ and $\check{e}_8$ do not depend on the choice. } 
Since the equivariant Thom isomorphism for string-oriented representations is valid for the group $\widetilde{\Spin(4)} \simeq \Z/2 \times \SU(2) \times \SU(2)$ by a slight extension of \cite[Fact~2.84]{lin2024topologicalellipticgenerai}\footnote{
	In \cite[Fact~2.84]{lin2024topologicalellipticgenerai}, the equivariant sigma orientation is stated for a class of groups including $\SU(n)$ and $U(n)$ for all $n$, and closed under taking finite product. 
We can easily generalize that class to include the group $\Z/2\times \SU(2) \times \SU(2)$. The argument is as follows. 
The equivariant Thom isomorphism for string oriented representation of $\Z/2$ is deduced from the case for $U(1)$ which is already valid: indeed, since we have $\RO(\Z/2) \simeq \Z[\underline{\R}] \oplus \Z[V_{\Z/2}]$ and \eqref{eq_Z2sigma}, we only need to show the following equivalence of $\Z/2$-equivariant $\TMF$-module spectra, 
\begin{align}
\TMF[8\overline{V}_{\Z/2}] \simeq \TMF. 
\end{align}
This follows by restricting the following equivalence of $U(1)$-equivariant $\TMF$-module spectra, 
\begin{align}
	\TMF[4 V_{U(1)}-3] \simeq \TMF[ V_{U(1)}\otimes_\bC V_{U(1)}]. 
\end{align}
which follows by the sigma orientation of $4V_{U(1)} - V_{U(1)}\otimes_\bC V_{U(1)}$. 
Then the case for $\Z/2 \times \SU(2) \times \SU(2)$ follows by the same argument showing the closed-under-product statement in \cite[Fact~2.84]{lin2024topologicalellipticgenerai}. 
}, we get the Thom isomorphism
\begin{align}\label{eq_sigma_spin4}
	\oldsigma(\mathfrak{s})\colon \TMF[\res_{\tilde i} (V_{\Spin(16)})]^{\widetilde{\Spin(4)}} \simeq \TMF[\res_q (V_{\Spin(4)})+ 12]^{\widetilde{\Spin(4)}} 
\end{align}
We can further compose it with the eqivariant transfer map \eqref{eq_tr_without_ad} in genuine equivariant $\TMF$, 
\begin{align}
	\mathrm{tr}_{q} \colon \TMF[\res_q (V_{\Spin(4)})+ 12]^{\widetilde{\Spin(4)}} \to \TMF[\overline V_{\Spin(4)}+16]^{\Spin(4)} . 
\end{align}

Now we are ready to define the $\Spin(4)$ and $\SU(2)$-equivariant $\TMF$-elements of our interest. 

\begin{defn}
	\begin{enumerate}
		\item 	Define the element $\check{\check{e}}_8 \in \pi_0 \TMF[\overline V_{\Spin(4)}+16]^{\Spin(4)} = \TMF_{\Spin(4)}^{16 +\overline{V}_{\Spin(4)} }(\pt)$ to be the image of the equivariant Euler class $\chi(V_{\Spin(16)}) \in \pi_0 \TMF[V_{\Spin(16)}]^{\Spin(16)} $ under the composition
		\begin{multline}\label{eq_def_checkchecke8}
			\TMF[V_{\Spin(16)}]^{\Spin(16)} \xrightarrow{\res_{\tilde i}} \TMF[\res_{\tilde i} (V_{\Spin(16)})]^{\widetilde{\Spin(4)}}  \\
			\xrightarrow{\mathrm{tr}_{q} \circ \oldsigma(\mathfrak{s}) }
			\TMF[\overline V_{\Spin(4)}+16]^{\Spin(4)}. 
		\end{multline}
		\item Furthermore, define the element $\check{e}_8 \in \pi_0 \TMF[\overline V_{\SU(2)} +16 ]^{\SU(2)} = \TMF_{\SU(2)}^{16 + \overline{V}_{\SU(2)}}(\pt)$ by the image of $\check{\check{e}}_8$ under the restriction map along the inclusion $\SU(2) \stackrel{(\id, 1)}{\hookrightarrow} \SU(2) \times \SU(2) \simeq \Spin(4)$, 
		\begin{align}
			\res_{\Spin(4)}^{\SU(2)}  \colon 	\TMF[\overline V_{\Spin(4)}+16]^{\Spin(4)} \to \TMF[\overline V_{\SU(2)} +16 ]^{\SU(2)}. 
		\end{align}
		Here we have used that $\res_{(\id, 1)}V_{\Spin(4)} = V_{\SU(2)}$ in $\RO(\SU(2))$. 
	\end{enumerate}
\end{defn}

\begin{prop}\label{prop_char_check}
	We have
	\begin{align}\label{eq_prop_char_check_1}
		\res_{\Spin(4)}^e \left(\check{\check{e}}_8\right) = \res_{\SU(2)}^e (\check{e}_8) = e_8 \in \pi_{-16}\TMF. 
	\end{align}
	Moreover, the images under the character maps \eqref{eq_char} and \eqref{eq_char_SU2} are
	\begin{align}\label{eq_char_hathate}
		e_{\Spin(4)} \left(\check{\check{e}}_8\right) = \mathcal{E}_4/\Delta \in \mathrm{JJF}^{\ind=(1,1')}_{\wt=-8},
	\end{align}
	where $\cE_4$ is introduced in \eqref{calE4} and
	\begin{align}\label{eq_E41Delta_1}
		e_{\SU(2)} (\check e_8) = E_{4, 1} / \Delta \in \JF^{\ind = 1}_{\wt = -8}. 
	\end{align}
	where $E_{4,1}$ appears in Proposition \ref{prop:JF2}, respectively.
\end{prop}

\begin{proof}
	First notice that it is enough to show \eqref{eq_char_hathate}. Indeed, the first equality follows by definition of $\check{e}_8$, and the rest of the statements follows by $\cE_4(\tau; z, 0) = E_{4,1}(z)$ \eqref{eq_cE4}, $E_{4,1}(z=0) = c_4$ (Proposition \ref{prop:JF2}) and the definition of $e_8$. 
	
	Let us show \eqref{eq_char_hathate} by applying the character formula \eqref{eq_Phi_V=chiV} for the Euler class and the commutative diagram \eqref{diag_tr} for the transfer maps. 
	We know, via Proposition~\ref{eq_Phi_V=chiV}, that 
	\begin{align}
		e_{\Spin(16)}\left( \chi(V_{\Spin(16)}) \right) = \Theta_{V_{\Spin(16)}} =  \Delta^{-1} \prod_{i=1}^8 \theta_{11}(\tau; z_i),
	\end{align}
	where we slightly abused the notation and used the coordinates $z_i$ of the Cartan torus of $SO(16)$
	instead of that of $SemiSpin(16)$.
	The composition \eqref{eq_def_checkchecke8} is translated into the following formula for the character,
	\begin{equation}
	\begin{aligned}
	e_{\Spin(4)}  \left(\check{\check{e}}_8\right) &=\frac12 \sum_{w = 0, \frac12, \frac{\tau}{2}, \frac{1+\tau}{2}}  \Theta_{V_{\Spin(16)}} (\tau; (z+z') + w, (z-z') + w, w, w, w, w, w, w) \\
&= \frac1{2\Delta}\sum_{w=\frac12, \frac{\tau}{2}, \frac{1+\tau}{2}} \theta_{11}(\tau;z+w)\theta_{11}(\tau;z'+w) \theta_{11}(\tau;w)^6.
	\end{aligned}
	\label{eeeee}
	\end{equation}
	We can show that this formula agrees with $\cE_{4,1}/\Delta$, for example by checking  the characterizing properties \eqref{eq_cE4}. This completes the proof. 
\end{proof}

\begin{rem}
If a genuine equivariant topological modular forms were available for general twists,
we would have been able to perform the transfer for $Spin(16)\to SemiSpin(16)$,
instead of  pulling back to the transfer for $\widetilde{Spin(4)}\to Spin(4)$ as in \eqref{spin4pull}.
The result would be an element $\dot e_8 \in \TMF^{16+\tau}_{SemiSpin(16)}(\pt)$ for a suitable twist,
whose character should be given by \begin{equation}
e_{SemiSpin(16)} (\dot e_8)= \frac1{2\Delta}\sum_{w=\frac12, \frac{\tau}{2}, \frac{1+\tau}{2}} \prod_{i=1}^8 \theta_{11}(\tau;z_i+w)
\label{char-ss}
\end{equation}
as a natural generalization of \eqref{eeeee}.
\end{rem}

\begin{rem}
The element $\dot e_8\in \TMF^{16+\tau}_{SemiSpin(16)}(\pt)$ should be the restriction of 
$\tilde e_8\in\TMF^{16+\tau}_{E_8}(\pt)$ discussed in Conjecture~\ref{conj_e8} via the inclusion $SemiSpin(16)\subset E_8$.
One piece of evidence is that  $e_{SemiSpin(16)} (\dot e_8)$ above equals $\eta(q)^{-16}$ times 
the character of the unique integral representation of the affine Lie algebra $(\mathfrak{e}_8)_1$.
In the framework of the conjectural correspondence between $\TMF$ classes and VOSAs in Sec.~\ref{sec:vosa},
the construction of $\dot e_8$ or $\tilde e_8$ via the transfer corresponds to the construction of the VOA $\V_{E_8}$ 
by a $\bZ/2$ orbifolding of $\V(n)$.
\end{rem}


\section{On the differential bordism groups and their Anderson duals}
\label{app:diff_bordism}
In this section we summarize the notions of differential bordism groups and differential Anderson duality pairings, which are used throughout the paper. 
We use a model of differential extension of Anderson dual cohomology groups given in \cite{YamashitaDifferentialIE}, which extends the construction of \cite{Yamashita:2021cao}. 
In that paper, the model of a differential extension $\widehat{I_\Z E}{}^\bullet$ of $I_\Z E^\bullet$ 
for a spectrum $E$
is defined provided that a differential extension $\widehat{E}_\bullet$ of $E$-{\it homology} is given. 
Since we need to deal with $I_\Z \MString$ and their twisted versions, we start from introducing the corresponding differential homology groups.

\subsection{Twisted differential string structures}\label{app_subsec_diff_string}
We first review the notion of {\it twisted differential string structures} here.
Our presentation is essentially based on \cite{Sati:2009ic,FSSdifferentialtwistedstring} which were in turn based on \cite{Waldorf},
in terms of $\infty$-sheaves of $\infty$-groupoids on the category of manifolds, i.e.~$\infty$-stacks.
See also a recent discussion in \cite{debray2023bordism}.

We start from the untwisted case. 
Differential principal $\mathrm{String}(d)$-bundles are classified by the $\infty$-stack $\mathbf{B}\mathrm{String}(d)_\nabla$ defined as the $\infty$-pullback
\begin{equation}
    \vcenter{\xymatrix{
    \mathbf{B}\mathrm{String}(d)_\nabla \ar[r]\ar[d] & \Omega^3 \ar[d]^-{a}\\
    \mathbf{B}\mathrm{Spin}(d)_\nabla \ar[r]^-{\widehat{\mathbf{p_1/2}}} & \mathbf{B}^3\mathrm{U}(1)_\nabla
    }},
\end{equation}
where 
\begin{itemize}
    \item $\mathbf{B}\mathrm{Spin}(d)_\nabla$ is the $\infty$-stack of principal $\Spin(d)$-bundles with connections, i.e., for each manifold $X$, $\mathbf{B}\mathrm{Spin}(d)_\nabla(X)$ is the groupoid of principal $\Spin(d)$-bundles with connections over $X$. 
    \item $\mathbf{B}^3\mathrm{U}(1)_\nabla$ is the $\infty$-stack of {\it circle $3$-bundles with connections}, which is a model of fourth differential ordinary cohomology in the sense that we have $\pi_0(\mathbf{B}^3\mathrm{U}(1)_\nabla(X)) = \widehat{H}^4(X; \Z)$. $\mathbf{B}^3\mathrm{U}(1)_\nabla$ is obtained by the homotopy sheafification of the nerve of the presheaf of cochain complexes
    \begin{align}\label{eq_B3U(1)}
       X \mapsto (C^\infty(X; U(1)) \xrightarrow{d\log} \Omega^1 (X)\xrightarrow{d} \Omega^2(X) \xrightarrow{d} \Omega^3(X) ). 
    \end{align}
    \item The map $\widehat{\mathbf{p_1/2}} \colon \mathbf{B}\mathrm{Spin}(d)_\nabla \to \mathbf{B}^3\mathrm{U}(1)_\nabla$ is the canonical differential refinement of the first Pontryagin class $p_1/2 \in H^4(\mathrm{BSpin}; \Z)$, which was obtained by the $\infty$-Chern-Weil theory \cite{FSSdifferentialtwistedstring}. 
    \item The map $a \colon \Omega^3 \to \mathbf{B}^3\mathrm{U}(1)_\nabla$ is induced by the obvious inclusion in terms of \eqref{eq_B3U(1)}, which induces the structure map $a \colon \Omega^3 \to \widehat{H}^4(-; \Z)$ in differential cohomology. 
\end{itemize}
A differential string structure on a manifold is a differential string structure on its tangent bundle. 
Unwinding the definition, the data specifying a differential string structure on a manifold $M$ consists of
\begin{itemize}
    \item a differential spin structure $g^\spin$, i.e., a spin structure with connection, on its tangent bundle, 
    \item a $3$-form $H \in \Omega^3(M)$ and
    \item an isomorphism $i \colon \widehat{\mathbf{p_1/2}}(g^\spin) \simeq a(H)$ in $\mathbf{B}^3\mathrm{U}(1)_\nabla(M)$. 
\end{itemize}
In particular, the last condition implies that, 
\begin{itemize}
    \item mapping by the forgetful map $\mathbf{B}^3\mathrm{U}(1)_\nabla(M) \to \mathbf{B}^3\mathrm{U}(1)(M)$, we have a string structure in the topological sense, 
    \item mapping by the curvature map $R\colon \mathbf{B}^3\mathrm{U}(1)_\nabla(M) \to \Omega^4_\clo(M)$, we have $\cw_{g^\spin}(p_1/2) = dH$ in $\Omega_\clo^4(M)$, where $\cw_{g^\spin}$ is the Chern-Weil construction with respect to the spin connection. 
\end{itemize}
This gives the physical explanation of the differential string structure used in the Appendix~\ref{sec:introphys} for physicists.

Now we turn to twisted differential string structures. 
In particular we use the twists coming from principal $G$-bundles with connections, which we call {\it $(G, \tau)$-twisted differential string structures}. 
We fix a compact Lie group $G$ and {a map $\tau \colon BG \to K(\Z, 4)$. }
We define the $\infty$-stack of $(G, \tau)$-twisted differential principal $\mathrm{String}(d)$-bundles as the following $\infty$-pullback
\begin{align}
    \vcenter{\xymatrix{
    \mathbf{B}\mathrm{String}(d)_\nabla \times_\tau \mathbf{B}G_\nabla \ar[r]\ar[d] & \Omega^3 \ar[d]^-{a}\\
    \mathbf{B}\mathrm{Spin}(d)_\nabla \times \mathbf{B}G_\nabla \ar[r]^-{\widehat{\mathbf{p_1/2}} - \widehat{\mathbf{\tau}}} & \mathbf{B}^3\mathrm{U}(1)_\nabla
    }},
\end{align}
where $\widehat{\mathbf{\tau}} \colon \mathbf{B}G_\nabla \to \mathbf{B}^3\mathrm{U}(1)_\nabla$ is the canonical differential refinement of $\tau$ obtained by the $\infty$-Chern-Weil theory \cite{FSSdifferentialtwistedstring}. 

A $(G, \tau)$-twisted differential string structure on a manifold $M$ is a $(G, \tau)$-twisted differential principal $\mathrm{String}(d)$-bundle whose underlying $\mathrm{GL}(d)$-bundle is identified with the frame bundle of the tangent bundle. 
Unwinding the definitions, the  data specifying a $(G, \tau)$-twisted differential string structure on a manifold $M$ consists of
\begin{itemize}
    \item a differential spin structure $g^\spin$, 
    \item a principal $G$-bundle with connection $(P, \nabla)$ on $M$,
    \item a $3$-form $H \in \Omega^3(M)$ and
    \item an isomorphism $i \colon \widehat{\mathbf{p_1/2}}(g) - \widehat{\tau}(P, \nabla) \simeq a(H)$ in $\mathbf{B}^3\mathrm{U}(1)_\nabla(M)$. 
\end{itemize}
In particular, the last condition implies that, 
\begin{itemize}
    \item mapping by the forgetful map $\mathbf{B}^3\mathrm{U}(1)_\nabla(M) \to \mathbf{B}^3\mathrm{U}(1)(M)$, 
    we have a $\tau(P)$-twisted string structure in the topological sense, 
    \item mapping by the curvature map $R\colon \mathbf{B}^3\mathrm{U}(1)_\nabla(M) \to \Omega^4_\clo(M)$, we have $\cw_{g^\spin}(p_1/2) - \cw_\nabla(\tau) = dH$ in $\Omega_\clo^4(M)$. 
\end{itemize}

\subsection{Differential bordism groups}\label{app_subsec_diff_bordism}
Our model of the differential extension of the Anderson duals to bordism homologies are based on {\it differential bordism groups}. 
Our discussions here are based on \cite{Yamashita:2021cao}, where the case of tangential structures given by sequences of Lie groups (such as Spin, SO) are treated in detail. 
In this paper we need the following generalizations.
\begin{itemize}
    \item We need to treat differential string structures and their twisted versions. 
    \item Moreover, we need to treat \emph{relative} differential bordism groups, namely the differential version of $\MSpin/\MString$. 
\end{itemize}

We first explain the non-relative non-twisted case where we allow the tangential structure $\mathcal{B}$ to be either coming from Lie groups or string structure. 
The differential tangential $\mathcal{B}$-bordism group $\widehat{MT\mathcal{B}}_n = \widehat{MT\mathcal{B}}_n(\pt)$ is constructed in terms of {\it differential stable tangential $\mathcal{B}$-cycles}, as follows. 
An $n$-dimensional differential stable tangential $\mathcal{B}$-cycle 
over $X$ is a pair $(M, g)$, consisting of an $n$-dimensional closed manifold $M$ and a {\it differential} stable $\mathcal{B}$-structure $g$ (i.e., equipped with connection in the case of $\mathcal{B}$ given by Lie groups, and the string case recalled in Subsection \ref{app_subsec_diff_string}). 

We use the {\it Bordism Picard groupoid} $\hBord^{\mathcal{B}_\nabla}_n$ defined in \cite[Definition 3.8]{Yamashita:2021cao}. 
The objects are differential stable tangential $\mathcal{B}$-cycles $(M, g)$ of dimension $n$, and 
the morphisms are {\it bordism classes} $[W, g_W]$ of bordisms of differential stable tangential $\mathcal{B}$-cycles. 


A differential stable tangential $\mathcal{B}$-structure $g$ on a manifold $M$ defines a homomorphism,
\begin{align}\label{eq_cw}
    \mathrm{cw}_g  \colon \H^\bullet(MT\mathcal{B}; \R) \to \Omega_\clo^\bullet(M; \mathrm{Ori}(M)), 
\end{align}
where $\mathrm{Ori}(M)$ is the orientation line bundle of $M$ and $\Omega^\bullet_\clo$ denotes the group of closed differential forms. 
The homomorphism \eqref{eq_cw} is given by the Chern-Weil construction in the case where $\mathcal{B}$ is given by Lie groups
as in \cite[Definition 4.4]{Yamashita:2021cao}, and in the string case it is just the Chern-Weil construction by the underlying differential spin structure. 

An object $(M, g)$ in $\hBord^{\mathcal{B}_\nabla}_n$ gives an element
\begin{align}
    \cw(M, g) \in \H_n(MT\mathcal{B}; \R) \simeq \Hom_\R (\H^n(MT\mathcal{B}; \R), \R), 
\end{align}
by
\begin{align}
    \cw(M, g)(\omega) := \int_M \cw_g(\omega)
\end{align}
for $\omega \in \H^n(MT\mathcal{B}; \R)$.
Similarly, for a morphism $[W, g_W]$ in $\hBord^{\mathcal{B}_\nabla}_n$, we get a well-defined element
\begin{align}\label{eq_cw_B}
    \cw[W, g_W] \in \H_{n+1}(MT\mathcal{B}; \R) \simeq \Hom_{\R} (\H^{n+1}(MT\mathcal{B}; \R), \R). 
\end{align}

\begin{defn}[of {$\widehat{MT\mathcal{B}}_n$}]\label{def_diff_bordism}
Let $n$ be an integer. 
\begin{enumerate}
    \item 
We set
\[
    \widehat{MT\mathcal{B}}_n := \{(M, g, \eta)\} / \sim, 
\]
where $(M, g)$ is an object in $\hBord^{\mathcal{B}_\nabla}_n$ and $\eta \in H_{n+1}(MT\mathcal{B}; \R) $. 
The relation $\sim$ is the equivalence relation generated by the relations
\[
    (M_-, g_-, \eta) \sim (M_+, g_+, \eta - \cw[W,g_W]) 
\]
for each morphism $[W, g_W] \colon (M_-, g_-) \to (M_+, g_+)$ in $\hBord^{\mathcal{B}_\nabla}_n$. 
\item
We define maps $R$, $I$ and $a$ by
\[
\begin{array}{rr@{\,}l@{\quad}r@{\,}l}
    R \colon& \widehat{MT\mathcal{B}}_n&\to H_n(MT\mathcal{B}; \R) ,&
    [M, g, \eta] &\mapsto \cw(M, g) , \\
    I \colon& \widehat{MT\mathcal{B}}_n&\to \pi_n MT\mathcal{B},&
    [M, g,  \eta] &\mapsto [M, g], \\
    a  \colon& H_{n+1}(MT\mathcal{B}; \R) &\to \widehat{MT\mathcal{B}}_n , &
    \eta &\mapsto [\varnothing, \eta]. 
\end{array}
\]
\end{enumerate}
\end{defn}

We can extend the above definition to that of $\widehat{MT\mathcal{B}}_\bullet(X)$ for manifolds $X$, and verify that the data $(\widehat{MT\mathcal{B}}_\bullet, R, I, a)$ gives a differential $MT\mathcal{B}$-homology in the sense of \cite{YamashitaDifferentialIE}. 
In particular the axiom of differential homology includes the exactness of the following sequence, 
\begin{align}\label{eq_exact_MTB}
    MT\mathcal{B}_{n+1} \xrightarrow{\mathrm{ch}} H_{n+1}(MT\mathcal{B}; \R)\xrightarrow{a} \widehat{MT\mathcal{B}}_n \xrightarrow{I} MT\mathcal{B}_n \to 0.
\end{align}

We have a canonical symmetric monoidal functor
\begin{align}\label{eq_hBord_to_MTB}
    \hBord^{\mathcal{B}_\nabla}_n \to  {\left( H_{n+1}(MT\mathcal{B}; \R) \xrightarrow{a}
    \widehat{MT\mathcal{B}}_n, \right)}, 
\end{align}
{where the right hand side is the Picard groupoid associated to the morphism of abelian groups},
which maps objects as $(M, g ) \mapsto [M, g, 0]$ and morphisms as $[W, g_W] \mapsto \cw[W, g_W]$. 
Using this functor we regard an object in $\hBord^{\mathcal{B}_\nabla}_n$ as an element in $\widehat{MT\mathcal{B}}_n$ and use the notation $[M, g]:=[M, g, 0]$.

Now we explain the variations mentioned above. 
The definition of corresponding differential bordism groups are given by suitable modifications as follows. 
\begin{itemize}
    \item $\reallywidehat{(\MString \wedge_\tau BG_+)}_n$ with {$\tau \colon BG \to K(\Z, 4)$}, which classifies the $(G, \tau)$-twisted differential string manifolds. 
    This is a differential homology version of the $\MString$-module Thom spectrum, which we denote by $\MString \wedge_\tau BG_+$, associated to the map $BG \xrightarrow{\tau} K(\Z, 4) \to BGL_1\MString$. 

    In this case, a differential cycle is a pair $(M, g)$, where $g$ is a $(G, \tau)$-twisted differential string structure on $M$. 
    Explicitly, $g$ consists of data $g = (g^\spin, (P, \nabla), H, i)$ described in Subsection \ref{app_subsec_diff_string}. 
    On realification the twist becomes trivial, and we have the isomorphism $\mathrm{MString}^{n+\tau}(BG; \R) \simeq \H^{n}(\MString \wedge BG_+; \R)$. 
    The Chern-Weil construction in this case is the map
    \begin{multline}\label{eq_cw_twisted_String}
        \mathrm{cw}_{g}=\cw_{g^\spin} \otimes \cw_\nabla \colon H^*(\MString \wedge BG_+; \R) \\ 
        \simeq H^*(B\mathrm{String}; \R) \otimes_\R H^*(BG; \R) \to \Omega_\clo^*(M), 
    \end{multline}
    The rest of the construction works by replacing $H(MT\mathcal{B}; \R)$ with $H(\MString \wedge BG_+; \R)$. 
    
    \item $\reallywidehat{\left((\MSpin/\MString)\wedge_\tau BG_+ \right)}_n$ with {a map $\tau \colon BG \to K(\Z, 4)$}, which classifies the relative $(G, \tau)$-twisted differential string/spin manifolds. 
    This is a differential homology version of the homotopy cofiber of the map $\MString \wedge_\tau BG_+ \to \MSpin \wedge BG_+$ which we denote by $(\MSpin/\MString)\wedge_\tau BG_+$. 
    
    In this case, a relative differential cycle consists of data $(N, M, g^\spin_N, g^{\text{$\tau$-string}}_M, (P, \nabla))$, where $N$ is a compact $n$-dimensional manifold with boundary $\del N = M$, $(P, \nabla)$ is a principal $G$-bundle with connection on $N$, $g^\spin_N$ is a differential spin structure on $N$ and $g^{\text{$\tau$-string}}_M$ is a $(G, \tau)$-twisted differential string structure (Subsection \ref{app_subsec_diff_string}) on $M$ whose underlying differential spin structure and $G$-bundle with connection are identified with $g^\spin_N|_M$ and $(P, \nabla)|_M$, respectively. 
    
    In this case, the corresponding bordism Picard groupoid involves manifolds with corners, for which we use the notion of {\it $\langle k \rangle$-manifolds} \cite{Janich1968}, also recalled in \cite[Subsection 2.3]{Yamashita:2021cao}. 
    The construction is similar to the relative bordism Picard groupoid $\hBord^{G_\nabla}_n(X, Y)$ in \cite{Yamashita:2021cao}. 

    The realification of $\left((\MSpin/\MString)\wedge_\tau BG_+\right)^n$ is computed by the homotopy fiber long exact sequence. The result is 
    \begin{align}
        &\left((\MSpin/\MString)\wedge_\tau BG_+\right)^n_\R\notag\\
        &    \label{eq_MSpin/MString_R}
        \quad \simeq \ker 
        \left(\H^n(\MSpin \wedge BG_+; \R) \to \Hom(\MSpin/\MString_{n+\tau}(BG), \R)
        \right)\\
        &\quad = \left( \frac{p_1}{2} -\tau \right) \H^{n-4}(\MSpin \wedge BG_+; \R) \notag
    \end{align} 
    In terms of the identification \eqref{eq_MSpin/MString_R}, the Chern-Weil construction with respect to a relative differential cycle $(N, M, g^\spin_N,g^{\text{$\tau$-string}}_M,  P, \nabla)$ takes the form
    \begin{align}\label{eq_cw_MSpin/MString}
        \cw_{(g^\spin_N, g^{\text{$\tau$-string}}_M, \nabla)} \colon \left( \frac{p_1}{2} -\tau \right) \H^{*-4}(\MSpin \wedge BG_+; \R) \to \Omega_\clo^*(N, M), 
    \end{align}
    where $\Omega_\clo^*(N, M)$ is the group of {\it relative closed differential forms}, consisting of pairs $(\omega, \eta) \in \Omega^*(N) \oplus\Omega^{*-1}(M)$ with $\omega = d\eta$. 
    The map \eqref{eq_cw_MSpin/MString} maps an element $\psi = \left( \frac{p_1}{2} -\tau \right)\tilde{\psi}$ 
    to the relative closed differential form
    \begin{align}\label{eq_cw_MSpin/MString2}
      \left( \cw_{g_N^\spin, \nabla}(\psi) , \ H \wedge \cw_{g^{\text{$\tau$-string}}_M}(\tilde{\psi})\right). 
    \end{align}
    Here $H \in \Omega^3(M)$ is the $3$-form in the data of $g^{\text{$\tau$-string}}_M$. 
\end{itemize}

\subsection{Differential Anderson duals to bordism groups}


Applying the definition of the differential extension $(\widehat{I_\Z E})^n$ given in \cite[Definition 5.3]{YamashitaDifferentialIE} to our differential bordism homology theories in the last subsection, we obtain the following model. 
Let $E$ be one of the bordism spectra $MT\mathcal{B}$, $\MString \wedge_\tau BG_+$ and $(\MSpin/\MString) \wedge_\tau BG_+$ in the last subsection. 
We have
\begin{align}
    (\widehat{I_\Z E})^n = (\widehat{I_\Z E})^n(\pt) := \{(\omega, h) \}, 
\end{align}
where 
\begin{itemize}
    \item $\omega \in H^n(E; \R)$, regarded as a characteristic polynomial for differential $E$-manifolds, 
    \item $h \colon \widehat{E}_{n-1} \to \R/\Z$ is a group homomorphism,
\end{itemize}
for which the following diagram commutes. 
   \begin{align}\label{diag_compatibility}
       \vcenter{\xymatrix{
         H_n(E; \R) \ar[r]^-{a} \ar[d]^-{ \langle -,  \omega \rangle} &\widehat{E}_{n-1} \ar[d]^-{h} \\
       \R\ar[r]^-{\mod \Z} &\R/\Z
         }}. 
     \end{align}
     Here $a$ is one of the structure map in Definition \ref{def_diff_bordism} (2). 
In particular, we have a canonical pairing, 
\begin{align}\label{eq_IE_diff_pairing}
   \langle \cdot, \cdot \rangle_{\widehat{I_\Z}} \colon  (\widehat{I_\Z E})^n \times \widehat{E}_{n-1} \to \R/\Z, 
\end{align}
which induces the differential pairing in Subsection \ref{subsec_diff_pairing}. 
     
In terms of Definition \ref{def_diff_bordism}, we can make the above definition more explicit: given any morphism $[W, g_W] \in \Hom_{\hBord_{n-1}^\bullet}((M_-, g_-), (M_+, g_+))$ in the differential bordism Picard groupoid in question, we have the following compatibility condition,
\begin{align}
    h(M_+, g_+) - h(M_-, g_-) \equiv \int_W\cw_{g_W} (\omega) \pmod \Z. 
\end{align}

One feature of the model $(\widehat{I_\Z E})^n$ is that we have the following commutative diagram with exact rows\footnote{
In the language of differential cohomology, this inclusion comes from the fact that the flat part (the kernel of the curvature homomorphism) of $(\widehat{I_\Z E})^{*+1}$ is identified with  ``$I_\Z E$ with $\R/\Z$-coefficient'', which is $I_{\R/\Z}E$. 
}, 
\begin{align}\label{diag_exact_diff_IE}
    \vcenter{\xymatrix{
    0 \ar[r] & (I_{\R/\Z}E)^{n-1} \ar[r]^{\iota_{\text{flat}}} \ar@{=}[d]  & (\widehat{I_\Z E})^n \ar[r]^-{R} \ar[d]^-{I}& H^n(E; \R)\ar@{=}[d] & \\
    H^{n-1}(E; \R) \ar[r] & (I_{\R/\Z}E)^{n-1} \ar[r] & (I_\Z E)^n \ar[r]& H^n(E; \R)\ar[r] & (I_{\R/\Z}E)^{n} ,
    }}
\end{align}
where the bottom sequence is the homotopy fiber exact sequence. 
The middle arrow $\iota_{\text{flat}}$ of the first row maps $\tilde{h} \in (I_{\R/\Z}E)^{n-1}= \Hom( \pi_{n-1}E, \R/\Z)$ to $(0, \tilde{h}\circ I)$. 
The last arrow $R$ of the first row maps $(\omega, h)$ to $\omega$. 
By the commutativity of \eqref{diag_exact_diff_IE}, we get the following relation between the differential pairing \eqref{eq_IE_diff_pairing} and the topological pairings. 

\begin{lem}\label{lem_pairing_compatibility}
    In the following diagram, 
\[
\vcenter{\xymatrixcolsep{0pc}
    \xymatrix{
   (I_{\R/\Z}E)^{n-1}\ar[d]^-{\iota_{\text{flat}}}  &\times& \pi_{n-1}E \ar[rrrrrr]^-{\langle \cdot, \cdot\rangle_{I_{\R/\Z}}} &&&&&& \R/\Z \ar@{=}[d]\\
    (\widehat{I_\Z E})^n \ar[d]^-{R}&\times& \widehat{E}_{n-1} \ar[u]^-{I}\ar[rrrrrr]^-{\langle \cdot, \cdot \rangle_{\widehat{I_\Z}}}  &&&&&& \R/\Z \\
    H^n(E; \R) &\times& H_n(E; \R)\ar[rrrrrr]^-{\langle \cdot, \cdot \rangle_{H\R}} \ar[u]^-{a}&&&&&& \R \ar[u]_-{\text{mod} \ \Z}
    }},
\]
the three pairings are compatible, in the sense that we have
\begin{align*}
    \langle x, I(y) \rangle_{I\R/\Z} &= \langle \iota_{\text{flat}}(x), y \rangle_{\widehat{I_\Z}}, \\
     \langle z, a(w) \rangle_{\widehat{I_\Z}} &\equiv \langle R(z), w \rangle_{\widehat{H\R}} \pmod \Z, 
\end{align*}
for any $x \in (I_{\R/\Z}E)^{n-1}$, $y \in \widehat{E}_{n-1}$, $z \in (\widehat{I_\Z E})^n$ and $w \in H_n(E; \R)$. 
\end{lem}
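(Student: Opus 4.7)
The plan is to unwind the explicit models and read both compatibilities directly off of them; no additional machinery is required beyond the definition of $(\widehat{I_\Z E})^n$ as pairs $(\omega, h)$ and the description of the structure maps $\iota_{\text{flat}}$, $R$ and the compatibility diagram \eqref{diag_compatibility} displayed just above the statement.

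First I will verify the top identity. Given $x \in (I_{\R/\Z}E)^{n-1} = \Hom(\pi_{n-1}E, \R/\Z)$, the element $\iota_{\text{flat}}(x) \in (\widehat{I_\Z E})^n$ is by definition the pair $(0, x \circ I)$, where $I \colon \widehat{E}_{n-1} \to \pi_{n-1}E$ is the underlying-class map. Therefore, for any $y \in \widehat{E}_{n-1}$,
\[
\langle \iota_{\text{flat}}(x), y \rangle_{\widehat{I_\Z}} = (x \circ I)(y) = x(I(y)) = \langle x, I(y)\rangle_{I_{\R/\Z}},
\]
which is the first asserted identity.

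Next I will verify the bottom identity. Write $z = (\omega, h) \in (\widehat{I_\Z E})^n$, so that $R(z) = \omega$. By definition of the pairing \eqref{eq_IE_diff_pairing}, and applying the compatibility square \eqref{diag_compatibility} which characterizes admissible pairs $(\omega, h)$,
\[
\langle z, a(w) \rangle_{\widehat{I_\Z}} = h(a(w)) \equiv \langle w, \omega\rangle_{H\R} = \langle R(z), w\rangle_{H\R} \pmod{\Z}
\]
for every $w \in H_n(E; \R)$, which is the second asserted identity.

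Since both identities follow immediately from the definitions, there is really no hard step here; the only content is that the model of $(\widehat{I_\Z E})^n$ has been engineered so that these compatibilities hold on the nose. The proposal is therefore simply to write out the two one-line computations above.
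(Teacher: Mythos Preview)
Your proof is correct and matches the paper's approach: the paper does not write out a proof for this lemma, treating both identities as immediate consequences of the explicit model $(\widehat{I_\Z E})^n = \{(\omega,h)\}$, the definition of $\iota_{\text{flat}}$, and the compatibility square \eqref{diag_compatibility}. Your two one-line computations are exactly the unwinding that the paper leaves implicit.
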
 
Similarly, we also have the following compatibility. 
\begin{lem}\label{lem_pairing_compatibility_2}
    In the following diagram, 
\begin{align*}
\vcenter{\xymatrixcolsep{0pc}
    \xymatrix{
   H^{n-1}(E; \R)\ar[d]^-{a}  &\times& H_{n-1}(E; \R)\ar[rrrrrr]^-{\langle \cdot, \cdot\rangle_{H\R}} &&&&&& \R \ar[d]^{\mod \Z}\\
    (\widehat{I_\Z E})^n &\times& \widehat{E}_{n-1} \ar[u]^-{R}\ar[rrrrrr]^-{\langle \cdot, \cdot \rangle_{\widehat{I_\Z}}}  &&&&&& \R/\Z
    }}
\end{align*}
the two pairings are compatible. 
\end{lem}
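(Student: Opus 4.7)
The plan is to reduce Lemma~\ref{lem_pairing_compatibility_2} to the already-established Lemma~\ref{lem_pairing_compatibility} by factoring the structure map $a \colon H^{n-1}(E;\R) \to (\widehat{I_\Z E})^n$ through the inclusion $\iota_\text{flat} \colon (I_{\R/\Z}E)^{n-1} \hookrightarrow (\widehat{I_\Z E})^n$ of the flat subgroup appearing in the top row of \eqref{diag_exact_diff_IE}. First I would observe that this factorization exists: the general differential cohomology axiom $R\circ a = d$ forces $R(a(\omega)) = d\omega = 0$ for any closed form $\omega \in H^{n-1}(E;\R)$, so $a(\omega)$ lies in $\ker(R) = \operatorname{im}(\iota_\text{flat})$ and can be written as $a(\omega) = \iota_\text{flat}(\tilde h_\omega)$ for a uniquely determined element $\tilde h_\omega \in (I_{\R/\Z}E)^{n-1} = \Hom(\pi_{n-1}E,\R/\Z)$.

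Next I would identify $\tilde h_\omega$ explicitly. The commutativity of the left square in \eqref{diag_exact_diff_IE} together with the rationalization map $\pi_{n-1}E \xrightarrow{\text{ch}} H_{n-1}(E;\R)$ characterizes $\tilde h_\omega$ as the composition
\[
\tilde h_\omega \colon \pi_{n-1} E \xrightarrow{\text{ch}} H_{n-1}(E;\R) \xrightarrow{\langle \omega, -\rangle_{H\R}} \R \xrightarrow{\bmod\,\Z} \R/\Z.
\]
With this identification, the top compatibility in Lemma~\ref{lem_pairing_compatibility} gives, for any $y \in \widehat{E}_{n-1}$,
\[
\langle a(\omega), y\rangle_{\widehat{I_\Z}} \;=\; \langle \iota_\text{flat}(\tilde h_\omega), y\rangle_{\widehat{I_\Z}} \;=\; \langle \tilde h_\omega, I(y)\rangle_{I_{\R/\Z}} \;=\; \langle \omega, \text{ch}(I(y))\rangle_{H\R} \pmod \Z.
\]

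Finally I would verify the relation $\text{ch}\circ I = R$ as maps $\widehat E_{n-1} \to H_{n-1}(E;\R)$. This is a standard compatibility built into any differential homology theory (entirely analogous to the commutative triangle in \eqref{diag_diff_homology_structure_pt} invoked in the proof of Lemma~\ref{lem:above}), and in the explicit model of Definition~\ref{def_diff_bordism} it is transparent: both $\text{ch}(I[M,g,\eta]) = \text{ch}[M,g]$ and $R[M,g,\eta] = \cw(M,g)$ are computed by integrating Chern--Weil representatives against $M$. Substituting $\text{ch}(I(y)) = R(y)$ into the previous display yields $\langle a(\omega), y\rangle_{\widehat{I_\Z}} \equiv \langle \omega, R(y)\rangle_{H\R} \pmod{\Z}$, which is the desired compatibility.

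The main obstacle, though a mild one, is the first step: pinning down the factorization of $a$ through $\iota_\text{flat}$ and confirming that the resulting $\tilde h_\omega$ has the claimed explicit form. Once that is established, the rest is a formal consequence of Lemma~\ref{lem_pairing_compatibility} and the basic structure of the differential bordism homology constructed in Appendix~\ref{app_subsec_diff_bordism}.
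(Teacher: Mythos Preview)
Your argument is correct, but it is somewhat more indirect than what the paper has in mind. The paper does not spell out a proof; the word ``Similarly'' indicates that, like Lemma~\ref{lem_pairing_compatibility}, the statement is meant to be read off directly from the explicit model of \cite{YamashitaDifferentialIE}. In that model the structure map $a\colon H^{n-1}(E;\R)\to(\widehat{I_\Z E})^n$ sends $\omega'$ to the pair $(0,h_{\omega'})$ with $h_{\omega'}(y)=\langle\omega',R(y)\rangle_{H\R}\bmod\Z$, and since the pairing $\langle-,-\rangle_{\widehat{I_\Z}}$ is by definition evaluation of the second component, the compatibility is then a tautology.

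Your route instead reduces to Lemma~\ref{lem_pairing_compatibility} via the factorization $a=\iota_{\text{flat}}\circ(\text{mod }\Z)$ and the identity $\mathrm{ch}\circ I=R$ on $\widehat{E}_{n-1}$. Both ingredients are standard and your verification of the second in the bordism model is fine. One point to tighten: the diagram~\eqref{diag_exact_diff_IE} does not itself display the map $a$, so ``commutativity of the left square'' does not literally give you the formula for $\tilde h_\omega$; what you are really invoking is the general differential-cohomology axiom that $a$ restricted to cohomology classes equals $\iota_{\text{flat}}$ composed with the reduction map $(I_\R E)^{n-1}\to(I_{\R/\Z}E)^{n-1}$. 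Once that is said, your argument goes through, and it has the minor advantage of not relying on the particular model for $a$---but at the cost of an extra detour through Lemma~\ref{lem_pairing_compatibility}.
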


\section{An analogue  for spin/spin$^c$ manifolds and $\KO$-theory}
\label{app:toy}
In the main part of this paper, we constructed a morphism
\begin{align}
    \alpha_{\stri/\spin} \colon \TMF \to \Sigma^{-20}I_\Z \MString/\MSpin, 
\end{align}
related it to the Anderson duality of topological modular forms,
and studied the induced pairings between $\pi_\bullet\TMF$ and $\pi_\bullet \MString/\MSpin$. 
In this section, we explain an analogous story for much simpler settings. We construct a morphism
\begin{align}
    \gamma_{\spin^c/\spin} \colon \KO \to \Sigma^{-2}I_\Z\MSpin^c/\MSpin, 
\end{align}
relate it to the Anderson self-duality of $\KO$-theory,
and study the induced pairings between $\pi_\bullet\KO$ and $\pi_\bullet \MSpin^c/\MSpin$. 
This section is logically independent from the main part of the paper, but we recommend the reader to refer to this section because it illustrates our basic strategy in a much simpler manner. 

In this section, we use the following notations for morphisms between spectra appearing in the commutative diagram in the $(\infty, 1)$-category of $\MSpin$-module spectra, 
\begin{align}\label{diag_ABS_rel}
    \vcenter{\xymatrix{
\MSpin \ar[d]^-{\ABS_{\spin}} \ar[r]^-{\iota'} &
\MSpin^c \ar[d]^-{\ABS_{\spin^c}} \ar[r]^-{C\iota'} &
\MSpin^c/\MSpin \ar[d]^-{\ABS_{\spin^c/\spin}} \\
\KO \ar[r]^-{c}& 
\K \ar[r]^-{R} & 
\K/\KO  
}}.
\end{align}
where each row is a homotopy fiber sequence, the bottom horizontal arrow is the complexification, the left and middle vertical arrows are Atiyah-Bott-Shapiro orientations, and the right vertical arrow is the morphism canonically induced on the homotopy cofibers.  
Here, we denoted the structure morphism $Cc$ in \eqref{diag_ABS_rel} by $R$
because 
 the complexification $c \colon \KO \to \K$ is a part of the {\it $\eta c R$-cofiber sequence} \cite{BrunerGreenlees}
 \begin{align}\label{eq_eta_c_R}
        \Sigma^{1}\KO \xrightarrow{\cdot \eta} \KO \xrightarrow{c} \K \xrightarrow{R} \Sigma^2 \KO,  
\end{align}
where $R := r \circ \beta^{-1}$ is the composition of the realification morphism $r \colon \K \to \KO$ and the Bott periodicity of $\K$,
so that we have the equivalence $\K/\KO \simeq \Sigma^2 \KO$.
Our convention is to denote the Bott element of $\K$ by $\beta\in\pi_2\K$.
We could have denoted $\K/\KO$ in \eqref{diag_ABS_rel} equivalently by $\Sigma^2 \KO$, but in order to make clear the analogy to $\KO((q))/\TMF$ in the main part of the paper, we intentionally denoted it by $\K/\KO$. 

\subsection{Secondary morphisms and the Anderson self-duality of $\KO$-theory}

Our starting point is the $\MSpin^c$-module morphism $\gamma_{\spin^c}: \K \to \Sigma^{-2}I_\bZ\MSpin^c$,
where we equip $\K$ with the $\MSpin^c$-module structure using the Atiyah-Bott-Shapiro orientation.
Due to our basic Lemma~\ref{lemma:basic}, it is specified by an element in $\pi_{2}I_\bZ \K$:
\begin{defn}
\label{eq_def_gamma_K}
    We let $\gamma_{\K} \in \pi_2 I_\Z \K$ to be the Anderson duality element of K-theory shifted by the Bott periodicity.
    In other words, via the isomorphism $\pi_2 I_\Z \K \simeq \Hom(\pi_{-2}K, \Z)\simeq \Hom(\Z \cdot \beta^{-1}, \Z)$, the element $\gamma_{\K}$ corresponds to the isomorphism sending $\beta^{-1}$ to $1$. 
\end{defn}


We now use the homotopy fiber sequence
\begin{align}\label{eq_K/KO_fiberseq}
    I_\Z \K/\KO \xrightarrow{I_\Z R} I_\Z \K \xrightarrow{I_\Z c} I_\Z \KO. 
\end{align}
We then have the following proposition:
\begin{prop}\label{prop_def_gamma_K/KO}
The morphism $I_\Z R$ induces an isomorphism
\begin{align}
    I_\Z R \colon \pi_{2}I_\Z \K/\KO \simeq \pi_{2}I_\Z \K. 
\end{align}
We define $\gamma_{ \K/\KO} \in \pi_{2}I_\Z \K/\KO $ to be the element which maps to $\gamma_{\K}$ under the isomorphism above. 
\end{prop}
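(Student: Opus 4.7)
The plan is to establish the isomorphism via a direct long exact sequence argument, analogous to the $\TMF$ case handled in the main text.

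First, I would write down the portion of the long exact sequence in homotopy groups associated to the homotopy fiber sequence \eqref{eq_K/KO_fiberseq} around degree $2$:
\begin{equation*}
\pi_{3}I_\Z \KO \to \pi_{2}I_\Z \K/\KO \xrightarrow{I_\Z R} \pi_{2}I_\Z \K \xrightarrow{I_\Z c} \pi_{2}I_\Z \KO.
\end{equation*}
The desired conclusion reduces to showing that the two flanking groups $\pi_{2}I_\Z \KO$ and $\pi_{3}I_\Z \KO$ both vanish.

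Next, I would invoke the universal coefficient style short exact sequence
\begin{equation*}
0 \to \Ext(\pi_{-n-1}\KO,\Z) \to \pi_{n}I_\Z \KO \to \Hom(\pi_{-n}\KO,\Z) \to 0
\end{equation*}
for $n=2,3$ and plug in the (Bott-periodic) homotopy groups of $\KO$, namely $\pi_0\KO \simeq \Z$, $\pi_1\KO \simeq \pi_2 \KO \simeq \Z/2$, $\pi_3\KO = 0$, $\pi_4\KO \simeq 2\Z$, $\pi_5\KO = \pi_6\KO = \pi_7\KO = 0$. For $n=2$ we get $\pi_{-2}\KO = \pi_6\KO = 0$ and $\pi_{-3}\KO = \pi_5\KO = 0$, so $\pi_{2}I_\Z \KO = 0$. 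For $n=3$ we get $\pi_{-3}\KO = \pi_5\KO = 0$ and $\pi_{-4}\KO = \pi_{4}\KO \simeq 2\Z$, which is free abelian so that $\Ext(2\Z,\Z) = 0$; hence $\pi_{3}I_\Z\KO = 0$ as well.

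Combining these two vanishings with the displayed exact sequence gives the desired isomorphism $I_\Z R \colon \pi_{2}I_\Z \K/\KO \xrightarrow{\sim} \pi_{2}I_\Z \K$, and the element $\gamma_{\K/\KO}$ is then defined as the unique preimage of $\gamma_{\K}$. Since every step is essentially a bookkeeping exercise with homotopy groups that are either standard or already recorded in the paper, no real obstacle arises; the argument is as straightforward as the analogous lift of $\alpha_{\KO((q))}$ to $\alpha_{\KO((q))/\TMF}$ in Sec.~\ref{subsec:secondary}, with the role of Facts~\ref{fact:tmf21} and \ref{fact:tmf22} now played by the vanishing $\pi_5\KO = \pi_6\KO = 0$ and the freeness of $\pi_4\KO$.
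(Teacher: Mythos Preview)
Your argument is correct and follows essentially the same route as the paper: both use the long exact sequence associated to \eqref{eq_K/KO_fiberseq} and show that the flanking terms $\pi_2 I_\Z\KO$ and $\pi_3 I_\Z\KO$ vanish via the Anderson-dual short exact sequence together with $\pi_{-2}\KO=\pi_{-3}\KO=0$ and $\Ext(\pi_{-4}\KO,\Z)=0$.
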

\begin{proof}
The long exact sequence for the homotopy fibration \eqref{eq_K/KO_fiberseq} gives
\begin{align}\label{eq_pi2_IZK/KO}
     \pi_{3}I_\bZ \KO \to \pi_{2} I_\Z  \K/\KO \xrightarrow{I_\Z R}\pi_{2}I_\bZ \K \to \pi_{2}I_\bZ \KO. 
\end{align}
Since $\Ext(\pi_{-4}\KO, \Z)$, $\pi_{-3}\KO$ and $\pi_{-2}\KO$ are all zero, the first and the last terms of \eqref{eq_pi2_IZK/KO} are zero. Thus the middle arrow is an isomorphism, as desired. 
\end{proof}

We apply Definition \ref{defn:mor} for $R=\KO$, $M=\K/\KO$ and $\alpha = \gamma_{\K/\KO}$. Following the convention in Remark \ref{rem:mor}, we get morphisms
\begin{align}
    \gamma_{\K/\KO} &\colon \K/\KO \to \Sigma^{-2} I_\Z \KO, \\
    \gamma^{\vee}_{\K/\KO} &\colon \KO \to \Sigma^{-2} I_\Z \K/\KO .
\end{align}

\begin{defn}[{$\gamma_{\spin^c/\spin}$}]
        We define an $\MSpin$-module morphism $\gamma_{\spin^c/\spin}$ to be the following composition, 
    \begin{align}\label{eq_def_gamma_spinc/spin}
        \gamma_{\spin^c/\spin} \colon \KO \xrightarrow{\gamma^{\vee}_{\K/\KO}}\Sigma^{-2}I_\Z \K/\KO \xrightarrow{I_\Z \ABS_{\spin^c/\spin}} \Sigma^{-2}I_\Z \MSpin^c/\MSpin. 
    \end{align}
    where $\ABS_{\spin^c/\spin}$ is given in \eqref{diag_ABS_rel}. 
\end{defn}
We also use the $\MSpin^c$-module morphism $\gamma_{\spin^c} $ given by the following composition. 
\begin{align}
    \gamma_{\spin^c} \colon \K \xrightarrow{\gamma^{\vee}_{\K}}\Sigma^{-2}I_\Z \K \xrightarrow{I_\Z \ABS_\spin^c} \Sigma^{-2}I_\Z \MSpin^c. 
\end{align}
The following diagram commutes. 
\begin{align}
    \vcenter{\hbox{\xymatrix{
    \KO \ar[rrr]^-{\gamma_{\spin^c/\spin}} \ar[d]^-{c} &&& \Sigma^{-2}I_\Z \MSpin^c/\MSpin \ar[d]^-{I_\Z C\iota'} \\
    \K \ar[rrr]^-{\gamma_{\spin^c}} &&& \Sigma^{-2}I_\Z \MSpin^c
    }}}. 
\end{align}

\subsubsection{Anderson self-duality of the $\eta c R$-cofiber sequence in terms of $\gamma_{\K/\KO}$}
This subsubsection corresponds to Subsection \ref{sec:xxx} in the main part. 
In this toy model, it is very straightforward to relate the secondary morphism $\gamma_{\K/\KO}$ with the Anderson self-duality of $\KO$: 

\begin{prop}\label{prop_main_toymodel}
    Under the identification $\K/\KO \simeq \Sigma^2 \KO$ explained below \eqref{diag_ABS_rel}, the element $\gamma_{\K/\KO} \in \pi_{2}I_\Z \K/\KO \simeq \pi_4 I_\Z \KO$ coincides with the Anderson self-duality element of $\KO$. 
    In particular, the morphisms
    \begin{align}
        \gamma_{\K/\KO} \colon \K/\KO \to \Sigma^{-2}I_\Z \KO, \quad \gamma^{\vee}_{\K/\KO} &\colon \KO \to \Sigma^{-2} I_\Z \K/\KO 
    \end{align}
    are equivalences of $\KO$-modules. 
\end{prop}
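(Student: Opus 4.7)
The plan is to reduce the statement to a direct computation on homotopy groups, using the fact that $\gamma_{\K/\KO}$ was characterized by a lifting property (Proposition \ref{prop_def_gamma_K/KO}). Applying $I_\Z$ to the $\eta c R$-cofiber sequence \eqref{eq_eta_c_R} and identifying $I_\Z(\K/\KO) \simeq I_\Z \Sigma^2 \KO \simeq \Sigma^{-2} I_\Z \KO$, we obtain a cofiber sequence $\Sigma^{-2}I_\Z \KO \xrightarrow{I_\Z R} I_\Z \K \xrightarrow{I_\Z c} I_\Z \KO$, and the identification $\pi_2 I_\Z(\K/\KO) \simeq \pi_4 I_\Z \KO$ appearing in the proposition is precisely the one induced by this identification.

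The heart of the proof is computing the map $R_* \colon \pi_{-2}\K \to \pi_{-4}\KO$ induced on homotopy. Because $\pi_{-2}\KO = \pi_{-3}\KO = 0$, the long exact sequence of the cofiber $\KO \xrightarrow{c} \K \xrightarrow{R} \Sigma^2 \KO$ shows that $R_*$ is an isomorphism between $\pi_{-2}\K \simeq \Z\cdot \beta^{-1}$ and $\pi_{-4}\KO \simeq 2\Z$. In particular $R_*(\beta^{-1})$ is a generator of $\pi_{-4}\KO$, i.e.\ $2\beta^{-2}$ under our convention $\pi_{-4}\KO \simeq 2\Z \subset \Z \simeq \pi_{-4}\K$. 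Now the Anderson self-duality element $\zeta_\KO \in \pi_4 I_\Z \KO \simeq \Hom(\pi_{-4}\KO,\Z)$ is characterized, as recalled in the introduction, by sending the generator $2\beta^{-2}$ of $\pi_{-4}\KO$ to $1\in\Z$. Dualizing $R_*$ shows that the image of $\zeta_\KO$ under
\[
I_\Z R \colon \pi_4 I_\Z \KO \simeq \pi_2 I_\Z(\K/\KO) \to \pi_2 I_\Z \K \simeq \Hom(\pi_{-2}\K,\Z)
\]
is the homomorphism sending $\beta^{-1}$ to $1$, which is exactly $\gamma_\K$ by Definition \ref{eq_def_gamma_K}. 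By the uniqueness of the lift in Proposition \ref{prop_def_gamma_K/KO}, we conclude $\gamma_{\K/\KO} = \zeta_\KO$ under the stated identification.

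The equivalence statement for the induced $\KO$-module morphisms then follows formally. Under $\K/\KO \simeq \Sigma^2 \KO$, the morphism $\gamma_{\K/\KO}\colon \K/\KO \to \Sigma^{-2}I_\Z\KO$ becomes the shifted Anderson self-duality morphism $\Sigma^2 \KO \to \Sigma^{-2}I_\Z \KO$, which is an equivalence by the classical Anderson self-duality of $\KO$; similarly for $\gamma^{\vee}_{\K/\KO}$.

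The main obstacle is purely bookkeeping: keeping the identifications $\K/\KO \simeq \Sigma^2 \KO$, $\pi_2 I_\Z(\K/\KO)\simeq \pi_4 I_\Z \KO$, and $\pi_4 I_\Z \KO \simeq \Hom(\pi_{-4}\KO,\Z)$ consistent with each other and with the sign conventions for the Bott element $\beta$ and the Anderson duality pairing. In particular, one has to verify that the generator of $\pi_{-4}\KO$ produced by $R_*(\beta^{-1})$ matches the one against which $\zeta_\KO$ is normalized, so that no sign discrepancy appears. Once the conventions are fixed, the argument is elementary.
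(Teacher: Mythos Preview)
Your argument is correct and follows essentially the same approach as the paper. The paper's proof is more terse: it simply observes that $\pi_4 I_\Z \KO \simeq \Z$, so the Anderson self-duality element is a generator (unique up to sign), and since $\gamma_{\K/\KO}$ is by construction the image of the generator $\gamma_\K$ under the isomorphism $I_\Z R$ of Proposition~\ref{prop_def_gamma_K/KO}, it is also a generator---hence they agree up to the inherent sign ambiguity. Your version unpacks this by explicitly tracking $R_*\colon \pi_{-2}\K \to \pi_{-4}\KO$, which is fine but not strictly needed: once you know both sides are $\Z$ and both elements are generators, you are done.
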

\begin{proof}
    The self-duality element of $\KO$ is the generator of $\pi_4 I_\Z \KO \simeq \Z$, which is unique up to a sign. 
    Our definition of $\gamma_{\K/\KO}$ in Proposition \ref{prop_def_gamma_K/KO} is clearly equivalent to that. 
\end{proof}

Moreover, it is clear from the construction in the last subsubsection that $\gamma_{\K/\KO}$ and $\gamma_K$ show the Anderson self-duality of the $\eta c R$-cofiber sequence \eqref{eq_eta_c_R} as follows. 
\begin{prop}
    We have a commutative diagram
    \begin{align}
        \vcenter{\xymatrix{
        \KO \ar[r]^-{c} \ar[d]_-{\simeq}^-{\gamma_{\K/\KO}^\vee} & \K \ar[r]^-{R} \ar[d]_-{\simeq}^-{\gamma_\K} & \K/\KO \simeq \Sigma^{2}\KO \ar[d]_-{\simeq}^-{\gamma_{\K/\KO}} \\
        \Sigma^{-4}\KO \simeq \Sigma^{-2}I_\Z \K/\KO \ar[r]^-{I_\Z R}  & \Sigma^{-2}I_\Z \K \ar[r]^-{I_\Z c} &  \Sigma^{-2}I_\Z \KO
        }}
    \end{align}
    where the rows are $\eta c R$-cofiber sequence \cite{BrunerGreenlees} and its Anderson dual, and the vertical arrows are equivalences. 
\end{prop}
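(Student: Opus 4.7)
The plan has two parts: verify that the three vertical arrows are equivalences, and then check commutativity of the two squares separately using Lemma~\ref{lemma:basic}.

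For the vertical arrows, the left and right ones are shown to be equivalences in Proposition~\ref{prop_main_toymodel}. For the middle one, the element $\gamma_{\K} \in \pi_2 I_\Z \K$ of Definition~\ref{eq_def_gamma_K} is, by construction, the Bott-shifted Anderson self-duality class of $\K$, so its associated $\K$-module morphism $\gamma_\K \colon \K \to \Sigma^{-2} I_\Z \K$ is an equivalence.

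For the right square, I would observe that both $I_\Z c \circ \gamma_\K$ and $\gamma_{\K/\KO} \circ R$ are $\KO$-module maps $\K \to \Sigma^{-2} I_\Z \KO$, and by Lemma~\ref{lemma:basic} they are in bijection with elements of $\pi_2 I_\Z \K$ via the underlying map $\K \to \Sigma^{-2} I_\Z$ obtained by forgetting the $\KO$-action on the target. Unwinding the definitions, the composition $I_\Z c \circ \gamma_\K$ corresponds tautologically to $\gamma_\K \in \pi_2 I_\Z \K$, while $\gamma_{\K/\KO} \circ R$ corresponds to $(I_\Z R)(\gamma_{\K/\KO}) \in \pi_2 I_\Z \K$. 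By the defining property of $\gamma_{\K/\KO}$ given in Proposition~\ref{prop_def_gamma_K/KO}, these two elements coincide, which gives commutativity of the right square.

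For the left square, I would apply the dual half of Lemma~\ref{lemma:basic}: both $I_\Z R \circ \gamma_{\K/\KO}^\vee$ and $\gamma_\K \circ c$ are $\KO$-module maps $\KO \to \Sigma^{-2} I_\Z \K$, and since $\K$ is a $\KO$-module, such maps are classified by $\pi_2 I_\Z \K$ through the assignment $\text{mor}(\alpha)^\vee \leftrightarrow \alpha$. The composition $\gamma_\K \circ c$ is, by definition, $\text{mor}(\gamma_\K)^\vee$ regarded via the unit $c \colon \KO \to \K$ and hence corresponds to $\gamma_\K$. The composition $I_\Z R \circ \gamma_{\K/\KO}^\vee = I_\Z R \circ \text{mor}(\gamma_{\K/\KO})^\vee$ equals $\text{mor}((I_\Z R)(\gamma_{\K/\KO}))^\vee = \text{mor}(\gamma_\K)^\vee$ by naturality of the $(-)^\vee$ construction in the element, again using the defining property of $\gamma_{\K/\KO}$. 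Thus both compositions correspond to $\gamma_\K$ and the square commutes.

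The main obstacle I anticipate is purely notational: the arrows $\gamma_\K$, $\gamma_{\K/\KO}$, $\gamma_{\K/\KO}^\vee$ all originate from one element in the relevant group $\pi_2 I_\Z(-)$, but appear in the diagram as module maps with various sources and targets, and care is needed to verify that each composition indeed corresponds to the claimed element under the bijection of Lemma~\ref{lemma:basic}. Once this bookkeeping is done, both squares commute essentially by construction, since $\gamma_{\K/\KO}$ was designed precisely as the lift of $\gamma_\K$ under $I_\Z R$.
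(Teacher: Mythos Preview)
Your proposal is correct and is essentially what the paper has in mind: the paper does not give a proof beyond the sentence ``it is clear from the construction in the last subsubsection,'' and your argument makes that construction explicit. Both squares commute precisely because $\gamma_{\K/\KO}$ was defined in Proposition~\ref{prop_def_gamma_K/KO} as the unique lift of $\gamma_\K$ along $I_\Z R$, and your use of Lemma~\ref{lemma:basic} to reduce the comparison of $\KO$-module maps to equality of elements in $\pi_2 I_\Z \K$ is exactly the right bookkeeping.
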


\subsection{Pairings induced by secondary morphisms}
This subsection corresponds to Section \ref{sec:pairings} in the main body of the paper. 
We study of the pairings induced by our secondary morphisms on the homotopy groups of $\KO$, $\K/\KO$ and $\MSpin^c/\MSpin$.
Of course our secondary morphism is a re-interpretation of the familiar Anderson self-duality of $\KO$ by Proposition \ref{prop_main_toymodel}, but our interpretation naturally connects it with the relative bordisms in an interesting way. 
Here we record the information of bordism groups which are relevant for us in this section in Table~\ref{table_toy}.
There, all the $S^1$'s appearing in the table are equipped with the nonbounding spin structure.\footnote{%
Most of the data there are either well-known or easy to determine. 
For example, the map $\iota':\pi_4(\MSpin)\to \pi_4(\MSpin^c)$
can be found by  tabulating the characteristic numbers: \[
\begin{array}{c|ccc}
& [K3] & [\mathbb{CP}^2] &  [(\mathbb{CP}^1)^{\times 2} ]  \\
\hline
\frac{p_1}{24}-\frac{(c_1)^2}{8} & -2 & 0 & -1\\
(c_1)^2  & 0 & 1 & 8
\end{array}
\].
It is also known that $\MSpin^c/\MSpin \simeq \Sigma^{2} \MSpin \wedge BU(1)_+$ \cite[Example 7.26]{many},
which can also be used to find $\pi_n(M\Spin^c/\MSpin)$.
}

\begin{table}[h]
\[
\begin{array}{c|@{\quad}cccccc}
n & \stackrel{\partial}{\longrightarrow }&\pi_n(\MSpin)&\stackrel{\iota'}{\longrightarrow}& \pi_n(\MSpin^c) &\stackrel{C\iota'}{\longrightarrow}& \pi_n(\MSpin^c/\MSpin) \\
\hline
4 &\longrightarrow& \bZ \cdot [K3] &\stackrel{(16, -2)}{\longrightarrow}& 
\begin{array}{r@{\,}c@{\,}l}
\bZ &\cdot& [\mathbb{CP}^2]\vphantom{\text{\Huge H}} \\
\oplus \bZ &\cdot& [(\mathbb{CP}^1)^{\times 2} ] 
\end{array}
&{\longrightarrow}& 
\begin{array}{r@{\,}c@{\,}l}
\bZ/2 &\cdot&  [8\overline{\mathbb{CP}^2} \sharp (\mathbb{CP}^1)^{\times 2}, \varnothing] \vphantom{\text{\Huge H}}\\ 
\oplus \bZ& \cdot& [\mathbb{CP}^2 , \varnothing] 
\end{array}
\\
3 &\longrightarrow& 0 &\longrightarrow& 0 &\longrightarrow& \Z/2 \cdot [D^2 \times S^1, (S^1)^{\times 2} ] \\
2 &\stackrel{\simeq}{\longrightarrow}& \bZ/2  \cdot [(S^1)^{\times 2} ]  &\stackrel{0}{\longrightarrow}& \bZ\cdot[\mathbb{CP}^1]&\stackrel{\times 2}{\longrightarrow}& \bZ \cdot [D^2, S^1]  \\
1 &\longrightarrow& \bZ/2 \cdot [S^1]   &\longrightarrow& 0&\longrightarrow& 0\\
0 &\longrightarrow& \bZ &\longrightarrow& \bZ &\longrightarrow& 0
\end{array}
\]

\caption{Homotopy groups of $\MSpin^c$, $\MSpin$ and $\MSpin^c/\MSpin$. }
\label{table_toy}
\end{table}

\subsubsection{Induced non-torsion pairing
}
Applying Definition \ref{def_int_pairing} to $\gamma_{\K/\KO}$ and $\gamma_{\spin^c/\spin}$, we get the non-torsion pairings
\begin{align}
    \langle - , - \rangle_{\gamma_{\K/\KO}} &\colon \pi_{-d} \KO \times \pi_{d-2} \K/\KO \to \Z, \label{eq_pairing_free_K/KO}\\
    \langle - , - \rangle_{\gamma_{\spin^c/\spin}}&\colon \pi_{-d} \KO \times \pi_{d-2} \MSpin^c/\MSpin \to \Z. \label{eq_pairing_free_MSpinc/MSpin}
\end{align}
The result of the pairing \eqref{eq_pairing_free_K/KO} is simply as follows. 
\begin{prop}\label{prop_free_pairing_toy}
    The pairing \eqref{eq_pairing_free_K/KO} is nontrivial only for $d \equiv 0 \pmod 4$. For those cases, we have $\pi_{-d} \KO \simeq \Z$ and $\pi_{d-2} \K/\KO \simeq \Z$, and the pairing is a perfect pairing $\Z \times \Z \to \Z$. 
\end{prop}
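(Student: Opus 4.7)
The plan is to first leverage the equivalence $\K/\KO \simeq \Sigma^{2}\KO$ supplied by the $\eta c R$-cofiber sequence \eqref{eq_eta_c_R} to identify $\pi_{d-2}(\K/\KO) \simeq \pi_{d-4}\KO$. Under this identification, Proposition~\ref{prop_main_toymodel} asserts that $\gamma_{\K/\KO}$ is the Anderson self-duality element of $\KO$ in $\pi_{4}I_{\Z}\KO$, so the pairing \eqref{eq_pairing_free_K/KO} coincides with the Anderson self-duality pairing of $\KO$,
\[
\pi_{-d}\KO \times \pi_{d-4}\KO \to \Z.
\]

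Second, for $d \not\equiv 0 \pmod 4$, I would invoke the $8$-periodic structure of $\pi_{\bullet}\KO$: the group $\pi_{n}\KO$ is a finite $2$-group (either $0$ or $\Z/2$) whenever $n \not\equiv 0 \pmod 4$, so at least one of $\pi_{-d}\KO$ and $\pi_{d-4}\KO$ is entirely torsion. A $\Z$-valued pairing annihilates torsion elements on either side, so the pairing vanishes identically in these degrees.

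Third, for $d \equiv 0 \pmod 4$, both $\pi_{-d}\KO$ and $\pi_{d-4}\KO$ are abstractly isomorphic to $\Z$ (even though, under our convention \eqref{eq_KO_convention}, one of the two may be realized as the subgroup $2\Z \subset \Z$ depending on the residue of $d$ modulo $8$). The identification from the first step exhibits the pairing as the Anderson self-duality pairing of $\KO$, which is perfect on the free parts of the homotopy groups; one may verify this concretely by pairing the $\KO$-theoretic Bott generators and checking that the resulting bilinear form on $\Z \times \Z$ has determinant $\pm 1$.

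The only substantive input is Proposition~\ref{prop_main_toymodel}, which has already been established, so no serious obstacle arises. This proposition should be viewed as a direct reformulation of the classical Anderson self-duality of $\KO$ through the cofiber $\K/\KO$, in exact parallel with the role played by Theorem~\ref{main} in the main body of the paper.
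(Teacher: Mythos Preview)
Your proposal is correct and follows precisely the second route the paper indicates: reducing to Proposition~\ref{prop_main_toymodel} so that the pairing becomes the classical Anderson self-duality pairing of $\KO$, then reading off the result from the known structure of $\pi_\bullet\KO$. The paper's own proof is only the one-line remark that the result is easy from the definition or from Proposition~\ref{prop_main_toymodel}, so your write-up is in fact more detailed than what the paper supplies.
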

\noindent We can prove Proposition \ref{prop_free_pairing_toy} easily by definition. We can also resort to Proposition \ref{prop_main_toymodel}. 

Next let us consider the pairing \eqref{eq_pairing_free_MSpinc/MSpin}.
As it is the composition of $\ABS_{\spin^c/\spin} $ 
and the pairing \eqref{eq_pairing_free_K/KO}, we can understand the result from that aspect.
But here we give an example of a rather direct computation of the pairing, which is in the spirit of the main body of the paper, c.f., Subsubection \ref{sec_example_free}. 
\begin{ex}[The pairing with $\pi_2 \MSpin^c/\MSpin$]
    Let us consider $d=4$ case. We refer to Table \ref{table_toy} to see $\pi_2 \MSpin^c/\MSpin = \Z \cdot [D^2, S^1]$. 
In contrast, if we denote a generator of $\pi_{-4} \KO \simeq \Z$ by $A$, the complexification $c \colon \pi_{-4} \KO \to \pi_{-4} \K$ maps $A$ to $\pm 2\beta^{-2}$, where $\beta \in \pi_2 \K$ is the complex Bott element. Let us fix the sign of $A$ so that it satisfies $c(A) = 2\beta^{-2}$. 
Then we claim that
\begin{align}\label{eq_pairing_free_toy_ex}
    \langle A, [D^2, {S^1}] \rangle_{\gamma_{\spin^c/\spin}} = 1. 
\end{align}
This can be proved by  the following compatibility of the pairings: 
\begin{align}
    \vcenter{\xymatrixcolsep{0pc}\xymatrix{
    \pi_{-4}\KO \ar[d]^-{c} & \times & \pi_{2}\MSpin^c/\MSpin  \ar[rrrrrr]^-{\langle -, - \rangle_{\gamma_{\spin^c/\spin}}} &&&&&& \Z \ar@{=}[d]\\
   \pi_{-4}\K \ar@{=}[d]  & \times&\pi_{2}\MSpin^c \ar[rrrrrr]^-{\langle -, - \rangle_{\gamma_{\spin^c}}}\ar[u]^-{C\iota'} \ar[d]^-{\ABS_{\spin^c}} &&&&&& \Z  \ar@{=}[d] \\
   \pi_{-4}\K  & \times&\pi_{2}\K \ar[rrrrrr]^-{\langle -, - \rangle_{\gamma_{\K}}} &&&&&& \Z
    }}. 
\end{align}
We compute
\begin{align}
    2\langle A, [D^2, {S^1}] \rangle_{\gamma_{\spin^c/\spin}}
    &=\left\langle A, C\iota' \left([\mathbb{CP}^1] \right) \right\rangle_{\gamma_{\spin^c/\spin}} \\
    &= \left\langle c(A), [\mathbb{CP}^1]\right\rangle_{\gamma_{\spin^c}} \\
    &= 2\left\langle \beta^{-2}, \ABS_{\spin^c}([\mathbb{CP}^1])\right\rangle_{\gamma_{\K}} = 2\left\langle \beta^{-2}, \beta\right\rangle_{\gamma_{\K}}= 2, 
\end{align}
verifying \eqref{eq_pairing_free_toy_ex}. 
\end{ex}

\subsubsection{Induced torsion pairings}
Now we discuss the torsion pairings. Applying Definition \ref{def_torsion_pairing} to $\gamma_{\K/\KO}$ and $\gamma_{\spin^c/\spin}$, we get the torsion pairings
\begin{align}
    ( - , - )_{\gamma_{\K/\KO}} &\colon (\pi_{-d} \KO)_\tor \times (\pi_{d-3} \K/\KO)_\tor \to \Q/\Z, \label{eq_pairing_tor_K/KO}\\
    ( - , - )_{\gamma_{\spin^c/\spin}}&\colon (\pi_{-d} \KO)_\tor \times (\pi_{d-3} \MSpin^c/\MSpin)_\tor \to \Q/\Z. \label{eq_pairing_tor_MSpinc/MSpin}
\end{align}
The result of the pairing \eqref{eq_pairing_tor_K/KO} is simply as follows. 
\begin{prop}\label{prop_tor_pairing_toy}
    The pairing \eqref{eq_pairing_tor_K/KO} is nontrivial only for $d \equiv -1, -2 \pmod 8$. For those cases, we have $(\pi_{-d} \KO)_\tor \simeq \Z/2$ and $(\pi_{d-2} \K/\KO)_\tor \simeq \Z/2$, and the pairings are the perfect pairing $\Z/2 \times \Z/2 \to \frac{1}{2}\Z/\Z \subset \Q/\Z$. 
\end{prop}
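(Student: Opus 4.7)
The plan is to reduce the statement entirely to Proposition \ref{prop_main_toymodel}, which identifies $\gamma_{\K/\KO}^\vee \colon \KO \to \Sigma^{-2} I_\Z (\K/\KO)$ as a $\KO$-module equivalence, and under the equivalence $\K/\KO \simeq \Sigma^2 \KO$ identifies $\gamma_{\K/\KO}$ with the Anderson self-duality element of $\KO$. Once this is in hand, the torsion pairing \eqref{eq_pairing_tor_K/KO} is nothing but the classical Anderson torsion pairing of $\KO$ in disguise, and the proposition reduces to a bookkeeping of degrees.

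First I would compute both sides. The torsion in $\pi_n \KO$ is $\Z/2$ exactly when $n \equiv 1, 2 \pmod 8$ and vanishes otherwise, so $(\pi_{-d}\KO)_\tor$ is $\Z/2$ precisely when $d \equiv -1, -2 \pmod 8$. Using $\K/\KO \simeq \Sigma^2 \KO$ one has $\pi_{d-3}\K/\KO \simeq \pi_{d-5}\KO$, whose torsion is $\Z/2$ precisely when $d - 5 \equiv 1, 2 \pmod 8$, i.e., when $d \equiv -1, -2 \pmod 8$. So both sides are $\Z/2$ in exactly the same range of degrees and the pairing is automatically trivial outside of them.

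Next I would invoke the general principle that whenever $\alpha \colon E \to \Sigma^{-s} I_\Z F$ is an equivalence, the induced torsion pairing $(\pi_d E)_\tor \times (\pi_{-d-s-1} F)_\tor \to \Q/\Z$ is perfect. Indeed, the equivalence yields $\pi_d E \simeq \pi_{d+s} I_\Z F$, and the universal coefficient sequence
\[
0 \to \Ext(\pi_{-d-s-1} F, \Z) \to \pi_{d+s} I_\Z F \to \Hom(\pi_{-d-s} F, \Z) \to 0
\]
identifies the torsion subgroup of the middle term with $\Hom((\pi_{-d-s-1}F)_\tor, \Q/\Z)$. Unraveling Definition \ref{def_torsion_pairing} shows that the resulting composite $(\pi_d E)_\tor \hookrightarrow \pi_d E \simeq \pi_{d+s} I_\Z F \twoheadrightarrow \Hom((\pi_{-d-s-1}F)_\tor, \Q/\Z)$ is precisely the adjoint of $(-,-)_\alpha$. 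Applying this to $\alpha = \gamma_{\K/\KO}^\vee$ with $(E, F, s) = (\KO, \K/\KO, 2)$ gives the perfectness of \eqref{eq_pairing_tor_K/KO}. Since both sides are $\Z/2$ in the relevant degrees, perfectness forces the pairing to be the unique nontrivial map $\Z/2 \times \Z/2 \to \tfrac{1}{2}\Z/\Z \subset \Q/\Z$.

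The main obstacle: there really isn't one, since the substantive content has been absorbed into Proposition \ref{prop_main_toymodel}. The only delicate point is verifying that the isomorphism on torsion subgroups induced by the spectrum-level equivalence genuinely represents the pairing of Definition \ref{def_torsion_pairing}, but this is a routine unwinding of definitions.
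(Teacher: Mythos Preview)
Your proposal is correct and takes essentially the same approach as the paper, which simply states that the proposition ``directly follows from Proposition~\ref{prop_main_toymodel}.'' You have filled in the degree bookkeeping and the general perfectness argument that the paper leaves implicit.
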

Again, Proposition \ref{prop_tor_pairing_toy} directly follows from Proposition \ref{prop_main_toymodel}. 
But here, we want to point out that it relies on the knowledge of the Anderson self-duality of $\KO$. It is quite nontrivial to compute the result of the torsion pairing directly from the definition of $\gamma_{\K/\KO}$ and that of torsion pairings. 
This is exactly analogous to the challenges we faced in the main part of the paper, especially in Section \ref{sec:diff}. 
Our strategy is to compute the geometric pairing \eqref{eq_pairing_tor_MSpinc/MSpin} instead. We do it by refining the pairings to {\it differential} pairings explained in the next section. 
More concretely, we will establish Proposition \ref{prop_tor_pairing_toy} by proving the following two propositions using the differential pairing:

\begin{prop}\label{claim_toy_7}
    The torsion pairing \eqref{eq_pairing_tor_MSpinc/MSpin} for $d = 7$ gives
    \begin{align}\label{eq_toy_7}
        ( \eta B^{-1},[8\overline{\mathbb{CP}^2} \sharp (\mathbb{CP}^1)^{\times 2}, \varnothing] )_{\gamma_{\spin^c/\spin}} = \frac{1}{2}. 
    \end{align}
\end{prop}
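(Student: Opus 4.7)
The plan is to compute the torsion pairing by refining it to a differential pairing, parallel to the strategy of Subsection~\ref{subsec_diff_pairing}. The next section will introduce a differential refinement $\widehat{\gamma}_{\spin^c/\spin}\colon\widehat{\KO}^{-d}(\pt)\times\reallywidehat{\MSpin^c/\MSpin}_{-d-3}\to\R/\Z$ of $\gamma_{\spin^c/\spin}$, and by Lemma~\ref{lem_diff_pairing_torsion} the torsion pairing of Proposition~\ref{claim_toy_7} coincides with $\widehat{\gamma}_{\spin^c/\spin}(\widehat{\eta B^{-1}},\widehat{[Y,\varnothing]})$ for any differential $\spin^c$ structure on $Y:=8\overline{\mathbb{CP}^2}\sharp(\mathbb{CP}^1)^{\times 2}$, where $\widehat{\eta B^{-1}}\in\widehat{\KO}^{7}(\pt)\simeq\Z/2$ is the unique (flat) lift of $\eta B^{-1}$; here I use that $\pi_{-6}\KO=0$ forces $\widehat{\KO}^7(\pt)\simeq\pi_{-7}\KO=\Z/2$.

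Next I would establish the pushforward formula analogous to Proposition~\ref{prop_diff_pairing_generalformula}, which in this toy setting presents $\widehat{\gamma}_{\spin^c/\spin}(\widehat{x},[N,\varnothing,g_N^{\spin^c}])$ as the composition $\widehat{R}\circ\widehat{p}^N_{\K}\circ\widehat{c}\circ(p^N)^*$ applied to $\widehat{x}$, followed by the tautological isomorphism $\reallywidehat{\K/\KO}^{d-n}(\pt)\simeq\R/\Z$, with $\widehat{p}^N_\K$ the differential $\K$-pushforward along the closed $\spin^c$ manifold $N$. The key input is the Bockstein identification $\widehat{c}(\widehat{\eta B^{-1}})=a(1/2)\in\widehat{\K}^{7}(\pt)\simeq\R/\Z$: at the level of rationalized homotopy, the real lift $\tfrac{1}{2}B^{-1}\in\pi_{-8}\KO\otimes\R$ of $\eta B^{-1}$ complexifies to $\tfrac{1}{2}\beta^{-4}\in\pi_{-8}\K\otimes\R$, which represents $1/2\pmod\Z$.

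Then I would invoke the compatibility diagram \eqref{diag_diff_gen_pairing_compatibility} applied to $a\colon\K^{6}_\R(\pt)\to\widehat{\K}^{7}(\pt)$: the substitution $\widehat{c}(\widehat{\eta B^{-1}})=a(1/2)$ reduces the computation to the realified pairing between $1/2\in\K^{6}_\R(\pt)$ and the curvature of $[Y,\varnothing,g_Y^{\spin^c}]$, yielding
\[
\widehat{\gamma}_{\spin^c/\spin}(\widehat{\eta B^{-1}},[Y,\varnothing,g_Y^{\spin^c}])=\tfrac{1}{2}\cdot\mathrm{Ind}_{\spin^c}(Y)\pmod\Z.
\]
Finally, $\mathrm{Ind}_{\spin^c}(Y)=8\cdot\mathrm{Ind}_{\spin^c}(\overline{\mathbb{CP}^2})+\mathrm{Ind}_{\spin^c}((\mathbb{CP}^1)^{\times 2})=-8+1=-7$, so the pairing evaluates to $-7/2\equiv 1/2\pmod\Z$, as claimed.

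The main obstacle is the setup work done in the next section: defining $\widehat{\gamma}_{\spin^c/\spin}$, proving the pushforward formula analogous to Proposition~\ref{prop_diff_pairing_generalformula}, and verifying the Bockstein identification of $\widehat{c}$ at the level of flat classes, which requires a careful analysis of the $\eta c R$-cofiber sequence in differential cohomology. Once this framework is in place, the proof of Proposition~\ref{claim_toy_7} reduces to the elementary index-theoretic computation above.
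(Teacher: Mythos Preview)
Your overall strategy coincides with the paper's: pass to the differential pairing via Lemma~\ref{lem_diff_pairing_torsion}, push through the complexification to land in $\widehat{\K}^7\simeq\R/\Z$, and then reduce to a spin$^c$ index computation on $Y=8\overline{\mathbb{CP}^2}\sharp(\mathbb{CP}^1)^{\times 2}$. The paper organises this via the compatibility diagram relating $\widehat{\gamma}_{\spin^c/\spin}$ and $\widehat{\gamma}_{\spin^c}$ along $(c,C\iota')$ rather than a pushforward formula, but that is only cosmetic; your final index computation $\mathrm{Ind}_{\spin^c}(Y)=-7$ matches the paper's use of $\mathrm{ABS}([\mathbb{CP}^2])=\mathrm{ABS}([(\mathbb{CP}^1)^{\times 2}])=\beta^2$.

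The one place where your sketch goes wrong is the justification of the key identity $\widehat{c}(\eta B^{-1})=a(1/2)\in\widehat{\K}^7$. Your heuristic invokes a ``real lift $\tfrac12 B^{-1}\in\pi_{-8}\KO\otimes\R$'' which is in the wrong degree: the structure map is $a\colon\KO^{6}_\R\to\widehat{\KO}^7$, and $\KO^{6}_\R=\pi_{-6}\KO\otimes\R=0$, so no rationalised lift on the $\KO$ side exists at all. There is thus no way to compute $\widehat{c}$ by ``complexifying a real lift'' in this degree. The paper proves the identity instead by a direct geometric argument: after multiplying by the periodicity element it reduces to computing $c\colon\widehat{\KO}^{-1}\to\widehat{\K}^{-1}$, and both sides are realised as differential pushforwards of $1$ along the nonbounding spin circle $S^1$. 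The $\widehat{\KO}$-integration gives $\eta$, while the $\widehat{\K}$-integration over the induced spin$^c$ circle (whose determinant $U(1)$-connection has holonomy $-1$) gives $\tfrac12\beta\in\R\beta/\Z\beta$. You correctly flag this identity as the main obstacle, but you should replace your Bockstein sketch with this pushforward argument, since the rationalisation route cannot work here.
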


\begin{prop}\label{claim_toy_6}
    The torsion pairing \eqref{eq_pairing_tor_MSpinc/MSpin} for $d = 6$ gives
    \begin{align}\label{eq_toy_6}
        ( \eta^2 B^{-1}, [D^2 \times S^1, (S^1)^{\times 2} ] )_{\gamma_{\spin^c/\spin}} = \frac{1}{2}. 
    \end{align}
\end{prop}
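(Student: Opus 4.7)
The plan is to reduce the computation to the Anderson self-duality pairing of $\KO$, by exploiting the factorization of $\gamma_{\spin^c/\spin}$ through $\ABS_{\spin^c/\spin}$ and $\gamma^\vee_{\K/\KO}$ in \eqref{eq_def_gamma_spinc/spin}, combined with the naturality of the induced torsion pairing. Applied to our situation this gives
\[
( \eta^2 B^{-1}, [D^2\times S^1, (S^1)^{\times 2}] )_{\gamma_{\spin^c/\spin}} = ( \eta^2 B^{-1}, \ABS_{\spin^c/\spin}([D^2\times S^1, (S^1)^{\times 2}]) )_{\gamma_{\K/\KO}},
\]
shifting the question to a computation purely in terms of $\KO$-theoretic data.

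The key geometric step is then to identify $\ABS_{\spin^c/\spin}([D^2\times S^1, (S^1)^{\times 2}]) \in \pi_3 \K/\KO$. Applying the naturality of the two horizontal cofiber sequences in \eqref{diag_ABS_rel} on homotopy groups, and using that $\pi_3 \MSpin^c = \pi_3 \K = 0$ together with the vanishing of both $\iota' \colon \pi_2 \MSpin \to \pi_2 \MSpin^c$ (from Table~\ref{table_toy}) and $c \colon \pi_2 \KO \to \pi_2 \K$, one sees that the connecting maps $\pi_3 \MSpin^c/\MSpin \xrightarrow{\sim} \pi_2 \MSpin$ and $\pi_3 \K/\KO \xrightarrow{\sim} \pi_2 \KO$ are both isomorphisms. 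Under the first of these, the generator $[D^2 \times S^1, (S^1)^{\times 2}]$ corresponds to $[(S^1)^{\times 2}] \in \pi_2 \MSpin$, which is well-known to map to $\eta^2 \in \pi_2 \KO$ under $\ABS_\spin$. Hence $\ABS_{\spin^c/\spin}([D^2\times S^1, (S^1)^{\times 2}])$ is the generator of $\pi_3 \K/\KO \simeq \Z/2$, which under the identification $\K/\KO \simeq \Sigma^2 \KO$ of Proposition~\ref{prop_main_toymodel} corresponds to $\eta \in \pi_1 \KO$.

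Finally, by Proposition~\ref{prop_main_toymodel}, $\gamma_{\K/\KO}$ implements the Anderson self-duality of $\KO$, so the remaining computation becomes the Anderson self-duality torsion pairing between $\eta^2 B^{-1} \in (\pi_{-6}\KO)_\tor \simeq \Z/2$ and $\eta \in (\pi_1 \KO)_\tor \simeq \Z/2$. Since this Pontryagin pairing is perfect, its value on the pair of generators must be the unique nontrivial order-$2$ element $\tfrac{1}{2} \in \Q/\Z$, yielding the claimed result. The main bookkeeping issue, and the chief source of potential confusion, is keeping track of the various suspension shifts through $\K/\KO \simeq \Sigma^2 \KO$ and $\Sigma^{-2}I_\Z \K/\KO \simeq \Sigma^{-4}I_\Z \KO$ so that the identification of classes in the second step correctly matches the Anderson self-duality pairing in the last step; this is settled by the isomorphism statements already invoked, and no further input beyond Proposition~\ref{prop_main_toymodel} and the standard computation $\ABS_\spin([(S^1)^{\times 2}]) = \eta^2$ is required.
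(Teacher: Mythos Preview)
Your proof is correct. The reduction to the pairing $(-,-)_{\gamma_{\K/\KO}}$ via the factorization \eqref{eq_def_gamma_spinc/spin}, the identification of $\ABS_{\spin^c/\spin}([D^2\times S^1,(S^1)^{\times 2}])$ as the generator of $\pi_3\K/\KO$ using the connecting maps, and the appeal to Proposition~\ref{prop_main_toymodel} to conclude that the resulting $\Z/2\times\Z/2$ pairing is perfect, are all valid.

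However, your route is quite different from the paper's and, in a sense, deliberately avoided there. The paper explicitly remarks that Proposition~\ref{prop_tor_pairing_toy} (equivalently your final step) follows immediately from Proposition~\ref{prop_main_toymodel}, but that this ``relies on the knowledge of the Anderson self-duality of $\KO$''. The point of the toy model is precisely the reverse: to establish Propositions~\ref{claim_toy_7} and~\ref{claim_toy_6} by a direct differential-geometric computation, and thereby \emph{recover} Proposition~\ref{prop_tor_pairing_toy}. Concretely, the paper lifts $\eta^2 B^{-1}$ to a twisted $U(1)$-equivariant class $W_1 B^{-1}\in\KO^{6+\tau}_{U(1)}$, decomposes the relative cycle into a closed spin$^c$ piece $S^2\times S^1$ and a bordism-trivial piece, evaluates the first contribution as an eta invariant (giving $1/2$), and shows the second vanishes via a general lemma on differential pairings with torsion multipliers. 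This mirrors the strategy of Section~\ref{sec:computation} in the main body, where the analogous self-duality statement for $\TMF$ is the theorem being proved and cannot be invoked. Your argument is the shorter and more conceptual one; the paper's is a rehearsal of the machinery needed when no self-duality statement is yet available.
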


Here we denote the real Bott element by $B \in \pi_8 \KO$ and let $\eta$ be the generator of $\pi_1\KO\simeq \bZ/2$, so that $\pi_{-7} \KO = \Z/2 \cdot \eta B^{-1}$ and $\pi_{-6} \KO = \Z/2 \cdot \eta^2 B^{-1}$. 
Please also refer to Table \ref{table_toy} for the elements of relative Spin$^c$/Spin bordism groups.

\subsubsection{Induced differential pairing}\label{subsec_diff_pairing_toy}
In Subsection \ref{subsec_diff_pairing} we explained the Anderson duality non-torsion/torsion pairings are combined and refined to the {\it differential pairing} between differential cohomology and homology. 
Applying the general theory explained there to $\gamma_{\spin^c/\spin}$ here, we get the differential pairing
\begin{align}\label{eq_toy_diff_pairing}
    \widehat{\gamma}_{\spin^c/\spin } \colon \widehat{\KO}^{d} \times \reallywidehat{\MSpin^c/\MSpin}_{d-3} \to \R/\Z.  
\end{align}
It is related to the non-torsion pairing via Lemma \ref{lem_diff_generalized_pairing_compatibility} and to the torsion pairing via Lemma \ref{lem_diff_pairing_torsion}. 
The morphism $\gamma_{\spin^c} \colon \K \to \Sigma^{-2}I_\Z \MSpin^c$ also induces the corresponding differential pairing. They are compatible in the following way:
\begin{align}\label{diag_compatibility_KO_K_pairing}
    \vcenter{\xymatrixcolsep{0pc}\xymatrix{
    \widehat{\KO}^d \ar[d]^-{c} & \times & \reallywidehat{\MSpin^c/\MSpin}_{d-3}  \ar[rrrrrr]^-{\widehat{\gamma}_{\spin^c/\spin}} &&&&&& \R/\Z \ar@{=}[d]\\
   \widehat{\K}^d   & \times&\reallywidehat{\MSpin^c }_{d-3}\ar[rrrrrr]^-{\widehat{\gamma}_{\spin^c}}\ar[u]^-{C\iota'}  &&&&&& \R/\Z   
    }}.
\end{align}

At this point, we can prove Proposition \ref{claim_toy_7} as follows. 
\begin{proof}[Proof of Proposition \ref{claim_toy_7}]
    We use the compatibility \eqref{diag_compatibility_KO_K_pairing} for $d=7$. Following the axiom of differential cohomology, we have
    \begin{align}
        \widehat{\KO}^7 &\stackrel{I}{\simeq}  \KO^7(\pt) = \Z\cdot \eta B^{-1} /(2\eta B^{-1}) \simeq \Z/2, \\
       \widehat{\K}^7  &\stackrel{a}{\simeq}\K^{6}(\pt) \otimes \R/\Z = (\R\cdot \beta^{-3})/(\Z \cdot \beta^{-3}) \simeq \R/\Z. 
    \end{align}
    We claim that the differential refinement of the complexification map is the injection, i.e., given by
    \begin{align}\label{eq_diff_complexification}
        c \colon \widehat{\KO}^7 \to \widehat{\K}^7, \quad \eta B^{-1} \mapsto \frac{1}{2}\beta^{-3} . 
    \end{align}
    This is because of the following. Multiplying by the real periodicity element $B \in \pi_8\KO \simeq \widehat{\KO}^{-8}$ which induces the periodicities in $\widehat{\KO}$ and $\widehat{\K}$, it is enough to show the corresponding claim for $c \colon \widehat{\KO}^{-1} \simeq \Z\eta/(2\eta) \to \widehat{\K}^{-1} \simeq \R \beta/(\Z \beta)$. 
    We use the fact that elements in $\widehat{\K}^{-1}$ and $\widehat{\KO}^1$ is realized by the {\it differential integration}, or equivalently  the {\it differential pushforward}. For a general account, see \cite{YamashitaAndersondualPart2} for example. They can be regarded as pairings between differential generalized cohomology and differential bordism homology, and in our case consist of the following compatibility diagram. 
    \begin{align}\label{diag_compatibility_diff_int_K}
        \vcenter{\xymatrixcolsep{0pc}\xymatrix{
        \widehat{\KO}^N \ar[d]^-{c} & \times & \reallywidehat{\MSpin}_{n} \ar[d]^-{\iota'} \ar[rrrrrr]^-{\int^{\widehat{\KO}}_{(-)}(-)} &&&&&& \widehat{\KO}^{N-n} \ar[d]^-{c}\\
   \widehat{\K}^N   & \times&\reallywidehat{\MSpin^c }_{n}\ar[rrrrrr]^-{\int^{\widehat{\K}}_{(-)}(-)} &&&&&& \widehat{\K}^{N-n}   
        }}
    \end{align}
    We appply this to $N=0$ and $n=1$. In this case the upper row reduces to the topological $\KO$-integration, which is
    \begin{align}
        \int^{\widehat{\KO}}_{(S^1, \rm{periodic})}1_{\widehat{\KO}} = \eta \in \frac{\Z \cdot \eta}{2 \cdot \eta} = \widehat{\KO}^{-1}
    \end{align}
    On the other hand, in the lower row, the element $\iota'(S^1, \text{periodic}) \in \widehat{\MSpin^c}_n$ is $S^1$ equipped with the spin$^c$-structure whose $U(1)$-connection has holonomy $-1$. The $\widehat{\K}^{-1} \simeq \R/\Z$-valued integration takes this holonomy, so that
    \begin{align}
        \int^{\widehat{K}}_{\iota'(S^1, \rm{periodic})}1_{\widehat{\K}} = \frac{1}{2} \beta \in \frac{\R \cdot \beta}{\Z \cdot \beta} = \widehat{\K}^{-1}. 
    \end{align}
    The compatibility in \eqref{diag_compatibility_diff_int_K} and the fact that $c(1_{\widehat{\KO}}) = 1_{\widehat{\K}}$ implies that $c(\eta)=\frac{1}{2}\beta$, verifying \eqref{eq_diff_complexification}.

    Now we use the compatibility of the various Anderson duality pairings to compute the left hand side of \eqref{eq_toy_7}. We compute
    \begin{equation}
    \begin{aligned}
         &\langle \eta B^{-1}, [8\overline{\mathbb{CP}^2} \sharp (\mathbb{CP}^1)^{\times 2}, \varnothing]\rangle_{\gamma_{\spin^c/\spin}} \\
         &=
         \langle \eta B^{-1}, [8\overline{\mathbb{CP}^2} \sharp (\mathbb{CP}^1)^{\times 2}, \varnothing]_\diff \rangle_{\widehat{\gamma}_{\spin^c/\spin}} && \mbox{ by Lemma \ref{lem_diff_pairing_torsion}} \\
         &= \langle \frac{1}{2}\beta^{-3}, [8\overline{\mathbb{CP}^2} \sharp (\mathbb{CP}^1)^{\times 2}]_\diff \rangle_{\widehat{\gamma}_{\spin^c}} &&\mbox{ by \eqref{diag_compatibility_KO_K_pairing}}\\
         &=\frac{1}{2} \langle \beta^{-3}, [8\overline{\mathbb{CP}^2} \sharp (\mathbb{CP}^1)^{\times 2}]_\diff\rangle_{(\gamma_{\spin^c})_\R} && \mbox{ by Lemma \ref{lem_diff_generalized_pairing_compatibility}} \\
         &= \frac{1}{2} \mod 1, 
    \end{aligned}
    \end{equation}
    where the last equality follows from the fact that $\mathrm{ABS} \colon \pi_4 \MSpin^c \to \pi_4 \K$ sends both $[\mathbb{CP}^2]$ and $[(\mathbb{CP}^1)^{\times 2}]$ to $\beta^2$ and the definition of $\gamma_\K$. This completes the proof of Proposition \ref{claim_toy_7}. 
\end{proof}

Now we are left to prove Proposition \ref{claim_toy_6}. 
Here we give a proof which uses a simplified version of the geometric trick used in the proof of Proposition \ref{prop:pairing}. 
We use the following variant of the differential pairing \eqref{eq_toy_diff_pairing}.\footnote{%
See footnote \ref{footnote_pairing} for details. Here we are allowed to use genuine twisted equivariant $\KO$, not the Borel equivariant one. The pairing factors through the Atiyah-Segal completion $\KO_G^{d+\tau} \to \KO^{d+\tau}(BG)$. In the main text we could not use the genuine twisted equivariant $\TMF$ since it has not been established currently, but we would have preferred to use the genuine one, if one were available. See also footnote \ref{footnote_genuine}.} 
If $G$ is a compact Lie group and {a map $\tau \colon BG \to BO\langle 0, 1, 2\rangle$} specifies a twist of $G$-equivariant $\KO$-theory, we get the differential pairing
\begin{align}\label{eq_diff_pairing_toy_equiv}
    {\KO}^{d+\tau}_G \times \left((\reallywidehat{\MSpin^c/\MSpin) \wedge_{\tau}BG_+}\right)_{d-3}  \to \R/\Z
\end{align}
provided that we have $\KO_\R^{(d-1)+\tau}(BG) =0$. This condition is satisfied if $d$ is even. 
We use this refinement to prove Proposition \ref{claim_toy_6} below.

For our purposes, we pick $G=U(1)$ and choose $\tau \colon BU(1) \to BO \langle 0, 1, 2 \rangle$ so that the homotopy class
\begin{equation}
[\tau] \in [BU(1), BO \langle 0, 1, 2 \rangle] \simeq H^2(BU(1); \Z/2)\simeq \Z/2
\end{equation}
is the unique nontrivial element. 
This classifies the extension $0\to \Z/2\to  U(1) \xrightarrow{(-)^2} U(1) \to 0$.

We can use $\tau$ to twist both $\KO$ and $\K$,
and similarly both $\MSpin$ and $\MSpin^c$.
In particular, 
we have a canonical equivalence \begin{equation}
f: \MSpin^c \xrightarrow{\sim} \MSpin\wedge_\tau BU(1)_+ ,
\label{ffff}
\end{equation}
i.e.~a $(U(1), \tau)$-twisted spin structure is equivalent to a spin$^c$ structure.
We will also need the morphism
\begin{equation}
\iota' := \iota' \wedge_\tau \id_{BU(1)_+}: \MSpin\wedge_\tau BU(1)_+ \to \MSpin^c \wedge_\tau BU(1)_+.
\label{iotaiota}
\end{equation}
\if0
There is a further equivalence \begin{equation}
\MSpin^c \wedge_\tau BU(1) \simeq \MSpin^c \wedge BU(1)
\end{equation}
\footnote{%
This is further equivalent to the pair of a spin$^c$ structure and a $U(1)$ bundle,
corresponding to the fact that $\tau$ is null-homotopic as a twist of $\K$.
We opt not to use this fact, as this null homotopy involves an unnecessary choice which makes the computation more confusing.
}
\fi

By fixing a null-homotopy $2\tau \sim 0$,
we can choose an element $V\in \K_{U(1)}^\tau(\pt)$ such that $\K_{U(1)}^0(\pt) \simeq \K_{U(1)}^{2\tau}(\pt)=\Z[V^2,V^{-2}]$ and $\K_{U(1)}^{\tau}(\pt)=V \cdot \K_{U(1)}^0(\pt)$.
By an explicit computation using Clifford modules, one finds that there exists an element $W_{2n-1} \in \KO_{U(1)}^{-2 + \tau}(\pt)$ for each $n \ge 1$ such that \begin{equation}
c(W_m) = \beta(V^m - V^{-m}). \label{cccc}
\end{equation}
so that we have \begin{equation}
\KO^{-2+\tau}_{U(1)}(\pt) =\bigoplus_{n\ge 1}  \Z  W_{2n-1} . 
\end{equation}
We fix an inclusion $i \colon \pt \to BU(1)$, with which we have $i^*: \KO^{d+\tau}_{U(1)}(\pt) \to \KO^d(\pt)$.
We find that \begin{equation}
i^* W_{2n-1} = \eta^2 \label{dddd}
\end{equation}
for all $n \ge 1$. 
After these preparations, we can proceed to the proof.

\begin{proof}[Proof of Proposition \ref{claim_toy_6}]
We apply the pairing \eqref{eq_diff_pairing_toy_equiv} for $d=6$, $G=U(1)$.
We have the following compatibility between differential pairings. 
\begin{align}
    \vcenter{\xymatrixcolsep{0pc}
    \xymatrix{
    \widehat{\KO}^6 \simeq \KO^6  & \times & \reallywidehat{\MSpin^c/\MSpin}_{3}  \ar[rrrrrr]^-{\widehat{\gamma}_{\spin^c/\spin}} \ar[d]^-{i_*} &&&&&& \R/\Z \ar@{=}[d]\\
  {\KO}^{6+\tau}_{U(1)} \ar[u]^-{i^*} \ar[d]^{c} & \times&\left((\reallywidehat{\MSpin^c/\MSpin) \wedge_{\tau}BU(1)_+}\right)_{3} \ar[rrrrrr]^-{\widehat{\gamma}_{\spin^c/\spin}}  &&&&&& \R/\Z \ar@{=}[d] \\
  {\K}^{6+\tau}_{U(1)} &\times& (\reallywidehat{\MSpin^c \wedge_\tau BU(1)_+})_{3} \ar[rrrrrr]^-{\widehat{\gamma}_{\spin^c}} \ar[u]^-{C\iota'}&&&&&& \R/\Z 
    }}\label{u1twisted}
\end{align}
Recall we use  a relative spin$^c$/spin manifold $(D^2,S^1)$.
By using $f$ \eqref{ffff}, we have a relative \{$(U(1),\tau)$-twisted spin\} / spin manifold $f(D^2,S^1)$.
By further applying $\iota' $ \eqref{iotaiota}, we have a relative \{$(U(1),\tau)$-twisted spin$^c$\}/ spin manifold
$\iota'\circ f (D^2,S^1)$. 
We can now glue the orientation reversal of $\iota'\circ f (D^2,S^1)$
and the original $(D^2,S^1)$ to have an $S^2$ equipped a $(U(1),\tau)$-twisted spin$^c$ structure. 
We denote its bordism class by $[S^2,P_{S^2}] \in \pi_2(\MSpin^c \wedge_\tau BU(1)_+)$, where $P_{S^2}$ symbolically stands for the entire data involved.
We can perform this construction at the level of differential relative cycles. We typically denote by $[-]_\diff$ the corresponding element in differential bordism homology groups. 

From this observation, we have the following equality in $\left((\reallywidehat{\MSpin^c/\MSpin) \wedge_{\tau}BU(1)_+}\right)_{2}$,
\begin{align}
    i_* [D^2, S^1]_\diff = 
    C\iota'([S^2 ,P_{S^2}]_\diff) + \iota'\circ f([D^2, S^1]_\diff) .
\end{align}
With this, we can evaluate the pairing as follows:
\begin{align}
         &\langle \eta^2 B^{-1}, [D^2\times S^1,(S^1)^{\times 2}] \rangle_{\gamma_{\spin^c/\spin}}  \\
         &=
         \langle \eta^2 B^{-1}, [D^2\times S^1,(S^1)^{\times 2}]_\diff \rangle_{\widehat{\gamma}_{\spin^c/\spin}} && \mbox{by Lemma \ref{lem_diff_pairing_torsion}}\\
         &= \langle W_1B^{-1},  C\iota'(X_\diff)\rangle_{\widehat{\gamma}_{\spin^c/\spin}} 
         +\langle  W_1B^{-1} ,  Y_\diff\rangle_{\widehat{\gamma}_{\spin^c/\spin}} 
         && \mbox{by \eqref{dddd}}
          \label{YYYY}
\end{align}
where \begin{align}
X_\diff &=[S^2 ,P_{S^2}]_\diff \times [S^1]_\diff \in \reallywidehat{\MSpin^c\wedge_\tau BU(1)_+}_{3}, \\
Y_\diff &=\iota'\circ f([D^2, S^1]_\diff)\times [S^1]_\diff \in \left((\reallywidehat{\MSpin^c/\MSpin) \wedge_{\tau}BU(1)_+}\right)_{3} .
\end{align}
The proof concludes by showing that the first term of \eqref{YYYY} is $1/2$ 
and that the second term of \eqref{YYYY} is $0$.

As for the first term, 
we have \begin{align}
\langle  W_1B^{-1} ,  C\iota'(X_\diff)\rangle_{\widehat\gamma_{\spin^c/\spin}}
&=\langle c(W_1B^{-1}) ,  X_\diff \rangle_{\widehat\gamma_{\spin^c}}
&&\text{by \eqref{u1twisted}} \\
&=\langle   (V-V^{-1})\beta^{-3} ,  X_\diff \rangle_{\widehat\gamma_{\spin^c}} 
&&\text{by \eqref{cccc}}.
\label{pppp}
\end{align}
The pairing \eqref{pppp} is computed in terms of the differential pushforward
as the eta invariant of a Dirac operator on the $(\mathrm{Spin}^c \times U(1))/(\Z/2)$-bundle tensored with the vector bundle associated to $V-V^{-1}$ on $X_\diff$ \cite{YamashitaAndersondualPart2}. This can be evaluated using the product formula for the eta invariants as \begin{align}
\langle   (V-V^{-1})\beta^{-3}  ,  X_\diff \rangle_{\widehat\gamma_{\spin^c}} &= \eta((S^2 ,P_{S^2}, \nabla_{S^2})\times S^1) \mod \Z \\
&= -\mathrm{ind}((S^2 ,P_{S^2}, \nabla_{S^2})) \cdot \eta(S^1) \mod \Z \\
&=  -1\cdot \frac 12 \mod \Z.
\end{align}

As for the second term,
we actually have
\begin{align}\label{eq_Ydiff=0}
    Y_\diff = 0 \in \left((\reallywidehat{\MSpin^c/\MSpin) \wedge_{\tau}BU(1)_+}\right)_{3}. 
\end{align}
This can be deduced from the following general lemma about differential (co)homology. 
\begin{lem}
\label{lem:above}
Let $\widehat{E}_*$ be a generalized differential homology theory which is a module over a generalized multiplicative differential homology theory $\widehat{F}_*$,
with  a multiplication map of the form
\begin{align}
    -\cdot- \colon \widehat{F}_m(\pt) \times \widehat{E}_n(Z) \to \widehat{E}_{n+m}(Z)
\end{align}
for a manifold $Z$. 
Let us take elements $\widehat{e} \in \widehat{E}_n(Z)$ and $\widehat{f} \in \widehat{F}_m(\pt)$, and suppose that $I(\widehat{e})=0$ and $I(\widehat{f}) \in \pi_m F$ is a torsion element.
Then $\widehat{f} \cdot \widehat{e} =0$.

Here we use the notation for structure maps of differential homology as in \cite{YamashitaDifferentialIE},
\begin{align}
    \xymatrix{
    \Omega_{n+1}(Z ; (E_\R)_\bullet) / \mathrm{im}(\del) \ar[r]^-{a} & \widehat{E}_n(Z) \ar[r]^-{I} \ar[d]^-{R} & E_n(Z) \ar[r] & 0 \\
    &\Omega_{n}^{\mathrm{clo}}(Z ; (E_\R)_\bullet)&&
    },
\end{align}
where the horizontal sequence is exact. 
For $\widehat{F}$ the input is $\pt$ and it becomes
\begin{align}\label{diag_diff_homology_structure_pt}
    \xymatrix{
    \pi_{m+1}F_\R \ar[r]^-{a} & \widehat{F}_m(\pt) \ar[r]^-{I} \ar[d]^-{R} & \pi_m F \ar[r] \ar[ld]^-{\otimes \R} & 0 \\
    &\pi_m F_\R&&,
    }
\end{align}
where the triangle commutes. 
\end{lem}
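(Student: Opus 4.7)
The plan is to exploit the exact sequence $\Omega_{n+1}(Z;(E_\R)_\bullet)/\mathrm{im}(\del)\xrightarrow{a}\widehat{E}_n(Z)\xrightarrow{I}E_n(Z)\to 0$ together with the standard module-compatibility axiom that multiplication by $a(-)$ is controlled by the curvature $R$. Once those two ingredients are in hand, the vanishing of $R(\widehat{f})$ in $\pi_m F_\R$ will follow from the torsion hypothesis on $I(\widehat{f})$ via the commutative triangle in \eqref{diag_diff_homology_structure_pt}, and the conclusion will be essentially automatic.

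More concretely, I would proceed as follows. First, since $I(\widehat{e})=0$, the exactness of the horizontal sequence in the structural diagram of $\widehat{E}_*(Z)$ produces a differential form $\omega\in\Omega_{n+1}(Z;(E_\R)_\bullet)/\mathrm{im}(\del)$ with $\widehat{e}=a(\omega)$. Next, I invoke the compatibility of the module structure with the map $a$: for any $\widehat{f}\in\widehat{F}_m(\pt)$ and any $\omega$ as above, one has
\begin{equation}
\widehat{f}\cdot a(\omega)=a\bigl(R(\widehat{f})\cdot\omega\bigr).
\end{equation}
This is part of the standard axioms for a module over a multiplicative differential (co)homology theory, and is the statement we would need to cite (or verify directly from the definition of the module structure being used).

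Finally, the triangle in \eqref{diag_diff_homology_structure_pt} tells us that $R(\widehat{f})\in\pi_m F_\R$ equals the image of $I(\widehat{f})\in\pi_m F$ under realification $\otimes\R$. Since $I(\widehat{f})$ is torsion by hypothesis, its image in the $\R$-vector space $\pi_m F_\R$ vanishes, so $R(\widehat{f})=0$. Consequently $R(\widehat{f})\cdot\omega=0$ in $\Omega_{n+m+1}(Z;(E_\R)_\bullet)/\mathrm{im}(\del)$, and therefore
\begin{equation}
\widehat{f}\cdot\widehat{e}=\widehat{f}\cdot a(\omega)=a\bigl(R(\widehat{f})\cdot\omega\bigr)=a(0)=0,
\end{equation}
as desired.

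The only non-formal step is the module-compatibility identity $\widehat{f}\cdot a(\omega)=a(R(\widehat{f})\cdot\omega)$. I do not expect a real obstacle here, since it is the homology analogue of the well-known cohomology identity $\widehat{x}\cup a(y)=a(R(\widehat{x})\wedge y)$, but one should make sure that this compatibility is part of the axiomatic setup used for $\widehat{E}_*$ and $\widehat{F}_*$ in the paper's framework (or else deduce it from the Anderson-dual pairing definition of $\widehat{E}_*$ in Appendix~\ref{app:diff_bordism}). Once that is confirmed, the proof is a two-line consequence of exactness plus the torsion hypothesis.
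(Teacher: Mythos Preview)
Your proof is correct and follows essentially the same approach as the paper: lift $\widehat{e}$ through $a$ using exactness, apply the module-compatibility identity $\widehat{f}\cdot a(\omega)=a(R(\widehat{f})\cdot\omega)$, and conclude from the triangle that $R(\widehat{f})=0$ since $I(\widehat{f})$ is torsion. The paper likewise flags the compatibility identity as the one nontrivial ingredient and justifies it by analogy with the well-known differential cohomology version, just as you do.
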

\begin{proof}
Take a differential current $\eta \in \Omega_{n+1}(Z ; V_\bullet^E) / \mathrm{im}(\del) $ so that $a(\eta) = \widehat{e}$. 
In this setting, we have the following equation in $\widehat{E}_{n+m}(X)$, 
\begin{align}\label{eq_diff_hom_torsion_multiplication}
   \widehat{f} \cdot \widehat{e} = a\left(R(\widehat{f}) \cdot {\eta}\right) . 
\end{align}
Here the multiplication on the right hand side is the map 
\begin{equation}
\pi_mF_\R \times \Omega_{n+1}(Z; (E_\R)_\bullet) \to \Omega_{n+m+1}(Z; (E_\R)_\bullet)
\end{equation}
 induced by the multiplication $\pi_mF_\R \times \pi_* E_\R \to \pi_{*+m} E_\R$ on the coefficient. 
The proof of \eqref{eq_diff_hom_torsion_multiplication} is exactly analogous to the corresponding well-known statement for differential cohomology. 
In particular, when $I(\widehat{f}) \in \pi_m F$ is a torsion element, by the commutativity of the triangle in \eqref{diag_diff_homology_structure_pt} we have $R(\widehat{f})=0$ so that 
   $ \widehat{f} \cdot \widehat{e} = a\left(0\cdot {\eta}\right) =0. $
\end{proof}

We apply this general lemma to our setting, where we set $E = (\MSpin^c/\MSpin) \wedge BU(1)_+$, $F = \MSpin$, $\widehat{e} = \iota'\circ f([D^2, S^1]_\diff)$ and $\widehat{f} = [S^1]_\diff$. 
We have
\begin{align}
    I(\iota'\circ f([D^2, S^1]_\diff)) = [D^2, S^1, P_{D^2}] = 0 \in (\MSpin^c/\MSpin)_{2+\tau}(BU(1)) 
\end{align}
by the same reason as the corresponding statement in the proof of Proposition \ref{prop:pairing} (in particular see Figure \ref{fig:corner}). 
Since we know that $I([S^1]_\diff) = \eta \in \pi_1 \MSpin$ is a torsion element, we can apply
the lemma above 
 and get \eqref{eq_Ydiff=0} as desired. 
In particular, the second term in \eqref{YYYY} is zero. 



As already explained above, this concludes the proof of this proposition.
\end{proof}

\section{A rough physics translation}
\label{sec:introphys}

We will provide the physics translation of salient points of this paper
in this appendix.
More applications written in the style of physics will be given elsewhere.

In a previous paper \cite{TachikawaYamashita}, 
the authors showed the vanishing of all anomalies, both global and local, 
of any heterotic compactifications using arbitrary internal worldsheet CFTs.
This was done by translating this question using the Stolz-Teichner proposal
to a property of $\TMF$,  the topological modular forms.
This proposal says that, among others,
a two-dimensional \Nequals{(0,1)}-supersymmetric conformal field theory $T$ with a Lie group symmetry $G$ with level $k$
gives a class $[T]\in \TMF^{2(c_L-c_R)+ k\tau}_G(\pt)$ of the group of
the twisted equivariant topological modular forms.\footnote{%
In the untwisted case, we have $\TMF^n(\pt)=\TMF_{-n}(\pt)=\pi_{-n}\TMF$.
In the mathematical part of this paper, we used the notation $\pi_{-n}\TMF$ most often.}
This allowed an application of various techniques of algebraic topology to study this question.

In this sequel, we find that a refinement of the method employed in \cite{TachikawaYamashita}
allows us to determine the Green-Schwarz coupling of the $B$-field, in such a way that
we know 
not only the standard local part given by $B\wedge X_{d-2}$ where $X_{d-2}$ is a gauge-invariant $(d-2)$-form,
but also the discrete part responsible for canceling the global anomalies of the fermions. 
We will also explain that the computation can often be done
effectively using the Anderson duality of $\TMF$.
As examples, we find that certain heterotic backgrounds considered in \cite{KaidiOhmoriTachikawaYonekura,Kaidi:2024cbx}
have nonzero discrete `gravitational' theta angles.

\subsection{Green-Schwarz anomaly cancellation from a modern perspective}
\label{subsec:GS}
\subsubsection{Traditional account}
The standard textbook account of the Green-Schwarz anomaly cancellation goes as follows.
Suppose that a set of $d$-dimensional fermions has an anomaly described by an anomaly polynomial $P_{d+2}$, a gauge-invariant differential form of degree $d+2$ constructed
from the gauge and gravitational curvatures. 
Suppose further that $P_{d+2}$ admits a factorization of the form \begin{equation}
P_{d+2}=(\frac {p_1(R)}2 - \sum_i k_i \tau(F_i)) X_{d-2}
\label{factorized-anomaly}
\end{equation}
where $p_1(R)$ is the first Pontryagin class of the gravitational curvature,
$k_i$ are integers,
and $\tau(F_i)$ are the instanton densities of  the $i$-th gauge group.

To cancel this anomaly, we introduce a $B$-field such that its invariant gauge field strength $H$ satisfies \begin{equation}
dH=\frac {p_1(R)}2 - \sum_i k_i \tau(F_i),\label{GS}
\end{equation} and a spacetime coupling \begin{equation}
2\pi i \int BX_{d-2}.\label{BX}
\end{equation}
The standard argument then shows that the $B$-field with this property has an anomaly
which cancels the factorized anomaly given in \eqref{factorized-anomaly}.

This description is not perfectly satisfactory, however, because of a couple of reasons.
One is that we know that there can be global anomalies of fermions
not captured by an anomaly polynomial.
We therefore need a way to characterize which global fermion anomalies can be cancelled 
by a Green-Schwarz mechanism, and if so, how.
Another is to ensure the well-defined-ness of the coupling \eqref{BX}.
It requires us to understand the integrality properties of $X_{d-2}$.
The way to resolve these points has not been explicitly discussed  in the literature to the authors' knowledge,
but the necessary ingredients for this task can be found scattered in literature, see e.g.~\cite{Wang:2017loc,Kobayashi:2019lep,Hsieh:2020jpj,Yonekura:2020upo}.\footnote{%
For other related approaches to discrete Green-Schwarz terms,
see e.g.~\cite{Garcia-Etxebarria:2017crf,Lee:2022spd,Dierigl:2022zll}.
}

Very briefly and schematically, the way the perturbative cancellation is generalized is as follows.
The anomaly polynomial vanished when restricted on manifolds satisfying \eqref{GS}.
This meant that the anomaly polynomial $P_{d+2}$ contained a factor of $\frac12{p_1(R)}-\sum_i k_i\tau(F_i)$ as in \eqref{factorized-anomaly}.
From this, a non-vanishing factor $X_{d-2}$ can be derived by dividing $P_{d+2}$ by $\frac12{p_1(R)}-\sum_i k_i\tau(F_i)$,
and the resulting quotient $X_{d-2}$ was the main ingredient of the Green-Schwarz coupling \eqref{BX}.

In the previous paper \cite{TachikawaYamashita},
it was shown that the anomaly including the global part vanished when restricted on manifolds satisfying \eqref{GS}.
Using algebraic topology, we can make sense of the phrase `dividing the anomaly by $\frac12{p_1(R})-\sum_i k_i\tau(F_i)$' so that it applies to global anomalies.
We can furthermore show that the resulting quotient gives the Green-Schwarz coupling, again including the subtle global part.
Let us give an outline of how this is actually achieved, leaving a more detailed account to be presented elsewhere.\footnote{
Before proceeding, the authors thank Kazuya Yonekura for his help in formulating this presentation.
The authors also thank N.~Kawazumi for pointing out after the seminar by YM on \cite{TachikawaYamashita} that the vanishing of the anomaly as formulated in algebraic topology as a morphism 
should provide us with a nonzero secondary morphism and asking for its physics interpretation.
This eventually led to the realization by the authors, during a collaboration leading to \cite{KaidiOhmoriTachikawaYonekura,Kaidi:2024cbx}, 
that the secondary morphism has a natural interpretation as encoding the Green-Schwarz coupling in the language of algebraic topology.
}

\subsubsection{Modern account}
We first discuss the case when there are only gravitational anomalies. 
The anomaly of a $d$-dimensional fermion system $\Psi$ is given by a spin invertible phase $\cI$ in $(d+1)$ dimensions.
The deformation class $[\cI]$ of such invertible phases takes values in 
$(I_\bZ\Omega^\text{spin})^{d+2}(\pt)$,
the Anderson dual to the spin bordism group \cite{FreedHopkins2021}.
For a leisurely account for this mathematical object for physicists explaining its necessity, 
the readers are referred to \cite[Sec.~2]{Lee:2020ojw}.

The invertible theory $\cI$ then assigns: \begin{itemize}
\item  a one-dimensional Hilbert space $\cH_\cI(M_d)$ for a closed $d$-dimensional spin manifold $M_d$ with metric,
\item and a unitary transformation $Z_{\cI}(N_{d+1}): \cH_\cI(M_d)\to \cH_\cI(M_d')$  
for a  $(d+1)$-dimensional spin manifold $N_{d+1}$ with metric such that 
$\partial N_{d+1}$ consists of an incoming boundary $M_d$ and an outgoing boundary $M_d'$.
\end{itemize}
In particular, for an empty boundary $M_d=\varnothing$, we have $\cH_\cI(\varnothing)=\bC$.
Then for $N_{d+1}$ with a single outgoing boundary, 
we have a Hartle-Hawking wavefunction $Z_\cI(N_{d+1}) \in \cH_\cI(\partial M_d)$,
and for $N_d$ without boundary, we have the partition function $Z_\cI(N_{d+1})\in U(1)$.

With the help of the bulk invertible phase $\cI$, we can now formulate 
the partition function of the fermion system $\Psi$ on $M_d$ as 
$Z_\Psi(M_d)\in \cH_\cI(M_d)$.
Although $\cH_\cI(M_d)$ is one dimensional,
there is no canonical basis vector,
therefore we cannot convert $Z_\Psi(M)$ into a number.

We now consider extending the structure of the spacetime by introducing the $B$-field such that its gauge invariant field strength $H$ satisfies \begin{equation}
dH=\frac{p_1(R)}2.\label{dH}
\end{equation}
This is an expression at the level of differential forms,
but it can be refined to include discrete topological data.
The entirety of such data is known as the differential string structure \cite{Sati:2009ic,FSSdifferentialtwistedstring}, which is reviewed mathematically in Appendix \ref{app_subsec_diff_string}.
We denote a string manifold by $M^H$
where $M$ is a spin manifold and $H$ symbolically denotes the $B$-field.
We sometimes drop the superscript $H$ when it can be understood from the context.

In many cases of interest to physicists, we often need to consider fermion systems coupled to gauge fields with gauge group $G$. 
In such cases, the deformation class of the fermion anomaly is specified by an element 
$[\cI]\in (I_\bZ\Omega^\text{spin})^{d+2}(BG)$.
For brevity let us assume $G$ is a simply-connected simple Lie group.
The $B$-field in such cases satisfies the relation \begin{equation}
dH=\frac{p_1(R)}2-k \tau(F)\label{HRF}
\end{equation} where $k$ is an integer and $\tau(F)$ is the instanton number density of the $G$ gauge field $F$.
Mathematically, such a structure is known as a $(G,k\tau)$-twisted string structure,
where $k\tau$ is now regarded as an element of $\H^4(BG;\bZ)$.

Suppose now that this anomaly spin invertible theory $\cI$ can be trivialized
when restricted on string manifolds.
At the level of the deformation class,
this means that the image of $[\cI]$ in $ (I_\bZ\Omega^\text{string})^{d+2+ k\tau}(BG)$ vanishes.
This implies, among others, that
the anomaly polynomial $P_{d+2}$ of $\cI$
vanishes when the condition \eqref{HRF} is satisfied.
This in turn means that the anomaly polynomial factorizes as $P_{d+2}=(\frac{p_1}2-k\tau ) X_{d-2}$.

That $\cI$ is trivialized means that the one-dimensional Hilbert spaces $\cH_\cI(M_d^H)$ have canonical basis vectors $v(M_d^H)$ 
and that the evolution by a string manifold $W_{d+1}^H$ preserves these canonical basis vectors.
This latter condition can be phrased using $X_{d-2}$ as 
\begin{equation}
v(\tilde M_d^H) = \exp(-2\pi i \int_{W_{d+1} }H X_{d-2}) Z_\cI(W_{d+1}) v(M_d^H)
\label{consistency-condition}
\end{equation}
where $M_d^H$ and $\tilde M_d^H$ are incoming and outgoing boundaries of $N_{d+1}^H$.
As a degenerate situation, when $W_{d+1}^H$ has no boundary,
we have \begin{equation}
Z_\cI(W_{d+1})=\exp(2\pi i \int_{W_{d+1} }H X_{d-2})
\end{equation}
which indeed has the variation given by $P_{d+2}=(\frac12p_1(R)-k\tau(F)) X_{d-2}$ thanks to \eqref{HRF}.

With such basis vectors we can make sense of partition functions of $\Psi$ on string manifolds $M_d^H$
as numbers via the formula\begin{equation}
\langle v(M_d^H), Z_\Psi(M_d)\rangle. \label{vZ}
\end{equation}
This can be considered as the partition function of the fermion system $\Psi$ and a fixed chosen $B$-field background.

We can also discuss the action of the $B$-field in isolation in the following sense.
For this purpose we need to assume that the string manifold  $M_{d}^H$
has a bulk extension $N_{d+1}$ given as a spin manifold
such that $M_{d}=\partial N_{d+1}$.
Note that we do not require that $N_{d+1}$ is equipped with the string structure,
and only require that $N_{d+1}$ is a spin manifold.
After all, the $B$-field in isolation is supposed to have an anomaly as a theory on spin manifolds
which cancels that of the fermions,
and therefore needs to be formulated on the boundary of a spin manifold.
With this caveat, we can rewrite \eqref{vZ} as \begin{equation}
\langle v(M_d^H), Z_\Psi(M_d)\rangle
=\langle v(M_d^H), Z_\cI(N_{d+1})\rangle
\langle Z_\cI(N_{d+1}), Z_\Psi(M_d)\rangle
\end{equation}
and 
 call the factor \begin{equation}
e^{-S_\text{top}[H]}:=\langle v(M_d^H), Z_\cI(N_{d+1})\rangle
\label{B-field-coupling}
\end{equation}
as the topological part of the $B$-field action in the Euclidean signature;
the part \begin{equation}
\langle Z_\cI(N_{d+1}), Z_\Psi(M_d)\rangle
\end{equation} is then the fermion partition function
which now depends on the choice of the bulk $N_{d+1}$.

This formalism allows us to determine not only the traditional continuous part 
but also the discrete part of the Green-Schwarz coupling.
Let us first see that it  encodes the standard continuous part of the Green-Schwarz coupling given in \eqref{BX}.
For this, take $M_d^{H_0}$ and $M_d^{H_1}$ defined on the same spin manifold $M_d$,
such that the $B$-fields are different by a globally-defined 2-form $B$ so that $H_1-H_0=dB$.
A string bordism $W_{d+1}^H$ between $M_d^{H_0}$ and $M_d^{H_1}$ is given by $W_{d+1}=M_d\times [0,1]$ with $H=H_0+d(sB)$, where $s\in [0,1]$.
Then the relation \eqref{consistency-condition} together with \eqref{B-field-coupling} 
indeed shows that \begin{equation}
e^{-S_\text{top}[H_1]} / e^{-S_\text{top}[H_0]} = \exp(-2\pi i\int_{W_{d+1}} H X_{d-2})
= \exp(-2\pi i \int_{M_d} BX_{d-2}).
\end{equation}

\subsubsection{Relative bordism groups and relative invertible phases:}

We now recognize that the topological part of the $B$-field action \eqref{B-field-coupling} is based on a somewhat unfamiliar type of invertible phases,
which assigns a complex number not to a closed manifold $N_{d+1}$ with a structure $\cS$ (such as orientation, spin structure, etc.)
but to a manifold $N_{d+1}$ with boundary $M_d=\partial N_{d+1}$
such that the boundary $M_d$ is equipped with a structure $\cS'$ related to but different from $\cS$.
Such invertible phases have actually appeared implicitly as the action of a boundary theory for a given anomalous theory obtained by a group extension method in \cite{Wang:2017loc,Tachikawa:2017gyf,Kobayashi:2019lep},
and the approach here is similar.

In our case $\cS$ is the spin structure and $\cS'$ is the string structure.
Then, $\cS$-manifolds whose boundaries are $\cS'$-manifolds 
have corresponding bordism groups, known as relative bordism groups,
which we denote by $\Omega^{\cS/\cS'}_{d+1}(\pt)$.
Then an $\cS$-manifold $N_{d+1}$ whose boundary $M_d=\partial N_{d+1}$ is equipped with an $\cS'$ structure gives a class $[N_{d+1},M_d]\in \Omega^{\cS/\cS'}_{d+1}(\pt)$.

These groups $\Omega^{\cS/\cS'}_\bullet(-)$ are generalized cohomology theories,
and represented by a spectrum $M\cS/M\cS'$ which is defined by the cofiber sequence \begin{equation}
M\cS' \to  M\cS \to M\cS/M\cS' \label{SSS}
\end{equation}
where $M\cS$, $M\cS'$ are Thom spectra representing the $\cS$- and $\cS'$- bordism groups, respectively \cite[p.25--26]{StongTextbook}.
In particular, $M\cS_d(\pt)=\Omega^\cS_d(\pt)$ and $(M\cS/M\cS')_d(\pt)=\Omega^{\cS/\cS'}(\pt)$.
The deformation classes of invertible phases defined on such $(d+1)$-dimensional manifolds 
with $d$-dimensional boundaries are then classified 
by its Anderson dual. 

The  sequence \eqref{SSS} leads to the following long exact sequence \begin{multline}
\label{eq:crucial}
\cdots \to (I_\bZ M\cS)^{d+1}(\pt)
\stackrel{(4)}{\to}
(I_\bZ M\cS')^{d+1}(\pt) \\
\stackrel{(3)}{\to}
(I_\bZ(M\cS/M\cS'))^{d+2}(\pt)
\stackrel{(1)}{\to}
\\
(I_\bZ M\cS)^{d+2}(\pt)
\stackrel{(2)}{\to}
(I_\bZ M\cS')^{d+2}(\pt) \to \cdots
\end{multline}
where each of the steps can be given the following physics interpretations:
\begin{itemize}
\item The step $(1)$ means that an invertible phase defined on $N_{d+1}$ with $\cS$ structure
whose boundary $M_d$ has $\cS'$ structure
tautologically defines an invertible phase on closed manifolds $N_{d+1}$ with $\cS$ structure.
\item The step $(2)$ reinterprets an $\cS$-invertible phase as an $\cS'$-invertible phase.
That the composition $(2)\circ(1)$ vanishes means that 
the middle term $(I_\bZ(M\cS/M\cS'))^{d+2}(\pt)$
is for the anomalies of $\cS$ structure 
such that it becomes trivial when restricted on $\cS'$ structure.
\item The step $(3)$ modifies an  invertible phase defined on $N_{d+1}$ with $\cS$ structure
whose boundary $M_d$ has $\cS'$ structure
by adding an $\cS'$-invertible phase purely supported on its boundary $M_d$ with $\cS'$ structure.
\item The step $(4)$ reinterprets an $\cS$-invertible phase as an $\cS'$-invertible phase.
That the composition $(3)\circ(4)$ vanishes means that
the modification of an invertible phase given by the middle term $(I_\bZ(M\cS/M\cS'))^{d+2}(\pt)$ by an $\cS$-invertible phase purely supported on its boundary
does not change it.
This is as it should be, since the boundary is null-bordant as $\cS$ manifolds by definition.
\end{itemize}

Note that the torsion part of $(I_\bZ M \cS')^{d+1}(\pt)$
is dual to the torsion part of $ (M\cS')_{d}(\pt)=\Omega^{\cS'}_d(\pt)$,
i.e.~they are the discrete gravitational theta angles
detecting $\cS'$-bordism classes.
When $\cS$ is spin structure and $\cS'$ is string structure,
they are discrete gravitational theta angle 
detecting torsion string bordism classes.
The discussion above means that the $B$-field topological coupling
described by $(I_\bZ(\MSpin/\MString))^{d+2}(\pt)$
includes the information of such discrete gravitational theta angles.

\subsection{Green-Schwarz couplings for general heterotic compactifications}

After all these preparations, we are finally in a position to explain what the mathematical content of this paper allows us to do in physics terms.

\subsubsection{Summary of the authors' previous work}
In a previous paper \cite{TachikawaYamashita}, we considered 
 general compactifications of heterotic string theory to $d$ dimensional spacetimes
using an arbitrary worldsheet conformal theory $T$ of central charge $(c_L,c_R)=(26-d,15-\frac32 d)$,
and showed that both the  perturbative anomaly and the global anomaly vanished, irrespective of the choice of $T$.
This was done as follows.

First, the anomaly $\cI$ of spacetime fermions was computed.
This was done first by enumerating the massless spacetime fermions from the data of $T$,
and then summing over their individual contributions to the anomaly.
This process was formulated in \cite{TachikawaYamashita} in the three steps given below: \begin{itemize}
\item  We first form the deformation class $[T]$ of $T$,
which takes values in $\TMF^{22+d}(\pt)$ using the Stolz-Teichner proposal.
\item We then apply the standard homomorphism $\sigma:\TMF^{22+d}(\pt)\to \KO^{22+d}((q))(\pt)$. 
This physically corresponds to taking the right-moving ground states of $T$ in the R-sector,
from which the spacetime massless fermion spectra can be easily read off.
\item Finally, the massless fermion spectra are converted to the anomaly by a further application of a certain homomorphism $\alpha_\text{spin}:\KO^{22+d}((q))(\pt)\to (I_\bZ\Omega^\text{spin})^{d+2}(\pt)$.
This encodes the anomalies of both spin-$1/2$ fermions and gravitinos in a single formula.
\end{itemize}
Then the anomaly $\cI\in (I_\bZ\Omega^\text{spin})^{d+2}(\pt)$ 
is given as the composition $\alpha_\text{spin}(\sigma([T]))$.
The main point of \cite{TachikawaYamashita} was that 
this anomaly vanished when restricted on string manifolds,
thereby showing  the absence of all anomalies of heterotic string theories,
compactified using arbitrary worldsheet CFTs.

\subsubsection{Green-Schwarz couplings and Anderson duality of $\TMF$}
The next question for us is to determine the Green-Schwarz coupling 
responsible for the cancellation of the anomalies.
According to the general discussion in Sec.~\ref{subsec:GS},
it is given by an element in the middle term $(I_\bZ\Omega^\text{spin/string})^{d+2}(\pt)$ of \eqref{eq:crucial},
which should map to $\alpha_\text{spin}(\sigma([T]))$ 
under the arrow $(1)$.
That there is at least \emph{an} element satisfying this condition is
guaranteed by the fact that $\alpha_\text{spin}(\sigma([T]))$ is sent to zero by the arrow $(2)$.
But this condition alone does not uniquely determine it.
One of the main results in this paper is that
there is a \emph{unique} choice of such elements $\alpha_\text{spin/string}([T])\in (I_\bZ\Omega^\text{spin/string})^{d+2}(\pt)$
compatible with successive compactifications. 
This is Proposition~\ref{lem:a_spin/string},
and determines the Green-Schwarz coupling of the compactification using the worldsheet CFT $T$, not only in its differential form part, but also the discrete global part.

How do we actually compute the Green-Schwarz coupling $\alpha_\text{spin/string}([T])$ for a given $T$?
For example, which discrete gravitational theta angle is induced by which worldsheet CFT?
This can be given a fairly complete answer using the Anderson duality of $\TMF$.
A particularly nice subcase is the following.

The Anderson duality of $\TMF$ is known to imply the following \cite{BrunerRognes}. 
Let $A^n$ be the kernel of $\sigma:\TMF^n(\pt)\to \KO^n((q))(\pt)$.\footnote{%
In the main part of the paper, we use the notation $A_{-n}:=A^n$ instead.
}
Physically this operation $\sigma$ corresponds to the extraction of the ordinary elliptic genus originally introduced by Witten \cite{Witten:1986bf} or its mod-2 version  discussed in \cite{TYY}.
Therefore the elements in $A^n$ correspond to SQFTs whose ordinary or mod-2 elliptic genera vanish,
and are necessarily very subtle. 
These elements are known to be necessarily torsion.
Furthermore, when $n\not\equiv -3$ or $1$ mod $24$, $A^{n}$ and $A^{22-n}$ are known to be naturally Pontryagin dual,
i.e.~there is a perfect pairing $A^n \times A^{22-n} \to U(1)$.

Let us now suppose that we choose the worldsheet CFT $T$ such that $[T]\in A^{22+d}$
for a compactification to $d$ dimensions.
Such $[T]$ is necessarily torsion, and therefore there is no term of the form $B\wedge X_{d-2}$ in the Green-Schwarz coupling.
However, the discrete part of the Green-Schwarz coupling can be nonvanishing.
For example, let $M_d^H$ to be a string manifold,
such that it is null as a spin manifold.
We can then pick an auxiliary bulk spin manifold $N_{d+1}$ such that $\partial N_{d+1}=M_d$.
What is the Green-Schwarz coupling of this $d$-dimensional compactification 
on this spacetime?
More mathematically, what is the value of the invertible phase $\alpha_\text{spin/string}([T])$ on the class determined by $[N_{d+1},M_d^H]\in \Omega^\text{spin/string}_{d+1}(\pt)$,
assuming that it is torsion?

Our main result, Theorem~\ref{main} combined with Proposition~\ref{pont-dual}, is that it is given by the pairing we quoted above.
By an abuse of notation, 
let us denote the \Nequals{(0,1)} sigma model on $M_d^H$ also as $M_d^H$.
Then it determines a class $[M_d^H]\in \TMF^{-d}(\pt)$,
which vanishes when sent by $\sigma$ to $\KO^{-d}((q))(\pt)$,
since $M_d$ is null-bordant as spin manifolds.
Therefore $[M_d^H]\in A^{-d}$.
Then, our analysis shows that the value of the invertible phase is given by the natural pairing between $[T]\in A^{22+d}$ and $[M_d^H]\in A^{-d}$,
when $d\neq 24k+3$.
In other words, the compactification using $T$ 
has a discrete gravitational theta angle detecting the string bordism class $[M_d^H]$.

\subsection{Examples}
The ranges of immediate interest to string theorists are $0\le d\le 10$.
The groups $A^{-d}$ are known to be nonzero 
only when $d=3$, $d=6$, $d=8$, $d=9$ and $d=10$, respectively,
and given by \begin{equation}
A^{-3}=\bZ_{24},\quad
A^{-6}=\bZ_2,\quad
A^{-8}=\bZ_2,\quad
A^{-9}=\bZ_2,\quad
A^{-10}=\bZ_3.
\end{equation}
Their generators are known to be given respectively by  the group manifold of $SU(2)$, $SU(2)^2$, $SU(3)$, $SU(2)^3$  and $Sp(2)$ equipped with its natural $H$ flux, cf.~\cite{Hopkins2002}.\footnote{%
$U(3)$ and $U(1)\times SU(3)$ are also known to be bordant to $SU(2)^3$.
}

The case $d=3$ shows a somewhat different behavior 
because it does not satisfy $d\neq 24k+3$;
but this case was essentially already discussed in \cite{Tachikawa:2021mvw} and \cite[Sec.~3.2]{TachikawaYamashita}. 
In this paper we discuss the case $d=9$ and $d=6$ in detail,
and make some conjectures in Appendix~\ref{app:VOA} concerning $d=10$ and $d=8$.

\subsubsection{$d=9$}
As the first example, $A^{31}$ and $A^{-9}$ are both $\bZ_2$ and are dual to each other.
$A^{-9}\subset \TMF^{-9}(\pt)$  is known to be generated by 
the \Nequals{(0,1)} sigma model on $\nu\times \nu\times \nu$,
where $\nu=(S^3)^H$ is a round $S^3$ equipped with the unit $H$ flux on it.
We showed in Sec.~\ref{app:power} that the generator of $A^{31}$ 
corresponds to the fibration of the $E_8\times E_8$ worldsheet current algebra
over the spacetime $S^1$ with anti-periodic spin structure
such that two $E_8$ factors are interchanged when we go around $S^1$.
This means that this 9-dimensional compactification 
has a non-zero gravitational theta angle detecting $\nu^3$. 
We note that this fibration over $S^1$ describes the angular part of the non-supersymmetric 7-brane recently discussed in \cite{KaidiOhmoriTachikawaYonekura,Kaidi:2024cbx}.

\subsubsection{$d=6$}

As another example, $A^{28}$ and $A^{-6}$ are both $\bZ_2$ and are dual to each other.
$A^{-6}$ is known to be generated by $\nu\times \nu$.
Our general argument then implies that the worldsheet CFT realizing the generator of $A^{28}=\bZ_2$ has a nonzero discrete gravitational theta angle detecting this configuration. 

Let us now describe this generator of $A^{28}$ in more concrete terms.
Take $S^4$, and regard its tangent bundle as an $SU(2)\times SU(2)$ bundle.
This can be further embedded into $E_8\times E_8$,
which can be used to fiber the $E_8\times E_8$ current algebra over the \Nequals{(0,1)} sigma model on $S^4$.
This specifies a worldsheet theory $T$ for this particular compactification to six dimensions.
We note that this fibration over $S^4$ describes the angular part of the non-supersymmetric 4-brane recently discussed in \cite{KaidiOhmoriTachikawaYonekura,Kaidi:2024cbx}.

We will show that this theory $T$ is the generator of $A^{28}=\bZ_2$ by computing the Green-Schwarz coupling on $M_6^H=\nu \times \nu$ and showing it to be $-1\in U(1)$.
This will be done as follows.
According to \eqref{B-field-coupling}, the Green-Schwarz coupling is given by \begin{equation}
e^{-S_\text{top}[H]}= \langle v(M_6^H),Z_\cI(N_7) \rangle
\label{explicit-pairing}
\end{equation}
where $\cI$ is the anomaly theory of the massless fermions contained in this 6-dimensional  compactification.
As $SU(2)\times SU(2)$ breaks $E_8\times E_8$ down to $E_7\times E_7$,
the massless fermions transform under $E_7\times E_7$.
A standard computation reveals that 
there is one symplectic-Majorana Weyl fermions in $\mathbf{56}$ of one $E_7$ on one chirality, 
and another symplectic-Majorana Weyl fermions in $\mathbf{56}$ of another $E_7$ of the other chirality. The anomaly polynomial is \begin{equation}
\frac12(\frac{p_1}2 -\tau-\tau') (\tau-\tau')
\end{equation} where $\tau$, $\tau'$ are the instanton number density for the two $E_7$ factors.

We take $N_7=S^3\times B^4$, where the $E_7\times E_7$ gauge fields are trivial.
We also use another copy of the same space, $N_7'=S^3\times B^4$,
this time adding an instanton of one of the two $E_7$'s at the center of $B^4$,
solving $dH=\frac{p_1}2-\tau-\tau'=-\tau$.
This gives a unit $H$ flux on $S^3=\partial B^4$.
Using \eqref{consistency-condition}, we find that \begin{equation}
v(M_6^H)=\exp\bigl(-2\pi i \int_{N_7'}\frac H2(\frac{p_1}2-\tau-\tau')\bigr)Z_\cI(N_7')
\end{equation}
where $Z_\cI(N_7')\in \cH_\cI(M_6)$ is the Hartle-Hawking wavefunction 
associated to the spin manifold $N_7'$ equipped with the $E_7$ bundle.
Then the pairing \eqref{explicit-pairing} is \begin{align}
e^{-S_\text{top}[H]}&= \exp\bigl(-2\pi i \int_{N_7'}\frac H2(\frac{p_1}2-\tau-\tau')\bigr) 
\langle Z_{\cI}(N_7), Z_\cI(N_7')\rangle \\
&= \exp\bigl(-2\pi i \int_{N_7'}\frac H2(\frac{p_1}2-\tau-\tau')\bigr)  Z_{\cI}(\overline{N_7} \cup_{M_6} N_7').
\label{almost-done}
\end{align}
The first factor on the right hand side of \eqref{almost-done} is \begin{equation}
\exp\bigl(-2\pi i \int_{N_7'}\frac H2(\frac{p_1}2-\tau-\tau')\bigr)
= \exp\bigl(-2\pi i \frac12  \int_{S^3} H  \int_{B^4} (-\tau)\bigr) = -1.
\end{equation}
The second factor on the right hand side of \eqref{almost-done} is the
exponentiated $\eta$ invariant of the symplectic Majorana Weyl fermions in $\mathbf{56}$ of $E_7$ on
$\overline{N_7} \cup_{M_6} N_7' = S^3\times S^4$
where $S^4$ has a unit instanton of $E_7$.
The eta invariant on the product space $M_\text{odd} \times M_\text{even}'$
satisfies $\eta(M_\text{odd} \times M_\text{even}')=
\eta(M_\text{odd}) \mathrm{index}(M_\text{even}')$
and the eta invariant on the round $S^3$ is $0$. Therefore \begin{equation}
Z_{A}(\overline{N_7} \cup N_7') =+1.
\end{equation}
Combining, we find that \begin{equation}
e^{-S_\text{top}[H]} = -1,
\end{equation} which is what we wanted to show.
A mathematically rigorous version of this computation is provided in Sec.~\ref{sec:computation},
which uses the techniques of differential bordism groups and their Anderson duals developed in 
\cite{Yamashita:2021cao,YamashitaAndersondualPart2,YamashitaDifferentialIE}
and recalled in Appendix~\ref{app:diff_bordism}.

\subsubsection{Comments}
We saw above that the non-trivial string bordism class $\nu^3$ in 9 dimensions,
leading to a class in $A^{-9}$
is detected by the $E_8\times E_8$ heterotic string on $S^1$ with antiperiodic spin structure with the exchange of two $E_8$ factors,
which gives the generator of $A^{31}$.
Similarly, we saw that the non-trivial string bordism class $\nu^2$ in 6 dimensions,
leading to a class in $A^{-6}$,
is detected by the $E_8\times E_8$ heterotic string on $S^4$ where the $SU(2)\times SU(2)$ curvature of the tangent bundle is embedded into $E_8\times E_8$,
which gives the generator of $A^{28}$.

We note that these are the angular parts of the two of the exotic heterotic brane configurations
recently discussed in \cite{KaidiOhmoriTachikawaYonekura,Kaidi:2024cbx}.
There, it was shown that the worldsheet CFTs corresponding to these configurations
can be continuously deformed to purely left-moving fermionic CFTs
based on the $(\mathfrak{e}_8)_2$ affine algebra and the $(\mathfrak{e}_7)_1\times (\mathfrak{e}_7)_1$ affine algebra,
with central charges $c=31/2$ and $c=14$, respectively.
Using the Stolz-Teichner proposal,
our finding translates to the statement that
these two purely left-moving fermionic CFTs
give rise to the generators of $A^{31}\subset \TMF^{31}(\pt)$
and $A^{28}\subset \TMF^{28}(\pt)$, respectively.
The relation between purely left-moving fermionic CFTs
and the $\TMF$ classes are discussed more fully in Appendix~\ref{app:VOA}.

We also note that another exotic heterotic brane configuration in 8 dimensions
is also discussed in \cite{KaidiOhmoriTachikawaYonekura,Kaidi:2024cbx}.
We have some physics evidence that it gives the generator of $A^{30}$
and detects the generator  of $A^{-8}$.
The details will be discussed elsewhere;
here it will suffice to mention that this configuration can be continuously deformed 
to the purely left-moving fermionic CFT based on the affine Lie algebra $\mathfrak{su}(16)_1$
of central charge $c=15$.
This therefore should be the generator of $A^{30}\subset \TMF^{30}(\pt)$.

\if0
\section*{Declarations}
\noindent\emph{Funding:} 
\funding
The authors have no other competing interests to declare that are relevant to the content of this article.
\fi

\def\arxivfont{\rm}
\bibliographystyle{ytamsalpha}
\bibliography{ref}

\end{document}